\renewcommand{\arraystretch}{2.2}
\newcommand\st{1.1}
\definecolor{airforceblue}{rgb}{0.36, 0.54, 0.66}
\definecolor{auburn}{rgb}{0.43, 0.21, 0.1}
\definecolor{alizarin}{rgb}{0.82, 0.1, 0.26}
\numberwithin{equation}{section}
\crefname{section}{Section}{Sections}
\crefname{subsection}{Subsection}{Subsections}
\crefname{condition}{Condition}{Conditions}
\crefname{hypothesis}{Hypothesis}{Hypothesis}
\crefname{assumption}{Assumption}{Assumptions}
\crefname{lemma}{Lemma}{Lemmas}
\crefname{claim}{Claim}{Claims}
\crefname{remark}{Remark}{Remarks}
\crefname{figure}{Figure}{Figures}
\newcommand{\tlcref}[1]{\textup{\labelcref{#1}}}
\newtheorem{theorem}{Theorem}[section]
\newtheorem{lemma}[theorem]{Lemma}
\newtheorem{corollary}[theorem]{Corollary}
\newtheorem{claim}[theorem]{Claim}
\newtheorem{proposition}[theorem]{Proposition}
\newtheorem{definition}[theorem]{Definition}
\newtheorem{remark}[theorem]{Remark}        
\newtheorem{assumption}[theorem]{Assumption}  
\numberwithin{equation}{section}
\def\YYint#1#2#3{{\setbox0=\hbox{$#1{#2#3}{\iint}$}
		\vcenter{\hbox{$#2#3$}}\kern-.50\wd0}}
\def\XXint#1#2#3{{\setbox0=\hbox{$#1{#2#3}{\int}$}
		\vcenter{\hbox{$#2#3$}}\kern-.50\wd0}}
\def\namedlabel#1#2{\begingroup
	\def\@currentlabel{#2}%
	\label{#1}\endgroup
}
\newcommand{\rmh}[1]{\mathpalette{\raisem@th{#1}}}
\newcommand{\raisem@th}[3]{\hspace*{-1pt}\raisebox{#1}{$#2#3$}}
\newcommand{\lsb}[2]{#1_{\rmh{-3pt}{#2}}}
\newcommand{\redref}[2]{\texorpdfstring{\protect\hyperlink{#1}{\textcolor{black}{(}\textcolor{red}{#2}\textcolor{black}{)}}}{}}
\newcommand{\redlabel}[2]{\hypertarget{#1}{\textcolor{black}{(}\textcolor{red}{#2}\textcolor{black}{)}}}
\newcommand{\descref}[2]{\hyperref[#1]{\textup{\textcolor{black}{(}\textcolor{blue}{\bf #2}\textcolor{black}{)}}}}
\newcommand{\descitemnormal}[2]{\item[#1]\label{#2}}
\newcommand{\descrefnormal}[2]{\hyperref[#1]{\textup{\textcolor{blue}{\bf #2}}}}
\g@addto@macro\normalsize{%
	\setlength\abovedisplayskip{3pt}
	\setlength\belowdisplayskip{3pt}
	\setlength\abovedisplayshortskip{1pt}
	\setlength\belowdisplayshortskip{3pt}
}
\def\ps@pprintTitle{%
	\let\@oddhead\@empty
	\let\@evenhead\@empty
	\def\@oddfoot{}%
	\let\@evenfoot\@oddfoot}
\newcommand{\mcq}{\mathcal{Q}}
\newcommand{\mbcq}{{\mathcal{\bf Q}}_{\tilde{R}}}
\newcommand\RR{\mathbb{R}}
\newcommand\NN{\mathbb{N}}
\newcommand{\al}{\alpha}
\newcommand{\be}{\beta}
\newcommand{\de}{\delta}
\newcommand{\ve}{\varepsilon}
\newcommand{\tht}{\theta}
\newcommand{\om}{\omega}
\newcommand{\Om}{\Omega}
\newcommand{\La}{\Lambda}
\DeclareMathOperator{\dv}{div}
\DeclareMathOperator{\spt}{spt}
\DeclareMathOperator{\loc}{loc}
\DeclareMathOperator*{\esssup}{ess\,sup}
\DeclareMathOperator*{\essinf}{ess\,inf}
\DeclareMathOperator*{\tail}{{\textup{Tail}_\infty}}
\newcommand{\tailp}[1][p-1]{\textup{Tail}_\infty^{#1}}
\DeclareMathOperator*{\essosc}{ess\,osc}
\DeclareMathOperator*{\I}{\mathbf{I}}
\DeclareMathOperator*{\II}{\mathbf{II}}
\DeclareMathOperator*{\III}{\mathbf{III}}
\DeclareMathOperator*{\J}{\mathbf{J}}
\DeclareMathOperator*{\mC}{\mathbf{C}}
\newcommand{\htt}{\mathfrak{\mathring{t}}}
\newcommand{\mft}{\mathfrak{\tilde{t}}}
\newcommand{\mreta}{\mathring\eta}
\newcommand{\norm}[1]{\left|\hspace{-0.2mm}\left| #1 \right|\hspace{-0.2mm}\right|}
\newcommand{\abs}[1]{\left| #1\right|}
\newcommand{\lbr}[1][(]{\left#1}
\newcommand{\rbr}[1][)]{\right#1}
\newcommand{\overlabel}[2]{\overset{\text{\cref{#1}}}{#2}}
\newcommand{\overred}[3]{\overset{\redlabel{#1}{#2}}{#3}}
\newcommand{\txt}[1]{\qquad \text{#1} \quad}
\newcommand{\pa}{\partial}
\definecolor{Plum}{HTML}{89b02e}
\definecolor{Violet}{HTML}{58429B}
\definecolor{OliveGreen}{HTML}{0d8795}
\DeclareMathOperator{\bsmu}{\boldsymbol\mu}
\DeclareMathOperator{\bsom}{\boldsymbol\omega}
\DeclareMathOperator{\bsy}{\boldsymbol  Y}
\DeclareMathOperator{\bsc}{\boldsymbol  C}
\newcommand{\omt}{ \Om \times (-T,T)}
\newcommand{\data}[1]{\{n,p,s,\Lambda#1\}}
\newcommand{\datanb}[1]{n,p,s,\Lambda#1}
\def\ps@pprintTitle{%
	\let\@oddhead\@empty
	\let\@evenhead\@empty
	\def\@oddfoot{}%
	\let\@evenfoot\@oddfoot}
\newenvironment{notationlist}
{\begin{enumerate}[labelindent=*,
		leftmargin=*,
		label=(\bf{N}{\arabic*})
		]}
	{\end{enumerate}}
\begin{document}

\begin{frontmatter}
\title{Local H\"older regularity for bounded, signed solutions to nonlocal Trudinger equations}
\author[myaddress]{Karthik Adimurthi\tnoteref{thanksfirstauthor}}
\ead{karthikaditi@gmail.com and kadimurthi@tifrbng.res.in}
\tnotetext[thanksfirstauthor]{Supported by the Department of Atomic Energy,  Government of India, under
	project no.  12-R\&D-TFR-5.01-0520}

\address[myaddress]{Tata Institute of Fundamental Research, Centre for Applicable Mathematics,Bangalore, Karnataka, 560065, India}

\begin{abstract}
We prove local H\"older regularity for bounded and  sign-changing weak solutions to nonlocal Trudinger equations of the form
\[
(|u|^{p-2}u)_t + \text{P.V.} \int_{\RR^n} \frac{|u(x,t) - u(y,t)|^{p-2}(u(x,t)-u(y,t))}{|x-y|^{n+sp}} = 0,
\]
 in the  range $1< p<\infty$ and $s \in (0,1)$.  One of the main difficulties in extending the  local theory to the  nonlocal Trudinger equation is that when $0 \ll u \ll \infty$ locally, a crucial change of variable is unavailable in the nonlocal case due to the presence of the Tail term.  We adapt several  new ideas developed in the past few years to prove the required H\"older regularity.  
\end{abstract}
    \begin{keyword} Nonlocal operators; doubly nonlinear equations; Weak Solutions; H\"older regularity
    \MSC[2010] 35K55, 35K59, 35K92, 35R11, 35B65.
    \end{keyword}

\end{frontmatter}
\tableofcontents

\section{Introduction}\label{sec1}

In this article, we prove local H\"older regularity for bounded, sign-changing solutions of nonlocal parabolic equations whose prototype structure is  of the form
\begin{equation}\label{maineq}
	\partial_t (|u|^{p-2}u) + \text{P.V.}\int_{\RR^n} |u(x,t)-u(y,t)|^{p-2}(u(x,t)-u(y,t))K(x,y,t)\,dy=0,
\end{equation} with $p\in (1,\infty)$ and $s \in (0,1)$. Moreover, for some universally fixed constant  $\La \geq 1$ and for almost every $x,y \in \RR^n$, we take $K:\RR^n\times\RR^n\times \RR \to [0,\infty)$ to be a symmetric measurable function satisfying
\begin{equation}\label{boundsonKernel}
	\frac{(1-s)}{\Lambda|x-y|^{n+sp}}\leq K(x,y,t)\leq \frac{(1-s)\Lambda}{|x-y|^{n+sp}}. 
\end{equation}

The main theorem we prove in this paper is the following.

\begin{theorem}\label{holderparabolic}
	Let $p\in(1,\infty)$, $s\in(0,1)$ and let $u\in L^p(I;W^{s,p}_{\text{loc}}(\Om))\cap L^\infty(I;L^2_{\text{loc}}(\Om))\cap L^\infty(I;L^{p-1}_{sp}(\RR^n))$ be any bounded, sign-changing weak solution to \cref{maineq}. Then $u$ is locally H\"older continuous in $\Om_T$, i.e., there exist constants $j_o > 1$, $\mathbf{C}_o>1$ and $\alpha\in (0,1)$ depending only on the data, such that the following holds:
	\[
	|u(0,0) - u(x,t)| \leq C_{\data{}} \frac{L}{R^{\alpha}} \lbr \max\{|x - 0|, |t - 0|^{\frac{1}{sp}}\}\rbr^{\alpha},
	\]
	for any $(x,t) \in B_{\frac12 R}(0)\times (-(\tfrac12 R)^{sp},0)$ and 
	\[
	L:= 2 \mathbf{C}_o^{\frac{sp}{p-1}j_o} \|u\|_{L^{\infty}(B_{8R}\times (-(8R)^{sp},0))} + \tail(|u|,8R,0,(-(8R)^{sp},0)).
	\]
	Here $R$ is any fixed radius and we assume $B_{8R}\times (-(8R)^{sp},0)\subset \Omega_T$ and $(0,0) \in \Omega_T$ is any fixed point.
\end{theorem}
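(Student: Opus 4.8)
The plan is to prove \cref{holderparabolic} by a quantitative reduction of oscillation on a nested sequence of cylinders shrinking to $(0,0)$. Since the prototype \cref{maineq} is homogeneous of degree $p-1$ in $u$ and invariant under the parabolic scaling $t\sim\rho^{sp}$, the natural family is $Q_j := B_{\rho_j}(0)\times(-\rho_j^{sp},0)$ with $\rho_j := \mathbf{C}_o^{-j}R$. Writing $\mu_j^\pm$ for the essential supremum and infimum of $u$ on $Q_j$ and $\omega_j := \mu_j^+-\mu_j^-$, the core of the argument is an inductive estimate of the form
\[
\omega_{j+1}\ \le\ \eta\,\omega_j \ +\ \mathbf{C}\,\Big(\tfrac{\rho_{j+1}}{R}\Big)^{\beta}\big(\|u\|_{L^\infty(B_{8R}\times(-(8R)^{sp},0))}+\tail(|u|,8R,0,(-(8R)^{sp},0))\big)
\]
for suitable $\eta\in(0,1)$, $\beta>0$ and $\mathbf{C}_o>1$ depending only on the data; summing the resulting geometric series for $j\ge j_o$ and converting geometric oscillation decay on the $Q_j$ into a modulus of continuity by a standard iteration lemma then yields the pointwise bound with $\alpha=\min\{\beta,\log_{\mathbf{C}_o}(1/\eta)\}$ and the stated $L$. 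The index $j_o$, together with the factor $\mathbf{C}_o^{\frac{sp}{p-1}j_o}$ appearing in $L$, records the finitely many initial scales one must descend before the tail measured at scale $8R$, rather than the full $L^\infty$ norm on $B_{8R}$, governs the iteration.

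\textbf{The inductive step.} For the passage from $Q_j$ to $Q_{j+1}$ I would use the De Giorgi alternative: either $u$ lies, in measure, mostly below the midlevel $\mu_j^+-\tfrac{\omega_j}{2}$ on a sub-cylinder of $Q_j$, or mostly above it; by the symmetry $u\mapsto -u$, which \cref{maineq} respects, assume the former. Two ingredients are then combined. First, a \emph{measure-shrinking lemma}: the logarithmic energy estimate for $u$ near $\mu_j^+$ — which follows from the Caccioppoli-type estimates established earlier together with Steklov averaging of the term $\partial_t(|u|^{p-2}u)$ — forces the relative measure of $\{u>\mu_j^+-2^{-i}\omega_j\}$ in a fixed sub-cylinder of $Q_j$ to tend to $0$ as $i\to\infty$, up to an additive error of order $\big(\tail(|u|,\rho_j,0,\cdot)/\omega_j\big)^{p-1}$ coming from the nonlocal term. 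Second, a \emph{De Giorgi iteration} (a ``measure-to-uniform'' lemma): once that relative measure drops below a data-dependent threshold at some level $\mu_j^+-2^{-i_0}\omega_j$, a geometric iteration over shrinking cylinders and truncation levels $k_m\uparrow\mu_j^+-2^{-i_0-1}\omega_j$, fed by the energy estimates for $(u-k_m)_+$, upgrades it to the uniform bound $u\le\mu_j^+-2^{-i_0-2}\omega_j$ on $Q_{j+1}$ — again modulo a tail contribution. In the assumed alternative the infimum cannot drop much, so $\omega_{j+1}\le(1-2^{-i_0-2})\omega_j+(\text{tail})$, i.e.\ the displayed inequality with $\eta=1-2^{-i_0-2}$.

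\textbf{Main obstacle.} The principal difficulty is the one flagged in the abstract: when $0\ll u\ll\infty$ on a cylinder, the local Trudinger theory removes the doubly nonlinear degeneracy by substituting a power of $u$, but the nonlocal operator sees all of $\RR^n$ through the Tail, where $u$ may vanish or change sign, so no such substitution closes the equation; one must run the entire De Giorgi machinery directly on $u$, carrying the tail quantities $\tail(|u|,\rho_j,0,\cdot)$ as genuine right-hand sides. The quantitative heart of the proof is then a dichotomy at each scale: if this tail is small relative to $\omega_j$ it is absorbed into the oscillation as above, whereas if it dominates then $\omega_j$ is already comparable to $(\rho_j/R)^\beta L$ and the claimed estimate holds for that step for free — and one must verify that, after fixing $\mathbf{C}_o$ large, the controlled geometric growth of the tail from $Q_j$ to $Q_{j+1}$ is beaten by the decay factor $\eta<1$. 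A secondary but essential technical point is the interaction of $\partial_t(|u|^{p-2}u)$ with truncations across the set $\{u=0\}$: after Steklov averaging one is led to an auxiliary quantity $\mathfrak{b}(u,k)$ built from $|u|^{p-2}u-|k|^{p-2}k$ whose comparability with $(u-k)^2$, and hence the effective intrinsic geometry of the iteration, degenerates near the zero set — precisely the place where the recently developed tools for sign-changing doubly nonlinear equations are invoked. I expect the propagation of the tail through the scales, and the verification that $\eta$ and $\mathbf{C}_o$ can be chosen compatibly, to be the main source of work.
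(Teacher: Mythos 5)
Your high-level plan --- nested cylinders, geometric decay of oscillation, a tail carried as a quantitative right-hand side --- is the right skeleton, and you correctly locate the central obstruction in the away-from-zero regime. The gap is in how you resolve it. From ``no such substitution closes the equation'' you conclude that ``one must run the entire De Giorgi machinery directly on $u$''; this overstates the obstruction and consequently discards the device that actually makes the argument close. Since \cref{maineq} is scale invariant, one may set $w := u/\bsmu^-$ (still a solution) and apply $v := w^{p-1}$ \emph{only on the reference cylinder}. Testing the equation with $(v-k)_\pm\zeta^p$ and comparing increments of $w$ and $v$ via \cref{alg_lem}, the paper obtains a mixed Caccioppoli inequality (\cref{lemma6.3}) in which every local term --- time derivative, bilinear form on $B_\varrho\times B_\varrho$, and the good isoperimetric term --- lives in the linear variable $v$, while the single far-field term is kept as a tail of $(w-k^{1/(p-1)})_\pm$. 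This reduces the away-from-zero case to a tail-perturbed parabolic fractional $p$-Laplacian with the clean intrinsic cylinders $\theta = (\varepsilon\bsom_v)^{2-p}$, handled by the De Giorgi, time-propagation and shrinking lemmas of \cref{section6}. Your sketch offers nothing in its place: it does not specify the intrinsic geometry when the time term carries the weight $|u|^{p-2}$ and the space term the $p$-Laplace degeneracy simultaneously; it proposes a logarithmic estimate for measure shrinking, a $p=2$-era device which the paper --- and the nonlocal $p$-Laplace literature it builds on --- deliberately replaces by expansion of positivity (\cref{Prop:1:1}, \cref{iteratedprop}); and it does not explain how the iteration would close for $p\neq 2$ with $\mathfrak{g}_\pm(u,k)$ in the energy estimate rather than the linearized $(v-k)_\pm^2$.

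By contrast, your per-scale tail dichotomy is a reasonable variant: the paper instead fixes $\mathbf{C}_o$, $\beta$, $\kappa$, $j_o$ once so that $\tail((u-\bsmu^{\pm})_\pm; R_j, 0, I_j) \le \kappa\,\mathbf{C}_o^{-\beta j}L$ holds for \emph{all} $j\ge j_o$ (\descrefnormal{step5deg}{Step 5} of \cref{section10}), which avoids case-splitting scale by scale; that is a matter of bookkeeping, not a gap. The away-from-zero treatment, however, is where your proposal as written would not go through.
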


\subsection{A brief history of the problem} 

Much of the early work on regularity of fractional elliptic equations in the case $p=2$ was carried out by Silvestre \cite{silvestreHolderEstimatesSolutions2006}, Caffarelli and Vasseur \cite{caffarelliDriftDiffusionEquations2010}, Caffarelli, Chan, Vasseur \cite{caffarelliRegularityTheoryParabolic2011} and Bass-Kassmann  \cite{bassHarnackInequalitiesNonlocal2005,bassHolderContinuityHarmonic2005, kassmannPrioriEstimatesIntegrodifferential2009}. An early formulation of the fractional $p$-Laplace operator was done by Ishii and Nakamura \cite{ishiiClassIntegralEquations2010} and existence of viscosity solutions was established. DiCastro, Kuusi and Palatucci extended the De Giorgi-Nash-Moser framework to study the regularity of the fractional $p$-Laplace equation in \cite{dicastroLocalBehaviorFractional2016}. The subsequent work of Cozzi \cite{cozziRegularityResultsHarnack2017} covered a stable (in the limit $s\to 1$) proof of H\"older regularity by defining a novel fractional De Giorgi class. An alternate proof of H\"older regularity based on the ideas of \cite{tilli} was given in \cite{adimurthiOlderRegularityFractional2022}. Explicit exponents for H\"older regularity with no coefficients were found in \cite{brascoHigherHolderRegularity2018,brascoContinuitySolutionsNonlinear2021}. 

 Regarding parabolic nonlocal equations of the form 
\begin{equation}\label{nonlocalp}
	\partial_t u + \text{P.V.}\int_{\RR^n} \frac{|u(x,t)-u(y,t)|^{p-2}(u(x,t)-u(y,t))}{|x-y|^{n+sp}}\,dy=0,
\end{equation}
bounded, weak solutions were shown to be H\"older continuous in \cite{adimurthiLocalHolderRegularity2022,liaoHolderRegularityParabolic2022}. 
Furthermore,  H\"older regularity of bounded, weak solutions for doubly nonlinear equations of the form
\begin{equation}\label{localtrudinger}
\partial_{t}(|u|^{p-2}u) - \Delta_p u = 0,
\end{equation}
was proved in \cite{bogeleinHolderRegularitySigned2021} (see also \cite{bogeleinOlderRegularitySigned2021,liaoHolderRegularitySigned2021}). In the special case of non-negative solutions, analogous H\"older regularity results were proved in the important paper \cite{MR2957656,MR3029403} which forms the framework for all future developments in understanding the regularity for \cref{localtrudinger}. 

 \emph{We note that \cref{nonlocalp} is translation invariant but not  scale invariant, whereas \cref{maineq} is scale invariant but not translation invariant. It is well known that failure of either translation invariance or scale invariance leads to fundamental difficulties when trying to obtain H\"older regularity. These difficulties are amplified further when dealing with nonlocal counterparts of the analogous local equations.}

Our approach to proving H\"older regularity for \cref{maineq} combines the  techniques from \cite{bogeleinHolderRegularitySigned2021} to handle the time terms and \cite{adimurthiLocalHolderRegularity2022,liaoHolderRegularityParabolic2022} to handle the nonlocal term. The main difficulty in studying \cref{maineq} was that the regularity theory for nonlocal $p$-parabolic equations was unknown until recently which played a crucial role when studying \cref{maineq}. As a consequence, the only known regularity results for \cref{maineq} in existing literature were local semi-continuity, local boundedness  and reverse H\"older inequality for globally bounded, strictly positive weak solutions proved in \cite{banerjeeLocalPropertiesSubsolution2021,banerjeeLowerSemicontinuityPointwise2021} and H\"older regularity was stated as an open problem.

\subsection{On historical development of intrinsic scaling}

The method of intrinsic scaling was developed by E.DiBenedetto when $p \geq 2$ in \cite{dibenedettoLocalBehaviourSolutions1986} to prove H\"older regularity for degenerate quasilinear parabolic equations of the form
\[
u_t - \dv |\nabla u|^{p-2} \nabla u = 0.
\]  A technical requirement of the proof was a novel logarithmic estimate which aids in the expansion of positivity.  Subsequently, the proof of H\"older regularity for the singular case  $p < 2$ was given in \cite{ya-zheLocalBehaviorSolutions1988} by switching the scaling from time variable to the space variable. These results are collected in E.DiBenedetto's treatise \cite{dibenedettoDegenerateParabolicEquations1993}. After a gap of several years, E.DiBenedetto, U.Gianazza and V.Vespri \cite{dibenedettoHarnackEstimatesQuasilinear2008, dibenedettoHarnackInequalityDegenerate2012} were able to develop fundamentally new ideas involving an exponential change of variable in time to prove Harnack's inequality for the parabolic $p$-Laplace equations. Crucially, this proof relies on expansion of positivity estimates and does not involve logarithmic test functions. Then, a new proof of H\"older regularity was given with a more geometric flavour in \cite{gianazzaNewProofHolder2010}. This theory was extended to generalized parabolic $p$-Laplace equations with Orlicz growth in \cite{hwangHolderContinuityBounded2015,hwangHolderContinuityBounded2015a}. Subsequently, using the exponential change of variables, a new proof of H\"older regularity was given in \cite{liaoUnifiedApproachHolder2020}.

\subsection{Our approach to proving H\"older regularity to \texorpdfstring{\cref{maineq}}.}

   The general strategy to proving H\"older regularity is based on the framework developed in \cite{bogeleinHolderRegularitySigned2021}, though our proof requires additional difficulties to be overcome. In particular, we encounter two problems:
   \begin{enumerate}[(i)]
   	\item In the `close to zero' case, we can iterate the analogous version of \cref{Prop:1:1} to obtain pointwise information at the top of the reference domain. This is similar to the local version, though we need to also assume a further bound on the tail, see \cref{Section4}. 
   	\item In the `away from zero case' (see \cref{section6}), the local equation allows to perform a change of variable of the form given in \cref{defvw}. Unfortunately, in the nonlocal equation, such a change of variable is only available locally and cannot be performed to handle the tail term. In order to overcome this, we have to obtain a mixed  energy estimate that retains the tail in the unchanged variable (see \cref{lemma6.3}) and prove H\"older regularity for this class of estimates.
   	\item We need the theory of H\"older regularity of weak solutions of  
   	\begin{equation*}
   		\partial_t u + \text{P.V.}\int_{\RR^n} \frac{|u(x,t)-u(y,t)|^{p-2}(u(x,t)-u(y,t))}{|x-y|^{n+sp}}\,dy=0,
   	\end{equation*}
   	obtained in \cite{adimurthiLocalHolderRegularity2022,liaoHolderRegularityParabolic2022} in order to obtain H\"older regularity in the `away from zero' case. It is not clear how to adapt the ideas from \cite{adimurthiLocalHolderRegularity2022} since the change of variables in \cref{section6} is not available for the tail, although we expect this can be made to work with delicate modifications. In order to overcome this, we instead follow the ideas from \cite{liaoHolderRegularityParabolic2022} and prove H\"older regularity for the mixed type energy estimate obtained in \cref{lemma6.3}. 
   	\item The H\"older regularity and covering argument for the mixed type equation in \cref{lemma6.3} is quite delicate and requires several modifications of existing ideas.
   \end{enumerate}

\begin{remark}
	{In this paper, we assume that solutions to the nonlocal equation are locally bounded. }
\end{remark}

\subsection{Notations}
We begin by collecting the standard notation that will be used throughout the paper:
\begin{notationlist}
	\item\label{not1} We shall denote $n$ to be the space dimension and by $z=(x,t)$ to be  a point in $ \RR^n\times (0,T)$.  
	\item\label{not2} We shall alternately use $\dfrac{\partial f}{\partial t}$,$\partial_t f$,$f'$ to denote the time derivative of $f$.
	\item\label{not3} Let $\Omega$ be an open bounded domain in $\mathbb{R}^n$ with boundary $\partial \Omega$ and for $0<T\leq\infty$,  let $\Omega_T\coloneqq \Omega\times (0,T)$. 
	\item\label{not4} We shall use the notation
	\begin{equation*}
		\begin{array}{ll}
			B_{\varrho}(x_0)=\{x\in\RR^n:|x-x_0|<\varrho\}, &
			\overline{B}_{\varrho}(x_0)=\{x\in\RR^n:|x-x_0|\leq\varrho\},\\
			I_{\tht}(t_0)=\{t\in\RR:t_0-\tht<t<t_0\},
			&Q_{\varrho,\tht}(z_0)=B_{\varrho}(x_0)\times I_\tht(t_0).
		\end{array}
	\end{equation*} 
\item\label{not5} We shall also use the notation $Q_R^\theta(x_0,t_0) := B_R(x_0) \times (t_0 - \theta R^{sp}, t_0)$.  
	\item\label{not7} Integration with respect to either space or time only will be denoted by a single integral $\int$ whereas integration on $\Om\times\Om$ or $\RR^n\times\RR^n$ will be denoted by a double integral $\iint$. 
	\item\label{not8} We will  use $\iiint dx\,dy\,dt$ to denote integral over $\RR^n \times \RR^n \times (0,T)$. More specifically, we will use the notation $\iiint_{\mcq}$ and $\iiint_{I \times B}$ to denote the integral over $\iiint_{I\times B \times B }dx\,dy\,dt$ where $\mcq = B\times I$ with the order of integration made precise by the order of the differentials.
	\item\label{not9} The notation $a \lesssim b$ is shorthand for $a\leq C b$ where $C$ is a constant which only depends on $\datanb{}$ and could change from line to line.
	\item\label{not10} Given a domain $Q=B\times(0,T)$, we denote it's parabolic boundary by $\pa_p Q = (\pa B \times (0,T)) \cup (B \times \{t=0\})$.
	\item\label{not11} For any fixed $t,k\in\RR$ and set $\Om\subset\RR^n$, we denote $A_{\pm}(k,t) := \{x\in \Om: (u-k)_{\pm}(\cdot,t) > 0\}$; for any ball $B_r$ we write $A_{\pm}(k,t) \cap (B_{r}\times I) =: A_{\pm}(k,t,r)$. 
	\item\label{not12} For any ball $B \subset \RR^n$, we denote $\mathcal{C}_B:=(B^c\times B^c)^c=\left(B\times B\right) \cup \left( B\times(\RR^n\setminus B)\right) \cup \left((\RR^n\setminus B)\times B \right)$.
	\item\label{not13} We shall denote a constant to depend on data if it depends on $\data{}$. Many a times, we will ignore writing this dependence explicitly for the sake or brevity.
\end{notationlist}

\subsection{Function spaces}
Let $1<p<\infty$, we denote by $p'=p/(p-1)$ the conjugate exponent of $p$. Let $\Om$ be an open subset of $\RR^n$, we  define the {\it Sobolev-Slobodeki\u i} space, which is the fractional analogue of Sobolev spaces as follows:
\begin{equation*}
	W^{s,p}(\Om):=\left\{ \psi\in L^p(\Omega): [\psi]_{W^{s,p}(\Om)}<\infty \right\},\qquad \text{for} \  s\in (0,1),
\end{equation*} where the seminorm $[\cdot]_{W^{s,p}(\Om)}$ is defined by 
\begin{equation*}
	[\psi]_{W^{s,p}(\Om)}:=\left( \iint_{\Om\times\Om} \frac{|\psi(x)-\psi(y)|^p}{|x-y|^{n+sp}}\,dx\,dy \right)^{\frac 1p}.
\end{equation*}
The space when endowed with the norm $\norm{\psi}_{W^{s,p}(\Om)}=\norm{\psi}_{L^p(\Om)}+[\psi]_{W^{s,p}(\Om)}$ becomes a Banach space. The space $W^{s,p}_0(\Om)$ is the subspace of $W^{s,p}(\RR^n)$ consisting of functions that vanish outside $\Om$. We will use the notation $W^{s,p}_{(u_0)}(\Om)$ to denote the space of functions in $W^{s,p}(\RR^n)$ such that $u-u_0\in W^{s,p}_0(\Om)$.

Let $I$ be an interval and let $V$ be a separable, reflexive Banach space, endowed with a norm $\norm{\cdot}_V$. We denote by $V^*$ to be its topological dual space. Let $v$ be a mapping such that for a.e. $t\in I$, $v(t)\in V$. If the function $t\mapsto \norm{v(t)}_V$ is measurable on $I$, then $v$ is said to belong to the Banach space $L^p(I;V)$ provided $\int_I\norm{v(t)}_V^p\,dt<\infty$. It is well known that the dual space $L^p(I;V)^*$ can be characterized as $L^{p'}(I;V^*)$.

Since the boundedness result requires some finiteness condition on the nonlocal tails, we define the tail space for some $m >0$ and $s >0$ as follows:
\begin{equation*}
	L^m_{s}(\RR^n):=\left\{ v\in L^m_{\text{loc}}(\RR^n):\int_{\RR^n}\frac{|v(x)|^m}{1+|x|^{n+s}}\,dx<+\infty \right\}.
\end{equation*}
Then a nonlocal tail is defined by 
\begin{equation*}
	\text{Tail}_{m,s,\infty}(v;x_0,R,I):=\text{Tail}_\infty(v;x_0,R,t_0-\tht,t_0):=\sup_{t\in (t_0-\tht, t_0)}\left( R^{sm}\int_{\RR^n\setminus B_R(x_0)} \frac{|v(x,t)|^{m-1}}{|x-x_0|^{n+sm}}\,dx \right)^{\frac{1}{m-1}},
\end{equation*} where $(x_0,t_0)\in \RR^n\times (-T,T)$ and the interval $I=(t_0-\tht,t_0)\subseteq (-T,T)$. From this definition, it follows that for any $v\in L^\infty(-T,T;L^{m-1}_{sm}(\RR^n))$, there holds $\text{Tail}_{m,s,\infty}(v;x_0,R,I)<\infty$. 

\subsection{Definitions}

For $k, l\in\RR$ we define 
\begin{equation}\label{defgpm}
	\mathfrak g_\pm (l,k):=\pm (p-1)\int_{k}^{l}|s|^{p-2}(s-k)_\pm\,ds.
\end{equation}
Note that $\mathfrak g_\pm (l,k)\ge 0$, then we have the following lemma from \cite[Lemma 2.2]{bogeleinHolderRegularitySigned2021}:
\begin{lemma}\label{lem:g}
	There exists a constant $\bsc =\bsc (p)$ such that,
	for all $l,k\in\RR$, the following inequality holds true:
	\begin{equation*}
		\tfrac1{\bsc } \lbr|l| + |k|\rbr^{p-2}(l-k)_\pm^2
		\le
		\mathfrak g_\pm (l,k)
		\le
		\bsc  \lbr|l| + |k|\rbr^{p-2}(l-k)_\pm^2
	\end{equation*}
\end{lemma}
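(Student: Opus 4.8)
The plan is to prove this elementary two-sided estimate by a scaling-and-compactness argument. First I would reduce to a single case. Since the integrand in \cref{defgpm} defining $\mathfrak g_+$ is supported on $\{s>k\}$, we have $\mathfrak g_+(l,k)=0$ whenever $l\le k$, and symmetrically $\mathfrak g_-(l,k)=0$ whenever $l\ge k$, so the asserted inequality is trivial (both sides vanish) in those ranges. Moreover the substitution $s\mapsto-s$ in \cref{defgpm} yields the reflection identity $\mathfrak g_-(l,k)=\mathfrak g_+(-l,-k)$, and since $(|{-l}|+|{-k}|)^{p-2}\bigl((-l)-(-k)\bigr)_+^2=(|l|+|k|)^{p-2}(l-k)_-^2$, it suffices to establish
\[
\tfrac1{\bsc}\,(|l|+|k|)^{p-2}(l-k)^2\ \le\ \mathfrak g_+(l,k)\ \le\ \bsc\,(|l|+|k|)^{p-2}(l-k)^2\qquad\text{for all }l>k .
\]

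Next I would exploit homogeneity. Substituting $s=\mu s'$ in \cref{defgpm} shows $\mathfrak g_+(\mu l,\mu k)=\mu^p\,\mathfrak g_+(l,k)$ for every $\mu>0$, and the right-hand side above is likewise positively $p$-homogeneous in $(l,k)$; so, writing $N:=|k|+|l|$ (positive, since $l>k$ forces $(k,l)\ne(0,0)$) and rescaling by $\mu=1/N$, it is enough to bound
\[
\Psi(k,l):=\frac{\mathfrak g_+(l,k)}{(l-k)^2}
\]
above and below by positive constants on the compact arc $\mathcal K:=\{(k,l)\in\RR^2:\ |k|+|l|=1,\ l\ge k\}$; note that on $\mathcal K$ one has $l=k$ only at the two endpoints $(\tfrac12,\tfrac12)$ and $(-\tfrac12,-\tfrac12)$.

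The heart of the matter is to check that $\Psi$ is continuous and strictly positive on $\mathcal K$. Continuity of $(l,k)\mapsto\mathfrak g_+(l,k)$ near $\mathcal K$ follows from $|s|^{p-2}\in L^1_{\text{loc}}(\RR)$ for $p>1$: the difference of two values of $\mathfrak g_+$ is controlled by $|k'-k|\int_k^l|s|^{p-2}\,ds=|k'-k|\,\frac{|k|^{p-1}+|l|^{p-1}}{p-1}$ together with two boundary integrals over short intervals on which $|s|^{p-2}$ is bounded or at worst integrable, all of which vanish in the limit. Hence $\Psi$ is continuous on $\mathcal K$ away from the endpoints, and at each endpoint the interval $[k,l]$ is trapped in a fixed neighbourhood of $\pm\tfrac12$ on which $|s|^{p-2}$ is continuous, so $\mathfrak g_+(l,k)=(p-1)\bigl((\tfrac12)^{p-2}+o(1)\bigr)\int_k^l(s-k)\,ds=(p-1)\bigl((\tfrac12)^{p-2}+o(1)\bigr)\tfrac{(l-k)^2}{2}$ and $\Psi$ extends continuously with the positive value $(p-1)2^{1-p}$. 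Since the integrand of \cref{defgpm} is strictly positive on $(k,l)$ whenever $l>k$, the extended $\Psi$ is a continuous, strictly positive function on the compact set $\mathcal K$, hence it attains a finite maximum $C_p$ and a strictly positive minimum $c_p$. Undoing the rescaling gives $c_p\,(|l|+|k|)^{p-2}(l-k)^2\le\mathfrak g_+(l,k)\le C_p\,(|l|+|k|)^{p-2}(l-k)^2$ for all $l>k$, and with $\bsc:=\max\{C_p,c_p^{-1}\}$ the lemma follows, the case of $\mathfrak g_-$ being recovered from the reflection identity above.

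The only genuinely delicate point — the main (mild) obstacle — is the singular range $1<p<2$ when $0\in[k,l]$: there $|s|^{p-2}$ blows up at the origin, the naive pointwise bound $|s|^{p-2}\le(|k|+|l|)^{p-2}$ is false, and one must lean on the integrability $\int_k^l|s|^{p-2}\,ds=\frac{|k|^{p-1}+|l|^{p-1}}{p-1}<\infty$ both to see that $\mathfrak g_+$ is finite and continuous and, should one prefer an explicit estimate to the compactness argument, to split the integral at $s=0$ and bound $\int_0^l s^{p-2}(s-k)\,ds\le(l-k)\int_0^l s^{p-2}\,ds=\frac{(l-k)\,l^{p-1}}{p-1}$ and symmetrically on $[k,0]$. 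All remaining steps are routine.
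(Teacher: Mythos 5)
Your argument is correct. Note first that the paper does not actually prove \cref{lem:g}: it is imported verbatim from \cite[Lemma 2.2]{bogeleinHolderRegularitySigned2021}, where the estimate is obtained by direct elementary case analysis (splitting according to the sign of $p-2$ and the relative sizes of $|k|$ and $|l-k|$), which produces an explicit constant. Your route is genuinely different: you exploit the $p$-homogeneity $\mathfrak g_+(\mu l,\mu k)=\mu^p\mathfrak g_+(l,k)$ and the reflection identity $\mathfrak g_-(l,k)=\mathfrak g_+(-l,-k)$ to reduce everything to bounding the continuous, strictly positive function $\Psi=\mathfrak g_+(l,k)/(l-k)^2$ on the compact arc $\{|k|+|l|=1,\ l\ge k\}$, including the continuous extension $\Psi\to(p-1)2^{1-p}$ at the two degenerate endpoints. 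All the individual steps check out — the vanishing of both sides when $\pm(l-k)\le 0$, the homogeneity computation, the continuity of $(k,l)\mapsto\int_k^l|s|^{p-2}(s-k)\,ds$ via local integrability of $|s|^{p-2}$ for $p>1$, and the treatment of the singular range $1<p<2$ with $0\in[k,l]$. What the compactness argument buys is brevity and a uniform treatment of the degenerate and singular cases; what it gives up is any quantitative control of $\bsc(p)$ (the constant is produced non-constructively), which is harmless here since the lemma only asserts existence of some $\bsc=\bsc(p)$. Your closing remark already indicates how to convert the soft argument into an explicit one by splitting the integral at $s=0$, so nothing essential is missing.
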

Now, we are ready to state the definition of a weak sub(super)-solution.

\begin{definition}
	A function $u\in L^p_{\loc}(I;W^{s,p}_{\text{loc}}(\Om))\cap L^\infty_{\loc}(I;L^p_{\text{loc}}(\Om))\cap L^\infty_{\loc}(I;L^{p-1}_{sp}(\RR^n))$ is said to be a local weak sub(super)-solution to  if for any closed interval $[t_1,t_2]\subset I$ and a compact set $B\subseteq \Om$, the following holds:
	\begin{multline*}
		\int_{B} (|u|^{p-2}u)(x,t_2)\phi(x,t_2)\,dx - \int_{B} (|u|^{p-2}u)(x,t_1)\phi(x,t_1)\,dx - \int_{t_1}^{t_2}\int_{B} (|u|^{p-2}u)(x,t)\partial_t\phi(x,t)\,dx\,dt\\
		+\iiint_{\mathcal{C}_B}\,K(x,y,t)|u(x,t)-u(y,t)|^{p-2}(u(x,t)-u(y,t))(\phi(x,t)-\phi(y,t))\,dy\,dx\,dt
		\leq (\geq)0 ,
	\end{multline*} for all $\phi\in L^p_{\loc}(I,W^{s,p}_{\loc}(\Omega))\cap W^{1,p}_{\loc}(I,L^p(\Om))$ and the spatial support of $\phi$ is contained in $\sigma B$ for some $\sigma \in (0,1)$.
\end{definition}

\subsection{Auxiliary Results}
We collect the following standard results which will be used in the course of the paper. We begin with the Sobolev-type inequality~\cite[Lemma 2.3]{dingLocalBoundednessHolder2021}.

\begin{theorem}\label{fracpoin}
	Let $t_2>t_1>0$ and suppose $s\in(0,1)$ and $1\leq p<\infty$. Then for any $f\in L^p(t_1,t_2;W^{s,p}(B_r))\cap L^\infty(t_1,t_2;L^2(B_r))$, we have
	\begin{equation*}
		\begin{array}{rc@{}l}
			\int_{t_1}^{t_2}\fint_{B_r}|f(x,t)|^{p\left(1+\frac{2s}{N}\right)}\,dx\,dt
			& \apprle_{n,s,p} &  \left(r^{sp}\int_{t_1}^{t_2}\int_{B_r}\fint_{B_r}\frac{|f(x,t)-f(y,t)|^p}{|x-y|^{n+sp}}\,dx\,dy\,dt+\int_{t_1}^{t_2}\fint_{B_r}|f(x,t)|^p\,dx\,dt\right) \\
			&&\quad \times\left(\sup_{t_1<t<t_2}\fint_{B_r}|f(x,t)|^2\,dx\right)^{\frac{sp}{N}}.
		\end{array}
	\end{equation*}  
\end{theorem}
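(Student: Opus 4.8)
The plan is to prove the Sobolev-type inequality in \cref{fracpoin} by combining the parabolic interpolation idea (using the $L^\infty$-in-time control of the $L^2$-norm to boost the exponent) with the fractional Sobolev embedding on balls. I would set $N = n$ throughout (the statement uses $N$, which should be read as the dimension $n$), and I would first reduce to the case $r=1$ by the natural scaling $f_r(x,t) := f(rx, t)$, $(x,t) \in B_1 \times (t_1,t_2)$, under which the seminorm term scales as $r^{sp}$ times the rescaled seminorm and all averaged integrals $\fint$ are scale invariant; this is exactly why the factor $r^{sp}$ appears in front of the Gagliardo term on the right. So it suffices to prove, with $\kappa := 1 + \tfrac{2s}{n}$,
\begin{equation*}
	\int_{t_1}^{t_2}\fint_{B_1}|f|^{p\kappa}\,dx\,dt \apprle_{n,s,p} \left(\int_{t_1}^{t_2}\int_{B_1}\fint_{B_1}\frac{|f(x,t)-f(y,t)|^p}{|x-y|^{n+sp}}\,dx\,dy\,dt + \int_{t_1}^{t_2}\fint_{B_1}|f|^p\,dx\,dt\right)\left(\sup_{t_1<t<t_2}\fint_{B_1}|f|^2\,dx\right)^{\frac{sp}{n}}.
\end{equation*}

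For fixed $t$, the key pointwise-in-time estimate is an interpolation: writing $p\kappa = p + \tfrac{2sp}{n}$, I would show
\begin{equation*}
	\fint_{B_1}|f(x,t)|^{p\kappa}\,dx \apprle \left(\fint_{B_1}|f(x,t)|^{p^\ast}\,dx\right)^{\theta}\left(\fint_{B_1}|f(x,t)|^2\,dx\right)^{\frac{sp}{n}},
\end{equation*}
where $p^\ast = \tfrac{np}{n-sp}$ is the fractional Sobolev exponent when $sp<n$ (and any large exponent when $sp\ge n$, where $W^{s,p}(B_1)\hookrightarrow L^q$ for all $q<\infty$), and $\theta \in (0,1)$ is chosen so that the exponents and the total mass balance: $p\kappa = \theta p^\ast + \tfrac{2sp}{n}$ with the homogeneity constraint determining $\theta$ uniquely. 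The arithmetic here is the standard three-exponent Hölder interpolation $\|f\|_{L^{p\kappa}}\le \|f\|_{L^{p^\ast}}^{\theta}\|f\|_{L^2}^{1-\theta}$ with the weights matched so that $1-\theta$ contributes the advertised power $\tfrac{2sp}{n\cdot p\kappa}$ after raising to the $p\kappa$; I would not grind through it but would state the resulting identity. Then I apply the fractional Sobolev inequality on $B_1$ to bound $\left(\fint_{B_1}|f|^{p^\ast}\right)^{p/p^\ast}$ by a constant times the Gagliardo seminorm over $B_1\times B_1$ plus $\fint_{B_1}|f|^p$ — this is where the full-norm version of the embedding (seminorm plus $L^p$-norm, valid on bounded extension domains like balls) is needed, and it is the only place the geometry of $B_1$ enters.

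Finally I would assemble: raise the pointwise interpolation to accommodate the exponents, bound the $L^2$ factor by its supremum in time (pulling the constant $\left(\sup_t\fint_{B_1}|f|^2\right)^{sp/n}$ outside the time integral), and then integrate over $t\in(t_1,t_2)$. The surviving time integral is $\int_{t_1}^{t_2}\left(\fint_{B_1}|f(x,t)|^{p^\ast}\,dx\right)^{\theta p\kappa / p^\ast}\,dt$, and the exponents are arranged precisely so that $\theta p\kappa/p^\ast = p/p^\ast \cdot (\text{something}) $ — more concretely, the matching of exponents forces $\theta p\kappa = p$, so that after applying Sobolev one is left exactly with $\int_{t_1}^{t_2}\big(\text{seminorm}(t) + \fint_{B_1}|f(t)|^p\big)\,dt$, which is the first bracket on the right-hand side. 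Undoing the scaling reinstates the $r^{sp}$ factor on the seminorm term. The main obstacle — really the only subtle point — is verifying that the exponent bookkeeping closes: one must check that the unique $\theta$ forced by the mass/homogeneity constraint in the interpolation step indeed satisfies $\theta p\kappa = p$ with $\kappa = 1+\tfrac{2s}{n}$, so that after the Sobolev embedding the time integrand is linear in the seminorm and no further (unavailable) integrability in $t$ is required; this is a short computation with $p^\ast$, $p\kappa$, and the weights, and it is the heart of why the precise exponent $1+\tfrac{2s}{n}$ appears.
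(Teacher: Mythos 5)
The paper itself offers no proof of \cref{fracpoin} -- it is imported verbatim from the cited reference -- so your argument stands or falls on its own. For the subcritical case $sp<n$ your route is the standard one and it is correct: the rescaling $x\mapsto rx$ leaves every spatial average invariant and produces exactly the factor $r^{sp}$ on the Gagliardo term; the splitting $|f|^{p\kappa}=|f|^{p}\cdot|f|^{2sp/n}$ together with H\"older's inequality for the conjugate pair $\bigl(\tfrac{p^\ast}{p},\tfrac{n}{sp}\bigr)$ (conjugate precisely because $\tfrac{p}{p^\ast}+\tfrac{sp}{n}=1$) gives $\fint_{B_1}|f|^{p\kappa}\,dx\lesssim\bigl(\fint_{B_1}|f|^{p^\ast}\,dx\bigr)^{p/p^\ast}\bigl(\fint_{B_1}|f|^{2}\,dx\bigr)^{sp/n}$, and the inhomogeneous fractional Sobolev embedding on the ball converts the first factor into (seminorm $+$ $L^p$-average), which is linear in the energy and can be integrated in $t$ once the $L^2$ factor is replaced by its essential supremum. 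Your two uses of $\theta$ (exponent on the integral versus norm-interpolation parameter) denote different numbers, but both bookkeepings close, so that is only a notational wobble.

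The genuine gap is the regime $sp\ge n$, which you dispose of with ``any large exponent $q$''. That cannot work: in any H\"older splitting $|f|^{p\kappa}=|f|^{a}|f|^{b}$ with $\tfrac{a}{q}+\tfrac{b}{2}=1$ and $a+b=p\kappa$, insisting on the weight $\tfrac{b}{2}=\tfrac{sp}{n}$ forces $a=p$ and $\tfrac{a}{q}=1-\tfrac{sp}{n}\le 0$; solving for $q$ gives $q=\tfrac{np}{n-sp}$, which is not a positive finite number when $sp\ge n$. So no choice of $q$ in $W^{s,p}(B_1)\hookrightarrow L^{q}(B_1)$ reproduces the advertised power $\bigl(\sup_t\fint_{B_1}|f|^2\,dx\bigr)^{sp/n}$ -- exactly the exponent bookkeeping that you rightly identify as the heart of the matter refuses to close here. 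The estimate is still true in this regime, but it needs a different ingredient: a limiting-case Gagliardo--Nirenberg/Ladyzhenskaya-type interpolation of the form $\|f\|_{L^{p\kappa}(B_1)}\lesssim\bigl([f]_{W^{s,p}(B_1)}+\|f\|_{L^{p}(B_1)}\bigr)^{1/\kappa}\|f\|_{L^{2}(B_1)}^{1-1/\kappa}$, obtained e.g.\ via the $L^\infty$ (Morrey) embedding rather than via $L^{p^\ast}$; raising this to the power $p\kappa$ recovers precisely the target since $(1-\tfrac1\kappa)p\kappa=\tfrac{2sp}{n}$. Note also that the naive reduction to some $\tilde s$ with $\tilde sp<n$ via $[f]_{W^{\tilde s,p}(B_1)}\lesssim[f]_{W^{s,p}(B_1)}$ only yields the inequality with $\tilde s$ in place of $s$ on both sides, which is strictly weaker. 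As written, your proof covers only $sp<n$.
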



%
Let us recall the following simple algebraic lemma:
\begin{lemma}\label{alg_lem}
	Let $c_1, c_2 \in (0,\infty)$ and supposed $c_1 \leq \al,\be \leq c_2$ be two numbers with $\alpha \geq \beta$. Then for any $1\leq  q< \infty$, we have 
	\[
	q\frac{ c_1^q}{c_2} (\al-\be) \leq \al^q - \be^q \leq q \frac{c_2^{q}}{c_1} (\al-\be).
	\]
	In the case $0 < q < 1$, we instead have
	\[
	q\frac{ c_1}{c_2^{2-q}} (\al-\be) \leq \al^q - \be^q \leq q \frac{c_2}{c_1^{2-q}} (\al-\be).
	\]
\end{lemma}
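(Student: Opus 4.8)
The plan is to apply the mean value theorem to the $C^1$ function $t \mapsto t^q$ on the interval $[\be,\al]$. If $\al = \be$ both sides of every inequality vanish and there is nothing to prove, so we may assume $\al > \be$. Then there exists $\xi \in (\be,\al) \subseteq [c_1,c_2]$ with
\[
\al^q - \be^q = q\,\xi^{q-1}(\al - \be),
\]
and, since $\al - \be \ge 0$, the whole statement reduces to obtaining two-sided bounds on $\xi^{q-1}$ in terms of $c_1$ and $c_2$, followed by elementary bookkeeping with exponents.

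When $q \ge 1$, the map $t \mapsto t^{q-1}$ is non-decreasing on $(0,\infty)$, so $c_1^{q-1} \le \xi^{q-1} \le c_2^{q-1}$. Writing $c_1^{q-1} = c_1^q/c_1 \ge c_1^q/c_2$ (using $c_1 \le c_2$) and $c_2^{q-1} = c_2^q/c_2 \le c_2^q/c_1$, and then multiplying by $q(\al-\be)$, gives the first chain of inequalities.

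When $0 < q < 1$, the exponent $q-1$ is negative, so $t \mapsto t^{q-1}$ is non-increasing and hence $c_2^{q-1} \le \xi^{q-1} \le c_1^{q-1}$. Now $c_2^{q-1} = c_2 \cdot c_2^{q-2} = c_2/c_2^{2-q} \ge c_1/c_2^{2-q}$ and $c_1^{q-1} = c_1/c_1^{2-q} \le c_2/c_1^{2-q}$, again using $c_1 \le c_2$; multiplying by $q(\al-\be)$ yields the second pair of inequalities.

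I do not expect any genuine obstacle here. The only point requiring care is tracking the sign of $q-1$, which reverses the monotonicity of $t \mapsto t^{q-1}$ and thereby swaps the roles of $c_1$ and $c_2$ in the exponent manipulations; one could equally replace the mean value theorem by monotonicity of difference quotients of the convex (for $q\ge 1$) or concave (for $q<1$) function $t \mapsto t^q$, but the mean value theorem route is the most direct.
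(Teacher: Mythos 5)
Your proof is correct and the underlying mathematics is essentially the same as the paper's: both reduce the problem to bounding the derivative $q\,t^{q-1}$ of $t\mapsto t^q$ on $[c_1,c_2]$ between $q\,c_1^q/c_2$ and $q\,c_2^q/c_1$ (resp.\ $q\,c_1/c_2^{2-q}$ and $q\,c_2/c_1^{2-q}$). The paper packages this by introducing auxiliary functions such as $f_1(x)=(\be+x)^q-\be^q-q\tfrac{c_1^q}{c_2}x$, noting $f_1(0)=0$, and showing $f_1'\ge 0$ so that $f_1\ge 0$; you instead invoke the mean value theorem directly and then bound $\xi^{q-1}$. These are two standard phrasings of the same estimate, and your version is arguably a touch more direct.
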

\begin{proof}
Let $\be \in [c_1,c_2]$ be fixed and denote $\al = \be+x$ for $x \in [0,c_2-\be]$. Define the functions 
\[
\text{Case}\,\, q \geq 1:\begin{cases}
	f_1(x) = (\be+x)^q - \be^q - q\tfrac{ c_1^q}{c_2}x, \\
	f_2(x) = (\be+x)^q - \be^q - q \tfrac{c_2^{q}}{c_1}x,
\end{cases}
\text{Case}\,\, 0<q<1:\begin{cases}
	h_1(x) = (\be+x)^q - \be^q - q\tfrac{ c_1}{c_2^{2-q}}x, \\
	h_2(x) = (\be+x)^q - \be^q -q \tfrac{c_2}{c_1^{2-q}}x.
\end{cases}
\]
 Then, we see that for $x \in [0,c_2-\be]$, there holds $f_1(0) = f_2(0) = h_1(0)= h_2(0)= 0$ along with  $f_1'(x) \geq 0$, $f_2'(x) \leq 0$, $h_1'(x) \geq 0$ and  $h_2'(x) \leq 0$ which proves the lemma.
\end{proof}

Finally, we recall the following well known lemma concerning the geometric convergence of sequence of numbers (see \cite[Lemma 4.1 from Section I]{dibenedettoDegenerateParabolicEquations1993} for the details): 
\begin{lemma}\label{geo_con}
	Let $\{Y_n\}$, $n=0,1,2,\ldots$, be a sequence of positive number, satisfying the recursive inequalities 
	\[ Y_{n+1} \leq C b^n Y_{n}^{1+\alpha},\]
	where $C > 1$, $b>1$, and $\alpha > 0$ are given numbers. If 
	\[ Y_0 \leq  C^{-\frac{1}{\alpha}}b^{-\frac{1}{\alpha^2}},\]
	then $\{Y_n\}$ converges to zero as $n\to \infty$. 
\end{lemma}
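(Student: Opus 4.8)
The plan is to prove, by induction on $n$, the quantitative bound
\[
Y_n \leq Y_0\, b^{-n/\alpha}\qquad\text{for every }n=0,1,2,\ldots,
\]
which already yields the assertion, since $b>1$ and $\alpha>0$ force $b^{-n/\alpha}\to 0$ as $n\to\infty$. The role of the smallness hypothesis $Y_0\leq C^{-1/\alpha}b^{-1/\alpha^2}$ is precisely to let this geometric ansatz survive a single application of the recursion; one can in fact discover the exponent $-n/\alpha$ by asking which pure power of $b$ is reproduced (up to one extra fixed factor per step) by the map $Y_n\mapsto Cb^nY_n^{1+\alpha}$.

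First I would dispose of the base case $n=0$, which is the trivial inequality $Y_0\leq Y_0$. For the inductive step, assume $Y_n\leq Y_0 b^{-n/\alpha}$. Inserting this into the recursive inequality and using that $t\mapsto t^{1+\alpha}$ is increasing on $[0,\infty)$,
\[
Y_{n+1}\leq Cb^nY_n^{1+\alpha}\leq Cb^n\gh{Y_0 b^{-n/\alpha}}^{1+\alpha}=C\,Y_0^{1+\alpha}\,b^{\,n-(1+\alpha)n/\alpha}=\gh{C\,Y_0^{\alpha}}\,Y_0\,b^{-n/\alpha},
\]
since $n-(1+\alpha)n/\alpha=-n/\alpha$. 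It then remains only to check that $C\,Y_0^{\alpha}\leq b^{-1/\alpha}$, for in that case the right-hand side is at most $Y_0 b^{-(n+1)/\alpha}$, which closes the induction. But raising the hypothesis $Y_0\leq C^{-1/\alpha}b^{-1/\alpha^2}$ to the power $\alpha$ (legitimate, as both sides are positive and $x\mapsto x^\alpha$ is increasing) gives exactly $Y_0^{\alpha}\leq C^{-1}b^{-1/\alpha}$, i.e.\ $C\,Y_0^{\alpha}\leq b^{-1/\alpha}$.

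With the induction established, $0\leq Y_n\leq Y_0 b^{-n/\alpha}\to 0$, hence $Y_n\to 0$ as $n\to\infty$. There is no genuine obstacle in this argument: the one point that requires any thought is guessing the self-improving ansatz $Y_n\leq Y_0 b^{-n/\alpha}$ — note that merely deducing a summable or null bound would not be enough, one needs the \emph{geometric} decay rate, with exponent calibrated to the $1/\alpha^2$ appearing in the hypothesis — and once that ansatz is fixed, everything reduces to the single displayed computation above together with routine bookkeeping of exponents.
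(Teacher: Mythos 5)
Your proof is correct, and it is exactly the classical induction argument (establishing $Y_n \leq Y_0 b^{-n/\alpha}$ and closing the step via $CY_0^{\alpha}\leq b^{-1/\alpha}$) found in the reference the paper cites for this lemma; the paper itself gives no proof, deferring to \cite[Lemma 4.1 from Section I]{dibenedettoDegenerateParabolicEquations1993}, whose argument yours reproduces.
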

 Let us recall the following algebraic lemma from \cite[Lemma 4.1]{cozziRegularityResultsHarnack2017}: 
\begin{lemma}\label{pineq1}
	Let $p\geq 1$ and $a,b\geq 0$, then for any $\tht\in [0,1]$,  the following holds:
	\begin{equation*}
		(a+b)^p-a^p\geq \tht p a^{p-1}b + (1-\tht)b^p.
	\end{equation*}
\end{lemma}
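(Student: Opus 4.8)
The plan is to reduce the inequality to its two endpoint cases $\theta=0$ and $\theta=1$ and then interpolate. The key observation is that the left-hand side $(a+b)^p-a^p$ does not depend on $\theta$, while the right-hand side $\theta\mapsto \theta\,p\,a^{p-1}b+(1-\theta)b^p$ is affine in $\theta$. Hence its maximum over the interval $[0,1]$ is attained at one of the two endpoints $\theta\in\{0,1\}$, and it suffices to prove the inequality for these two values. (Equivalently, one multiplies the $\theta=1$ inequality by $\theta$, the $\theta=0$ inequality by $1-\theta$, and adds.)

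For $\theta=1$ the assertion is $(a+b)^p-a^p\geq p\,a^{p-1}b$, which is precisely the tangent-line (subgradient) inequality for the convex function $t\mapsto t^p$ on $[0,\infty)$ applied at the point $a$ with increment $b$; that is, convexity gives $(a+b)^p\geq a^p+p\,a^{p-1}b$. The degenerate situations are handled directly: if $a=0$ and $p>1$ the right-hand side is $0\leq b^p$, and if $p=1$ one has equality.

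For $\theta=0$ the assertion is the superadditivity $(a+b)^p\geq a^p+b^p$. If $a+b=0$ this is trivial; otherwise divide by $(a+b)^p$ and use that $x\mapsto x^p$ satisfies $x^p\leq x$ for $x\in[0,1]$ when $p\geq 1$, applied to $x=\tfrac{a}{a+b}$ and $x=\tfrac{b}{a+b}$, whose sum equals $1$. This yields $\bigl(\tfrac{a}{a+b}\bigr)^p+\bigl(\tfrac{b}{a+b}\bigr)^p\leq 1$, which is the claim after multiplying back by $(a+b)^p$.

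There is no genuine obstacle here: the statement is elementary and follows entirely from convexity and superadditivity of $t\mapsto t^p$ for $p\geq 1$. The only points that warrant a moment's care are the boundary cases $a=0$ and $p=1$ in the first endpoint inequality, both disposed of as indicated above.
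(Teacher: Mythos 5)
Your proof is correct. Note that the paper itself does not supply a proof of this lemma; it simply cites it from Cozzi \cite[Lemma~4.1]{cozziRegularityResultsHarnack2017}, so there is no in-text argument to compare against. Your approach — observing that the right-hand side is affine in $\tht$ so it suffices to verify the two endpoints $\tht=0$ and $\tht=1$ (equivalently, take a convex combination of the two endpoint inequalities), then invoking the tangent-line inequality for the convex map $t\mapsto t^p$ at $\tht=1$ and superadditivity of $t\mapsto t^p$ on $[0,\infty)$ at $\tht=0$ — is the standard elementary argument and is watertight, including your treatment of the boundary cases $a=0$ and $p=1$. An equally short alternative, for the record, is to fix $a$ and $\tht$, set $g(b)=(a+b)^p-a^p-\tht p a^{p-1}b-(1-\tht)b^p$, note $g(0)=0$, and observe $g'(b)=p\bigl[(a+b)^{p-1}-\tht a^{p-1}-(1-\tht)b^{p-1}\bigr]\ge 0$ because $(a+b)^{p-1}\ge\max\{a^{p-1},b^{p-1}\}$ when $p\ge1$; this avoids splitting into two cases but is not materially different.
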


Let us recall the following algebraic lemma from \cite[Lemma 4.3]{cozziRegularityResultsHarnack2017}:
\begin{lemma}\label{pineq3}
	Let $p\geq 1$ and $a\geq b\geq 0$, then for any $\ve>0$, the following holds:
	\begin{equation*}
		a^p-b^p\leq \ve a^p+\left(\frac{p-1}{\ve}\right)^{p-1}(a-b)^p.
	\end{equation*}
\end{lemma}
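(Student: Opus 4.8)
The plan is to combine the tangent-line estimate for the convex function $t\mapsto t^p$ with Young's inequality, choosing the Young parameter so that the resulting constants match those in the statement exactly.

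First I would dispose of the boundary case $p=1$: there $(p-1)/\ve=0$, and with the convention $0^0=1$ the claimed inequality reads $a-b\le \ve a+(a-b)$, which holds trivially since $\ve a\ge 0$. So for the rest of the argument I would assume $p>1$. Next, since $t\mapsto t^p$ is convex on $[0,\infty)$ for $p\ge 1$, it lies above each of its tangent lines; evaluating the tangent line at $a$ at the point $b$ yields $b^p\ge a^p+pa^{p-1}(b-a)$, i.e.
\[
a^p-b^p\le p\,a^{p-1}(a-b),
\]
with both sides nonnegative because $a\ge b\ge 0$. (Equivalently one may write $a^p-b^p=\int_b^a pt^{p-1}\,dt\le pa^{p-1}(a-b)$, using $t^{p-1}\le a^{p-1}$ on $[b,a]$.)

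Finally I would apply Young's inequality with conjugate exponents $p/(p-1)$ and $p$ together with a free parameter $\delta>0$ to the product $a^{p-1}\cdot(a-b)$, obtaining
\[
p\,a^{p-1}(a-b)\le (p-1)\,\delta^{\frac{p}{p-1}}\,a^p+\delta^{-p}(a-b)^p.
\]
Choosing $\delta=\bigl(\tfrac{\ve}{p-1}\bigr)^{\frac{p-1}{p}}$ makes the first coefficient equal to $\ve$ and the second equal to $\bigl(\tfrac{p-1}{\ve}\bigr)^{p-1}$, and combining this with the previous display gives the claim. I expect no genuine obstacle: the only points requiring any care are picking $\delta$ so that the constant comes out to be exactly $\bigl(\tfrac{p-1}{\ve}\bigr)^{p-1}$, and interpreting the degenerate exponent in the case $p=1$.
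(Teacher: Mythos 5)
Your argument is correct: the tangent-line (or integral) bound $a^p-b^p\le p\,a^{p-1}(a-b)$ followed by Young's inequality with the parameter $\delta=\bigl(\tfrac{\ve}{p-1}\bigr)^{\frac{p-1}{p}}$ reproduces the stated constants exactly, and your handling of the degenerate case $p=1$ is fine. Note that the paper gives no proof of this lemma — it simply recalls it from \cite[Lemma 4.3]{cozziRegularityResultsHarnack2017} — and your argument is essentially the standard one used there, so there is nothing to reconcile.
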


%
%
%

\section{Preliminary Estimates}
In this section, we recall some important estimates. The first one is a standard energy estimate,   the proof of which follows by combining \cite[Proposition 3.1]{bogeleinHolderRegularitySigned2021} along with \cite[Theorem 3.1]{adimurthiLocalHolderRegularity2022}.
\begin{proposition}\label{Prop:energy}
	Let $u$ be a  local weak sub(super)-solution to in $\Om_T$ and $z_o = (x_o,t_o)$ be a fixed point.
	There exists a constant $\bsc  >0$ depending only on the data such that
 	for all cylinders $\mcq_{R,S}=B_R(x_o)\times (t_o-S,t_o)\Subset E_T$,
 	every $k\in\RR$, and every non-negative, piecewise smooth cut-off function
 	$\zeta$ vanishing on $\partial B_{R}(x_o)\times (t_o-S,t_o)$,  there holds
\begin{multline*}
	\esssup_{t_o-S<t<t_o}\int_{B_R(x_o)\times\{t\}}	
	\zeta^p\mathfrak g_\pm (u,k)\,dx +
	\iint_{\mcq_{R,S}(z_o)} (u-k)_{\pm}(x,t)\zeta^p(x,t)\int_{B_R(x_o)}\frac{(u-k)_{\mp}^{p-1}(y,t)}{|x-y|^{n+sp}}\,dy\,dx\,dt 
	\\
	 +\int_{t_o-S}^{t_o}\iint_{B_R(x_o)\times B_R(x_o)}|(u-k)_{\pm}(x,t)\zeta(x,t)-(u-k)_{\pm}(y,t)\zeta(y,t)|^p\,d\mu\,dt\\ 
	 \begin{array}{rcl}
	&\leq&
	\boldsymbol{C} \int_{t_o-S}^{t_o}\iint_{B_R(x_o)\times B_R(x_o)} \frac{\max\{(u-k)_{\pm}(x,t),(u-k)_{\pm}(y,t)\}^{p}|\zeta(x,t)-\zeta(y,t)|^p}{|x-y|^{n+sp}}\,dx\,dy\,dt
	\\
	&&+
	\iint_{\mcq_{R,S}(z_o)}\mathfrak g_\pm (u,k)|\partial_t\zeta^p| \,dx\,dt
	+\int_{B_R(x_o)\times \{t_o-S\}} \zeta^p \mathfrak g_\pm (u,k)\,dx 
	\\
	&&+ \,\boldsymbol{C}\lbr  \underset{\stackrel{t \in (t_o-S,t_o)}{x\in \spt \zeta}}{\esssup}\,\int_{\RR^n \setminus B_R(x_o)}\frac{(u-k)_{\pm}^{p-1}(y,t)}{|x-y|^{n+sp}}\,dy\rbr\iint_{(t_o-S,t_o)\times B_R(x_o)} (u-k)_{\pm}(x,t)\zeta^p(x,t)\,dx\,dt,
	\end{array}
\end{multline*}
where we recall the definition of $\mathfrak{g}_{\pm}$ from  \cref{defgpm}.
\end{proposition}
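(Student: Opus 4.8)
The plan is to carry out the usual Caccioppoli argument for the nonlocal Trudinger equation, handling the parabolic term as in \cite[Proposition 3.1]{bogeleinHolderRegularitySigned2021} and the fractional term as in \cite[Theorem 3.1]{adimurthiLocalHolderRegularity2022}. Concretely, I would test the weak formulation with the truncation $\phi=(u-k)_+\zeta^p$ in the subsolution case (and $\phi=-(u-k)_-\zeta^p$ in the supersolution case), which is the content of the $\pm/\mp$ notation in the statement. Since $u$ need not be weakly differentiable in time, the first step is a standard time-regularisation (mollifying either the equation or the test function, as in \cite{bogeleinHolderRegularitySigned2021}), which is legitimate because the fractional term is stable under this mollification; one tests the regularised identity with $(u-k)_\pm\zeta^p$ on a slab $(t_o-S,t_2)\subset(t_o-S,t_o)$ and then lets the mollification parameter tend to $0$, using lower semicontinuity on the left-hand (``good'') terms and dominated convergence on the right.

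For the parabolic contribution I would use the elementary identity $\partial_l\mathfrak g_\pm(l,k)=\pm(p-1)|l|^{p-2}(l-k)_\pm$, immediate from \cref{defgpm}, which gives (in the appropriate monotone sense after regularisation) $\partial_t(|u|^{p-2}u)\,(u-k)_\pm=\pm\,\partial_t\mathfrak g_\pm(u,k)$. Multiplying by $\zeta^p$, integrating over $B_R(x_o)\times(t_o-S,t_2)$ and integrating by parts in time transfers the derivative onto $\zeta^p$ and produces the boundary-in-time terms; after a by-now-standard manipulation of the time slices (letting $t_2\uparrow t_o$ and taking the essential supremum) one recovers the $\esssup_t\int_{B_R}\zeta^p\mathfrak g_\pm(u,k)$ term on the left, and the initial-time term $\int_{B_R\times\{t_o-S\}}\zeta^p\mathfrak g_\pm(u,k)$ together with $\iint\mathfrak g_\pm(u,k)|\partial_t\zeta^p|$ on the right.

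For the fractional contribution I would split $\mathcal{C}_{B_R}=(B_R\times B_R)\cup(B_R\times B_R^c)\cup(B_R^c\times B_R)$ and use the symmetry of $K$ to reduce to the diagonal block plus twice the off-diagonal block. On $B_R\times B_R$ one invokes the pointwise algebraic inequality underlying the fractional $p$-Laplacian energy estimate (a sign analysis of $u(x)-k$ and $u(y)-k$, with \cref{pineq1} in the mixed regime $u(x)>k\ge u(y)$, where $u(x)-u(y)=(u-k)_+(x)+(u-k)_-(y)$): it bounds
\[
|u(x)-u(y)|^{p-2}(u(x)-u(y))\bigl((u-k)_\pm\zeta^p(x)-(u-k)_\pm\zeta^p(y)\bigr)
\]
from below by $\tfrac1{\bsc}\bigl|(u-k)_\pm(x)\zeta(x)-(u-k)_\pm(y)\zeta(y)\bigr|^p+\tfrac1{\bsc}(u-k)_\pm(x)\zeta^p(x)(u-k)_\mp(y)^{p-1}-\bsc\max\{(u-k)_\pm(x),(u-k)_\pm(y)\}^p|\zeta(x)-\zeta(y)|^p$, and integrating against $K\asymp(1-s)|x-y|^{-n-sp}$ over $B_R\times B_R$ yields the Gagliardo-energy term and the mixed term on the left and the $|\zeta(x)-\zeta(y)|^p$ term on the right. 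On $B_R\times B_R^c$ the cut-off forces $\zeta(y)=0$, so the test increment reduces to $(u-k)_\pm(x)\zeta^p(x)$; moving the part of the integrand of favourable sign to the left and using, on the remaining set, that there $|u(x)-u(y)|\le(u-k)_\pm(y)$, one gets $|u(x)-u(y)|^{p-2}(u(x)-u(y))\cdot(\pm(u-k)_\pm(x))\ge-(u-k)_\pm(y)^{p-1}(u-k)_\pm(x)$; factoring out $\esssup_{x\in\spt\zeta,\,t}\int_{\RR^n\setminus B_R(x_o)}(u-k)_\pm^{p-1}(y,t)|x-y|^{-n-sp}\,dy$ leaves exactly $\iint(u-k)_\pm\zeta^p\,dx\,dt$. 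Collecting everything and absorbing $1-s\le1$ and $\Lambda$ into a constant depending only on the data gives the estimate.

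The step I expect to be the main obstacle is the rigorous handling of the parabolic term under regularisation: one must show, uniformly in the mollification parameter, that the regularised time term controls $\partial_t\mathfrak g_\pm(u,k)$ from below in a way simultaneously compatible with the truncation $(\cdot-k)_\pm$ and with the (mollification-stable) fractional term, so that lower semicontinuity recovers the genuine $\esssup$ term in the limit; this is precisely the technical heart of \cite[Proposition 3.1]{bogeleinHolderRegularitySigned2021}. The fractional splitting above is the analogue of \cite[Theorem 3.1]{adimurthiLocalHolderRegularity2022}, and the proposition follows by running the two arguments in tandem.
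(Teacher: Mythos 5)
Your proposal matches the paper's intended proof: the paper obtains Proposition~\ref{Prop:energy} by combining the time-mollification argument of \cite[Proposition 3.1]{bogeleinHolderRegularitySigned2021} for the $\partial_t(|u|^{p-2}u)$ term with the diagonal/off-diagonal splitting and pointwise algebraic inequalities of \cite[Theorem 3.1]{adimurthiLocalHolderRegularity2022} for the nonlocal term, which is precisely the two-track strategy you sketch. Your handling of the diagonal block (the sign analysis yielding both the Gagliardo energy and the mixed ``good'' term), the off-diagonal block (where $\zeta(y)=0$ isolates the tail), and the parabolic term via $\mathfrak g_\pm$ is correct and fills in the details the paper leaves to the references.
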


\subsection{Shrinking Lemma}

One of the main difficulties we face when dealing with regularity issues for nonlocal equations is the lack of a corresponding isoperimetric inequality for $W^{s,p}$ functions. Indeed, since such functions can have jumps, a generic isoperimetric inequality seems out of reach at this time (see \cite{cozziFractionalGiorgiClasses2019, adimurthiOlderRegularityFractional2022} ) One way around this issue is to note that since we are working with solutions of an equation, which we expect to be continuous and hence have no jumps, we could try and cook up an isoperimetric inequality for solutions. Such a strategy turns out to be feasible due to the presence of the ``good term'' or the isoperimetric term in the Caccioppoli inequality. The following lemma can be found in \cite[Lemma 3.3]{adimurthiLocalHolderRegularity2022}.

\begin{lemma}\label{lem:isop}
	Let $k<l<m$ be arbitrary levels and $A \geq 1$. Then,
	\[
	(l-k)(m-l)^{p-1}\abs{[u>m]\cap B_{\varrho}}\abs{[u<k]\cap B_{\varrho}} \leq 
	C\varrho^{n+sp}\int_{B_{\varrho}} (u-l)_{-}(x)\int_{B_{A\varrho}}\frac{(u-l)_{+}^{p-1}(y)}{|x-y|^{n+sp}}\,dy\,dx,
	\]
	where $C = C(n,s,p,A)>0$. 
\end{lemma}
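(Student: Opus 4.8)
\emph{Strategy.} The assertion is purely measure-theoretic: the equation plays no role, and one only needs to bound the double integral on the right-hand side from below by restricting the domains of integration to the relevant super- and sub-level sets and using the crude size bound for $|x-y|$ on $B_\varrho$.

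\emph{Main steps.} First I would restrict the outer integral in $x$ to the set $[u<k]\cap B_{\varrho}$. For such $x$ one has $u(x)<k<l$, hence $(u-l)_-(x)=l-u(x)\ge l-k$. Next, since $A\ge 1$ we have $B_{\varrho}\subseteq B_{A\varrho}$, so the inner integral in $y$ may be restricted to $[u>m]\cap B_{\varrho}\subseteq [u>m]\cap B_{A\varrho}$; for such $y$ one has $u(y)>m>l$, hence $(u-l)_+^{p-1}(y)\ge (m-l)^{p-1}$. Finally, for $x,y\in B_{\varrho}$ one has $|x-y|<2\varrho$, so $|x-y|^{-(n+sp)}\ge (2\varrho)^{-(n+sp)}$. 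Combining these three lower bounds,
\[
\int_{B_{\varrho}}(u-l)_-(x)\int_{B_{A\varrho}}\frac{(u-l)_+^{p-1}(y)}{|x-y|^{n+sp}}\,dy\,dx
\;\ge\; \frac{(l-k)(m-l)^{p-1}}{(2\varrho)^{n+sp}}\,\abs{[u<k]\cap B_{\varrho}}\,\abs{[u>m]\cap B_{\varrho}},
\]
and rearranging yields the claim with $C=2^{n+sp}$ (in particular $C$ can be taken independent of $A$ in this formulation).

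\emph{Expected obstacle.} There is essentially no analytic difficulty here, only bookkeeping: one must use the inclusion $[u>m]\cap B_{\varrho}\subseteq [u>m]\cap B_{A\varrho}$ in the right place (this is the only point where $A\ge1$ enters) and keep track that the strict orderings $k<l<m$ are what make the restricted integrand bounded below by the stated positive quantities. If instead one needs the larger ball $B_{A\varrho}$ to genuinely contribute mass (as in a covering argument), one restricts $y$ to $[u>m]\cap B_{A\varrho}$ and replaces $2\varrho$ by $(1+A)\varrho$, which is the source of the $A$-dependence permitted in the statement.
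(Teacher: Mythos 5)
Your proof is correct, and it is the natural (and essentially unique) argument for this purely measure-theoretic statement: restrict $x$ to $[u<k]\cap B_\varrho$ and $y$ to $[u>m]\cap B_\varrho\subseteq B_{A\varrho}$, use the level-set orderings to bound the integrand below by $(l-k)(m-l)^{p-1}$, and bound $|x-y|^{-(n+sp)}$ from below by $(2\varrho)^{-(n+sp)}$. This matches the approach in the cited reference, and your observation that one can in fact take $C=2^{n+sp}$ independent of $A$ when both $x$ and $y$ are kept in $B_\varrho$ is a correct refinement; the $A$-dependence only enters if one insists on letting $y$ range over all of $B_{A\varrho}$, which is not needed for the stated inequality.
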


We can apply \cref{lem:isop} to get the following shrinking lemma whose proof can be found in \cite[Lemma 3.4]{adimurthiLocalHolderRegularity2022}.
\begin{lemma}\label{lem:shrinking}
	Let $u$ be a given function and suppose that for some level $m$, some constant $\nu \in (0,1)$ and all time levels $\tau$ in some interval $I$, we have
	\[
	|[u(\cdot,\tau)>m]\cap B_{\varrho}| \geq \nu|B_{\varrho}|,
	\]
	and we can arrange that for some $A \geq 1$,  the following is also satisfied:
	\begin{equation*}
		\iint_{I\times B_{\varrho}} (u-l)_{-}(x,t)\int_{B_{A\varrho}}\frac{(u-l)_{+}^{p-1}(y,t)}{|x-y|^{n+sp}}\,dy\,dx\,dt \leq {\mC}_1\frac{l^p}{\varrho^{sp}}|Q|,
	\end{equation*}
	where 
	\[
	l = \frac{m}{2^{j}}, \qquad j\geq 1 \qquad \text{ and } \qquad Q := B_{\varrho} \times I,
	\] 
	then the following conclusion holds:
	\[
	\left|\left[u<\frac{m}{2^{j+1}}\right]\cap Q\right| \leq \left(\frac{C}{2^{j}-1}\right)^{p-1}|Q|,
	\]
	where $C = ({\mC}_1,A,n,\nu) >0$.
\end{lemma}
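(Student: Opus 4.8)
The plan is to apply the isoperimetric-type inequality of \cref{lem:isop} on each time slice, with the three levels chosen as
\[
k=\frac{m}{2^{j+1}}<l=\frac{m}{2^{j}}<m,
\]
the uppermost being exactly the level in the measure density hypothesis, and then to integrate in time, using that the right-hand side of \cref{lem:isop} is precisely the quantity that the second assumption controls.

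First I would fix an arbitrary time level $\tau\in I$ and apply \cref{lem:isop} on $B_\varrho$ with the three levels above and the radius factor $A$ from the hypothesis. Since $l-k=\frac{m}{2^{j+1}}$ and $m-l=\frac{m(2^{j}-1)}{2^{j}}$, this yields
\begin{multline*}
\frac{m}{2^{j+1}}\left(\frac{m(2^{j}-1)}{2^{j}}\right)^{p-1}\bigl|[u(\cdot,\tau)>m]\cap B_\varrho\bigr|\,\bigl|[u(\cdot,\tau)<k]\cap B_\varrho\bigr|\\
\le C\,\varrho^{n+sp}\int_{B_\varrho}(u-l)_-(x,\tau)\int_{B_{A\varrho}}\frac{(u-l)_+^{p-1}(y,\tau)}{|x-y|^{n+sp}}\,dy\,dx.
\end{multline*}
Invoking the hypothesis $|[u(\cdot,\tau)>m]\cap B_\varrho|\ge\nu|B_\varrho|$ to transfer that factor to the right-hand side and then dividing, I obtain a bound for $|[u(\cdot,\tau)<k]\cap B_\varrho|$ in which the level $m$ enters only through the factor $m^{-p}=m^{-1}\cdot m^{-(p-1)}$.

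Next I would integrate this inequality over $\tau\in I$ and insert the second hypothesis, which bounds the arising space--time double integral by ${\mC}_1\,l^{p}\varrho^{-sp}|Q|$ with $l=\frac{m}{2^{j}}$, so that $l^{p}=m^{p}2^{-jp}$. At this stage the factor $m^{p}$ cancels exactly; the powers of $\varrho$ combine as $\varrho^{n+sp}\cdot\varrho^{-sp}=\varrho^{n}$, which together with $|B_\varrho|^{-1}$ is a purely dimensional constant; and the powers of two collapse since $2^{j+1}\cdot 2^{j(p-1)}\cdot 2^{-jp}=2$. One is left with
\[
\left|\left[u<\frac{m}{2^{j+1}}\right]\cap Q\right|\le\frac{C}{(2^{j}-1)^{p-1}}\,|Q|,
\]
with $C$ depending only on the data, on $A$, on $\nu$ and on ${\mC}_1$; since $p>1$, taking the $(p-1)$-th root of the constant rewrites this in the stated form.

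I expect no substantial analytic obstacle here: all the real work sits in \cref{lem:isop}, whose ``good term'' $\int_{B_\varrho}(u-l)_-\int_{B_{A\varrho}}\frac{(u-l)_+^{p-1}}{|x-y|^{n+sp}}$ has been arranged to be exactly the object that the second hypothesis controls. The only points requiring care are the slicewise application of the isoperimetric inequality followed by a Fubini-type integration in $\tau$ — legitimate because the density assumption is posited for every $\tau\in I$ — and the bookkeeping of constants that makes all powers of $m$ and $\varrho$ cancel, which is precisely what renders the conclusion level- and scale-invariant, as is needed for the De Giorgi iteration in which this lemma is subsequently used.
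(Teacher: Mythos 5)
Your proof is correct and is essentially the same argument the paper outsources to \cite[Lemma 3.4]{adimurthiLocalHolderRegularity2022}: apply \cref{lem:isop} slicewise with levels $k=m/2^{j+1}<l=m/2^{j}<m$, use the measure-density assumption to remove the $|[u>m]\cap B_\varrho|$ factor, integrate in $\tau$, and insert the hypothesis on the isoperimetric term. The bookkeeping you carry out (cancellation of $m^p$, of $\varrho^{sp}$ against $\varrho^{n+sp}$ modulo $|B_\varrho|$, and the collapse $2^{j+1}\cdot 2^{j(p-1)}\cdot 2^{-jp}=2$) is accurate, and the resulting constant has exactly the stated dependence on $\mathbf{C}_1$, $A$, $n$, $\nu$.
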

\subsection{Tail Estimates}\label{sec:tail}
In this section, we want to outline how we estimate the tail term and we shall refer to this section whenever we make a similar calculation.

For any level $k = \bsmu^{-}+ M$ with $M > 0$, we want to estimate
\[
\underset{\stackrel{t \in I;}{x\in \spt \zeta}}{\esssup}\int_{\RR^n \setminus B_{\varrho}(y_o)}\frac{(u-k)_{-}^{p-1}(y,t)}{|x-y|^{n+sp}}\,dy,
\]
where $I$ is some time interval and $\zeta$ is a cut-off function supported in $B_{\vartheta\varrho}$ for some $\vartheta \in (0,1)$. In such case, we have
\[
|y-y_o| \leq |x-y|\left(1+\frac{|x-y_o|}{|x-y|}\right)\leq  |x-y|\left(1+\frac{\vartheta}{(1-\vartheta)}\right),
\]
using which, we get
\[
\underset{\stackrel{t \in I;}{ x\in \spt \zeta}}{\esssup}\int_{\RR^n \setminus B_{\varrho}(y_o)}\frac{(u-k)_{-}^{p-1}(y,t)}{|x-y|^{n+sp}}\,dy 
\leq \frac{1}{(1-\vartheta)^{n+sp}}\underset{t \in I}{\esssup}\int_{\RR^n \setminus B_{\varrho}(y_o)}\frac{(u-k)_{-}^{p-1}(y,t)}{|y-y_o|^{n+sp}}\,dy. 
\]
In the local case, we could always estimate $(u-k)_{-} \leq k$ because we take $u \geq 0$ locally. However, in the $\tail$ term,  we are on the complement of a cube and so unless we make a global boundedness assumption (for e.g. $u \geq 0$ in full space), the best we can do is
\[
(u-k)_{-} \leq u_{-} + k,
\]
which leads us to the next estimate
\[
\underset{t \in I}{\esssup}\int_{\RR^n \setminus B_{\varrho}(y_o)}\frac{(u-k)_{-}^{p-1}(y,t)}{|y-y_0|^{n+sp}}\,dy  \leq C(p)\frac{M^{p-1}}{\varrho^{sp}} +  C(p)\,\underset{t \in I}{\esssup}\int_{\RR^n \setminus B_{\varrho}(y_o)}\frac{(u -\bsmu^-)_-^{p-1}(y,t)}{|y-y_o|^{n+sp}}\,dy.
\]
Putting together the above estimates yield
\[
\underset{\stackrel{t \in I;}{ x\in \spt \zeta}}{\esssup}\int_{\RR^n \setminus B_{\varrho}(y_o)}\frac{(u-k)_{-}^{p-1}(y,t)}{|x-y|^{n+sp}}\,dy  \leq \frac{C(p)}{\varrho^{sp}}\left[M^{p-1}+\tailp((u -\bsmu^-)_-;y_o,\varrho,I)\right].
\]
Finally we usually want an estimate of the form
\[
\underset{\stackrel{t \in I;}{ x\in \spt \zeta}}{\esssup}\int_{\RR^n \setminus B_{\varrho}(y_o)}\frac{(u-k)_{-}^{p-1}(y,t)}{|x-y|^{n+sp}}\,dy \leq C\frac{M^{p-1}}{\varrho^{sp}},
\]
and so we impose the condition that the following is satisfied:
\[
\tailp((u -\bsmu^-)_-;y_0,\varrho,I) \leq M^{p-1}.
\]
This is the origin of the various Tail alternatives. 

\begin{remark}
	We have an analogous estimate regarding $k = \boldsymbol{\mu}^+-M$ which corresponds to subsolutions.
\end{remark}

\section{Expansion of Positivity}
For a compact ball $B_R\subset\RR^n$ of radius $R$ and
 a cylinder $\mathbf{Q} := B_R\times(T_1,T_2]\subset \Om_T$, 
we introduce numbers $\bsmu^{\pm}$ and $\bsom$ satisfying
\begin{equation*}
	\bsmu^+\geq  \esssup_{\mathbf{Q}}u, \qquad \bsmu^-\leq \essinf_{\mathbf{Q}} u \qquad \text{ and } 
	\qquad \bsom \geq \bsmu^+ - \bsmu^-.	
\end{equation*}
We also assume $(x_o,t_o)\in \mcq$, such that the forward
cylinder 
\begin{equation}\label{Eq:1:5}
	B_{8\varrho}(x_o)\times\left(t_o-(8\varrho)^{sp},t_o+(8\varrho)^{sp}\right]\subset\mathbf{Q}.
\end{equation}
Next we state our main proposition of this section.

\begin{proposition}\label{Prop:1:1}
	Let $u$ be a locally bounded, local, weak sub(super)-solution to \cref{maineq} in $\omt$. With notation as in \cref{Eq:1:5}, suppose there exists constants  $M>0$ and $\alpha \in(0,1)$ such that the following is satisfied:
	\begin{equation*}
		\left|\left\{\pm\lbr\bsmu^{\pm}-u(\cdot, t_o)\rbr\geq M\right\}\cap B_\varrho(x_o)\right|
		\geq
		\alpha \big|B_\varrho\big|.
	\end{equation*}
Then there exist constants $\xi\in(0,1)$, $\delta\in(0,1)$ and $\eta\in(0,1)$ depending only on the data and $\alpha$,
such that, if the following is additionally satisfied 
\begin{equation}\label{Prop:1:eq}
 \tail((u-\mu^{\pm})_{\pm};x_0,\varrho,(t_o,t_o+\delta\varrho^{sp}]) \leq \eta M \quad \text{and} \quad \abs{\bsmu^{\pm}}\leq \xi M,
\end{equation}
then we have the following conclusion
\begin{equation*}
	\pm\lbr\bsmu^{\pm}-u\rbr\geq\eta M
	\quad
	\mbox{a.e.~in $B_{2\varrho}(x_o)\times\left(t_o+\tfrac{\de}{2}\varrho^{sp},t_o+\delta\varrho^{sp}\right],$}
\end{equation*}
where 
\[	\xi	:= \left\{
	{\def\arraystretch{1} \begin{array}{ll}
	2\eta, & \mbox{if $p>2$,} \\
	8, & \mbox{if $1<p\le 2$.}
	\end{array}}
	\right.
\]
\end{proposition}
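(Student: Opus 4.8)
The plan is to follow the now-classical "measure shrinking $+$ De Giorgi" strategy adapted to the nonlocal doubly nonlinear setting, working (by the usual reduction) only with the supersolution case and the level from below, i.e. with $(u-\bsmu^-)_-$; the subsolution case follows by the symmetry noted in the \cref{sec:tail} remark. First I would propagate the pointwise measure information forward in time: starting from the assumption $|\{u(\cdot,t_o)\le \bsmu^-+M\}\cap B_\varrho|\ge\alpha|B_\varrho|$, one uses the energy estimate \cref{Prop:energy} with the truncation level $k=\bsmu^-+M$, a cutoff in space supported in $B_{2\varrho}$ and equal to $1$ on $B_\varrho$, and no cutoff in time, together with a logarithmic-type test function argument (as in \cite{bogeleinHolderRegularitySigned2021}) to show that for a suitable $\delta=\delta(\text{data},\alpha)$ one still has $|\{u(\cdot,t)\le \bsmu^-+\sigma M\}\cap B_{2\varrho}|\ge\tfrac{\alpha}{2}|B_{2\varrho}|$ for all $t\in(t_o,t_o+\delta\varrho^{sp}]$, with $\sigma=\sigma(\text{data},\alpha)\in(0,1)$. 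Here the smallness conditions \cref{Prop:1:eq} on the tail and on $|\bsmu^{\pm}|$ are exactly what is needed to absorb the nonlocal contribution and the $\mathfrak g_\pm$ time term (the latter via \cref{lem:g} and \cref{alg_lem}, which relate $\mathfrak g_-(u,k)$ to $(u-k)_-^2$ up to the factor $(|u|+|k|)^{p-2}$ — this is where $|\bsmu^{\pm}|\le\xi M$ enters, controlling the weight).

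Second, I would upgrade this "positive measure in $B_{2\varrho}$ for all later times" into "full measure": apply the shrinking \cref{lem:shrinking} with $\nu=\alpha/2$, the interval $I=(t_o,t_o+\delta\varrho^{sp}]$, radius $\varrho$ replaced by $2\varrho$, and the isoperimetric integrand bounded using \cref{Prop:energy} again — the crucial "good term" $\iint (u-k)_-\zeta^p\int_{B}\frac{(u-k)_+^{p-1}}{|x-y|^{n+sp}}$ on the left-hand side of the energy inequality is precisely the quantity \cref{lem:isop}/\cref{lem:shrinking} need, and the right-hand side is $\lesssim (\sigma M)^p\varrho^{-sp}|Q|$ after using \cref{Prop:1:eq} to kill the tail. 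This yields, for any prescribed $\nu_*\in(0,1)$ (to be fixed by the forthcoming De Giorgi iteration), a time-uniform smallness $|\{u(\cdot,t)\le \bsmu^-+\tfrac{\sigma M}{2^{j}}\}\cap B_{2\varrho}|\le\nu_*|B_{2\varrho}|$ for all $t\in I$, provided $j$ is chosen large depending only on the data and $\alpha$ (this choice of $j$ and $\sigma$ is what ultimately fixes $\eta\sim \sigma 2^{-j}$).

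Third, I would run a De Giorgi iteration on shrinking cylinders with vertices at the top, $B_{2\varrho}\times(t_o+\delta\varrho^{sp}-\theta_i, t_o+\delta\varrho^{sp}]$, with levels $k_i=\bsmu^-+\eta M(1+2^{-i})$ decreasing to $\bsmu^-+\eta M$, using \cref{Prop:energy} to get a Caccioppoli inequality, then the parabolic Sobolev inequality \cref{fracpoin} to produce the reverse-type bound on the truncated energy, and finally \cref{geo_con} to conclude that the rescaled measures $Y_i\to0$, i.e. $u\ge\bsmu^-+\eta M$ a.e. on $B_{2\varrho}\times(t_o+\tfrac{\delta}{2}\varrho^{sp},t_o+\delta\varrho^{sp}]$. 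The time level $t_o+\tfrac{\delta}{2}\varrho^{sp}$ appears because the starting time-slice for the iteration is taken at the midpoint, leaving room $\tfrac{\delta}{2}\varrho^{sp}$ for the cylinders to shrink into; the initial smallness $Y_0\le C^{-1/\beta}b^{-1/\beta^2}$ required by \cref{geo_con} is guaranteed by the choice of $\nu_*$ (hence of $j$) in the previous step.

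The main obstacle is the nonlocal tail. Unlike the local case, where $(u-k)_-\le k$ holds because $u\ge0$ locally, here on $\RR^n\setminus B_{2\varrho}$ one only has $(u-k)_-\le u_-+k$, so every application of \cref{Prop:energy} generates a Tail term that must be reabsorbed. The point of hypothesis \cref{Prop:1:eq} — and the reason $\eta$ must be chosen \emph{before} we know how small the tail is, creating a genuine circularity that is resolved by fixing the constants in the order $\alpha\rightsquigarrow\sigma,\delta\rightsquigarrow j\rightsquigarrow\eta\rightsquigarrow\xi$ — is exactly to make these reabsorptions quantitative and uniform in time, using the estimates systematized in \cref{sec:tail}. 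A secondary technical point is the doubly nonlinear time term: because $\partial_t(|u|^{p-2}u)$ rather than $\partial_t u$ appears, all energy and logarithmic estimates are phrased through $\mathfrak g_\pm$, and one must repeatedly invoke \cref{lem:g} together with the condition $|\bsmu^{\pm}|\le\xi M$ (in the form dictated by whether $p>2$ or $p\le2$) to pass between $\mathfrak g_-(u,k)$ and the more tractable $(u-k)_-^2$, which is the source of the two-case definition of $\xi$.
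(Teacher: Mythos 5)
Your proposal follows the paper's own proof route — \cref{Lm:3:1} (expansion of positivity in time), \cref{Lm:3:2} (measure shrinking via \cref{lem:shrinking}), then \cref{Lm:3:3} with \cref{remark_degiorgi} (De Giorgi), with $\delta,\epsilon,\nu,j_*,\eta,\xi$ fixed in the same order — so the approach is essentially the same as the paper's. Three small corrections to the description: the time-propagation step in \cref{Lm:3:1} (and in \cite{bogeleinHolderRegularitySigned2021}) uses the energy estimate anchored at the initial slice, \emph{not} a logarithmic test function (the introduction stresses that the method deliberately avoids logs, which are awkward for the $\partial_t(|u|^{p-2}u)$ time term); the radii are $B_\varrho$ for propagation, $B_{4\varrho}$ for shrinking and $B_{2\varrho}$ for the De Giorgi conclusion rather than $B_{2\varrho}$ throughout; and your first two level-set displays should be super-level sets, $\{u(\cdot,t_o)\geq\bsmu^-+M\}$ and $\{u(\cdot,t)\geq\bsmu^-+\sigma M\}$, consistent with truncating $(u-k)_-$.
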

\begin{figure}[ht]\label{figureProp1.1}
	\begin{center}
		\begin{tikzpicture}[remember picture,scale=0.5,>=latex]
			\coordinate  (O) at (0,0);
			\draw[thick, draw=orange,pattern=north west lines, pattern color=orange, opacity=0.3] (-7,-2) rectangle (7,0);
			
			\draw[thick, draw=blue] (-3,-5) -- (3,-5);
			\draw[thick, draw=blue, dashed] (-3,-5) -- (-3,2);
			\draw[thick, draw=blue, dashed] (3,-5) -- (3,2);
			
			
			\draw[draw=black,thick] (-3,-3.5) -- (3,-3.5);
			\draw[draw=teal, dashed, <->] (-3,-5.5) -- (3,-5.5);
			\draw[draw=teal, dashed, <->] (-7,-1) -- (7,-1);
			\node  at (0,-6) {\scriptsize $\varrho$};
			\node  at (0,-1.5) {\scriptsize $2\varrho$};
			\node  at (4,-3.5) {\scriptsize $t=t_o$};
			\node  [anchor=west] at (7,-2) {\scriptsize $t=t_o+ \de \lbr\tfrac{\varrho}{2}\rbr^{sp}$};
			\node  [anchor=west] at (7,0) {\scriptsize $t=t_o+ \de \varrho^{sp}$};
%
%
		\end{tikzpicture}
	\end{center}
	\caption{Measure to pointwise bound in \cref{Prop:1:1}}
\end{figure}

\begin{remark}\label{rmk:4.2}
	Let $B_R(x_o) \times (t_o, t_o+S)$ be some cylinder and let $\nu^\pm$ be two numbers satisfying
	\[
	\nu^+\geq  \esssup_{B_R(x_o) \times (t_o, t_o+S)}u \qquad \text{ and }
	\qquad 
	\nu^-\leq \essinf_{B_R(x_o) \times (t_o, t_o+S)} u,
	\]
	then we see that $(u-\nu^{\pm})_{\pm} = 0$ in $B_R(x_o)\times (t_o,t_o+S)$. Thus, for any $8\varrho \leq R$, we have
	\[
	\tail((u-\nu^{\pm})_{\pm};x_o;\varrho;(t_o, t_o + S)) \leq \left(\frac{\varrho}{R}\right)^{\frac{sp}{p-1}}\tail((u-\nu^{\pm})_{\pm};x_o;R;(t_0, t_o + S)),
	\]
	so the $\tail$ alternative from \cref{Prop:1:eq} in \cref{Prop:1:1} can be rewritten as
	\[
	\left(\frac{\varrho}{R}\right)^{\frac{sp}{p-1}}\tail((u-\bsmu^{\pm})_{\pm};x_o;R;(t_o, t_o + \delta \varrho^{sp})) \leq \eta M.
	\]
\end{remark}

\subsection{De Giorgi Lemma}

Here we prove a De Giorgi-type Lemma on cylinders of the form $\mcq_\varrho^\theta(z_o) =B_\varrho (x_o)\times (t_o-\theta\varrho^{sp}, t_o]$. In the application, $\theta$ will be a universal constant depending only on data and hence we will keep track of the dependence of the constants on $\theta$ only qualitatively.

\begin{lemma}\label{Lm:3:3}
 Let $u$ be a locally bounded, local sub(super)-solution to \cref{maineq} in $\omt$ and
$\mcq_\varrho^\theta(z_o)=B_\varrho (x_o)\times (t_o-\theta\varrho^{sp}, t_o]$  be given for some $\tht \in (0,\infty)$. There exists a constant 
$\nu = \nu(\datanb{,\tht})\in (0,1)$
, such that if
\begin{equation*}
	\left|\left\{
	\pm\left(\bsmu^{\pm}-u\right)\le M\right\}\cap \mcq_{\varrho}^\theta(z_o)\right|
	\le
	\nu|\mcq_{\varrho}^\theta|,
\end{equation*}
holds along with the assumptions 
\begin{equation}\label{Lm:3:3:hypothesis}
    \tail((u-\bsmu^{\pm})_{\pm};x_0,\varrho,(t_o-\theta\varrho^{sp}, t_o]) \leq M \qquad \text{and} \qquad |\bsmu^{\pm}|\leq 8M,
\end{equation}
then the following conclusion holds:
\begin{equation*}
	\pm\lbr\bsmu^{\pm}-u\rbr\ge\tfrac{1}2M
	\quad
	\mbox{ on }\quad \mcq_{\frac{\varrho}{2}}^\theta(z_o) = B_{\frac{\varrho}2} (x_o)\times \left(t_o-\theta\lbr \tfrac{\varrho}{2}\rbr^{sp}, t_o\right].
\end{equation*}

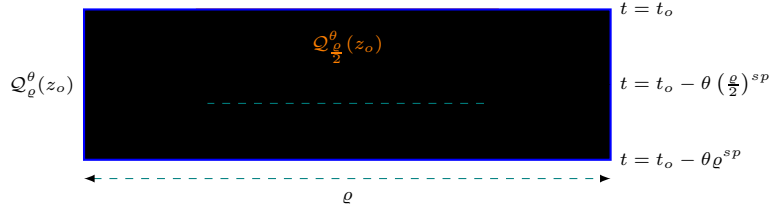
\begin{figure}[ht]\label{figureLm3.3}
	\begin{center}
		\begin{tikzpicture}[remember picture,scale=0.5,>=latex]
			\coordinate  (O) at (0,0);
			\draw[thick, draw=orange,pattern=north west lines, pattern color=orange, opacity=0.3] (-4,-2) rectangle (4,0);
			
			\draw[thick, draw=blue,fill=black, opacity=0.1] (-7,-4) rectangle (7,0);
			
			
			\draw[draw=teal, dashed, <->] (-7,-4.5) -- (7,-4.5);
			\draw[draw=teal, dashed, <->] (-4,-2.5) -- (4,-2.5);
			\draw[draw=black, dashed, |->] (4.2,-2) -- (7,-2);
			\node  at (0,-5) {\scriptsize $\varrho$};
			\node  at (0,-3) {\scriptsize $\tfrac{\varrho}{2}$};
			\node  [anchor=west] at (7,-4) {\scriptsize $t=t_o-\theta \varrho^{sp}$};
			\node  [anchor=west] at (7,0) {\scriptsize $t=t_o$};
			\node  [anchor=west] at (7,-2) {\scriptsize $t=t_o-\theta \lbr \tfrac{\varrho}{2}\rbr^{sp}$};
			\node [anchor=east] at (-7,-2) {\scriptsize$\mcq_{\varrho}^\theta(z_o)$};
			\node  at (0,-1) {\textcolor{orange}{\scriptsize$\mcq_{\frac{\varrho}{2}}^\theta(z_o)$}};
			%
			%
		\end{tikzpicture}
	\end{center}
	\caption{De Giorgi lemma}
\end{figure}
\end{lemma}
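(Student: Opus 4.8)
The plan is a De Giorgi iteration adapted to the nonlocal doubly nonlinear structure, combining the level-set technique of \cite{bogeleinHolderRegularitySigned2021} for the time term with the tail bookkeeping of \cite{adimurthiLocalHolderRegularity2022,liaoHolderRegularityParabolic2022}. It suffices to treat the super-solution case (lower signs, $\bsmu^-\le\essinf_{\mathbf{Q}}u$), since replacing $u$ by $-u$ turns a super-solution into a sub-solution of the same equation. First I would fix, for $j=0,1,2,\dots$, the decreasing sequences
\[
\varrho_j:=\tfrac{\varrho}{2}\bigl(1+2^{-j}\bigr),\qquad k_j:=\bsmu^-+\tfrac{M}{2}+\tfrac{M}{2^{j+1}},
\]
the nested cylinders $Q_j:=B_{\varrho_j}(x_o)\times(t_o-\theta\varrho_j^{sp},t_o]$ with intermediate cylinders $\widetilde{Q}_j$ between $Q_{j+1}$ and $Q_j$, and cut-offs $\zeta_j$ equal to $1$ on $\widetilde{Q}_j$, vanishing outside $Q_j$ and on the bottom slice $\{t=t_o-\theta\varrho_j^{sp}\}$, with $|\na\zeta_j|\lesssim 2^{j}/\varrho$ and $|\partial_t\zeta_j^{p}|\lesssim 2^{jsp}/(\theta\varrho^{sp})$. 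Writing $A_j:=\{u<k_j\}\cap Q_j$ and $Y_j:=|A_j|/|Q_j|$, the hypothesis gives $Y_0\le\nu$, and the goal is a recursion $Y_{j+1}\le\bsc\,b^{j}Y_j^{1+\kappa}$ with $\bsc=\bsc(\datanb{,\theta})$, $b=b(n,s,p)>1$, $\kappa=\kappa(n,s,p)>0$, after which \cref{geo_con} with $\nu:=\bsc^{-1/\kappa}b^{-1/\kappa^{2}}$ forces $Y_j\to0$, i.e.\ $u\ge\bsmu^-+\tfrac12 M$ a.e.\ on $\mcq_{\varrho/2}^\theta(z_o)$, which is the claim.

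Next I would apply the energy estimate \cref{Prop:energy} on $Q_j$ with level $k_j$, cut-off $\zeta_j$ and the lower signs; on the left this controls $\esssup_t\int_{B_{\varrho_j}\times\{t\}}\zeta_j^{p}\mathfrak g_-(u,k_j)\,dx$, the isoperimetric term, and the Gagliardo seminorm of $(u-k_j)_-\zeta_j$. The key a priori facts I would exploit are that $\bsmu^-\le u\le\bsmu^+$ on the reference cylinder, so $(u-k_j)_-\le k_j-\bsmu^-\le M$ pointwise, and that on $\{u<k_j\}$, together with $|\bsmu^-|\le 8M$, one has $|u|+|k_j|\lesssim M$; then \cref{lem:g} yields both $\mathfrak g_-(u,k_j)\lesssim M^{p}$ and $(u-k_j)_-^2\lesssim M^{2-p}\mathfrak g_-(u,k_j)$ on that set (the elementary case split $p\ge2$ / $1<p\le2$ carried out as in \cite{bogeleinHolderRegularitySigned2021}, using for $p<2$ that $|u|+|k_j|\ge(u-k_j)_-$). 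With these: the initial-slice term of \cref{Prop:energy} vanishes since $\zeta_j\equiv0$ there; the $\zeta$-difference term and the $\partial_t\zeta_j^{p}$ term are each $\lesssim b^{j}\varrho^{-sp}M^{p}\,|A_j|$ (constants depending also on $\theta$); and the tail term is handled verbatim as in \cref{sec:tail}: writing $(u-k_j)_-\le u_-+|k_j|$, using $|\bsmu^-|\le8M$ and the tail hypothesis $\tailp((u-\bsmu^-)_-;x_o,\varrho,(t_o-\theta\varrho^{sp},t_o])\le M^{p-1}$ from \cref{Lm:3:3:hypothesis}, it too is $\lesssim b^{j}\varrho^{-sp}M^{p}\,|A_j|$.

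Then I would upgrade this to decay. On $A_{j+1}$ one has $(u-k_j)_-\ge k_j-k_{j+1}=M\,2^{-(j+2)}$, so with $f:=(u-k_j)_-\zeta_j$ (raising $\zeta_j$ to the power $p/2$ when $p>2$, so that $\|f(\cdot,t)\|_{L^2}^2$ is dominated by $\int\zeta_j^p\mathfrak g_-$),
\[
\Bigl(\tfrac{M}{2^{j+2}}\Bigr)^{p}|A_{j+1}|\le\iint_{\widetilde{Q}_j}f^{p}\,dx\,dt .
\]
Applying the fractional Sobolev--Poincar\'e inequality \cref{fracpoin} to $f$ on $B_{\varrho_j}$, its right-hand side is controlled by the product of the Gagliardo-plus-$L^p$ energy of $f$ — dominated by the energy-estimate output, hence $\lesssim b^{j}\varrho^{-sp}M^{p}|A_j|$ up to volume normalisation — and $\bigl(\esssup_t|B_{\varrho_j}|^{-1}\int_{B_{\varrho_j}}f^2\,dx\bigr)^{sp/n}$, for which \cref{lem:g} with $|u|+|k_j|\lesssim M$ on $\{u<k_j\}$ gives $f^2\lesssim M^{2-p}\zeta_j^{p}\mathfrak g_-(u,k_j)$, whence $\esssup_t\int f^2\lesssim M^{2-p}\esssup_t\int\zeta_j^{p}\mathfrak g_-\lesssim b^{j}\varrho^{-sp}M^{2}|A_j|$. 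A H\"older step relating $\iint_{\widetilde{Q}_j}f^{p}$ to $\iint f^{p(1+2s/n)}$ and $|A_j|$, together with $|Q_j|\simeq\theta\varrho^{n+sp}$, then collapses everything to $Y_{j+1}\le\bsc\,b^{j}Y_j^{1+\kappa}$ with $\kappa=\kappa(n,s,p)>0$ (the powers of $M$ cancelling identically), and \cref{geo_con} closes the argument.

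\textbf{Main obstacle.} The crux is the doubly nonlinear time contribution: the $\esssup$-in-time $L^2$ bound needed to run \cref{fracpoin} is not directly available from \cref{Prop:energy} and must be extracted from $\mathfrak g_-(u,k_j)$ via \cref{lem:g}, the weight $(|u|+|k_j|)^{p-2}$ there being tamed only because the smallness hypothesis $|\bsmu^-|\le8M$ keeps $|u|+|k_j|$ comparable to $M$ on the level sets in play; the algebra differs slightly for $p\ge2$ and $1<p\le2$. By contrast the genuinely nonlocal ingredient, the tail term, is routine once one invokes \cref{sec:tail} and the tail hypothesis in \cref{Lm:3:3:hypothesis}.
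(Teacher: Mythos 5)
Your proposal is correct and follows essentially the same route as the paper's proof: the same nested cylinders and levels, the energy estimate of \cref{Prop:energy}, the observations from \cref{lem:g} combined with $|\bsmu^{\pm}|\le 8M$ to convert $\mathfrak g_\pm$ into the $L^\infty L^2$ control needed for \cref{fracpoin}, the tail treatment of \cref{sec:tail}, and \cref{geo_con} to close the iteration. The only cosmetic difference is that the paper interposes intermediate levels $\tilde k_j$ and uses nested balls $\widetilde B_j$ (on which $\zeta_j\equiv 1$) rather than your $\zeta_j^{p/2}$ device for the $\esssup$-in-time $L^2$ slice.
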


\begin{proof}
We prove the case for only the supersolutions because the case for subsolutions is analogous. Without loss of generality, we assume that $(x_o,t_o)=(0,0)$ and for $j=0,1,\ldots$, define
\begin{equation}\label{choices:B_n}
	{\def\arraystretch{\st}\begin{array}{cccc}
	 k_j:=\bsmu^-+\tfrac{M}2+\tfrac{M}{2^{j+1}},& \tilde{k}_j:=\tfrac{k_j+k_{j+1}}2,&
	 \varrho_j:=\tfrac{\varrho}2+\tfrac{\varrho}{2^{j+1}},
	&\tilde{\varrho}_j:=\tfrac{\varrho_j+\varrho_{j+1}}2,\\
	 B_j:=B_{\varrho_j},& \widetilde{B}_j:=B_{\tilde{\varrho}_j},&
	 \mcq_j:=\mcq_{\varrho_j}^\theta,&
	\widetilde{\mcq}_j:=\mcq_{\tilde\varrho_j}^\theta.
	\end{array}}
\end{equation}
Furthermore, we also define 
\begin{equation}\label{choices:B_nn}
			\hat{\varrho}_j:=\tfrac{3\varrho_j+\varrho_{j+1}}4, \quad 
			\bar{\varrho}_j:=\tfrac{\varrho_j+3\varrho_{j+1}}4,\quad 
			\hat{\mcq}_j:=\mcq_{\hat\varrho_j}^\theta, \quad
			\bar{\mcq}_j:=\mcq_{\bar\varrho_j}^\theta.
\end{equation}
We now consider a cut-off functions $\bar{\zeta_j}$ and $\zeta_j$ such that
\begin{equation}\label{cutoff_size}
	\begin{array}{c}
		\bar{\zeta_j} \equiv 1 \text{ on } B_{j+1}, \quad \bar{\zeta_j} \in C_c^{\infty}(B_{\bar{\varrho_j}}), \quad  |\nabla\bar{\zeta_j}| \apprle \frac{1}{\bar{\varrho_j} - \varrho_{j+1}} \approx \frac{2^j}{\varrho} \quad \text{and} \quad  |\pa_t\bar{\zeta_n}| \apprle \frac{1}{\theta(\bar\varrho_j^{sp} - \varrho_{j+1}^{sp})} \approx \frac{2^{jsp}}{\theta\varrho^{sp}}, \\
		{\zeta_j} \equiv 1 \text{ on } B_{\tilde{\varrho_j}}, \quad {\zeta_j} \in C_c^{\infty}(B_{\hat{\varrho_j}}), \quad  |\nabla{\zeta_j}| \apprle \frac{1}{\hat{\varrho_n} - \tilde{\varrho_{j}}}\approx \frac{2^{j}}{\varrho} \quad \text{and} \quad  |\pa_t{\zeta_j}| \apprle \frac{1}{\theta(\hat\varrho_j^{sp} - \tilde{\varrho_{j}}^{sp})}\approx \frac{2^{jsp}}{\theta\varrho^{sp}},
	\end{array}
\end{equation}
We note that by \cref{lem:g},  we have
\begin{equation*}
	\mathfrak g_-(u,k)
	\le
	\bsc  \lbr|u|+|k|\rbr^{p-2}(u-k)_-^2
	\le
	\bsc  \lbr|u|+|k|\rbr^{p-1}(u-k)_-.
\end{equation*}
Next, we make the following observations 
\begin{itemize}
	\item For $\tilde k<k$, there holds $(u-k)_-\ge (u-\tilde k)_-$, 
	\item On $A_j = \left\{u<k_j\right\}\cap \mcq_j$, we have $\bsmu^-\le u\le k_j\le \bsmu^-+M$ ,
	\item On $A_j = \left\{u<k_j\right\}\cap \mcq_j$, we have $|u|+|k_j|\leq 18 M$  since $|\bsmu^-|\leq 8M$ from \cref{Lm:3:3:hypothesis},
	\item On the set $\{u<{k}_{j+1}\}$, we have $|u|+|\tilde{k}_j|\geq \tilde{k}_j-u\geq \tilde{k}_j- {k}_{j+1}=\tfrac{M}{2^{j+3}}$.
\end{itemize}
%
%
Now, we  apply the energy estimates from \cref{Prop:energy} over  $\mcq_j $ to $ (u-\tilde{k}_j)_{-}$ and $\zeta_j$ to get
\begin{multline}\label{Eq:3.6en}
    \underset{-\theta\varrho_j^{sp} < t < 0}{\esssup}\int_{B_j}(|u|+|\tilde{k}_j|)^{p-2}(u-\tilde{k}_j)_{-}^2\zeta_j^p(x,t)\,dx  
    \\+ {\iiint_{\mcq_j}}\frac{|(u-\tilde{k}_j)_{-}(x,t)\zeta_j(x,t)-(u-\tilde{k}_j)_{-}\zeta_j(y,t)|^p}{|x-y|^{n+sp}}\,dx \,dy\,dt
    \\  \begin{array}{rcl} &\apprle_{\data{}}&  \frac{2^{jp}}{\varrho^{sp}}M^p |A_j| +\frac{2^{jp}}{\tht\varrho^{sp}}M^p |A_j| 
    \\&&+\lbr \underset{\stackrel{-\theta\varrho_j^{sp} < t < 0;}{ x\in \spt \zeta_j}}{\esssup}\int_{\RR^n \setminus B_j}\frac{(u-\tilde{k}_j)_{-}^{p-1}(y,t)}{|x-y|^{n+sp}}\,dy\rbr\lbr \int_{-\theta\varrho_j^{sp}}^{0}\int_{B_j} (u-k_j)_{-}(x,t)\zeta(x,t)\,dx\,dt\rbr.
    \end{array}
\end{multline}

We estimate the Tail as below following what was described in \cref{sec:tail} to get
\begin{equation*}
{\def\arraystretch{1}	\begin{array}{rcl}
\underset{\stackrel{-\theta\varrho_j^{sp} < t < 0;}{ x\in \spt \zeta_j}}{\esssup}\int_{\RR^n \setminus B_j}\frac{(u-\tilde{k}_j)_{-}^{p-1}(y,t)}{|x-y|^{n+sp}}\,dy & \apprle & 2^{jsp}\frac{M^{p-1}}{\varrho^{sp}} +  \frac{2^{jsp}}{\varrho^{sp}}\tailp((u-\bsmu^{-})_{-};0;\varrho;(-\theta\varrho^{sp}, 0])
\\
&\overset{\cref{Lm:3:3:hypothesis}}{\apprle} & 2^{jsp}\frac{M^{p-1}}{\varrho^{sp}}.
\end{array}}
\end{equation*}
Combining the previous two estimates along with the observations, we have
\begin{equation}\label{Eq:sample}
	\frac{1}{18^2}\frac{M^{p-2}}{2^{p(j+3)}} \underset{-\theta\tilde\varrho_j^{sp} < t < 0}{\esssup}
	\int_{\widetilde{B}_j} (u-\tilde{k}_j)_-^2\,dx
	+\iiint_{\widetilde{\mcq}_j}\frac{|(u-\tilde{k}_j)_{-}(x,t)-(u-\tilde{k}_j)_{-}|^p}{|x-y|^{n+sp}}\,dx \,dy\,dt
	{\leq}
	\bsc  \frac{2^{jsp}}{\varrho^{sp}}M^{p}|A_j|,
\end{equation} where we recall that $A_j = \left\{u<k_j\right\}\cap \mcq_j$.

From Young's inequality, we have
\begin{multline}\label{eq3.9}
	|(u-\tilde{k}_{j})_- \bar{\zeta_j}(x,t) - (u-\tilde{k}_{j})_- \bar{\zeta_j}(y,t)|^p \leq c |(u-\tilde{k}_{j})_- (x,t) - (u-\tilde{k}_{j})_- (y,t)|^p\bar{\zeta_j}^p(x,t) \\
	+ c |(u-\tilde{k}_{j})_-(y,t)|^p |\bar{\zeta_j}(x,t) - \bar{\zeta_j}(y,t)|^p,
\end{multline}
from which we obtain the following sequence of estimates:
\begin{equation*}
	\begin{array}{rcl}
	\frac{M}{2^{j+3}}
	|A_{j+1}|
	&\overred{3.7a}{a}{\leq} &
	\iint_{\bar{\mcq}_{j+1}}( u-\tilde{k}_j)_-\bar{\zeta_j}\,dx\,dt \\
	&\overred{3.7b}{b}{\leq} &
	\left[\iint_{\widetilde{\mcq}_j}\left[( u-\tilde{k}_j)_-\bar{\zeta_j}\right]^{p\frac{n+2s}{n}}
	\,dx\,dt\right]^{\frac{n}{p(n+2s)}}|A_j|^{1-\frac{n}{p(n+2s)}}\\
	&\overred{3.7c}{c}{\leq} &\bsc 
	\left(\tilde{\rho}^{sp}\iiint_{\widetilde{\mcq}_j}\frac{|(u-\tilde{k}_j)_{-}(x,t)\bar{\zeta_j}(x,t)-(u-\tilde{k}_j)_{-}\bar{\zeta_j}(y,t)|^p}{|x-y|^{n+sp}}\,dx \,dy\,dt\right)^{\frac{n}{p(n+2s)}} 
    \\&&\qquad\times \left(\underset{-\theta\tilde\varrho_j^{sp} < t < 0}{\esssup}\int_{\widetilde{B}_j}\left[(u-\tilde{k}_j)_{-}\bar\zeta_j(x,t)\right]^2\,dx\right)^{\frac{s}{n+2s}}|A_j|^{1-\frac{n}{p(n+2s)}}\\
	&\overred{3.7d}{d}{\leq}&
	\bsc  
	\lbr {2^{jsp}}M^p\rbr^{\frac{n}{p(n+2s)}}
	\lbr\frac{2^{p(j+3)+jsp}}{\varrho^{n+sp}}M^2\rbr^{\frac{s}{n+2s}}
	|A_j|^{1+\frac{s}{n+2s}} \\
	&\overred{3.7e}{e}{\leq} &
	\bsc 
	\frac{b_0^j}{\varrho^\frac{s(n+sp)}{n+2s}}
	M |A_j|^{1+\frac{s}{n+2s}},
	\end{array}
\end{equation*}
where to obtain \redref{3.7a}{a}, we made use of the observations and enlarged the domain of integration with $\tilde{\zeta}_j$ as defined in \cref{cutoff_size}; to obtain \redref{3.7b}{b}, we applied H\"older's inequality; to obtain \redref{3.7c}{c}, we applied \cref{fracpoin}; to obtain \redref{3.7d}{d}, we made use of \cref{Eq:3.6en}, \cref{Eq:sample} and \cref{eq3.9}  with $\bsc = \bsc_{\data{,\theta}}$ and finally we collected all the terms to obtain \redref{3.7e}{e}, where $b_0=b_0(\datanb{})\geq 1$ is a universal constant. Setting
$\bsy_j=|A_j|/|\mcq_j|$, we get
\begin{equation*}
	\bsy_{j+1}
	\le
	\bsc  \boldsymbol b^j \bsy_j^{1+\frac{s}{n+2s}},
\end{equation*}
for  constants $\bsc  = \bsc_{\data{,\theta}} \geq 1$ and $\boldsymbol b=b_{\data{}}\geq  1$ depending only on the data. The conclusion now follows from \cref{geo_con}.
\end{proof}

\begin{remark}\label{remark_degiorgi}
	The conclusion of \cref{Lm:3:3} can be changed to hold on the following cylinders 
	\begin{equation*}
		\pm\lbr\bsmu^{\pm}-u\rbr\ge\tfrac{1}2M
		\quad
		\mbox{ on }\quad \mcq_{\frac{\varrho}{2}}^\theta(z_o) = B_{\frac{\varrho}2} (x_o)\times \left(t_o-\tfrac{\theta}{2} \varrho^{sp}, t_o\right].
	\end{equation*}

The proof is a simple modification by considering $a_j := \frac12 + \frac{1}{2^{j+1}}$, $\hat{a_j} := \frac{3a_j+a_{j+1}}{4}$, $\bar{a_j} := \frac{a_j + 3a_{j+1}}{4}$ and $\tilde{a_j} := \frac{a_j + a_{j+1}}{2}$. Then we take cylinders of the form
\[
\hat{\mcq}_j= B_{\hat\varrho_j} \times (t_0 -\theta \varrho^{sp}\hat{a_j}, t_0) \qquad 
\bar{\mcq}_j=B_{\bar\varrho_j} \times (t_0 -\theta \varrho^{sp}\bar{a_j}, t_0) \txt{and} \tilde{\mcq}_j= B_{\tilde\varrho_j} \times (t_0 -\theta \varrho^{sp}\tilde{a_j}, t_0),
\]
where $\hat\varrho_j, \bar\varrho_j$ and $\tilde\varrho_j$ are as defined in \cref{choices:B_n} and \cref{choices:B_nn}. 

With these choices, we see that \cref{cutoff_size} becomes
\[
 |\pa_t\bar{\zeta_n}| \apprle \frac{1}{\theta\varrho^{sp}(\bar a_j - a_{j+1})} \approx \frac{2^{j}}{\theta\varrho^{sp}} \txt{and}  |\pa_t{\zeta_n}| \apprle \frac{1}{\theta\varrho^{sp}(\hat a_j - \tilde a_{j+1})} \approx \frac{2^{j}}{\theta\varrho^{sp}},
\]
and the rest of the proof follows verbatim as in \cref{Lm:3:3} with a slightly different $\nu = \nu(\datanb{,\tht})\in (0,1)$.
\end{remark}

\subsection{Expansion of positivity in time}\label{sec:exptime}
We prove the following expansion of positivity in time lemma.
\begin{lemma}\label{Lm:3:1}
	Let $M>0$ and $\alpha \in(0,1)$ be given,  then, there exist constants $\delta \in (0,1)$ and $\epsilon \in (0,1)$,
	depending only on the data and $\alpha$, such that whenever $u$ is a locally bounded, local, 
	weak sub(super)-solution in $\omt$ satisfying
	\begin{equation*}
	\left|\left\{
		\pm\lbr\bsmu^{\pm}-u(\cdot, t_o-\delta\varrho^{sp})\rbr\geq M
		\right\}\cap B_{\varrho}(x_o)\right|
		\geq\alpha \left|B_{\varrho}\right|,
	\end{equation*}
	along with the bounds  
	\begin{equation}\label{sec:exptime:hyp}
	\tail((u-\bsmu^{\pm})_{\pm};x_0,\varrho,(t_o-\delta\varrho^{sp},t_o]) \leq  M \qquad \text{and} \quad |\bsmu^{\pm}|\leq 8M,  
	\end{equation}
	then the following conclusion follows:
	\begin{equation}\label{Eq:3:1}
	\left|\left\{
	\pm\left(\bsmu^{\pm}-u(\cdot, t)\right)\geq \epsilon M\right\} \cap B_{\varrho}(x_o)\right|
	\geq\frac{\alpha}2 |B_\varrho|
	\quad \text{ for all } \,  t\in(t_o-\delta\varrho^{sp},t_o].
\end{equation}
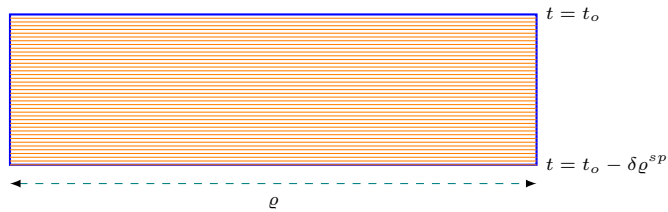
\begin{figure}[h]\label{figureLm3.1}
	\begin{center}
		\begin{tikzpicture}[remember picture,scale=0.5,>=latex]
			\coordinate  (O) at (0,0);
			
			\draw[thick, draw=blue] (-7,-4) rectangle (7,0);
			\foreach \y in{-4,-3.90,...,0}
			\draw[color=orange, opacity=0.3] (-7,\y) -- (7,\y);
			
			
			
			\draw[draw=teal, dashed, <->] (-7,-4.5) -- (7,-4.5);
			\node  at (0,-5) {\scriptsize $\varrho$};
			\node  [anchor=west] at (7,-4) {\scriptsize $t=t_o-\de \varrho^{sp}$};
			\node  [anchor=west] at (7,0) {\scriptsize $t=t_o$};
			%
			%
		\end{tikzpicture}
	\end{center}
	\caption{Expansion of positivity in time}
\end{figure}
\end{lemma}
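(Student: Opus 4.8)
\emph{Overview and set-up.} The plan is to run a DiBenedetto-type logarithmic \emph{expansion of positivity in time}, combining the treatment of the doubly nonlinear time term from \cite{bogeleinHolderRegularitySigned2021} with the tail bookkeeping from \cite{adimurthiLocalHolderRegularity2022,liaoHolderRegularityParabolic2022}. I argue for supersolutions (the subsolution case being symmetric) and, after a translation, take $(x_o,t_o)=(0,0)$, so the claim concerns the slab $B_\varrho\times(-\delta\varrho^{sp},0]$; write $k:=\bsmu^-+M$. Since $\bsmu^-\le\essinf u$ on the reference cylinder, one has $u\ge\bsmu^-$ throughout $B_\varrho\times(-\delta\varrho^{sp},0]$, hence $0\le(u-k)_-\le M$ there, and together with \cref{sec:exptime:hyp} this gives $|u|+|k|\le18M$ wherever $u<k$.

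\emph{Logarithmic energy estimate.} To avoid the weight $(|u|+|k|)^{p-2}$ appearing in the leading term — which, after \cref{lem:g}, would leak multiplicative constants into the measure iteration below, fatally so for $1<p\le2$ — I work with the monotone variable $b:=|u|^{p-2}u$, set $b_k:=|k|^{p-2}k$, and let $H:=b_k-|\bsmu^-|^{p-2}\bsmu^-$, which by \cref{alg_lem} satisfies $H\approx M^{p-1}$ and bounds $(b-b_k)_-$ on the slab. Fixing a small $\epsilon\in(0,1)$ (to be chosen) and a time-independent cut-off $\zeta\in C_c^\infty(B_\varrho)$ with $\zeta\equiv1$ on $B_{(1-\sigma)\varrho}$, $|\nabla\zeta|\lesssim(\sigma\varrho)^{-1}$, I set
\[
\psi(b):=\left[\ln\frac{H}{H-(b-b_k)_-+\epsilon H}\right]_+,
\]
so that $0\le\psi\le\ln(1/\epsilon)$, $\psi$ vanishes on $\{u\ge\bsmu^-+M\}$, and (using $b_k-|k'|^{p-2}k'\ge(1-c\epsilon)H$ with $k'=\bsmu^-+\epsilon M$, again via \cref{alg_lem}) one has $\psi\ge\ln(1/\epsilon)-C$ on $\{u<\bsmu^-+\epsilon M\}$. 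Testing the weak supersolution inequality (after a Steklov regularisation in time) with $\varphi:=-(\psi^2)'(b)\,\zeta^p\ge0$, the parabolic terms collapse \emph{exactly} to $\int\psi^2(b(\cdot,-\delta\varrho^{sp}))\zeta^p\,dx-\int\psi^2(b(\cdot,t))\zeta^p\,dx$; discarding the nonnegative coercive interaction produced over $B_\varrho\times B_\varrho$ by the nonlocal operator and moving the remainder to the right, one obtains, for every $t\in(-\delta\varrho^{sp},0]$,
\[
\int_{B_{(1-\sigma)\varrho}\times\{t\}}\psi^2(b)\,dx\ \le\ \int_{B_\varrho\times\{-\delta\varrho^{sp}\}}\psi^2(b)\,dx\ +\ \frac{C(\epsilon)\,\delta}{\sigma^p}\,|B_\varrho|.
\]
The error collects (i) the $B_\varrho\times B_\varrho$ remainder, estimated as in \cref{Prop:energy} by $\iint_{B_\varrho\times B_\varrho}\frac{|\zeta(x)-\zeta(y)|^p}{|x-y|^{n+sp}}\,dx\,dy$ times powers of $\psi_{\max}$; and (ii) the tail interaction over $B_\varrho\times(\RR^n\setminus B_\varrho)$, handled exactly as in \cref{sec:tail}: since $\varphi\ge0$ is supported where $u(x)\le k$, there $u(x)-u(y)\le M+(u(y)-\bsmu^-)_-$, and invoking $\tail((u-\bsmu^-)_-;0,\varrho,(-\delta\varrho^{sp},0])\le M$ bounds this term by $\lesssim\varrho^{-sp}M^{p-1}\int\varphi$. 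The constant $C(\epsilon)$ depends only on the data and on $\epsilon$.

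\emph{From measure to measure.} Because $\psi^2(b(\cdot,-\delta\varrho^{sp}))$ vanishes on $\{u(\cdot,-\delta\varrho^{sp})\ge\bsmu^-+M\}$, whose complement in $B_\varrho$ has measure at most $(1-\alpha)|B_\varrho|$ by hypothesis, the first term on the right is $\le\ln^2(1/\epsilon)(1-\alpha)|B_\varrho|$; while on $A_t:=\{u(\cdot,t)<\bsmu^-+\epsilon M\}\cap B_{(1-\sigma)\varrho}$ one has $\psi^2(b)\ge(\ln(1/\epsilon)-C)^2$, so the left side is $\ge(\ln(1/\epsilon)-C)^2|A_t|$. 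Hence
\[
|A_t|\ \le\ \frac{\ln^2(1/\epsilon)}{(\ln(1/\epsilon)-C)^2}\,(1-\alpha)\,|B_\varrho|\ +\ \frac{C(\epsilon)\,\delta}{\sigma^p(\ln(1/\epsilon)-C)^2}\,|B_\varrho|.
\]
Now choose, in order: $\sigma=\sigma(n,\alpha)$ so small that $|B_\varrho\setminus B_{(1-\sigma)\varrho}|\le\tfrac{\alpha}{8}|B_\varrho|$; then $\epsilon=\epsilon(\datanb{},\alpha)$ so small that $\frac{\ln^2(1/\epsilon)}{(\ln(1/\epsilon)-C)^2}(1-\alpha)\le1-\tfrac{3\alpha}{4}$ (possible, since the quotient tends to $1$ as $\epsilon\to0$); and finally $\delta=\delta(\datanb{},\alpha)$ so small that the last term is $\le\tfrac{\alpha}{8}|B_\varrho|$. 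Adding $|B_\varrho\setminus B_{(1-\sigma)\varrho}|$ yields $|\{u(\cdot,t)<\bsmu^-+\epsilon M\}\cap B_\varrho|\le(1-\tfrac{\alpha}{2})|B_\varrho|$, i.e.\ \cref{Eq:3:1}, for every $t\in(-\delta\varrho^{sp},0]$.

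\emph{Main obstacle.} The genuine difficulty, absent in the local doubly nonlinear equation, is that the logarithmic estimate carries a real contribution from the nonlocal tail; controlling it is precisely what forces the a priori bound $\tail((u-\bsmu^\pm)_\pm;\ldots)\le M$ into the hypotheses and must be run through the mechanism of \cref{sec:tail}. A secondary, but essential, point is that the doubly nonlinear time term must be handled without leaking multiplicative constants into the measure iteration: passing to the variable $|u|^{p-2}u$ (so that the parabolic terms form an exact difference of $\int\psi^2$'s) achieves this uniformly for $1<p<\infty$, at the cost of the elementary bookkeeping with \cref{alg_lem} used above.
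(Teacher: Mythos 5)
Your route is genuinely different from the paper's. You run a DiBenedetto--type logarithmic lemma in the variable $b=|u|^{p-2}u$, testing with $-(\psi^2)'(b)\zeta^p$ so that the doubly nonlinear time derivative collapses to an exact difference of $\int\psi^2(b)\zeta^p\,dx$; the paper instead tests with $(u-k)_-\zeta^p$ via \cref{Prop:energy}, treats the time terms through the quantity $\mathfrak g_\pm$ of \cref{defgpm} and \cref{lem:g}, and closes by comparing $\int_{\bsmu^-}^{k}|s|^{p-2}(s-k)_-\,ds$ with $\int_{\bsmu^-+\epsilon M}^{k}|s|^{p-2}(s-k)_-\,ds$, following \cite[Lemma~4.1]{bogeleinHolderRegularitySigned2021}; no logarithmic test function appears in the paper. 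Both routes isolate and control the tail contribution in the same way, via \cref{sec:exptime:hyp} and the mechanism of \cref{sec:tail}.

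There is, however, a real gap in your measure-to-measure step. You claim ``$\psi\ge\ln(1/\epsilon)-C$ on $\{u<\bsmu^-+\epsilon M\}$'', justified by ``$b_k-|k'|^{p-2}k'\ge(1-c\epsilon)H$ with $k'=\bsmu^-+\epsilon M$, via \cref{alg_lem}''. But \cref{alg_lem} requires both arguments to lie in an interval $[c_1,c_2]$ with $c_1>0$, while here the only information is $|\bsmu^{\pm}|\le 8M$: the levels $\bsmu^-,k',k$ may straddle or touch $0$, where $s\mapsto|s|^{p-2}s$ is not uniformly bi-Lipschitz. Concretely, for $1<p<2$ and $\bsmu^-=0$ one has $|k'|^{p-2}k'-|\bsmu^-|^{p-2}\bsmu^-=(\epsilon M)^{p-1}=\epsilon^{p-1}H$, so on $\{u<\bsmu^-+\epsilon M\}$ you only obtain $\psi\ge\ln\frac{1}{\epsilon+\epsilon^{p-1}}\approx(p-1)\ln\frac1\epsilon$. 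Fed into your iteration, the leading coefficient becomes
\[
\frac{\ln^2(1/\epsilon)}{\bigl((p-1)\ln(1/\epsilon)-C\bigr)^{2}}\,(1-\alpha)\;\longrightarrow\;\frac{1-\alpha}{(p-1)^{2}}\qquad\text{as }\epsilon\to0,
\]
which exceeds $1$ whenever $\alpha<1-(p-1)^{2}$, so the estimate is vacuous precisely in the singular range the lemma must cover. This is exactly the constant-leaking you flagged and sought to avoid by moving to the $b$-variable: it does not disappear, it simply reappears in the passage from $b$-levels back to $u$-levels. A clean repair is to set the lowered level intrinsically in $b$-space, say $b_{\hat k}:=b_k-(1-\epsilon)H$, for which $\psi\ge\ln\frac{1}{2\epsilon}$ holds on $\{b<b_{\hat k}\}$ with no appeal to \cref{alg_lem}, and then check directly (using only $|\bsmu^-|\le8M$ and elementary properties of $s\mapsto|s|^{p-2}s$) that $\hat k-\bsmu^-\ge c(p)\min\{\epsilon,\epsilon^{1/(p-1)}\}M$; the lemma's conclusion then follows with $\epsilon$ replaced by $c(p)\,\epsilon^{1/(p-1)}$ when $1<p<2$, which is admissible since only \emph{some} $\epsilon=\epsilon(\datanb{},\alpha)$ is asked for. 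For comparison, the paper's route is insensitive to this scaling loss: the proof only needs the quotient $1+I_\epsilon$ to tend to $1$, and $I_\epsilon\lesssim\epsilon^{\min\{1,p-1\}}$ is enough regardless of rate.
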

\begin{proof} 
We prove the case of supersolutions only because the case for subsolutions is analogous. Without loss of generality,  assume that $(x_o,t_o)=(0,0)$. For $k>0$ and $t>0$,  set
\[
A_{k,\varrho}(t) = \left\{u(\cdot,t)  - \bsmu^-< k\right\} \cap B_{\varrho}.
\]
By hypothesis, we have 
\[
|A_{M,\varrho}(-\delta\varrho^{sp})|\leq (1-\alpha)|B_{\varrho}|.
\]
We consider the energy estimate  from \cref{Prop:energy} for $(u-k)_{-}$ with $k=\bsmu^-+M$ over the cylinder $B_{\varrho}\times(-\delta\varrho^{sp},0]$ where $\delta>0$ will be chosen later in the proof. Note that $(u-k)_{-} \leq M$ in $B_{\varrho}$. For $\tilde\sigma \in (0,\tfrac18]$ to be chosen later, we take a cut-off function $\zeta = \zeta(x)\geq 0$,  such that it is supported in $B_{\varrho(1-\frac{\tilde\sigma}{2})}$ with $\zeta \equiv 1$ on $B_{(1-\tilde\sigma)\varrho}$ and $|\nabla \zeta| \apprle \tfrac{1}{\tilde\sigma\varrho}$ and make use of  \cref{Prop:energy} to get
\begin{multline}\label{est4.6}
    \int_{B_\varrho\times\{\mft\}}\int_{u}^k |s|^{p-2}(s-k)_-\,ds \zeta^p\,dx 
     \leq  \frac{C M^p}{\tilde\sigma^p \varrho^{sp}}|\de\varrho^{sp}+\mft||B_{\varrho}|
    + \int_{B_\varrho\times\{-\de\varrho^{sp}\}}\int_{u}^k     |s|^{p-2}(s-k)_-\,ds\,\zeta^p\,dx
    \\
     + C\underset{\stackrel{t \in (-\delta\varrho^{sp},0]}{ x\in \spt \zeta}}{\esssup}\int_{B_{\varrho}^c}\frac{(u-k)_{-}^{p-1}(y,t)}{|x-y|^{n+sp}}\,dy\int_{-\delta\varrho^{sp}}^0\int_{B_{\varrho}} (u-k)_{-}(x,t)\zeta(x,t)\,dx\,dt,
\end{multline}
where $\mft \in (-\de \varrho^{sp},0]$ is any time level.  Furthermore, using $u\geq\bsmu^-$, we get
\begin{equation*}
	\begin{array}{rcl}
	\int_{B_\varrho\times\{-\de\varrho^{sp}\}}\int_{u}^k |s|^{p-2}(s-k)_-\,ds\zeta^p\,dx
	 &\leq &  
	|A_{M,\varrho}(-\de\varrho^{sp})|\int_{\bsmu^-}^{k} |s|^{p-2}(s-k)_-\,ds\\
	&\leq& 
	(1-\alpha)|B_\varrho|\int_{\bsmu^-}^{k} |s|^{p-2}(s-k)_-\,ds.
	\end{array}
\end{equation*}
For the $\tail$ term, we follow what was described in \cref{sec:tail} to get
\begin{equation*}
	\begin{array}{rcl}
\underset{\stackrel{t \in (-\delta\varrho^{sp},0]}{ x\in \spt \zeta}}{\esssup}\int_{\RR^n \setminus B_{\varrho}}\frac{(u-k)_{-}^{p-1}(y,t)}{|x-y|^{n+sp}}\,dy &\apprle_{p,n}& \tilde\sigma^{-(n+sp)}\frac{M^{p-1}}{\varrho^{sp}} +  \frac{\tilde\sigma^{-(n+sp)}}{\varrho^{sp}}\tailp((u-\bsmu^{-})_{-};0,\varrho,(-\delta\varrho^{sp},0])
\\
&\overset{\cref{sec:exptime:hyp}}{\apprle_{p,n}} & \tilde\sigma^{-(n+sp)}\frac{M^{p-1}}{\varrho^{sp}}.
\end{array}
\end{equation*}

Setting $k_\epsilon:=\epsilon M$ for some $ \epsilon \in (0,\tfrac12)$ to be chosen later, we estimate the term on the left hand side of \cref{est4.6} from below to get
\begin{equation*}
	\int_{B_\varrho\times\{\mft\}}\int_{u}^k |s|^{p-2}(s-k)_-\,ds \,\zeta^p\,dx
	\geq 
	\big|A_{k_\epsilon,(1-\tilde\sigma)\varrho}(\mft)\big| \int_{\bsmu^-+k_\epsilon}^k |s|^{p-2}(s-k)_-\,ds.
\end{equation*}
 Making use of the bound 
$\tfrac{1}{2}M\leq(1-\epsilon)M=k-(\bsmu^-+k_\epsilon)\leq |\bsmu^-+k_\epsilon|+|k|\leq 2(|\bsmu^-|+ M)\overlabel{sec:exptime:hyp}{\leq} 18 M$, we further get
\begin{equation}\label{prop_1}
	\def\arraystretch{\st}
	\begin{array}{rcl}
	\int_{\bsmu^-+k_\epsilon}^k|s|^{p-2}(s-k)_-\,ds
	&=&
	\tfrac{1}{p-1}\, \mathfrak g_-(\bsmu^-+k_\epsilon,k)
	\overlabel{lem:g}{\geq} 
	\tfrac{1}{\bsc  (p)}
	\lbr|\bsmu^-+k_\epsilon|+|k|\rbr^{p-2} (k-\bsmu^--k_\epsilon)^2\\
	&\geq &
	\tfrac{1}{\bsc  (p)} M^p.
	\end{array}
\end{equation}
Next, we note that
\begin{equation*}
	\abs{A_{k_\epsilon,\varrho}(\mft)} \leq 
	\abs{A_{k_\epsilon,(1-\tilde\sigma)\varrho}(\mft)}+n\tilde\sigma |B_\varrho|.
\end{equation*}
Putting together the above estimates gives
\begin{equation*}
	|A_{k_\epsilon,\varrho}(\mft)|
	\leq 
	\frac{\int_{\bsmu^-}^k |s|^{p-2}(s-k)_-\,ds }{\int_{\bsmu^-+k_\epsilon}^k |s|^{p-2}(s-k)_-\,ds }
	(1-\alpha)|B_\varrho|
	+
	\frac{\bsc  \delta}{\tilde\sigma^p}|B_\varrho| +n\tilde\sigma|B_\varrho|,
\end{equation*}
for a universal constant $\bsc  > 0$. Proceeding analogously as in \cite[Proof of Lemma 4.1]{bogeleinHolderRegularitySigned2021}, we write
\[
\frac{\int_{\bsmu^-}^k |s|^{p-2}(s-k)_{-} \,ds }{\int_{\bsmu^-+k_\epsilon}^k |s|^{p-2}(s-k)_{-} \,ds } = 1 + \frac{\int_{\bsmu^-}^{\bsmu^-+k_\epsilon} |s|^{p-2}(s-k)_{-} \,ds}{\int_{\bsmu^-+k_\epsilon}^k  |s|^{p-2}(s-k)_{-} \,ds} = 1 + I_{\epsilon}.
\]
Using $|\bsmu^-|\leq 8M$, $|\bsmu^-+k_\epsilon|\leq 9M$ and applying \cref{lem:g},  we get
\[
	\int_{\bsmu^-}^{\bsmu^-+k_\epsilon} |\tau|^{p-2}(\tau-k)_-\,d\tau
	\leq 
	M\int_{\bsmu^-}^{\bsmu^-+k_\epsilon}|\tau|^{p-2}\,d\tau
	=
	M |s|^{p-2} s\Big|_{\bsmu^-}^{\bsmu^-+k_\epsilon}
	\leq
	\bsc  (p)M^p\epsilon.
\]
Making use of  \cref{prop_1},  we obtain
\begin{equation*}
	I_\epsilon
	\leq
	\bsc  (p) \epsilon.
\end{equation*}
We now choose $\epsilon\in (0,1)$ small enough such that
\begin{equation*}
	(1-\alpha)(1+\bsc \epsilon)\leq 1-\tfrac{3}{4}\alpha,
\end{equation*}
and make the choice $\tilde\sigma=\tfrac{\alpha}{8n}$.
Finally, we choose $\delta\in (0,1)$ small enough so that $\tfrac{\bsc\delta}{\tilde\sigma^p}\leq\tfrac{\alpha}{8}$, which gives the required conclusion.
\end{proof}

\subsection{A Shrinking Lemma}

\begin{lemma}\label{Lm:3:2}
With $\epsilon$ and $\de$ as obtained in \cref{Lm:3:1}, we assume that \cref{Eq:3:1} from \cref{Lm:3:1} holds. Let
$\mcq=\mcq_{\varrho}^{\de}(z_o) = B_{\varrho}(x_o)\times\left(t_o-\delta\varrho^{sp},t_o\right]$ be the corresponding cylinder and let $\widehat{\mcq}=B_{4\varrho}(x_o)\times\left(t_o-\delta\varrho^{sp},t_o\right]\Subset E_T$. Furthermore, for some integer $j \in \NN$, assume the following is satisfied:
\begin{equation}\label{Lm:3:2:hyp}
\tail((u-\bsmu^{\pm})_{\pm};x_0,8\varrho,(t_o-\delta\varrho^{sp},t_o]) \leq \frac{\epsilon M}{2^j} \qquad \text{and} \qquad |\bsmu^{\pm}|\leq \sigma M,
\end{equation}
where $\sigma$ is chosen to be
\[
\sigma := \left\{ {\def\arraystretch{\st}\begin{array}{ll} 8 & \text{if} \,\,\, 1<p<2, \\
	\frac{\epsilon}{2^{j}} & \text{if} \,\,\, p >2,
	\end{array}}\right.
\]
then there exists $\bsc_o >0$ depending only on data and $\alpha$ (recall $\alpha$ is from \cref{Lm:3:1}),  we have 
\begin{equation*}
	\left|\left\{
	\pm\lbr\bsmu^{\pm}-u\rbr\le\frac{\epsilon M}{2^{j+1}}\right\}\cap \widehat{ \mcq}\right|
	\leq\left(\frac{\bsc_o}{2^{j}-1}\right)^{p-1}|\widehat{ \mcq}|.
\end{equation*}
\end{lemma}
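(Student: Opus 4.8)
The statement is a measure shrinking estimate: starting from the pointwise-in-time measure bound \cref{Eq:3:1} (valid on the whole time interval $(t_o-\delta\varrho^{sp},t_o]$), I want to show that the bad set where $\pm(\bsmu^\pm-u)$ is \emph{very} small (below $\epsilon M/2^{j+1}$) is proportionally small on the enlarged cylinder $\widehat{\mcq}=B_{4\varrho}\times(t_o-\delta\varrho^{sp},t_o]$. The natural route is to invoke the abstract Shrinking Lemma \cref{lem:shrinking} with the ball radius $4\varrho$ (so $\varrho$ there is $4\varrho$ here), level $m:=\epsilon M$, and the same constant $\nu=\alpha/2$ coming from \cref{Eq:3:1}. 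To do so I must verify the two hypotheses of \cref{lem:shrinking}: (a) the lower measure density $|[u(\cdot,\tau)>m]\cap B_{4\varrho}|\ge \nu|B_{4\varrho}|$ for all $\tau$ in the interval, and (b) the integral smallness condition $\iint_{I\times B_{4\varrho}}(u-l)_-(x,t)\int_{B_{A\cdot 4\varrho}}\frac{(u-l)_+^{p-1}(y,t)}{|x-y|^{n+sp}}\,dy\,dx\,dt\le \mC_1\,l^p\varrho^{-sp}|\widehat{\mcq}|$ with $l=m/2^j$.

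\emph{Verifying (a).} Here I work with $w:=u-\bsmu^-$ (supersolution case; the subsolution case is symmetric). The hypothesis \cref{Eq:3:1} gives $|[w(\cdot,\tau)\ge \epsilon M]\cap B_\varrho|\ge \tfrac\alpha2|B_\varrho|$ for all $\tau\in(t_o-\delta\varrho^{sp},t_o]$. Since $B_\varrho\subset B_{4\varrho}$, the same set has measure at least $\tfrac\alpha2|B_\varrho| = \tfrac{\alpha}{2\cdot 4^n}|B_{4\varrho}|$, so \cref{lem:shrinking} applies with $\nu:=\alpha/(2\cdot 4^n)\in(0,1)$. (One must also note $w\ge 0$ throughout so that $[w>m]$ and the truncations behave as required.)

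\emph{Verifying (b) — the main obstacle.} This is the heart of the argument: I need a Caccioppoli-type bound controlling the ``good term'' integral. Apply the energy estimate \cref{Prop:energy} to $(u-k)_-$ with $k=\bsmu^-+l=\bsmu^-+\epsilon M/2^j$ on the cylinder $B_{8\varrho}\times(t_o-\delta\varrho^{sp},t_o]$, against a cutoff $\zeta$ that is $\equiv 1$ on $B_{4\varrho}$, supported in $B_{8\varrho}$, with $|\nabla\zeta|\lesssim 1/\varrho$ and $|\partial_t\zeta|=0$. On the left we keep the isoperimetric/good term $\iint_{B_{4\varrho}\times I}(u-k)_-\int_{B_{4\varrho}}\frac{(u-k)_+^{p-1}}{|x-y|^{n+sp}}$; the factor $(u-k)_+^{p-1}$ on $B_{4\varrho}$ is what \cref{lem:shrinking} wants (possibly after enlarging the inner ball from $B_{A\varrho}$-to-$B_{A\cdot 4\varrho}$, which only helps). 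On the right I must bound: the gradient-of-cutoff term $\lesssim \varrho^{-sp}\sup\{(u-k)_-\}^p|\widehat{\mcq}|$, which since $(u-k)_-\le l+|\bsmu^-|$ and by the choice of $\sigma$ (so that $|\bsmu^-|\le \sigma M$ is comparable to $l$ when $p>2$, or $(u-k)_-$ is still controlled when $1<p<2$ because then $(u-k)_-\le l$ on the relevant set) is $\lesssim l^p\varrho^{-sp}|\widehat{\mcq}|$; the time-slice term at $t=t_o-\delta\varrho^{sp}$, which vanishes after choosing the initial slice appropriately, or is absorbed; the $\partial_t\zeta$ term, which vanishes; and crucially the tail term $\big(\esssup\int_{\RR^n\setminus B_{8\varrho}}\frac{(u-k)_-^{p-1}}{|x-y|^{n+sp}}\big)\cdot\iint(u-k)_-\zeta^p$. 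For the tail, following \cref{sec:tail} and using the hypothesis \cref{Lm:3:2:hyp} that $\tail((u-\bsmu^-)_-;x_0,8\varrho,I)\le \epsilon M/2^j=l$ together with $|\bsmu^-|\le\sigma M$, one gets $\esssup\int_{\RR^n\setminus B_{8\varrho}}\frac{(u-k)_-^{p-1}}{|x-y|^{n+sp}}\lesssim l^{p-1}\varrho^{-sp}$, and $\iint(u-k)_-\zeta^p\lesssim l|\widehat{\mcq}|$, so this term is also $\lesssim l^p\varrho^{-sp}|\widehat{\mcq}|$. Collecting everything yields hypothesis (b) with $\mC_1=\mC_1(\datanb{},\alpha)$. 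The delicate bookkeeping is precisely the interplay between $p>2$ and $1<p<2$ in bounding $(u-k)_-$ and $(|u|+|k|)^{p-2}$-type weights — this is why the two cases for $\sigma$ appear in \cref{Lm:3:2:hyp}.

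\emph{Conclusion.} With (a) and (b) verified, \cref{lem:shrinking} (applied with $m=\epsilon M$, $l=m/2^j$, $Q=\widehat{\mcq}$, $\nu=\alpha/(2\cdot 4^n)$) gives $|[w<\epsilon M/2^{j+1}]\cap\widehat{\mcq}|\le (C/(2^j-1))^{p-1}|\widehat{\mcq}|$ with $C=C(\mC_1,A,n,\nu)$, i.e. the claim with $\bsc_o=C$ depending only on data and $\alpha$. The subsolution case ($k=\bsmu^+-l$) is identical after reflecting $u\mapsto -u$, using the analogue of \cref{sec:tail} noted in the remark there.
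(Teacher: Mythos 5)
Your overall route is the paper's: apply \cref{Prop:energy} to $(u-k_j)_-$ with $k_j=\bsmu^-+\epsilon M\,2^{-j}$ over $B_{8\varrho}\times(t_o-\delta\varrho^{sp},t_o]$ against a time-independent cutoff, control the gradient-of-cutoff term and the tail term, and then feed the resulting ``good term'' bound into \cref{lem:shrinking} with $m=\epsilon M$ and $l=\epsilon M\,2^{-j}$, after replacing $u$ by $u-\bsmu^-$. Your remark that the density $\nu$ fed to \cref{lem:shrinking} must be deflated from $\alpha/2$ to $\alpha/(2\cdot 4^n)$ when the ball is enlarged from $B_\varrho$ to $B_{4\varrho}$ is correct and worth making explicit.

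There is, however, one substantive slip in your treatment of the right-hand side. The time-slice term
$\int_{B_{8\varrho}\times\{t_o-\delta\varrho^{sp}\}}\zeta^p\,\mathfrak g_-(u,k_j)\,dx$
does \emph{not} vanish and cannot be made to vanish by rechoosing the initial time: the hypothesis \cref{Eq:3:1} is given on the \emph{fixed} interval $(t_o-\delta\varrho^{sp},t_o]$, and the energy estimate is written over exactly that interval, so the bottom slice is pinned down. This term must be estimated, and this is precisely where the hypothesis $|\bsmu^\pm|\le\sigma M$ is used. From \cref{lem:g} one has $\mathfrak g_-(u,k_j)\le\bsc\lbr|u|+|k_j|\rbr^{p-2}(u-k_j)_-^2$, and the cases split: for $p\ge 2$ the exponent $p-2\ge 0$ forces one to bound $|u|+|k_j|$ from above by a multiple of $\epsilon M\,2^{-j}$, which is exactly what $|\bsmu^-|\le\sigma M=\epsilon M\,2^{-j}$ supplies; for $1<p<2$ the exponent is negative and one uses $(u-k_j)_-\le|u|+|k_j|$ to obtain $\mathfrak g_-(u,k_j)\le\bsc(u-k_j)_-^p\le\bsc(\epsilon M\,2^{-j})^p$ with no constraint on $\bsmu^-$ at all, which is why $\sigma=8$ suffices there. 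In contrast, $\sigma$ plays no role in the gradient-of-cutoff term: since $u\ge\bsmu^-$ on $\mcq$ one has $(u-k_j)_-\le k_j-\bsmu^-=\epsilon M\,2^{-j}$ pointwise, independently of the size of $|\bsmu^-|$, so $\max\{(u-k_j)_-(x,t),(u-k_j)_-(y,t)\}^p$ is already controlled. Once you replace the hand-wave about the initial slice with this estimate, the argument goes through and matches the paper's proof.
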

\begin{proof}
We prove the case of super-solutions only because the case for sub-solutions is analogous. Without loss of generality,  assume that $(x_o,t_o)=(0,0)$. We write the energy estimates over the cylinder $B_{8\varrho}\times (-\delta\varrho^{sp},0]$ for the functions $(u-k_j)_-$ with the choice of levels 
\[
k_j = \bsmu^{-} + \frac{\epsilon M}{2^j} \qquad \text{for given } \ j \geq 1.
\] 
We choose a test function $\zeta = \zeta(x)$ such that $\zeta\equiv 1$ on $B_{4\varrho}$, it is supported in $B_{6\varrho}$ and
\[
|\nabla \zeta|\leq \frac{1}{2\varrho}.
\] 
Using $(u-k_j)_{-} \leq \tfrac{\epsilon M}{2^{j}}$ locally (see \cref{Eq:1:5}) and  by our choice of the test function, applying \cref{Prop:energy}, we get
\begin{multline}\label{Lm:3:2:energy}
\int_{-\delta\varrho^{sp}}^0\int_{B_{4\varrho}}|(u-k_j)_-(x,t)|\int_{B_{4\varrho}}\frac{|(u-k_j)_+(y,t)|}{|x-y|^{n+sp}}\,dx\,dy\,dt
\\
\apprle_{\data{}}\frac{1}{\varrho^{sp}}\left(\frac{\epsilon M}{2^j}\right)^{p}|\widehat{\mcq}|
     +\frac{\epsilon M}{2^j}|\widehat{\mcq}|\underset{\stackrel{t \in (-\delta\varrho^{sp},0]}{ x\in \spt \zeta}}{\esssup}\int_{B_{8\varrho}^c}\frac{(u-k_j)_{-}(y,t)}{|x-y|^{n+sp}}\,dy
          + \int_{B_{8\varrho}\times\{-\de \varrho^{sp}\}} \mathfrak g_- (u,k_j) \,dx.
\end{multline}
 We recall the Tail estimates from  \cref{sec:tail} to get
\begin{equation*}
	\begin{array}{rcl}
\underset{\stackrel{t \in (-\delta\varrho^{sp},0]}{ x\in \spt \zeta}}{\esssup}\int_{B_{8\varrho}^c}\frac{(u-k_j)_{-}^{p-1}(y,t)}{|x-y|^{n+sp}}\,dy &\apprle_{n,p}& \frac{1}{\varrho^{sp}}\left(\frac{\epsilon M}{2^j}\right)^{p-1} +  \frac{1}{\varrho^{sp}}\tailp((u-\bsmu^{-})_{-};0;8\varrho;(-\delta\varrho^{sp},0])
\\
&\overlabel{Lm:3:2:hyp}{\apprle_{n,p}}& \frac{1}{\varrho^{sp}}\left(\frac{\epsilon M}{2^j}\right)^{p-1}.
\end{array}
\end{equation*}
 Proceeding analogously as \cite[Proof of Lemma 4.2]{bogeleinHolderRegularitySigned2021},  we make use of \cref{lem:g} to  get 
\begin{equation*}
	\mathfrak g_- (u,k_j)
	\le
	\bsc  \lbr|u|+|k_j|\rbr^{p-2}(u-k_j)_-^2.
\end{equation*}
\begin{description}
	\item[\color{airforceblue}{Case $p \geq 2$:}] For $p\geq 2$, we use $(u-k_j)_-\le |u|+|k_j|$, $u\geq \bsmu^-$ and $|\bsmu^{-}|\leq\epsilon M 2^{-j}$ to get
	\begin{equation*}
		\mathfrak g_- (u,k_j)
		\leq
		\bsc_p  \lbr|u|+|k_j|\rbr^p \chi_{\{u\leq k_j\}}
		\leq
		\bsc_p \left(\frac{\epsilon M}{2^j}\right)^{p}.
	\end{equation*}
	\item[\color{airforceblue}{Case $p \leq 2$:}] For $1<p<2$, we use $(u-k_j)_-\leq |u|+|k_j|$ and $u\ge \bsmu^-$ to get (note that in this case, we need no assumption on $\bsmu^-$)
	\begin{equation*}
		\mathfrak g_- (u,k_j)
		\leq
		\bsc_p  (u-k_j)_-^p
		\leq
		\bsc_p \left(\frac{\epsilon M}{2^j}\right)^{p}.
	\end{equation*}
\end{description}

 In particular,  for all $1<p<\infty$, we get
\begin{equation*}
	\int_{B_{8\varrho}\times\{-\de \varrho^{sp}\}}\mathfrak g_- (u,k_j)
	\leq
	 \frac{\bsc_p }{\delta\varrho^{sp}}
	 \left(\frac{\epsilon M}{2^j}\right)^{p} |\widehat{ \mcq}|.
\end{equation*}
Putting all the above estimates together and recalling that $\delta \in (0,1)$, we get
\[
\int_{-\delta\varrho^{sp}}^0\int_{B_{4\varrho}}|(u-k_j)_-(x,t)|\int_{B_{4\varrho}}\frac{|(u-k_j)_+(y,t)|}{|x-y|^{n+sp}}\,dx\,dy\,dt \leq \frac{\bsc }{\delta\varrho^{sp}}
	 \left(\frac{\epsilon M}{2^j}\right)^{p} |\widehat{ \mcq}|
\]
We now invoke \cref{lem:shrinking} (in which we replace $u$ with $u - \bsmu^-$) with $l =  \epsilon M 2^{-j}$ and $m = \epsilon M$ to conclude that
\[
|\{u-\bsmu^{-} < \epsilon M2^{-(j+1)}\}\cap\widehat{ \mcq})| \leq \left(\frac{\bsc}{2^{j}-1}\right)^{p-1}|\widehat{ \mcq}|,
\]
holds for a constant $\bsc>0$ depending only on $\alpha$ and data. Note that $\bsc$ depends on $\delta = \delta(n,p,s)$.  
\end{proof}

\subsection{Proof of \texorpdfstring{\cref{Prop:1:1}}.}
We prove the proposition for supersolutions because the case for subsolutions is analogous. Without loss of generality, we assume $(x_0,t_0) = (0,0)$ and prove the proposition in several steps:
\begin{description}
	\item[\color{auburn}{Step 1:}] With $\alpha$ be as given in \cref{Prop:1:1}, we let  $\delta=\de(\datanb{,\alpha}),\epsilon=\epsilon(\datanb{,\alpha})\in(0,1)$ be as obtained in \cref{Lm:3:1}. Let $\bsc_o = \bsc_o(\datanb{,\alpha}) \geq 1$ be the  constant from  \cref{Lm:3:2}  and  $\nu=\nu(\datanb{,\tht})\in(0,1)$ be the constant from \cref{Lm:3:3} and \cref{remark_degiorgi}, where we choose $\theta=\delta$ which gives $\nu = \nu(\datanb{,\alpha})$.
	\item[\color{auburn}{Step 2:}] Next, we choose an integer $j_{*} \geq 1$ in such a way that 
	\[
	\left(\frac{\boldsymbol{C}_0}{2^{j_*}-1}\right)^{p-1} \leq \nu,
	\]
	then, $j_* = j_*(\datanb{,\alpha})$.
	\item[\color{auburn}{Step 3:}] We take 
	\begin{equation*}
		\xi := \left\{ {\def\arraystretch{\st}\begin{array}{ll} 8 & \text{if} \ 1 < p < 2,\\
				\epsilon 2^{-j_*} & \text{if} \ p >2,\end{array}} \right.
	\end{equation*}
	and assume that $|\bsmu^{-}|\le \xi M$.
	\item[\color{auburn}{Step 4:}]   With these choices, applying \cref{Lm:3:1} and \cref{Lm:3:2}, we get
	\begin{equation*}
		\bigg|\bigg\{
		u\leq \bsmu^-+\frac{\epsilon M}{2^{j_*+1}}\bigg\}\cap \widehat{\mcq}\bigg|
		\leq
		\nu|\widehat{\mcq}|,\quad
	\end{equation*}
	where $\widehat{ \mcq}=B_{4\varrho}\times(-\delta\varrho^{sp},0]$
	provided
	\[
	\tail((u-\bsmu^{-})_{-};0,8\varrho,(-\delta\varrho^{sp},0]) \leq \frac{\epsilon M}{2^{j_*}}.
	\]
	\item[\color{auburn}{Step 5:}] Finally, we apply \cref{Lm:3:3} along with \cref{remark_degiorgi} and $M$ replaced by $\frac{\epsilon M}{2^{j_*+1}}$ to get 
	\begin{equation*}
		u\geq \bsmu^- + \frac{\epsilon M}{2^{j_*+2}}
		\quad
		\mbox{a.e.~in } B_{2\varrho}\times \left(-\tfrac{\de}{2}\varrho^{sp},0\right].
	\end{equation*}
	\item[\color{auburn}{Step 6:}] Note that the $\tail$ alternative required to apply \cref{Lm:3:3} is already satisfied due to the preceding condition on the $\tail$. We set
	$\eta=\frac{\epsilon}{2^{j_*+2}}$ and 
	the conclusion follows.
	\item[\color{auburn}{Step 7:}] The above proof is given for backwards in time cylinders but a simple change in the starting time level gives the desired proof.
\end{description}
\section{Iteration of \texorpdfstring{\cref{Prop:1:1}}.}
\label{Section4}


\begin{lemma}\label{iteratedprop}
	Let $u$ be a locally bounded, local, weak sub(super)-solution to \cref{maineq} in $\omt$. With notation as in \cref{Eq:1:5}, suppose there exists constants  $M>0$ and $\alpha \in(0,1)$ such that the following is satisfied:
	\begin{equation}\label{initialize1}
		\left|\left\{\pm\lbr\bsmu^{\pm}-u(\cdot, t_o)\rbr\geq M\right\}\cap B_\varrho(x_o)\right|
		\geq
		\alpha \big|B_\varrho\big|.
	\end{equation}
	Given any arbitrary $A >1$, there exists $\bar{\eta} = \bar{\eta}(n,p,s,\alpha,A) \in (0,1)$ such that if the following is satisfied
	\begin{equation*}
	\tail((u-\mu^{\pm})_{\pm};x_0,\varrho,(t_o,t_o+A\varrho^{sp}]) \leq \bar\eta M \quad \text{and} \quad \abs{\bsmu^{\pm}}\leq \xi M,
	\end{equation*}
	then we have the following conclusion
	\begin{equation*}
		\pm\lbr\bsmu^{\pm}-u\rbr\geq\bar\eta M
		\quad
		\mbox{a.e.~in $B_{2\varrho}(x_o)\times\left(t_0+\tfrac{\de}{2}\varrho^{sp},t_0+A\rho^{sp}\right] .$}
	\end{equation*}
	Here we have taken $\xi$ (and $\eta$ both independent of $A$) to be the same constant as in \cref{Prop:1:1}:
	\[	\xi	:= \left\{
	{\def\arraystretch{1} \begin{array}{ll}
			2\eta, & \mbox{if\,\,   $p>2$,} \\
			8, & \mbox{if\,\,   $1<p\le 2$.}
	\end{array}}
	\right.
	\]
\end{lemma}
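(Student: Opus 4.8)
The plan is to iterate \cref{Prop:1:1} a finite number of times, the number of iterations being dictated by $A$ alone (together with the data and $\alpha$). As in \cref{Prop:1:1} we argue for supersolutions and take $(x_o,t_o)=(0,0)$, the remaining cases being analogous; $\bsmu^-$ is held fixed throughout. \emph{Initialization:} apply \cref{Prop:1:1} on the interval $(0,\delta\varrho^{sp}]$ --- the measure hypothesis is precisely \cref{initialize1}, the bound $|\bsmu^{\pm}|\le\xi M$ is assumed, and the tail alternative $\tail((u-\bsmu^{\pm})_{\pm};0,\varrho,(0,\delta\varrho^{sp}])\le\eta M$ follows from $\tail((u-\bsmu^{\pm})_{\pm};0,\varrho,(0,A\varrho^{sp}])\le\bar\eta M$, the inclusion $(0,\delta\varrho^{sp}]\subset(0,A\varrho^{sp}]$, and $\bar\eta\le\eta$ (which will hold by construction). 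This yields $u\ge\bsmu^-+\eta M$ a.e.\ in $B_{2\varrho}\times(\tfrac{\delta}{2}\varrho^{sp},\delta\varrho^{sp}]$.

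\emph{Forward propagation.} The key observation is that whenever $u\ge\bsmu^-+M_\star$ holds a.e.\ on $B_{2\varrho}\times\{\tau\}$, the measure hypothesis of \cref{Prop:1:1} at the base point $(0,\tau)$ is satisfied trivially on $B_\varrho\subset B_{2\varrho}$, with density $1$ (any fixed $\alpha_1\in(0,1)$ would do) and level $M_\star$; the associated constants, call them $\xi_1,\delta_1,\eta_1$, then depend only on the data. Applying \cref{Prop:1:1} at such a $\tau$ therefore propagates positivity forward in time by $\delta_1\varrho^{sp}$ at the price of replacing the level $M_\star$ by $\eta_1 M_\star$. Picking the successive base times inside the time-set on which positivity is already known, so that consecutive conclusion-intervals overlap --- a routine covering bookkeeping, since $\delta_1\varrho^{sp}$ is a fixed fraction of $\varrho^{sp}$ --- one reaches, after a number $N=N(n,p,s,\alpha,A)$ of applications, a positivity interval covering $(\tfrac{\delta}{2}\varrho^{sp},A\varrho^{sp}]$ with level reduced to $\bar\eta M$, where we set $\bar\eta:=\eta\,\eta_1^{\,N-1}$; since each intermediate level is $\ge\bar\eta M$ and $\bsmu^-$ is unchanged, this gives $u\ge\bsmu^-+\bar\eta M$ a.e.\ on $B_{2\varrho}\times(\tfrac{\delta}{2}\varrho^{sp},A\varrho^{sp}]$. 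As $N$ depends only on $A$ and the data (through $\delta,\delta_1$), $\bar\eta\in(0,1)$ has the claimed dependence, and $\bar\eta\le\eta$.

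\emph{Persistence of the hypotheses; the main obstacle.} At the $k$-th application, with running level $M_k\in[\bar\eta M,M]$ over a sub-interval $I_k\subset(0,A\varrho^{sp}]$, the tail alternative needed is $\tail((u-\bsmu^{\pm})_{\pm};0,\varrho,I_k)\le\eta_1 M_k$; since $\bsmu^{\pm}$ is fixed, the left-hand side is at most $\tail((u-\bsmu^{\pm})_{\pm};0,\varrho,(0,A\varrho^{sp}])\le\bar\eta M$, and $\bar\eta M\le\eta_1 M_k$ holds because $M_k\ge\bar\eta M/\eta_1$ at every step by the choice of $\bar\eta$ --- so no further smallness of $\bar\eta$ is required. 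For the bound $|\bsmu^{\pm}|\le\xi M$: with $\bsmu^-$ fixed and $M_k\le M$ this is at least as strong as what \cref{Prop:1:1} needs when $p\ge2$; in the singular range $1<p\le2$, where the auxiliary lemmas \cref{Lm:3:1,Lm:3:3} are phrased with $|\bsmu^{\pm}|$ comparable to the running level, the mismatch is only by the fixed factor $M/M_k\le\eta_1^{-(N-1)}$, which worsens the constants by an $A$-dependent amount and is harmless (equivalently, one may use a routine variant of \cref{Prop:1:1} allowing $|\bsmu^{\pm}|\le cM$ with $c$-dependent constants). The genuine difficulty is therefore not any individual estimate but the bookkeeping of the iteration --- chaining the time-intervals without gaps, tracking the geometric decay of $M_k$, and keeping every accumulated constant quantitative --- which succeeds precisely because $A$ is fixed and only finitely many iterations occur.
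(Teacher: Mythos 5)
Your plan --- iterate \cref{Prop:1:1} as a black box, re-seeding the measure hypothesis with $\alpha_1=1$ from the pointwise output of the previous step --- handles the tail condition correctly, but the treatment of the hypothesis $|\bsmu^{\pm}|\le\xi M$ contains a genuine error, and it is precisely this obstacle that forces the paper onto a different route. For $p>2$, applying \cref{Prop:1:1} at running level $M_k$ demands $|\bsmu^{\pm}|\le 2\eta_1 M_k$. Since $M_k$ decays geometrically while $\bsmu^{\pm}$ is fixed, this requirement becomes strictly \emph{tighter} at each step; your claim that the assumed $|\bsmu^{\pm}|\le\xi M$ ``is at least as strong as what \cref{Prop:1:1} needs when $p\ge2$'' has the inequality backwards. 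The fallback of a ``routine variant of \cref{Prop:1:1} allowing $|\bsmu^{\pm}|\le cM$'' does not exist off the shelf: inside \cref{Prop:1:1}, the shrinking lemma \cref{Lm:3:2} is iterated with levels $k_j=\bsmu^-+\epsilon M 2^{-j}$, and for $p>2$ it genuinely requires $|\bsmu^-|\lesssim \epsilon M 2^{-j}$ to control the initial-time term $\int_{B_{8\varrho}\times\{-\de\varrho^{sp}\}}\mathfrak g_-(u,k_j)\,dx$ via $\mathfrak g_-(u,k_j)\le\bsc(|u|+|k_j|)^{p-2}(u-k_j)_-^2\lesssim(\epsilon M2^{-j})^p$; relaxing to $|\bsmu^-|\le cM$ with large fixed $c$ destroys the $j$-dependence and breaks the geometric convergence.

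The paper therefore does not iterate \cref{Prop:1:1} wholesale. It re-runs the internal chain \cref{Lm:3:1}$\to$\cref{Lm:3:2}$\to$\cref{Lm:3:3} with the base time $\mft_1$ chosen inside the time-set on which pointwise positivity $u\ge\bsmu^-+\eta M$ is already known, and exploits that choice structurally: since $k_j=\bsmu^-+\epsilon\eta M2^{-j}\le\bsmu^-+\eta M\le u(\cdot,\mft_1)$, the initial-time integral in the energy estimate of \cref{Lm:3:2} vanishes identically, so \cref{Lm:3:2} applies with \emph{no} assumption on $\bsmu^-$ at all. This observation --- not the covering bookkeeping you identify as the main difficulty --- is the heart of the lemma; it is what lets $\xi$ stay attached to the original $M$ rather than to the decaying $M_k$, and without it your iteration does not close in the degenerate range $p>2$.
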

\begin{proof}The proof follows by repeated application of \cref{Prop:1:1} and 
the proof will proceed according to the following steps:
	\begin{description}
		\item[Step 1:] We will apply \cref{Prop:1:1} with \cref{initialize1} and \cref{remark_degiorgi} (see \cref{fig5}) to get 
		\begin{equation}\label{first_iter_gen}
			\pm\lbr\bsmu^{\pm}-u\rbr\geq\eta M
			\quad
			\mbox{a.e.~in $B_{2\varrho}(x_o)\times\left(t_0+\tfrac{\de}{2}\varrho^{sp},t_0+\de\varrho^{sp}\right] .$}
		\end{equation}

		\item[Step 2:] Let us take any time level $\mft_1 \in \left(t_0+\tfrac{\de}{2}\varrho^{sp},t_0+\de{\rho}^{sp}\right]$ (see \cref{fig5}), then for $\eta$ as in \cref{Prop:1:1}, we have 
		\begin{equation}\label{eq3.33333}
			\left|\left\{\pm\lbr\bsmu^{\pm}-u(\cdot, \mft_1)\rbr\geq \eta M\right\}\cap B_\varrho(x_o)\right|
			=
			\big|B_\varrho\big|.
		\end{equation}
		\begin{figure}[ht]
		\begin{center}
			\begin{tikzpicture}[line cap=round,line join=round,>=latex,scale=0.5]
				\coordinate  (O) at (0,0);
				\draw[thick, draw=orange,pattern=north west lines, pattern color=orange, opacity=0.3] (-7,-2) rectangle (7,0);
				
				\draw[thick, draw=blue] (-3,-5) -- (3,-5);
				\draw[thick, draw=blue, dashed] (-3,-5) -- (-3,2);
				\draw[thick, draw=blue, dashed] (3,-5) -- (3,2);
				
				
				\draw[draw=black,thick] (-3,-3.5) -- (3,-3.5);
				\draw[draw=black,thick] (-3,1.5) -- (3,1.5);
				\draw[draw=teal, dashed, <->] (-3,-5.5) -- (3,-5.5);
				\draw[draw=teal, dashed, <->] (-7,-3) -- (7,-3);
				
				\foreach \y in{-2,-1.90,...,0}
				\draw[color=black, opacity=0.3] (-3,\y) -- (3,\y);
				
				\draw[decoration={brace,raise=5pt},decorate]
				(-3,-2) -- node[left=6pt] {\scriptsize $\mft_1$} (-3,0);
				\node  at (0,-6) {\scriptsize $\varrho$};
				\node  at (0,-2.5) {\scriptsize $2\varrho$};
				\node  at (4,-3.5) {\scriptsize $t=t_o$};
				\node  at (5,1.5) {\scriptsize $t=t_o+A\varrho^{sp}$};
				\node  [anchor=west] at (7,-2) {\scriptsize $t=t_o+\tfrac{\de}{2}\varrho^{sp}$};
				\node  [anchor=west] at (7,0) {\scriptsize $t=t_o+ \de \varrho^{sp}$};
			\end{tikzpicture}
		\end{center}
		\caption{\cref{Prop:1:1} gives the following pointwise information}
		\label{fig5}
	\end{figure}
	
		\item[Step 3:] We can now apply \cref{Lm:3:1} noting that \cref{eq3.33333} implies the first hypothesis of \cref{Lm:3:1} is satisfied with $\al =1$. Additionally, if the following is imposed to hold:
		\begin{equation}\label{corsec:exptime:hyp}
			\tail((u-\bsmu^{\pm})_{\pm};x_0,\varrho,(\mft_1,\mft_1+\de\varrho^{sp}]) \leq  \eta M \qquad \text{and} \quad |\bsmu^{\pm}|\leq 8M,  
		\end{equation}
		then all the hypothesis of \cref{Lm:3:1} are satisfied and  the following conclusion follows for some universal $\varepsilon = \varepsilon(\datanb) \in (0,1)$ and $\de = \de(\datanb)\in(0,1)$:
		\begin{equation}\label{corEq:3:1}
			\left|\left\{
			\pm\left(\bsmu^{\pm}-u(\cdot, t)\right)\geq \epsilon\eta M\right\} \cap B_{\varrho}(x_o)\right|
			\geq\frac{1}2 |B_\varrho|
			\quad \text{ for all } \,  t\in(\mft_1, \mft_1+\delta\varrho^{sp}].
		\end{equation}
		\item[Step 4:] 
		Our next step is to apply \cref{Lm:3:2} and for this, we need to be a bit more careful and keep track of all the constants. \emph{We show this calculations in the case of super solutions noting the same works also in the case of sub solutions}. In the proof of \cref{Lm:3:2}, we make the choice $k_j:=\bsmu^- + \frac{\varepsilon\eta M}{2^j}$ where $\eta$ is from \cref{corsec:exptime:hyp} and $\varepsilon$ as in \cref{corEq:3:1}. This gives, $u(\cdot,\mft_1) \geq \bsmu^-+\eta M \geq k_j$ and thus, we see that 
		\[
		\left.g_-(u,k_j)\right|_{t=\mft_1} \leq \left.\boldsymbol{C} (|u| + |k_j|)^{p-2}(u-k_j)_-^2\right|_{t=\mft_1} = 0.
		\]
		
		 As a consequence, in the proof of \cref{Lm:3:2}, we do not need to assume any bound on $\bsmu^-$ since the last term appearing on the right hand side of \cref{Lm:3:2:energy} is zero. Thus we obtain the estimate
		\[
		\int_{\mft_1}^{\mft_1+\delta\varrho^{sp}}\int_{B_{4\varrho}}|(u-k_j)_-(x,t)|\int_{B_{4\varrho}}\frac{|(u-k_j)_+(y,t)|}{|x-y|^{n+sp}}\,dx\,dy\,dt \leq \frac{\bsc}{\delta\varrho^{sp}}
		\left(\frac{\epsilon\eta M}{2^j}\right)^{p} |\widehat{ \mcq}_1|,
		\]
		where $\widehat{{\mcq}}_1=B_{4\varrho}(x_0)]\times (\mft_1,\mft_1+\de\varrho^{sp}]$.

		We now invoke \cref{lem:shrinking} (in which we replace $u$ with $u - \bsmu^-$) with $l =  \epsilon \eta M 2^{-j}$ and $m = \epsilon \eta M$ to conclude that the following
		\[
		|\{u-\bsmu^{-} < \epsilon\eta M2^{-(j+1)}\}\cap\widehat{ \mcq}_1| \leq \left(\frac{\bsc}{2^{j}-1}\right)^{p-1}|\widehat{ \mcq}_1|,
		\]
		holds for a constant $\bsc>0$ depending only on $\alpha$ and data.
		\descitemnormal{Step 5:}{step5} In this step, we apply \cref{Lm:3:3} with cut-off function depending only on the space variable $x$ along with \cref{remark_degiorgi} to find a $\widehat\nu_1 = \widehat\nu_1(\datanb{},\de) \in (0,1)$ such that if 
		\begin{equation*}
			\left|\left\{
			\pm\left(\bsmu^{\pm}-u\right)\le \epsilon \eta M2^{-(j+1)}\right\}\cap \widehat\mcq\right|
			\le
			\widehat\nu_1|\widehat\mcq|,
		\end{equation*}
		holds where $\widehat{{\mcq}}=B_{4\varrho}(x_0)]\times (\mft_1,\mft_1+\de\varrho^{sp}]$, along with the assumptions 
		\begin{equation*}
			\tail((u-\bsmu^{\pm})_{\pm};x_0,\varrho,(\mft_1,\mft_1+\de\varrho^{sp}]) \leq  \epsilon \eta M2^{-(j+1)} \qquad \text{and} \qquad |\bsmu^{\pm}|\leq 8M,
		\end{equation*}
		then the following conclusion follows:
		\begin{equation*}
			\pm\lbr\bsmu^{\pm}-u\rbr\ge\tfrac{1}2 \epsilon \eta M2^{-(j+1)}
			\quad
			\mbox{ on }\quad  B_{2\varrho}(x_o)\times \left(\mft_1+\tfrac{\de}{2}\varrho^{sp}, \mft_1+\de\varrho^{sp}\right].
		\end{equation*}
		Recall that $\mft_1 \in \left(t_0+\tfrac{\de}{2}\varrho^{sp},t_0+\de{\rho}^{sp}\right]$ as in \cref{fig5}. 
		\descitemnormal{Step 6:}{step6} In particular, by varying $\mft_1$ over $\left(t_0+\tfrac{\de}{2}\varrho^{sp},t_0+\de{\rho}^{sp}\right]$, we see that \descrefnormal{step5}{Step 5} gives 	
		\begin{equation}\label{second_iter_gen}
			\pm\lbr\bsmu^{\pm}-u\rbr\ge\eta \eta_1M
			\quad
			\mbox{ on }\quad  B_{2\varrho}(x_o)\times \left(t_o+\de\varrho^{sp}, t_o+2\de\varrho^{sp}\right],
		\end{equation}
		where we have set $\eta_1 = \tfrac{1}2 \epsilon 2^{-(j+1)}$.
		
		In particular, we can combine \cref{first_iter_gen} and \cref{second_iter_gen} (see \cref{fig6}) to get
		\begin{equation*}
			\pm\lbr\bsmu^{\pm}-u\rbr\ge\eta \eta_1M
			\quad
			\mbox{ on }\quad  B_{2\varrho}(x_o)\times \left(t_o+\tfrac{\de}{2}\varrho^{sp}, t_o+2\de\varrho^{sp}\right].
		\end{equation*}
			\begin{figure}[ht]				
				\begin{center}
					\begin{tikzpicture}[line cap=round,line join=round,>=latex,scale=0.5]
						\coordinate  (O) at (0,0);
						\draw[thick, draw=teal,pattern=north west lines, pattern color=teal, opacity=0.3] (-7,0) rectangle (7,3);
						
							\draw[thick, draw=orange,pattern=north west lines, pattern color=orange, opacity=0.3] (-7,-2) rectangle (7,0);
						\node [anchor=south] at (9.2,2.5) {\scriptsize \textcolor{teal}{$t=t_o + 2\de \varrho^{sp}$}};
						
						\draw[thick, draw=blue] (-3,-5) -- (3,-5);
						\draw[thick, draw=blue, dashed] (-3,-5) -- (-3,4);
						\draw[thick, draw=blue, dashed] (3,-5) -- (3,4);

						\draw[draw=black,thick] (-3,-3.5) -- (3,-3.5);
						\draw[draw=blue, dashed, <->] (-3,-5.5) -- (3,-5.5);
						\draw[draw=blue, dashed, <->] (-7,-3) -- (7,-3);
		
						\node  at (0,-6) {\scriptsize $\varrho$};
						\node  at (0,-2.5) {\scriptsize $2\varrho$};
						\node  at (4,-3.5) {\scriptsize $t=t_o$};
						\node  [anchor=south] at (9,-2.5) {\scriptsize \textcolor{orange}{$t=t_o+ \tfrac{\de}{2} \varrho^{sp}$}};
						\node  [anchor=south] at (9,-0.5) {\scriptsize \textcolor{orange}{$t=t_o+ \de \varrho^{sp}$}};

\node at (0,-1) {\scriptsize \textcolor{orange}{$\pm\lbr\bsmu^{\pm}-u\rbr\ge\eta M$}};

\node at (0,1.5) {\scriptsize \textcolor{teal}{$\pm\lbr\bsmu^{\pm}-u\rbr\ge\eta \eta_1M$}};
					\end{tikzpicture}
				\end{center}
				\caption{Application of \descrefnormal{step6}{Step 6}}
				\label{fig6}
			\end{figure}
	\end{description}

	Once we have this, the rest of the proof follows as in the proof of \cref{Prop:1:1}. 
	
\end{proof}

\section{Reduction of oscillation near zero: Degenerate Case}\label{section5}

In this case, we assume $p \geq 2$ and prove H\"older regularity. 

\subsection{Setting up the geometry}
Let us fix some reference cylinder $\mbcq \subset \omt$. Fix $(x_o,t_o)\in \omt$ and let ${\mcq}_o={\mcq}_{\varrho} = B_{\varrho}(x_0) \times (t_0-A\varrho^{sp},t_0] \subset \mbcq $ where $A \geq 1$ will be chosen later in terms of the data, be any cylinder centred at $(x_o,t_o)$. 
Assume, without loss of generality, that $(x_o,t_o) = (0,0)$ and let
\begin{equation}\label{defbsom}
\bsom := 2\esssup_{\mbcq} |u| +\tail(|u|; \mbcq), \quad \bsmu^+:=\esssup_{{\mcq_o}}u,
\txt{and}
\bsmu^-:=\essinf_{{\mcq_o}}u,
\end{equation}
then it is easy to see that 
\[
\essosc_{\mcq_o} u   = \bsmu^+ - \bsmu^-\leq \bsom.
\]

For some positive constant $\tau \in (0,\tfrac14)$ that will be made precise later on, we have two cases
\refstepcounter{equation}
\def\myname{\theequation}
\begin{align}[left=\empheqlbrace]
		&\mbox{when $u$ is \emph{near} zero:}  \bsmu^-\le\tau\bsom  \,\,\text{and} \,\,
			\bsmu^+\ge-\tau\bsom,\tag*{(\myname$_{a}$)}\label{Eq:Hp-main1}\\
		&\mbox{when $u$ is \emph{away} from zero:} \bsmu^- >\tau\bsom \,\,\text{or}\,\, \bsmu^+<-\tau\bsom.\tag*{(\myname$_{b}$)}\label{Eq:Hp-main2}
\end{align}
Furthermore, we will always assume the following is satisfied:
\begin{equation}\label{Eq:mu-pm-}
	\bsmu^+ -\bsmu^- >\tfrac12\bsom.
\end{equation} since in the other case, we trivially get the reduction of oscillation.


\subsection{Reduction of Oscillation near zero when \texorpdfstring{\tlcref{Eq:Hp-main1}}. holds}

\begin{description}
	\item[Lower bound for $\bsmu^-$:] In this case, we have
	\begin{equation}\label{deg5.3a}
		\bsmu^- = \bsmu^- - \bsmu^+ + \bsmu^+ \geq -\bsom + \bsmu^+ \geq -(1+\tau)\bsom.
	\end{equation}
	\item[Upper bound for $\bsmu^+$:] In this case, we have
	\begin{equation}\label{deg5.3b}
		\bsmu^+ = \bsmu^+ - \bsmu^- + \bsmu^- \leq \bsom + \bsmu^- \leq (1+\tau)\bsom.
	\end{equation}
\end{description}
In particular, \tlcref{Eq:Hp-main1} implies
\begin{equation}\label{boundmu}
	|\bsmu^{\pm}| \leq (1+\tau)\bsom.
\end{equation}

\begin{assumption}\label{assump1}
	Let us suppose that there exists $\mft \in (-(A-1)(c_o\varrho)^{sp},0]$ such that 
	\begin{equation*}
		\abs{\left\{u\le\bsmu^-+\tfrac{\bsom}{4} \right\}
		\cap 
		{\mcq}_{c_o\varrho}(0,\bar{t})}\le \nu|{\mcq}_{c_o\varrho}|,
	\end{equation*}
where $\nu$ is the constant determined in \cref{Lm:3:3} depending on $\data{}$ and the constant $c_o$ is to be determined according to \cref{degclaim5.2} and \cref{Lm:6:1}. Here we have taken ${\mcq}_{c_o\varrho}(0,\mft) = B_{c_o\varrho}(0) \times ( \mft-(c_o\varrho)^{sp},\mft]$.
\end{assumption}
	\begin{figure}[ht]
	\begin{center}
		\begin{tikzpicture}[line cap=round,line join=round,>=latex,scale=0.5]
			\coordinate  (O) at (0,0);
			
			
			\draw[thick, draw=orange, pattern color=orange,pattern=north east lines, opacity=0.3] (-1,2) rectangle (1,1);
			
			\draw[thick, draw=blue,pattern=north east lines, pattern color=blue, opacity=1] (-0.5,2) rectangle (0.5,1.5);
			
			\node  [anchor=west] at (3,2) {\scriptsize \textcolor{orange}{$t=\mft$}};
			\draw[dotted,->] (1,2) -- (3,2);
			\draw[dotted,->] (1,1) -- (3,1);
			\node  [anchor=west] at (3,1) {\scriptsize \textcolor{orange}{$t=\mft-(c_o\varrho)^{sp}$}};
			

			\node [anchor=east] at (-1,1.5) {\scriptsize{$\mcq_{c_o\varrho}$}};
			\node [anchor=east] at (-3,0) {{$\mcq_o$}};
			
			\draw[thick, draw=blue, dashed] (-3,-3.5) -- (-3,6);
			\draw[thick, draw=blue, dashed] (3,-3.5) -- (3,6);
			
			\draw[thick, draw=orange, dashed] (-1,-3.5) -- (-1,6);
			\draw[thick, draw=orange, dashed] (1,-3.5) -- (1,6);
			
			
			\draw[draw=black,thick] (-3,-3.5) -- (3,-3.5);
			
			\draw[draw=black,thick] (-3,5.5) -- (3,5.5);
			\node  [anchor=west] at (3,5.5) {\scriptsize \textcolor{black}{$t=t_o$}};
			
			\draw[draw=blue, dashed, <->] (-3,-4) -- (3,-4);
			\draw[draw=orange, dashed, <->] (-1,-3) -- (1,-3);
			
			%
			\node  at (0,-4.5) {\scriptsize $\varrho$};
			\node  at (0,-2.5) {\scriptsize $c_o\varrho$};
			\node [anchor=west] at (3,-3.5) {\scriptsize $t_o- A\varrho^{sp}$};
		\end{tikzpicture}
	\end{center}
	\caption{\cref{assump1}}
	\label{fig12}
\end{figure}


\subsubsection{Reduction of Oscillation when \texorpdfstring{$-\tau \bsom \leq \bsmu ^-\leq \tau \om$}. for subsolutions near its infimum}

Since $\tau \in (0,\tfrac14)$, we see that $|\bsmu^-| \leq (1+\tau)\om$ and hence the second condition  in \cref{Lm:3:3:hypothesis} is automatically satisfied with $M = \tfrac{\bsom}{4}$. 
 
 \begin{claim}\label{degclaim5.2}
 	There exists constant $c_o(\datanb{}) \in (0,1)$  such that $\tail((u-\bsmu^{-})_{-};0;c_o\varrho;(\mft - (c_o\varrho)^{sp},\mft] \leq \tfrac{\bsom}{4}$.
 	\end{claim}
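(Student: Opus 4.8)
The plan is to exploit one simple but decisive observation: the negative truncation $(u-\bsmu^-)_-$ vanishes identically on all of $\mcq_o$. Indeed, since $\bsmu^-=\essinf_{\mcq_o}u$ we have $u\ge\bsmu^-$ a.e.\ in $\mcq_o=B_\varrho(0)\times(-A\varrho^{sp},0]$, so $(u-\bsmu^-)_-\equiv 0$ there; and because $\mft\le 0$ while $\mft-(c_o\varrho)^{sp}>-(A-1)(c_o\varrho)^{sp}-(c_o\varrho)^{sp}=-A(c_o\varrho)^{sp}\ge -A\varrho^{sp}$ (using $c_o<1$), the slab $B_\varrho(0)\times(\mft-(c_o\varrho)^{sp},\mft]$ lies inside $\mcq_o$. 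Hence in the tail integral over $\RR^n\setminus B_{c_o\varrho}(0)$ the inner annulus $B_\varrho(0)\setminus B_{c_o\varrho}(0)$ contributes nothing, so the effective inner radius is $\varrho$, not $c_o\varrho$. This is the whole point: it leaves a free factor $c_o^{sp}$ with which to absorb the remaining (otherwise uncontrolled, $s$-dependent) constants. Note that this step uses nothing about $u$ beyond the definition of $\bsmu^-$.

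Granting that reduction, the estimate is routine. I would bound $(u-\bsmu^-)_-^{p-1}\le C(p)\big(|u|^{p-1}+|\bsmu^-|^{p-1}\big)$ and use $|\bsmu^-|\le(1+\tau)\bsom$ from \cref{boundmu}. Then I would split $\RR^n\setminus B_\varrho(0)$ into the part inside the spatial slice of $\mbcq$, where $|u|\le\esssup_{\mbcq}|u|\le\tfrac12\bsom$, and the part outside $\mbcq$, handled by a comparison of kernels exactly as in \cref{sec:tail} together with the definition of $\tail$ and the bound $\tail(|u|;\mbcq)\le\bsom$. Since $\int_{\RR^n\setminus B_\varrho(0)}|y|^{-n-sp}\,dy$ and the tail of $\mbcq$ both scale like $\varrho^{-sp}$ (here one uses that the radius of $\mbcq$ is at least $\varrho$, as $\mcq_o\subset\mbcq$), multiplying through by $(c_o\varrho)^{sp}$ leaves a bound $C(n,s,p)\,c_o^{sp}\,\bsom^{p-1}$ for each time slice; taking the supremum over $t\in(\mft-(c_o\varrho)^{sp},\mft]$ and then the $(p-1)$-th root gives $\tail((u-\bsmu^-)_-;0;c_o\varrho;(\mft-(c_o\varrho)^{sp},\mft])\le C(n,s,p)^{1/(p-1)}\,c_o^{sp/(p-1)}\,\bsom$. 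It then suffices to fix $c_o=c_o(\datanb{})\in(0,1)$ small enough that $C(n,s,p)^{1/(p-1)}c_o^{sp/(p-1)}\le\tfrac14$ (this $c_o$ may be shrunk further to meet the requirements of \cref{Lm:6:1}).

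The only real obstacle is spotting the first step. If one naively tries to estimate $(u-\bsmu^-)_-$ on the annulus $B_\varrho(0)\setminus B_{c_o\varrho}(0)$ directly, the best available pointwise bound there is of size $\esssup_{\mbcq}|u|\approx\bsom$ with no smallness, and the resulting contribution is comparable to $\bsom$ — far too large for the required threshold $\tfrac{\bsom}{4}$; recognising that the truncation is in fact zero on that annulus is what rescues the argument. A minor point worth verifying en route is precisely the inclusion $(\mft-(c_o\varrho)^{sp},\mft]\subseteq(-A\varrho^{sp},0]$ used above, which is where the hypothesis $\mft\in(-(A-1)(c_o\varrho)^{sp},0]$ of \cref{assump1} enters.
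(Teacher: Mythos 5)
Your proof is correct and is essentially the same argument as the paper's: the paper's \cref{deg5.5} is precisely your observation that $(u-\bsmu^-)_-\equiv 0$ on $B_\varrho\times(\mft-(c_o\varrho)^{sp},\mft]\subset\mcq_o$, packaged as \cref{rmk:4.2}, and the paper's \cref{deg5.6} is your subsequent bound of the $\varrho$-radius tail by $C\bsom^{p-1}$ via the pointwise bound $(u-\bsmu^-)_-^{p-1}\lesssim |u|^{p-1}+|\bsmu^-|^{p-1}$, the bound $|\bsmu^-|\lesssim\bsom$, and the splitting of $\RR^n\setminus B_\varrho$ across $\partial B_{\tilde R}$.
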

 \begin{proof}[Proof of \cref{degclaim5.2}]
 From \cref{rmk:4.2} and \cref{defbsom}, we have
 \begin{equation}\label{deg5.5}
 \tail((u-\bsmu^{\pm})_{\pm};0;c_o\varrho;(\mft - (c_o\varrho)^{sp},\mft]) \leq c_o^{\frac{sp}{p-1}}\tail((u-\bsmu^{\pm})_{\pm};0;\varrho;( \mft - (c_o\varrho)^{sp},\mft]).
 \end{equation}
 Furthermore, we have the following sequence of estimates:
 \begin{equation}\label{deg5.6}
 	\begin{array}{rcl}
 		\tail((u-\bsmu^{-})_{-};0;\varrho;(\mft, \mft + \varrho^{sp}])^{p-1} & = & \esssup_{t \in(\mft, \mft + (c_o\varrho)^{sp}) }\varrho^{sp} \int_{\RR^n \setminus B_{\varrho}} \frac{(u-\bsmu^-)_-^{p-1}}{|x|^{n+sp}} \,dx\\
 		& \apprle  &  (\bsmu^-)^{p-1} + 
 		 \tail(u_{-};\mbcq)^{p-1} \\
 		 &&+ \esssup_{t \in(\mft - (c_o\varrho)^{sp},\mft] }\varrho^{sp} \int_{B_{\tilde{R}} \setminus B_{\varrho}} \frac{u_-^{p-1}}{|x|^{n+sp}} \,dx\\
 		 & \apprle & \bsom^{p-1},
 		\end{array}
 \end{equation}
where we have used $\bsmu^- \leq \bsom$ from \cref{boundmu}, $\tail(u_{-};\mbcq) \leq \bsom$ from \cref{defbsom}, $\tau \leq \tfrac14$ and $u_- \leq \bsom$ on $\mbcq$ holds due to $u_- \leq |\bsmu^-|$ and \cref{boundmu}. 

Combining \cref{deg5.5} and \cref{deg5.6}, we can now choose $c_o$ small enough such that the claim follows. 
 \end{proof}
 
 \begin{remark}
 	\cref{degclaim5.2} does not see the time interval and instead, we actually obtain a constant $c_o = c_o(\datanb{})$ small such that $\tail((u-\bsmu^{-})_{-};0;c_o\varrho;(-(A-	1)\varrho^{sp},0]) \leq \tfrac{\bsom}{4}$.
 \end{remark}


Thus both the conditions in \cref{Lm:3:3:hypothesis} are satisfied with $M = \tfrac{\bsom}{4}$ and we can apply \cref{Lm:3:3} to conclude (blue shaded region in \cref{fig12}) that
\begin{equation}\label{Eq:lower-bd}
	u\ge\bsmu^-+\tfrac{1}{8}\bsom
	\quad
	\mbox{a.e.~in }\,  \mcq_{\frac{c_o\varrho}{2}} = B_{\frac{c_o\varrho}2} \times (\mft- \lbr \tfrac{1}{2}c_o\varrho\rbr^{sp}, \mft].
\end{equation}

\begin{remark}
	Making use of \cref{remark_degiorgi}, instead of the time interval $(\mft- ( \tfrac{c_o\varrho}{2})^{sp}, \mft]$ in \cref{Eq:lower-bd}, we can also get $(\mft-  \tfrac{1}{2}(c_o\varrho)^{sp}, \mft]$, see \cref{remark_degiorgi} for this modification. 
\end{remark}
\begin{lemma}\label{degclaim5.3}
	There exist constants $\bar\eta = \bar\eta(\datanb{,A}) \in (0,1)$, $\eta=\eta(\datanb{})\in(0,1)$  and $c_o=c_o(\datanb{,A})$ such that if we take $\tau \leq 2\eta$ in \tlcref{Eq:Hp-main1} and  $|\bsmu^-| \leq \tau \bsom$, then we have 
	\[
	u \geq \bsmu^- + \bar\eta \bsom \txt{in} B_{\frac12 c_o\varrho} \times (t_o -\tfrac14 (c_o\varrho)^{sp},t_o],
	\]
	Here $\de = \de(\datanb{})$  is obtained by applying \cref{Prop:1:1} with $\al =1$. 
\end{lemma}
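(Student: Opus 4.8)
The plan is to feed the pointwise lower bound \cref{Eq:lower-bd} --- which, by \cref{assump1}, \cref{degclaim5.2} and the De~Giorgi Lemma \cref{Lm:3:3} together with the time‑interval improvement of \cref{remark_degiorgi}, reads $u\ge\bsmu^-+\tfrac18\bsom$ a.e.\ in $B_{\frac12 c_o\varrho}(0)\times(\mft-\tfrac12(c_o\varrho)^{sp},\mft]$ --- into the forward iteration \cref{iteratedprop}, thereby carrying positivity from the time level $\mft$ supplied by \cref{assump1} up to the top $t_o=0$ of $\mcq_o$. Throughout I write $M_\circ:=\tfrac18\bsom$ and $r:=\tfrac12 c_o\varrho$; note that for every time $t$ in the interval above one has the trivial density statement $|\{u(\cdot,t)-\bsmu^-\ge M_\circ\}\cap B_r|=|B_r|$, i.e.\ the initial hypothesis of \cref{iteratedprop} and of \cref{Prop:1:1} with $\alpha=1$.

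First I would dispose of the easy case $\mft\ge t_o-r^{sp}$: there the cylinder of \cref{Eq:lower-bd} already covers $B_r\times(t_o-\tfrac14(c_o\varrho)^{sp},\mft]$, and the residual slab $B_r\times(\mft,t_o]$ of thickness $<r^{sp}$ is filled by a bounded, data‑dependent number of applications of \cref{Prop:1:1} based at time levels inside the interval of \cref{Eq:lower-bd}. In the remaining case $\mft<t_o-r^{sp}$ I would apply \cref{iteratedprop} at the base point $(0,t_\circ')$ with $t_\circ':=\mft-\tfrac{\de}{2}r^{sp}$ (which lies in the interval of \cref{Eq:lower-bd}, since $\tfrac{\de}{2}r^{sp}<\tfrac12(c_o\varrho)^{sp}$ for any $\de<1$), with $\alpha=1$, level $M=M_\circ$, and the parameter $A':=(t_o-t_\circ')/r^{sp}=(t_o-\mft)/r^{sp}+\tfrac{\de}{2}>1$. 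Since $\mft\in(-(A-1)(c_o\varrho)^{sp},t_o]$ and $r=\tfrac12 c_o\varrho$ one checks $1<A'\le C(n,p,s)\,A$, so $A'$ --- and hence the constant $\bar\eta$ returned by \cref{iteratedprop} --- depends only on $\datanb$ and $A$; crucially $t_\circ'+A'r^{sp}=t_o$, so the interval $I':=(t_\circ',t_o]$ on which \cref{iteratedprop} is invoked is contained in $(t_o-A\varrho^{sp},t_o]$, the time interval of $\mcq_o$.

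It then remains to verify the two hypotheses of \cref{iteratedprop} on $I'$. The bound $|\bsmu^-|\le\xi M_\circ$ is immediate: for $1<p\le2$ one has $\xi=8$ and $\xi M_\circ=\bsom\ge\tau\bsom\ge|\bsmu^-|$; for $p>2$ one has $\xi=2\eta$, and the standing smallness of $\tau$ relative to $\eta$ (this is the role of the hypothesis $\tau\le2\eta$, with $\tau$ taken as small as this dependence demands) gives $|\bsmu^-|\le\tau\bsom\le\xi M_\circ=\tfrac{\eta}{4}\bsom$. The tail bound $\tail((u-\bsmu^-)_-;0,r,I')\le\bar\eta M_\circ$ is the main point. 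Because $\bsmu^-=\essinf_{\mcq_o}u$ and $I'\subseteq(t_o-A\varrho^{sp},t_o]$, we have $(u-\bsmu^-)_-\equiv0$ on $B_\varrho\times I'$; hence, as in \cref{rmk:4.2}, $\tail((u-\bsmu^-)_-;0,r,I')\le(c_o/2)^{\frac{sp}{p-1}}\tail((u-\bsmu^-)_-;0,\varrho,I')$, the rescaling being \emph{lossless} precisely because $(u-\bsmu^-)_-$ vanishes on the annulus $B_\varrho\setminus B_r$. The tail at scale $\varrho$ is estimated by the computation in the proof of \cref{degclaim5.2} (cf.\ \cref{deg5.6}): splitting $(u-\bsmu^-)_-\le u_-+|\bsmu^-|$ and using $|\bsmu^-|\le\tau\bsom$, $u_-\le\bsom$ on $\mbcq$, and $\tail(u_-;\mbcq)\le\bsom$ (from \cref{defbsom}, \cref{boundmu}) gives $\tail((u-\bsmu^-)_-;0,\varrho,I')\le C(\datanb)\,\bsom$; consequently the tail at scale $r$ is $\le C(\datanb)\,c_o^{\frac{sp}{p-1}}\bsom$, and choosing $c_o=c_o(\datanb,A)$ small enough --- small in terms of $\bar\eta$, hence of $A$ --- makes it $\le\bar\eta M_\circ$.

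With both hypotheses in hand, \cref{iteratedprop} yields $u\ge\bsmu^-+\bar\eta M_\circ$ a.e.\ in $B_{2r}(0)\times(t_\circ'+\tfrac{\de}{2}r^{sp},t_o]=B_{c_o\varrho}(0)\times(\mft,t_o]$. Combining this with \cref{Eq:lower-bd} on $B_r\times(\mft-\tfrac12(c_o\varrho)^{sp},\mft]$ (where $u\ge\bsmu^-+M_\circ\ge\bsmu^-+\bar\eta M_\circ$) and using $\mft\le t_o$, so that $(t_o-\tfrac14(c_o\varrho)^{sp},t_o]\subseteq(\mft-\tfrac12(c_o\varrho)^{sp},t_o]$, we obtain $u\ge\bsmu^-+\bar\eta M_\circ$ a.e.\ in $B_{\frac12 c_o\varrho}(0)\times(t_o-\tfrac14(c_o\varrho)^{sp},t_o]$, and relabelling $\bar\eta M_\circ/\bsom$ as $\bar\eta$ finishes the proof. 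The hard part will be the bookkeeping that keeps $I'$ inside $\mcq_o$: it is only there that $\bsmu^-=\essinf u$ forces $(u-\bsmu^-)_-$ to vanish in $B_\varrho$, and hence only there that the passage from scale $\varrho$ to scale $r=\tfrac12 c_o\varrho$ costs no negative power of $c_o$; an overshoot past $t_o$ would reintroduce a factor $c_o^{-n/(p-1)}$ in the tail and break the argument, which is why the iteration must be stopped exactly at $t_o$ and the case of $\mft$ close to $t_o$ handled separately.
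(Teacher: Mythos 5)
Your proposal is correct and follows the same strategy as the paper: use \cref{Eq:lower-bd} (via \cref{assump1}, \cref{degclaim5.2}, \cref{Lm:3:3} and \cref{remark_degiorgi}) to obtain the full-measure hypothesis $\alpha=1$ at a time level near $\mft$, then feed this into \cref{iteratedprop} to propagate the pointwise lower bound forward to $t_o$, with the tail alternative secured by choosing $c_o$ small exactly as in \cref{degclaim5.2}. You spell out two details that the paper leaves implicit: the explicit computation $1<A'\le C(\datanb)A$ for the forward-iteration parameter (which is what makes $\bar\eta$ depend on $A$), and the case distinction between $\mft$ far from and close to $t_o$ — the latter is worth flagging, since when $\mft$ is very close to $t_o$ the effective $A'$ for \cref{iteratedprop} could drop to or below $1$ for small $sp$, so one must fall back on a few direct applications of \cref{Prop:1:1} (or simply note that \cref{Eq:lower-bd} already covers the target strip). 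Your observation that the passage from scale $\varrho$ to scale $r=\tfrac12 c_o\varrho$ in the tail estimate is lossless precisely because $(u-\bsmu^-)_-$ vanishes on $B_\varrho\times I'$ with $I'\subset (t_o-A\varrho^{sp},t_o]$ correctly identifies the role of the inclusion $I'\subset\mcq_o$; this is the same mechanism \cref{rmk:4.2} and \cref{degclaim5.2} rely on.
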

\begin{proof}[Proof of \cref{degclaim5.3}] Since \cref{Eq:lower-bd} holds (see also \cref{remark_degiorgi}), we see that the measure hypothesis in \cref{Prop:1:1} holds with $\al =1$ at the time level $t = \mft - \tfrac12\lbr c_o\varrho\rbr^{sp}$.

Let $\de = \de(\datanb{})$ (independent of $A$ at this instance) and $\eta = \eta(\datanb{})$ be as obtained in \cref{Prop:1:1} and $c_o = c_o(\datanb{})$ from \cref{degclaim5.2}, then we can further impose $\tfrac{\de}{2} \leq  \tfrac14$.
%
%
%
With this modification, we can now apply \cref{iteratedprop} and \cref{remark_degiorgi} to obtain an $\bar\eta = \bar\eta(\datanb{,A}) \in (0,1)$ such that we have
\[
u \geq \bsmu^- + \bar\eta \bsom \txt{in} B_{\frac12 c_o\varrho} \times (\mft -\tfrac12 (c_o\varrho)^{sp} + \tfrac{\de}{2}(c_o\varrho)^{sp},t_o],
\]
provided the $\tail$ alternative in the hypothesis of \cref{iteratedprop} holds. This can be ensured by now choosing $c_o=c_o(\datanb{,A})$ small and applying \cref{degclaim5.2}. Note that this is where $c_o$ additionally depends on $A$. 

Since $\mft$ is an arbitrary time level, we see that the following inclusion holds: 
\[
(t_o -\tfrac14 (c_o\varrho)^{sp},t_o] \subset (\mft -\tfrac12 (c_o\varrho)^{sp} + \tfrac{\de}{2}(c_o\varrho)^{sp},t_o],
\]
from which  the conclusion follows.
\end{proof}
\begin{remark}
Having obtained $\eta$ and hence $\tau$ according to \cref{degclaim5.3}, we are left to consider the case $\bsmu ^-\leq -\tau \bsom$ to complete the reduction of oscillation when \tlcref{Eq:Hp-main1} holds. In particular, this corresponds to the case $|\bsmu^-| \leq \tau \bsom$ does not hold in \cref{Prop:1:eq}.
\end{remark}
\subsubsection{Reduction of Oscillation when \texorpdfstring{$\bsmu ^-\leq -\tau \bsom$}. for subsolutions near its infimum}

Due to the restriction on $\bsmu^+$ in \tlcref{Eq:Hp-main1}, from \cref{deg5.3a},  we  have $\bsmu^-\geq-(1+\tau)\bsom > -2\bsom$.
Thus,  we proceed further with the assumptions
\begin{equation}\label{Eq:H1}
	\left\{
	{\def\arraystretch{1}\begin{array}{c}
		-2\bsom<\bsmu^-<-\tau\bsom,\\
		 u(\cdot, \mft-(\tfrac{c_o\varrho}2 )^{sp}]\ge\bsmu^-+\tfrac{1}8\bsom\;\;\mbox{a.e.~in}\,\, B_{c_o\frac{\varrho}{2}},
	\end{array}}
	\right.
\end{equation}
which holds due to \cref{Eq:lower-bd}.  

In the next lemma we establish that the pointwise information in \cref{Eq:H1}
propagates to the top of the cylinder using de Giorgi iteration. The novelty of the lemma is that we can use \cref{Eq:H1} to use cut-off functions independent of time to obtain a pointwise expansion of positivity type result.
\begin{lemma}\label{Lm:6:1}
	Suppose \cref{Eq:H1} and \cref{Eq:mu-pm-} holds,
	then there exists constants $\eta_1 = \eta_1(\datanb{,A,\tau})\in(0,1)$ and $c_o = c_o(\datanb{,\eta_1}) \in (0,1)$ such that the following holds
	\[
	u\ge\bsmu^-+\eta_1\bsom\quad\mbox{a.e. in}\,\, 
	B_{\frac14 c_o\varrho}\times(\mft-(\tfrac12 c_o \varrho)^{sp},t_o].
	\]
	provided
	\begin{equation}\label{degtailh}
	\tail((u-\bsmu^{-})_{-};0;\tfrac14 c_o\varrho;(\mft-(\tfrac12 c_o \varrho)^{sp},t_o]) \leq \eta_1  \bsom,
	\end{equation}
	As a result, we have a reduction of oscillation 
	\[
	\essosc_{\widehat{{\mcq}}_1}u\le(1-\eta_1)\bsom\quad\mbox{where $\widehat{{\mcq}}_1
		=B_{\frac14 c_o\varrho}\times(-(\tfrac12 c_o\varrho)^{sp},0]$.}
	\]
\end{lemma}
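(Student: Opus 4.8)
The plan is to run, with cut-off functions depending on $x$ only, the expansion-of-positivity machinery behind \cref{Prop:1:1} (positivity in time $\to$ shrinking $\to$ De Giorgi), iterated up to $t_o$ in the pattern of \cref{iteratedprop} since the time interval $(\mft-(\tfrac12c_o\varrho)^{sp},t_o]$ may be long. Assume $(x_o,t_o)=(0,0)$ and $u$ a super-solution. The reason spatial cut-offs are legitimate is exactly the content of the ``novelty'': \cref{Eq:H1} gives $u\ge\bsmu^-+\tfrac18\bsom$ on (and just above) the slice $t=\mft-(\tfrac12c_o\varrho)^{sp}$, so for any level $k\le\bsmu^-+\tfrac18\bsom$ one has $(u-k)_-\equiv0$ there; hence in \cref{Prop:energy}, taken over a cylinder with that bottom time and with $\zeta=\zeta(x)$, both the initial-data term $\int\zeta^p\mathfrak g_-(u,k)\,dx$ (which vanishes) and the $|\partial_t\zeta|$-term (absent) disappear. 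These are precisely the terms whose treatment in the proof of \cref{Prop:1:1} forced the restrictive hypothesis $|\bsmu^\pm|\le\xi M$; dropping them, we only need $|\bsmu^-|\le2\bsom$ together with $\bsmu^-\le-\tau\bsom$ from \cref{Eq:H1}, the latter keeping every relevant level ($\le\bsmu^-+\tfrac18\bsom$) at distance $\gtrsim\tau\bsom$ from $0$, so that $(|u|+|k|)^{p-2}\gtrsim(\tau\bsom)^{p-2}$ and $\mathfrak g_-$ is bounded below via \cref{lem:g} --- at the cost of a $\tau$-dependence in the constants.

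Concretely, I would carry out on a stack of time-slabs of height $\sim(c_o\varrho)^{sp}$ between $t=\mft-(\tfrac12c_o\varrho)^{sp}$ and $t_o=0$ the three standard sub-steps with spatial cut-offs: (i) the argument of \cref{Lm:3:1}, initialised with full measure ($\alpha=1$) at the bottom of the slab --- the $\alpha=1$ initial term vanishing, the bound $|\bsmu^-|\le8M$ replaced by $|\bsmu^-|\le2\bsom,\ \bsmu^-\le-\tau\bsom$ --- yielding a lower measure bound $\ge\tfrac12|B|$ at all times of the slab for a slightly lower level; (ii) the shrinking \cref{lem:shrinking}, fed by the integral estimate from \cref{Prop:energy} (spatial cut-off, vanishing initial term, Tail controlled through \cref{degtailh} as in \cref{sec:tail}), to push the bad set below the De Giorgi threshold $\nu$ of \cref{Lm:3:3}; (iii) the iteration of \cref{Lm:3:3}, again with spatial cut-offs, using \cref{Prop:energy}, the parabolic fractional Sobolev inequality \cref{fracpoin} and H\"older's inequality to reach $\bsy_{j+1}\le Cb^{\,j}\bsy_j^{1+s/(n+2s)}$ and then \cref{geo_con}, upgrading to a pointwise lower bound $u\ge\bsmu^-+\eta'\bsom$ on the slightly smaller slab. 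Exactly as in the passage from \cref{Prop:1:1} to \cref{iteratedprop}, the pointwise bound at the top of one slab re-initialises step (i) on the next; after $N=N(\datanb{,A,\tau})$ slabs one reaches $t_o=0$, and shrinking the balls from $\tfrac12c_o\varrho$ to $\tfrac14c_o\varrho$ gives $u\ge\bsmu^-+\eta_1\bsom$ a.e.\ in $B_{\frac14c_o\varrho}\times(\mft-(\tfrac12c_o\varrho)^{sp},t_o]$ with $\eta_1=\eta_1(\datanb{,A,\tau})\in(0,1)$. Finally $c_o=c_o(\datanb{,\eta_1})$ is chosen small (exactly the estimate of \cref{degclaim5.2}; cf.\ \cref{rmk:4.2}, \cref{remark_degiorgi}) so that \cref{degtailh} indeed supplies all the Tail alternatives used above.

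For the oscillation reduction, since $\mft\le t_o=0$ we have $\widehat{\mcq}_1=B_{\frac14c_o\varrho}\times(-(\tfrac12c_o\varrho)^{sp},0]\subset B_{\frac14c_o\varrho}\times(\mft-(\tfrac12c_o\varrho)^{sp},t_o]\subset{\mcq}_o$, so $\essinf_{\widehat{\mcq}_1}u\ge\bsmu^-+\eta_1\bsom$ while $\esssup_{\widehat{\mcq}_1}u\le\bsmu^+$, whence $\essosc_{\widehat{\mcq}_1}u\le(\bsmu^+-\bsmu^-)-\eta_1\bsom\le(1-\eta_1)\bsom$, using $\bsom\ge\bsmu^+-\bsmu^-$ (\cref{Eq:mu-pm-} being carried along for later use). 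I expect the crux to be exactly the observation of the first paragraph --- that \cref{Eq:H1} lets one replace all cut-offs by $t$-independent ones and thereby run the \cref{Prop:1:1} chain without any smallness constraint on $\bsmu^-$ --- together with the care needed to see that this costs only a $\tau$-dependence, and that the possibly long time interval forces the $N\sim A$ re-initialisations, so that $\eta_1$, and then $c_o$, acquire a dependence on $A$.
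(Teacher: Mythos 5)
Your strategy is substantially different from the paper's and, in the degenerate case, heavier than necessary. The paper proves \cref{Lm:6:1} with a \emph{single} De~Giorgi iteration over the entire long cylinder $B_{c_o\varrho/4}\times(\mft-(\tfrac12 c_o\varrho)^{sp},0]$: it picks levels $k_j=\bsmu^-+\eta_1\bsom+\eta_1\bsom/2^j$ clustered within $2\eta_1\bsom$ of $\bsmu^-$ (so that $|k_j|\ge\tfrac12\tau\bsom$ once $\eta_1\le\tau/4$), uses time-independent cut-offs plus the vanishing-initial-data observation from \cref{Eq:H1} to drop the $\mathfrak g_\pm$-terms on the right-hand side of \cref{Prop:energy}, and arrives at the recursion $\bsy_{j+1}\le\bsc\,\boldsymbol b^j(\eta_1^{p-2}A)^{s/(n+2s)}\bsy_j^{1+s/(n+2s)}$. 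The long time length $A$ shows up as a multiplicative factor, and the crucial trick — available precisely because $p\ge2$ — is that choosing $\eta_1$ small makes $\eta_1^{2-p}/A\ge1$, so the smallness requirement of \cref{geo_con} on $\bsy_0$ is satisfied trivially, with no measure hypothesis whatsoever. Neither \cref{Lm:3:1} nor \cref{Lm:3:2} nor the slab-by-slab re-initialization of \cref{iteratedprop} is invoked. What you propose instead — running the full positivity-in-time $\to$ shrinking $\to$ De~Giorgi chain and re-seeding it on a stack of $\sim A$ time-slabs — is considerably more machinery; it would force you to re-verify each of \cref{Lm:3:1,Lm:3:2,Lm:3:3} under the modified hypotheses ($|\bsmu^-|\le2\bsom$, $\bsmu^-\le-\tau\bsom$, $M$ shrinking geometrically across slabs), and the dependence of the intermediate constants on the growing ratio $|\bsmu^-|/M$ needs care. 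The economy the paper obtains from \cref{Eq:H1} is not merely ``spatial cut-offs are legitimate''; it is that the initial-data condition plus $p>2$ removes the need for the measure-shrinking ingredient entirely.

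One concrete error in your reasoning: the claim that every level $\le\bsmu^-+\tfrac18\bsom$ is ``at distance $\gtrsim\tau\bsom$ from $0$'' is false when $\tau<\tfrac18$ — then $\bsmu^-+\tfrac18\bsom$ can be arbitrarily close to $0$ or even positive, since all we know is $\bsmu^-\le-\tau\bsom$. The lower bound $(|u|+|k|)^{p-2}\gtrsim(\tau\bsom)^{p-2}$ holds only for levels $k$ within $O(\tau\bsom)$ of $\bsmu^-$; that is exactly why the paper takes $\eta_1\le\tau/4$ and works with levels $k_j\in(\bsmu^-,\bsmu^-+2\eta_1\bsom]$. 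If you instead start your slab iteration at level $\bsmu^-+\tfrac18\bsom$, you do not get a level-independent lower bound for $\mathfrak g_-$ via $|k|$ and would have to fall back on the $j$-decaying bound $|u|+|\tilde k_j|\ge M/2^{j+3}$ as in \cref{Lm:3:3} — which reinstates the $j$-dependence that makes absorbing the factor $A$ problematic in a single De~Giorgi pass (the paper's trick needs the $j$-independent $(\tau\bsom)^{p-2}$ exactly to push the $A$-dependence into the constant of \cref{geo_con}). Your slab-iteration route sidesteps this by keeping each slab of unit intrinsic length, so it is not fatal, but the stated justification is wrong, and the architecture is overall much longer than the paper's one-shot argument.
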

\begin{proof}
	Without loss of generality, we will take $t_o =0$. Assuming $\eta_1$ is already determined, we see that the calculations from \cref{degclaim5.2} gives the existence of  $c_o = c_o(\datanb{,\eta_1})\in(0,1)$ small enough such that \cref{degtailh} holds.  For $j=0,1,\ldots$, define
		\begin{equation*}
			\def\arraystretch{\st}\begin{array}{llll}
					k_j:=\bsmu^-+{\eta_1\bsom}+\tfrac{\eta_1\bsom}{2^{j}},& \tilde{k}_j:=\tfrac{k_j+k_{j+1}}2,&
					\varrho_j:=\tfrac{c_o\varrho}4+\tfrac{c_o\varrho}{4^{j+1}},
					&\tilde{\varrho}_j:=\tfrac{\varrho_j+\varrho_{j+1}}2,\\
					B_j:=B_{\varrho_j},& \widetilde{B}_j:=B_{\tilde{\varrho}_j},&
					\mcq_j:= B_j \times (\mft - (\varrho_j)^{sp}),0],&\\
					\widetilde{\mcq}_j:=\widetilde{B}_j \times (\mft - (\tilde\varrho_j)^{sp}),0].
			\end{array}
		\end{equation*} 
		Furthermore, we also define 
		\begin{equation*}
			\hat{\varrho}_j:=\tfrac{3\varrho_j+\varrho_{j+1}}4, \quad 
			\bar{\varrho}_j:=\tfrac{\varrho_j+3\varrho_{j+1}}4,\quad 
			\hat{\mcq}_j:=\mcq_{\hat\varrho_j}, \quad
			\bar{\mcq}_j:=\mcq_{\bar\varrho_j}.
		\end{equation*}
		We now consider a cut-off functions $\bar{\zeta_j}$ and $\zeta_j$ independent of time such that
		\begin{equation*}
			\begin{array}{c}
				\bar{\zeta_j} \equiv 1 \text{ on } B_{j+1}, \quad \bar{\zeta_j} \in C_c^{\infty}(B_{\bar{\varrho_j}}), \quad  |\nabla\bar{\zeta_j}| \apprle \frac{1}{\bar{\varrho_j} - \varrho_{j+1}} \approx \frac{2^j}{c_o\varrho}, \\
				{\zeta_j} \equiv 1 \text{ on } B_{\tilde{\varrho_j}}, \quad {\zeta_j} \in C_c^{\infty}(B_{\hat{\varrho_j}}), \quad  |\nabla{\zeta_j}| \apprle \frac{1}{\hat{\varrho_n} - \tilde{\varrho_{j}}}\approx \frac{2^{j}}{c_o\varrho},
			\end{array}
		\end{equation*}
	We note that by \cref{lem:g},  we have
	\begin{equation*}
		\mathfrak g_-(u,k)
		\le
		\bsc  \lbr|u|+|k|\rbr^{p-2}(u-k)_-^2.
	\end{equation*}

Next, we make the following observations 
\begin{itemize}
	\item For $\tilde k<k$, there holds $(u-k)_-\ge (u-\tilde k)_-$, 
	\item We trivially have $|u|+|k_j|\geq |k_j| = |\bsmu^- + \eta_1\bsom + \tfrac{\eta_1\bsom}{2^j}|$.   Since $-2\bsom\leq \bsmu^- \leq - \tau \bsom$ holds, we get
	\[
	-2\bsom \leq -2\bsom + \eta_1\bsom + \tfrac{\eta_1\bsom}{2^j} \overlabel{Eq:H1}{\leq} \underbrace{\bsmu^-  + \eta_1\bsom + \tfrac{\eta_1\bsom}{2^j}}_{= k_j} \leq \bsmu^- + 2\eta_1 \bsom \overlabel{Eq:H1}{\leq} -\tau \bsom  + 2\eta_1 \bsom \leq -\tfrac{\tau\bsom}{2},
	\]
	provided $2\eta_1 \leq \tfrac{\tau}{2}$. In particular, this implies that $|k_j| \geq \tfrac{\tau\bsom}{2}$.
	\item We have $(u-k_j)_-(\cdot,\mft-(\tfrac12 c_o \varrho)^{sp}) = 0$ provided $2\eta_1 \leq \tfrac18$, since 
	\[
	u(\cdot,\mft-(\tfrac12 c_o \varrho)^{sp}) \overset{\cref{Eq:H1}}{\geq} \bsmu^- + \tfrac{\bsom}{8} \geq \bsmu^- + 2\eta_1\bsom \geq \bsmu^-  + \eta_1\bsom + \tfrac{\eta_1\bsom}{2^j} = k_j.
	\] 
	\item Let us denote $A_j := \{ u < k_j\} \cap \mcq_j$.
\end{itemize}
%
		%
		%
		Now, we  apply the energy estimates from \cref{Prop:energy} over  $\mcq_j $ to $ (u-\tilde{k}_j)_{-}$ and $\zeta_j$ to get
		\begin{multline*}
		\bsom^{p-2}\hspace*{-0.5cm}	\underset{\mft-(\frac12 c_o \tilde\varrho)^{sp} < t < 0}{\esssup}\int_{B_j}(u-\tilde{k}_j)_{-}^2\zeta_j^p(x,t)\,dx  
			+ {\iiint_{\mcq_j}}\frac{|(u-\tilde{k}_j)_{-}(x,t)\zeta_j(x,t)-(u-\tilde{k}_j)_{-}\zeta_j(y,t)|^p}{|x-y|^{n+sp}}\,dx \,dy\,dt
			\\  \leq_{\data{}}  \frac{2^{jp}}{(c_o\varrho)^{sp}}(\eta_1 \bsom)^p |A_j| 
				+ \lbr \underset{\stackrel{\mft-(\frac12 c_o \varrho)^{sp} < t < 0;}{ x\in \spt \zeta_j}}{\esssup}\int_{\RR^n \setminus B_j}\frac{(u-\tilde{k}_j)_{-}^{p-1}(y,t)}{|x-y|^{n+sp}}\,dy\rbr   (\eta_1 \bsom) |A_j|.
		\end{multline*}
		
		We estimate the Tail as below following the what was describes in \cref{sec:tail} to get
		\begin{equation*}
			{\def\arraystretch{1}	\begin{array}{rcl}
					\underset{\stackrel{\mft-(\frac12 c_o \varrho)^{sp} \leq t \leq 0;}{ x\in \spt \zeta_j}}{\esssup}\int_{\RR^n \setminus B_j}\frac{(u-\tilde{k}_j)_{-}^{p-1}(y,t)}{|x-y|^{n+sp}}\,dy & \apprle & 2^{jsp}\frac{(\eta_1\bsom)^{p-1}}{(c_o\varrho)^{sp}} \\&&+  \frac{2^{jsp}}{(c_o\varrho)^{sp}}\tailp((u-\bsmu^{-})_{-};0;\tfrac14c_o\varrho;(\mft-(\tfrac12 c_o \varrho)^{sp}, 0])
					\\
					&\overset{\cref{degtailh}}{\apprle} & 2^{jsp}\frac{(\eta_1\bsom)^{p-1}}{(c_o\varrho)^{sp}}.
			\end{array}}
		\end{equation*}
		Combining the previous two estimates along with the observations, we have
		\begin{equation*}
				\bsom^{p-2}	\underset{\mft-(\frac12 c_o \tilde\varrho)^{sp} < t < 0}{\esssup}
				\int_{\widetilde{B}_j} (u-\tilde{k}_j)_-^2\,dx
				+\iiint_{\widetilde{\mcq}_j}\frac{|(u-\tilde{k}_j)_{-}(x,t)-(u-\tilde{k}_j)_{-}|^p}{|x-y|^{n+sp}}\,dx \,dy\,dt
				{\leq}
				\bsc  \frac{2^{jsp}}{(c_o\varrho)^{sp}}(\eta_1\bsom)^{p}|A_j|.
		\end{equation*}
		From Young's inequality, we have
		\begin{multline*}
			|(u-\tilde{k}_{j})_- \bar{\zeta_j}(x,t) - (u-\tilde{k}_{j})_- \bar{\zeta_j}(y,t)|^p \leq c |(u-\tilde{k}_{j})_- (x,t) - (u-\tilde{k}_{j})_- (y,t)|^p\bar{\zeta_j}^p(x,t) \\
			+ c |(u-\tilde{k}_{j})_-(y,t)|^p |\bar{\zeta_j}(x,t) - \bar{\zeta_j}(y,t)|^p,
		\end{multline*}
		from which we obtain the following sequence of estimates:
		\begin{equation*}
			\begin{array}{rcl}
				\frac{\eta_1\bsom}{2^{j+3}}
				|A_{j+1}|
				&\overred{3.7a}{a}{\leq} &
				\iint_{\bar{\mcq}_{j+1}}\lbr u-\tilde{k}_j\rbr_-\bar{\zeta_j}\,dx\,dt \\
				&\overred{3.7b}{b}{\leq} &
				\left[\iint_{\widetilde{\mcq}_j}\big[\lbr u-\tilde{k}_j\rbr_-\bar{\zeta_j}\right]^{p\frac{n+2s}{n}}
				\,dx\,dt\bigg]^{\frac{n}{p(n+2s)}}|A_j|^{1-\frac{n}{p(n+2s)}}\\
				&\overred{3.7c}{c}{\leq} &\bsc 
				\left(\iiint_{\widetilde{\mcq}_j}\frac{|(u-\tilde{k}_j)_{-}(x,t)\bar{\zeta_j}(x,t)-(u-\tilde{k}_j)_{-}\bar{\zeta_j}(y,t)|^p}{|x-y|^{n+sp}}\,dx \,dy\,dt\right)^{\frac{n}{p(n+2s)}} 
				\\&&\qquad\times \left(\underset{\mft-(\frac12 c_o \tilde\varrho)^{sp} < t < 0}{\esssup}\int_{\widetilde{B}_j}[(u-\tilde{k}_j)_{-}\bar\zeta_j(x,t)]^2\,dx\right)^{\frac{s}{n+2s}}|A_j|^{1-\frac{n}{p(n+2s)}}\\
				&\overred{3.7d}{d}{\leq}&
				\bsc  
				\lbr \frac{2^{jsp}}{(c_o\varrho)^{sp}}(\eta_1\bsom)^p\rbr^{\frac{n}{p(n+2s)}}
				\lbr\frac{2^{jsp}}{(c_o\varrho)^{sp}}\eta_1^p \bsom^2\rbr^{\frac{s}{n+2s}}
				|A_j|^{1+\frac{s}{n+2s}} \\
				&\overred{3.7e}{e}{\leq} &
				\bsc 
				\frac{b_0^j}{(c_o\varrho)^\frac{s(n+sp)}{n+2s}} \eta_1^{\frac{n+sp}{n+2s}}
				\bsom |A_j|^{1+\frac{s}{n+2s}},
			\end{array}
		\end{equation*}
		where to obtain \redref{3.7a}{a}, we made use of the observations and enlarged the domain of integration with $\tilde{\zeta}_j$ as defined in \cref{cutoff_size}; to obtain \redref{3.7b}{b}, we applied H\"older's inequality; to obtain \redref{3.7c}{c}, we applied \cref{fracpoin}; to obtain \redref{3.7d}{d}, we made use of \cref{Eq:3.6en}, \cref{Eq:sample} and \cref{eq3.9}  with $\bsc = \bsc_{\data{}}$ and finally we collected all the terms to obtain \redref{3.7e}{e}, where $b_0\geq 1$ is a universal constant. We see that 
		\[
		|\mcq_j| = (c_o\varrho)^n |\mft - (\tfrac12 c_o \varrho)^{sp}| \leq (c_o\varrho)^n 2^p A (\tfrac12 c_o \varrho)^{sp},
		\]
		where we have used the fact that $\mft \in (-(A-1)(c_o\varrho)^{sp},0]$ from \cref{assump1}. Setting
		$\bsy_j=|A_j|/|\mcq_j|$, we get
		\begin{equation*}
			\bsy_{j+1}
			\le
			\bsc  \boldsymbol b^j (\eta_1^{p-2}A)^{\frac{s}{n+2s}} \bsy_j^{1+\frac{s}{n+2s}},
		\end{equation*}
		for  constants $\bsc  = \bsc_{\data{}} \geq 1$ and $\boldsymbol b\geq  1$ depending only on the data. We see from \cref{geo_con} that if 
		\[
		\bsy_0 \leq \bsc^{-\frac{(n+2s)}{s}}\boldsymbol b^{-\frac{(n+2s)^2}{s^2}} \frac{\eta_1^{2-p}}{A} \quad \Longrightarrow \quad  \lim_{j\rightarrow \infty}\bsy_{j} \rightarrow 0.
		\]
		Since $p \geq 2$, we can choose $\eta_1 = \eta_1(\datanb{,A})$ small such that $\bsc^{-\frac{(n+2s)}{s}}\boldsymbol b^{-\frac{(n+2s)^2}{s^2}} \frac{\eta_1^{2-p}}{A} \geq 1$, which completes the proof. Note that $\eta_1$ is independent of $c_o \varrho$.
\end{proof}

\subsubsection{Reduction of Oscillation for supersolutions near its supremum}

In this subsection, we still assume \tlcref{Eq:Hp-main1} still holds, but \cref{assump1} isn't satisfied. In particular, we will consider the case
\begin{assumption}\label{assump2}
	Let us suppose that for any $\mft \in (-(A-1)(c_o\varrho)^{sp},0]$, there holds
	\begin{equation}\label{degEq:1st-alt-meas}
		\abs{\left\{u\le\bsmu^-+\tfrac{\bsom}{4} \right\}
			\cap 
			{\mcq}_{c_o\varrho}(0,\mft)}> \nu|{\mcq}_{c_o\varrho}|,
	\end{equation}
	where $\nu$ is the constant determined in \cref{Lm:3:3} depending on $\data{,A}$ and the constant $c_o = c_o(\datanb{})$ is to be further determined according to  \cref{deggclaim5.2} and the cylinder ${\mcq}_{c_o\varrho}(0,\mft) := B_{c_o\varrho}(0) \times ( \mft-(c_o\varrho)^{sp},\mft]$.
\end{assumption}
\subsubsection{Reduction of Oscillation when \texorpdfstring{$-\tau\bsom \leq \bsmu^+ \leq \tau \bsom$}. for supersolutions near its supremum}

\begin{claim} \label{claim3}
	For any  $\mft$ from \cref{assump2}, there exists some $\htt\in[
	\mft-(c_o\varrho)^{sp},\mft-\tfrac{1}2\nu(c_o\varrho)^{sp}]
	$
	with
	\begin{equation*}
		\left|\left\{u(\cdot, \htt)\le\bsmu^-+\tfrac{1}4\bsom\right\}\cap B_{c_o\varrho}\right|>\tfrac{1}2\nu |B_{c_o\varrho}|.
	\end{equation*}
\end{claim}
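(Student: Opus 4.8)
The plan is to prove \cref{claim3} by a standard pigeonhole/measure argument on time slices, exploiting the \emph{strict} measure inequality \cref{degEq:1st-alt-meas} of \cref{assump2}. First I would argue by contradiction: suppose that for every $t$ in the sub-interval $\left(\mft-(c_o\varrho)^{sp},\ \mft-\tfrac12\nu(c_o\varrho)^{sp}\right]$ one has
\[
\left|\left\{u(\cdot,t)\le\bsmu^-+\tfrac14\bsom\right\}\cap B_{c_o\varrho}\right| \le \tfrac12\nu\,|B_{c_o\varrho}|.
\]
Then I split the base interval $\left(\mft-(c_o\varrho)^{sp},\mft\right]$ of the cylinder $\mcq_{c_o\varrho}(0,\mft)$ into the lower portion $\left(\mft-(c_o\varrho)^{sp},\ \mft-\tfrac12\nu(c_o\varrho)^{sp}\right]$, of length $(1-\tfrac12\nu)(c_o\varrho)^{sp}$, and the upper portion $\left(\mft-\tfrac12\nu(c_o\varrho)^{sp},\ \mft\right]$, of length $\tfrac12\nu(c_o\varrho)^{sp}$.

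By Fubini's theorem, the spatial measure of the sub-level set integrated over the lower portion is controlled by the contradiction hypothesis, while over the upper portion I use the trivial bound $|B_{c_o\varrho}|$. This yields
\[
\left|\left\{u\le\bsmu^-+\tfrac{\bsom}{4}\right\}\cap\mcq_{c_o\varrho}(0,\mft)\right| \le \left(1-\tfrac12\nu\right)(c_o\varrho)^{sp}\,\tfrac12\nu\,|B_{c_o\varrho}| + \tfrac12\nu(c_o\varrho)^{sp}\,|B_{c_o\varrho}| \le \nu(c_o\varrho)^{sp}\,|B_{c_o\varrho}| = \nu\,|\mcq_{c_o\varrho}|,
\]
which contradicts \cref{degEq:1st-alt-meas}. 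Hence there exists a time level $\htt$ inside the lower portion for which the desired strict inequality holds, and since that portion is contained in the closed interval $\left[\mft-(c_o\varrho)^{sp},\ \mft-\tfrac12\nu(c_o\varrho)^{sp}\right]$, the claim follows.

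I do not expect a genuine obstacle here; the argument is purely measure-theoretic. The only points needing care are (i) keeping the strict inequality of \cref{assump2} separate from the non-strict chain above, since it is precisely the strictness that produces the contradiction, and (ii) placing the sub-interval of length $\tfrac12\nu(c_o\varrho)^{sp}$ at the \emph{top} of the cylinder so that the resulting $\htt$ stays bounded away from $\mft$ by $\tfrac12\nu(c_o\varrho)^{sp}$ — this margin is what the subsequent De Giorgi iteration with time-independent cut-offs will use. The constant $\nu$ here is the one from \cref{Lm:3:3}, entering only through \cref{assump2}.
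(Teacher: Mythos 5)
Your argument is correct and is essentially identical to the paper's: both proceed by contradiction, split the time interval $\bigl(\mft-(c_o\varrho)^{sp},\mft\bigr]$ into the same lower and upper pieces, bound the lower piece by the contradiction hypothesis and the upper piece trivially, and then compare the sum against the strict inequality \cref{degEq:1st-alt-meas}. Your closing remark about why the short sub-interval is placed at the top of the cylinder correctly anticipates how the resulting $\htt$ is used later; no gaps.
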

\begin{proof}[Proof of \cref{claim3}]
	Indeed, if the claim does not hold for any $\htt\in[
	\mft-(c_o\varrho)^{sp},\mft-\tfrac{1}2\nu(c_o\varrho)^{sp}]$, then 
	\begin{equation*}
		\begin{array}{rcl}
		\left|\left\{u\le\bsmu^-+\tfrac{1}4\bsom\right\}\cap {\mcq}_{c_o\varrho}(0,\mft)\right|
		&=&
		\int_{\mft-(c_o\varrho)^{sp}}^{\mft-\frac12\nu (c_o\varrho)^{sp}}\left|\left\{u(\cdot, t)
		\le\bsmu^-+\tfrac{1}4\bsom\right\}\cap B_{c_o\varrho}\right|\,dt\\
		&&
		+\int^{\mft}_{\mft-\frac12\nu(c_o\varrho)^{sp}}\left|\left\{u(\cdot, t)\le\bsmu^-+\tfrac{1}4\bsom\right\}\cap B_{c_o\varrho}\right|\,dt\\
		&\leq& \tfrac12\nu |B_{c_o\varrho}| \abs{(c_o\varrho)^{sp}-\tfrac12\nu(c_o\varrho)^{sp}} +\tfrac12\nu(c_o\varrho)^{sp}
		|B_{c_o\varrho} |
		\\&\leq  &\nu |{\mcq}_{c_o\varrho}|,
		\end{array}
	\end{equation*}
	contradicting \cref{degEq:1st-alt-meas}. This completes the proof of the claim. 
\end{proof}

\begin{remark}\label{remark5.10}
From \cref{Eq:mu-pm-}, we have $\bsmu^+-\tfrac14\bsom>\bsmu^-+\tfrac14\bsom$, from which we can rewrite the conclusion of \cref{claim3} to be
\begin{equation*}
	\left|\left\{u(\cdot,\htt)\le\bsmu^+-\tfrac{1}4\bsom\right\}\cap B_{c_o\varrho}\right|
	\ge\tfrac12\nu|B_{c_o\varrho}| \quad \text{for some}\quad  \htt\in[
	\mft-(c_o\varrho)^{sp},\mft-\tfrac{1}2\nu(c_o\varrho)^{sp}].
\end{equation*}
\end{remark}

\begin{claim}\label{deggclaim5.2}
	There exists constant $c_o = c_o(n,p,s)$ small such that $\tail((u-\bsmu^{+})_{+};0;c_o\varrho;(\mft - (c_o\varrho)^{sp},\mft]) \leq \tfrac{\bsom}{4}$.
\end{claim}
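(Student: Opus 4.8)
\textbf{Proof proposal for Claim~\ref{deggclaim5.2}.} The plan is to transcribe the proof of its subsolution counterpart Claim~\ref{degclaim5.2}, now working with the positive truncation $(u-\bsmu^+)_+$ at the level $\bsmu^+=\esssup_{\mcq_o}u$ instead of $(u-\bsmu^-)_-$. First I would observe that $(u-\bsmu^+)_+$ vanishes a.e.\ on the whole reference cylinder $\mcq_o=B_\varrho(0)\times(t_o-A\varrho^{sp},t_o]$, and that the time slab $(\mft-(c_o\varrho)^{sp},\mft]$ is contained in the time interval of $\mcq_o$: indeed $\mft\in(-(A-1)(c_o\varrho)^{sp},0]$ from \cref{assump2} and $c_o\le 1$, so $\mft-(c_o\varrho)^{sp}>-A(c_o\varrho)^{sp}\ge-A\varrho^{sp}$. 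Hence, as soon as $c_o\le\tfrac18$, the truncation is identically zero on $B_{8c_o\varrho}\times(\mft-(c_o\varrho)^{sp},\mft]$, and Remark~\ref{rmk:4.2} (applied with the larger ball $B_\varrho$ and $\nu^+=\bsmu^+$) yields the scaling reduction
\[
\tail((u-\bsmu^{+})_{+};0;c_o\varrho;(\mft-(c_o\varrho)^{sp},\mft])\le c_o^{\frac{sp}{p-1}}\,\tail((u-\bsmu^{+})_{+};0;\varrho;(\mft-(c_o\varrho)^{sp},\mft]),
\]
which is the analogue of \cref{deg5.5}.

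The second step is to bound the tail on the right at scale $\varrho$ by $\bsom$, up to a constant depending only on the data, exactly as in \cref{deg5.6}. Using $(u-\bsmu^+)_+^{p-1}\le C(p)\big(u_+^{p-1}+|\bsmu^+|^{p-1}\big)$, I would split $\RR^n\setminus B_\varrho$ into the annulus $B_{\tilde{R}}\setminus B_\varrho$ and the exterior $\RR^n\setminus B_{\tilde{R}}$, where $\mbcq=B_{\tilde{R}}\times(\cdot)$ and $\varrho\le\tilde{R}$. On the annulus one has $u_+\le\esssup_{\mbcq}|u|\le\tfrac12\bsom$ together with $\varrho^{sp}\int_{B_{\tilde{R}}\setminus B_\varrho}|x|^{-n-sp}\,dx\le C(n,s)$; on the exterior, monotonicity of the tail in the radius and $\varrho\le\tilde{R}$ give a contribution controlled by $\tail(u_+;\mbcq)^{p-1}\le\tail(|u|;\mbcq)^{p-1}\le\bsom^{p-1}$ by \cref{defbsom}; and $|\bsmu^+|\le(1+\tau)\bsom\le 2\bsom$ by \cref{boundmu} (here I prefer to keep $|\bsmu^+|$ rather than the tighter bound $|\bsmu^+|\le\tau\bsom$ valid in this subcase, so that the final $c_o$ depends only on the data). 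This produces $\tail((u-\bsmu^{+})_{+};0;\varrho;(\mft-(c_o\varrho)^{sp},\mft])\le C(\datanb{})\,\bsom$.

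Combining the two steps gives $\tail((u-\bsmu^{+})_{+};0;c_o\varrho;(\mft-(c_o\varrho)^{sp},\mft])\le C(\datanb{})\,c_o^{\frac{sp}{p-1}}\bsom$, and choosing $c_o=c_o(\datanb{})\in(0,\tfrac18]$ small enough that $C(\datanb{})\,c_o^{\frac{sp}{p-1}}\le\tfrac14$ finishes the proof. I do not expect any genuine obstacle here, since the argument is a verbatim copy of the proof of Claim~\ref{degclaim5.2} with the roles of $+$ and $-$ interchanged; the only two places that need a line of care are verifying that the time slab lies inside the reference cylinder $\mcq_o$ (so that Remark~\ref{rmk:4.2} is applicable and $(u-\bsmu^+)_+$ really vanishes on the annulus), and retaining the harmless $|\bsmu^+|^{p-1}$ term, which is controlled by $\bsom$ throughout \tlcref{Eq:Hp-main1} via \cref{boundmu}.
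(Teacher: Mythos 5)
Your proposal is correct and follows the same route as the paper: apply \cref{rmk:4.2} to replace the tail at radius $c_o\varrho$ by $c_o^{\frac{sp}{p-1}}$ times the tail at radius $\varrho$, split $\RR^n\setminus B_\varrho$ into the annulus $B_{\tilde R}\setminus B_\varrho$ (where $u_+\leq\esssup_{\mbcq}|u|\leq\tfrac12\bsom$) and the exterior $\RR^n\setminus B_{\tilde R}$ (controlled by $\tail(|u|;\mbcq)\leq\bsom$), and absorb $|\bsmu^+|^{p-1}\leq(2\bsom)^{p-1}$ via \cref{boundmu}. The extra care you take in checking the time-slab inclusion so that \cref{rmk:4.2} applies, and your explicit remark that one should keep the data-dependent bound $|\bsmu^+|\leq 2\bsom$ rather than the sharper $\tau\bsom$, are both in line with the paper's proof; the only cosmetic difference is that the paper's display \cref{degg5.6} writes $\tail(u_+;\mbcq)$ where $\tail(u_+;\mbcq)^{p-1}$ is clearly intended, which you have correctly supplied.
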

\begin{proof}[Proof of \cref{deggclaim5.2}]
	From \cref{rmk:4.2}, for any time interval $I \subseteq (-(A-1)(c_o\varrho)^{sp},0]$, we have
	\begin{equation}\label{degg5.5}
		\tail((u-\bsmu^{\pm})_{\pm};0;c_o\varrho;I) \leq c_o^{\frac{sp}{p-1}}\tail((u-\bsmu^{\pm})_{\pm};0;\varrho;I).
	\end{equation}
	Furthermore, we have the following sequence of estimates:
	\begin{equation}\label{degg5.6}
		\begin{array}{rcl}
			\tail((u-\bsmu^{+})_{+};0;\varrho;I)^{p-1} & = & \esssup_{t \in I }\varrho^{sp} \int_{\RR^n \setminus B_{\varrho}} \frac{(u-\bsmu^+)_+^{p-1}}{|x|^{n+sp}} \,dx\\
			& \apprle  &  (\bsmu^+)^{p-1} + 
			\tail(u_{+};\mbcq) \\
			&&+ \esssup_{t \in(-(A-1)(c_o\varrho)^{sp},0] }\varrho^{sp} \int_{B_{\tilde{R}} \setminus B_{\varrho}} \frac{u_+^{p-1}}{|x|^{n+sp}} \,dx\\
			& \apprle & \bsom^{p-1},
		\end{array}
	\end{equation}
	where to obtain the last inequality, we have used $\bsmu^+ \leq 2\bsom$ from \cref{boundmu}, $\tail(u_{+};\mbcq) \leq \bsom$ from \cref{defbsom} and $u_+ \leq \bsom$ on $\mbcq$ holds due to \cref{defbsom}. 
	
	Combining \cref{degg5.5} and \cref{degg5.6}, we can now choose $c_o$ small enough such that the claim follows. 
\end{proof}

Since $\tau \in (0,\tfrac14)$, we see that $|\bsmu^{\pm}| \leq (1+\tau)\om$ which follows from \cref{boundmu} and hence the second condition  in \cref{Lm:3:3:hypothesis} is automatically satisfied with $M = \tfrac{\bsom}{4}$. 

\begin{lemma}\label{deggclaim5.3}
	There exist a  constant $\eta_2 = \eta_2(\datanb{})$, $\bar \eta_2 = \bar \eta_2(\datanb{,A})$ and $c_o = c_o(\datanb{,A})$ such that if we take $\tau \leq \tfrac{\eta_2}{2}$ in \tlcref{Eq:Hp-main1} and  $\bsmu^+ \leq \tau \bsom$, then we have 
	\[
	u \leq \bsmu^+ - \bar\eta_2 \bsom \txt{in} B_{\frac12c_o\varrho} \times (- \tfrac{\nu}{4}(\tfrac{c_o\varrho}{2})^{sp}, 0]
	\]
	In particular, this implies a reduction of oscillation
	\begin{equation*}
		\essosc_{B_{\frac12c_o\varrho} \times (- \tfrac{\nu}{4}(\tfrac{c_o\varrho}{2})^{sp}, 0]}u\le(1-\bar\eta_2)\bsom.
	\end{equation*}
\end{lemma}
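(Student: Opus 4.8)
The strategy is to run, on the ``$+$'' side, the argument already used for \cref{degclaim5.3}, but feeding into \cref{Prop:1:1} and \cref{iteratedprop} the measure-density estimate of \cref{claim3} in place of the full-measure information available there. By \cref{remark5.10}, at the time level $\htt\in[\mft-(c_o\varrho)^{sp},\mft-\tfrac12\nu(c_o\varrho)^{sp}]$ one has
\[
\big|\{\bsmu^+-u(\cdot,\htt)\ge\tfrac{\bsom}{4}\}\cap B_{c_o\varrho}\big|\ge\tfrac12\nu\,|B_{c_o\varrho}|,
\]
so the measure hypothesis of \cref{Prop:1:1} (hence of \cref{iteratedprop}) is met at the point $(0,\htt)$ and radius $c_o\varrho$, with $M=\tfrac{\bsom}{4}$ and $\alpha=\tfrac12\nu$. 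I set $\eta_2$ to be the single-step constant $\eta$ that \cref{Prop:1:1} produces with this $\alpha$, which is a data constant as in \cref{Prop:1:1}; the tail to be controlled in the ``$+$'' version of \cref{Prop:1:eq} is that of $(u-\bsmu^+)_+$, which is precisely what \cref{deggclaim5.2} estimates.

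Next I would discharge the two remaining hypotheses of \cref{iteratedprop}. For the size of $\bsmu^+$: under \tlcref{Eq:Hp-main1} we have $\bsmu^+\ge-\tau\bsom$, and by assumption $\bsmu^+\le\tau\bsom$, so $|\bsmu^+|\le\tau\bsom$; imposing $\tau\le\tfrac{\eta_2}{2}$ yields $|\bsmu^+|\le\tfrac{\eta_2}{2}\bsom\le\xi M$ (the binding case being $p>2$, where $\xi=2\eta=2\eta_2$). For the tail alternative, i.e.\ $\tail((u-\bsmu^+)_+;0;c_o\varrho;I)\le\bar\eta_2\tfrac{\bsom}{4}$ on the relevant interval $I$, I would use \cref{deggclaim5.2} together with the rescaling gain $c_o^{sp/(p-1)}$ of \cref{rmk:4.2}: after shrinking $c_o=c_o(\datanb{,A})$ this holds, where $\bar\eta_2=\bar\eta(\datanb{,A})\in(0,1)$ is the constant supplied by \cref{iteratedprop}. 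One shrinks $c_o$ a little further, exactly as in \cref{degclaim5.3}, so that the forward cylinders needed to run \cref{iteratedprop} (with its own parameter taken $\ge A$, so that the iteration reaches $t=0$ starting from $\htt>-A(c_o\varrho)^{sp}$) lie inside $\mbcq$.

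With every hypothesis verified, I would apply \cref{iteratedprop} and \cref{remark_degiorgi}, just as in the proof of \cref{degclaim5.3}, to obtain
\[
u\le\bsmu^+-\bar\eta_2\bsom\qquad\text{a.e.\ in }B_{2c_o\varrho}\times\big(\htt+\tfrac{\de}{2}(c_o\varrho)^{sp},\,0\big].
\]
Since $\mft\le0$ forces $\htt\le-\tfrac12\nu(c_o\varrho)^{sp}$, imposing additionally $\de\le\tfrac{\nu}{2}$ (admissible, $\de$ being a free small data constant) gives $\htt+\tfrac{\de}{2}(c_o\varrho)^{sp}\le-\tfrac{\nu}{4}(c_o\varrho)^{sp}\le-\tfrac{\nu}{4}(\tfrac{c_o\varrho}{2})^{sp}$, so the region above contains $B_{\frac12c_o\varrho}\times(-\tfrac{\nu}{4}(\tfrac{c_o\varrho}{2})^{sp},0]$; this is the asserted pointwise bound, and the oscillation estimate follows from $\essosc\,u\le(\bsmu^+-\bar\eta_2\bsom)-\bsmu^-\le(\bsmu^+-\bsmu^-)-\bar\eta_2\bsom\le(1-\bar\eta_2)\bsom$.

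The main obstacle is bookkeeping rather than a new idea: a single $c_o=c_o(\datanb{,A})$ must at once make the tail small via \cref{deggclaim5.2}, keep the relevant forward cylinders inside $\mbcq$, and be admissible for \cref{iteratedprop}; moreover one must ensure the conclusion holds \emph{uniformly up to} $t=0$ even though $\htt$ sweeps a whole sub-interval of $(-A(c_o\varrho)^{sp},0]$ --- this is exactly why the iterated form \cref{iteratedprop} (and the consequent dependence of $\bar\eta_2$ on $A$) is indispensable, whereas $\eta_2$ and $\xi$ stay data constants.
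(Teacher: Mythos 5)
Your proposal is correct and follows essentially the same route as the paper: you rewrite \cref{claim3} via \cref{remark5.10} to obtain the half-measure density of $\{\bsmu^+-u\ge\tfrac{\bsom}{4}\}$ at a slice $\htt$, feed it into \cref{Prop:1:1} and \cref{iteratedprop} with $\alpha=\tfrac12\nu$ and $M=\tfrac{\bsom}{4}$, discharge the bound $|\bsmu^+|\le\xi M$ via $\tau\le\tfrac{\eta_2}{2}$ and the tail condition via \cref{deggclaim5.2} after shrinking $c_o$, and then sweep $\mft$ over the full interval to reach $t=0$. The only cosmetic slip is that \cref{iteratedprop} with $M=\tfrac{\bsom}{4}$ directly yields $u\le\bsmu^+-\bar\eta\tfrac{\bsom}{4}$ rather than $u\le\bsmu^+-\bar\eta\bsom$; one must absorb the factor $\tfrac14$ into $\bar\eta_2$ (as the paper does explicitly at the end of its proof).
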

\begin{proof}
	From \cref{Eq:mu-pm-}, we see that $\bsmu^+ - \tfrac14 \bsom > \bsmu^-+\tfrac14 \bsom$, and hence at the time level $\htt$ as obtained in \cref{claim3}, we can rewrite the conclusion of \cref{claim3} to be
	\begin{equation*}
		\left|\left\{u(\cdot, \htt)\le\bsmu^+-\tfrac{1}4\bsom\right\}\cap B_{c_o\varrho}\right|\geq \tfrac{1}2\nu |B_{c_o\varrho}|.
	\end{equation*}
    
    With this, we can see that the hypothesis of \cref{Prop:1:1} and \cref{iteratedprop} is satisfied with $\al = \tfrac12 \nu$ and hence, there exists $\eta_2 = \eta_2(\datanb{})$, $\bar \eta_2 = \bar \eta_2(\datanb{,A})$ and $c_o = c_o(\datanb{,A})$ such that if the following is satisfied:
    \begin{equation}\label{eq5.22}
    	\tail((u-\mu^{+})_{+};x_0,c_o\varrho,(-(A-1)(c_o\varrho)^{sp},0]) \leq \bar\eta_2 \frac{\bsom}{4} \quad \text{and} \quad \abs{\bsmu^{\pm}}\leq 2 \eta_2 \frac{\bsom}{4},
    \end{equation}
then \cref{iteratedprop} implies
\[
u \leq \bsmu^+ - \bar \eta_2 \tfrac{\bsom}{4} \txt{on} B_{c_o\varrho} \times (\htt + \tfrac{\de}{2}(c_o\varrho)^{sp}, 0).
\]

From \cref{deggclaim5.2}, the choice of $c_o = c_o(\datanb{,\bar\eta_2})$ can be made such that the $\tail$ condition in \cref{eq5.22} is satisfied. Since we are considering the case $-\tau \bsom \leq \bsmu^+ \leq \tau \bsom$, if we choose $\tau \leq 2\eta_2$, then the second condition in \cref{eq5.22} is satisfied.
Since the time level $\htt$ and $\mft$ from \cref{claim3} are arbitrarily located in $(-(A-1)(c_o\varrho)^{sp},0]$, we see that the conclusion holds at least in the region
\[
u \leq \bsmu^+ - \bar \eta_2 \tfrac{\bsom}{4} \txt{on} B_{c_o\varrho} \times (\tfrac{\de-\nu}{2}(c_o\varrho)^{sp}, 0),
\]
provided we choose $\de \leq \tfrac{\nu}{4}$ which can be ensured analogously as in the proof of \cref{degclaim5.3}. This completes the proof of the lemma after replacing $\bar \eta_2$ in the proof with $\tfrac{\bar\eta_2}{4}$ which is in the statement of the lemma. 
%
%
%
\end{proof}


\subsubsection{Reduction of Oscillation for supersolutions when \texorpdfstring{$2\bsom \geq\bsmu^+ \geq \tau \bsom$}.  near its supremum}

 Estimate \cref{deg5.3b} shows that $\bsmu^+ \leq 2\bsom$ and hence we need to consider the last remaining case when \tlcref{Eq:Hp-main1} holds:
\begin{equation}\label{Eq:4:3a}
	\tau\bsom\le\bsmu^+\le2\bsom,
\end{equation}


First we prove an expansion of positivity type estimate:
\begin{lemma}\label{Lm:4:2}
	Suppose \cref{Eq:4:3a} and  \cref{assump2} are in force. Let $\mft$ and $\htt$ be as in \cref{claim3},  then there exists $c_o = c_o(n,p,s,\varepsilon)$ and  $\varepsilon = \varepsilon(\datanb{,\nu,\tau})\in(0,1)$, such that the following two conclusion holds:
\[	\begin{cases*}
		\left|\left\{ u(\cdot, t)\le\bsmu^+-\varepsilon\boldsymbol \om\right\}\cap B_{c_o\varrho}\right|
		\ge
		\tfrac14\nu |B_{c_o\varrho}|
		\txt{for all} t\in(\htt,\mft],\\
		c_o^{\frac{sp}{p-1}} \tail(( u-\bsmu^{+})_{+}; {\mbcq}) \leq   \varepsilon^{p-1}\bsom.
	\end{cases*}
	\]
\end{lemma}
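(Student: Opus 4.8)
Normalize $t_o=0$ and recall from \cref{remark5.10} that there is a time level $\htt\in[\mft-(c_o\varrho)^{sp},\,\mft-\tfrac12\nu(c_o\varrho)^{sp}]$ with $|\{u(\cdot,\htt)\le\bsmu^+-\tfrac14\bsom\}\cap B_{c_o\varrho}|\ge\tfrac12\nu|B_{c_o\varrho}|$. The second assertion is quick bookkeeping: since $(u-\bsmu^+)_+\le|u|$, \cref{defbsom} gives $\tail((u-\bsmu^+)_+;\mbcq)\le\tail(|u|;\mbcq)\le\bsom$, so $c_o^{\frac{sp}{p-1}}\tail((u-\bsmu^+)_+;\mbcq)\le\varepsilon^{p-1}\bsom$ as soon as $c_o\le\varepsilon^{(p-1)^2/(sp)}$; we retain this tail bound as an input. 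For the first assertion, the plan is to carry the measure information at $\htt$ forward to every $t\in(\htt,\mft]$ by a \emph{logarithmic energy estimate} at levels near $\bsmu^+$; this is the same mechanism as the expansion of positivity in time (cf.\ \cref{Lm:3:1}), but the gain of smallness now comes from the slow growth of the logarithm rather than from the length of the time interval --- indeed the bare energy estimate of \cref{Prop:energy}, applied to $(u-k)_+$, only produces a multiplicative constant of the form $\bsc^2(1-\tfrac12\nu)$, which is useless, whereas the logarithm replaces this by a ratio that tends to $1$.

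Concretely, using that the weak solution $u$ is in particular a sub-solution, I would fix $k:=\bsmu^+-\tfrac14\bsom$, $H:=\tfrac14\bsom$ (an admissible bound for $(u-k)_+$ on $\mcq_o$, as $u\le\bsmu^+$ there), $c:=\varepsilon\bsom$ with $\varepsilon\in(0,\tfrac14)$ to be chosen, and the truncated logarithmic function $\psi:=\big[\ln\tfrac{H}{H-(u-k)_++c}\big]_+$, which satisfies $0\le\psi\le\ln\tfrac1{4\varepsilon}$ and vanishes on $\{(u-k)_+\le c\}$. Testing the equation over $B_{c_o\varrho}\times(\htt,\mft]$ against a time-regularization of $\psi'(u)\psi(u)\,\zeta^p$, with $\zeta=\zeta(x)$ a cut-off, $\zeta\equiv1$ on $B_{(1-\tilde\sigma)c_o\varrho}$ and $|\nabla\zeta|\lesssim(\tilde\sigma c_o\varrho)^{-1}$, gives for every $t\in(\htt,\mft]$ an estimate of the form
\[
\int_{B_{(1-\tilde\sigma)c_o\varrho}}\psi^2(u(\cdot,t))\,dx\le\int_{B_{c_o\varrho}}\psi^2(u(\cdot,\htt))\,dx+\Big(\tfrac{C}{\tilde\sigma^p}\ln\tfrac1\varepsilon+\gamma(\varepsilon)\Big)|B_{c_o\varrho}|.
\]
Here the cut-off term contributes the $\tilde\sigma^{-p}\ln(1/\varepsilon)$ factor, while $\gamma(\varepsilon)$ collects the nonlocal tail contribution: estimated by the splitting of \cref{sec:tail} after separating off the excess of $u$ over $\bsmu^+$, it is controlled by $\tail((u-\bsmu^+)_+;\mbcq)$, hence by the second assertion it is made as small as desired by shrinking $c_o=c_o(\datanb{,\varepsilon})$. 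The time term $(|u|^{p-2}u)_t$ is handled as in \cite{bogeleinHolderRegularitySigned2021} (reduction to a boundary term); because $\psi'(u)\neq0$ only where $u$ lies within $\tfrac14\bsom$ of $\bsmu^+\ge\tau\bsom$, the weights $|u|^{p-2}$ arising there are comparable to $\bsom^{p-2}$ through \cref{lem:g} (and are comparable from above and below, since $|u|\le\bsmu^+\le2\bsom$ on that set), so they cancel between the two sides up to a $\tau$-dependent factor --- this is the origin of the dependence of $\varepsilon$ on $\tau$.

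To extract the measure statement: at $t=\htt$ the function $\psi(u(\cdot,\htt))$ vanishes on $\{u(\cdot,\htt)\le k+c\}\supseteq\{u(\cdot,\htt)\le\bsmu^+-\tfrac14\bsom\}$, a set of measure $\ge\tfrac12\nu|B_{c_o\varrho}|$, and $\psi\le\ln\tfrac1{4\varepsilon}$ pointwise, so $\int_{B_{c_o\varrho}}\psi^2(u(\cdot,\htt))\,dx\le(1-\tfrac12\nu)\big(\ln\tfrac1{4\varepsilon}\big)^2|B_{c_o\varrho}|$; whereas for $t\in(\htt,\mft]$, on $\{u(\cdot,t)>\bsmu^+-\varepsilon\bsom\}$ one has $(u-k)_+>(\tfrac14-\varepsilon)\bsom$, hence $H-(u-k)_++c<2\varepsilon\bsom$ and $\psi(u(\cdot,t))\ge\ln\tfrac1{8\varepsilon}$, so that $\int_{B_{(1-\tilde\sigma)c_o\varrho}}\psi^2(u(\cdot,t))\,dx\ge\big(\ln\tfrac1{8\varepsilon}\big)^2\big|\{u(\cdot,t)>\bsmu^+-\varepsilon\bsom\}\cap B_{(1-\tilde\sigma)c_o\varrho}\big|$. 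Feeding these into the displayed estimate, dividing by $\big(\ln\tfrac1{8\varepsilon}\big)^2$, and adding $|B_{c_o\varrho}\setminus B_{(1-\tilde\sigma)c_o\varrho}|\le n\tilde\sigma|B_{c_o\varrho}|$, one obtains $|\{u(\cdot,t)>\bsmu^+-\varepsilon\bsom\}\cap B_{c_o\varrho}|\le\theta(\varepsilon,\tilde\sigma,c_o)|B_{c_o\varrho}|$ with a coefficient $\theta$ that, since $\tfrac{\ln(1/4\varepsilon)}{\ln(1/8\varepsilon)}\to1$ and $\tfrac{\ln(1/\varepsilon)}{(\ln(1/8\varepsilon))^2}\to0$ as $\varepsilon\downarrow0$, tends to $1-\tfrac12\nu$ upon letting $\varepsilon,\tilde\sigma,c_o\downarrow0$ in this order. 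Choosing $\varepsilon=\varepsilon(\datanb{,\nu,\tau})$, then $\tilde\sigma$, then $c_o=c_o(\datanb{,\varepsilon})$ small enough makes $\theta\le1-\tfrac14\nu$, i.e.\ $|\{u(\cdot,t)\le\bsmu^+-\varepsilon\bsom\}\cap B_{c_o\varrho}|\ge\tfrac14\nu|B_{c_o\varrho}|$ for all $t\in(\htt,\mft]$, as claimed.

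\textbf{Main obstacle.} The decisive (and genuinely nonlocal) point is the tail term in the logarithmic estimate: one must show it is governed by $\tail((u-\bsmu^+)_+;\mbcq)$ --- rather than by a quantity that sees the full oscillation $\bsom$ --- so that the choice ``$c_o$ small'', equivalently the second assertion of the lemma, actually renders it harmless. Setting up this logarithmic estimate for the doubly nonlinear time term $(|u|^{p-2}u)_t$ (passing to a suitable boundary functional and keeping the weights $|u|^{p-2}$ comparable to $\bsom^{p-2}$ near $\bsmu^+$ via $\bsmu^+\ge\tau\bsom$) is the other delicate ingredient.
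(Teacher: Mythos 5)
Your proposal takes a genuinely different route from the paper. You propose a DiBenedetto-type \emph{logarithmic} energy estimate (a truncated $\ln$-test function $\psi$, closing the loop as the ratios $\ln(1/4\varepsilon)/\ln(1/8\varepsilon)\to1$, $\ln(1/\varepsilon)/(\ln(1/8\varepsilon))^2\to0$), motivated by the worry that the bare energy inequality from \cref{Prop:energy} ``only produces a multiplicative constant $\bsc^2(1-\tfrac12\nu)$, which is useless.'' The paper instead runs the \emph{linear} energy estimate, exactly as in \cref{Lm:3:1}: with $k=\bsmu^+-\varepsilon\bsom$ ($\varepsilon<\tfrac12\tau$, so $k\ge\tfrac12\tau\bsom>0$) and a nearby level $k_{\tilde\varepsilon}=\bsmu^+-\tilde\varepsilon\varepsilon\bsom$, it never separates the two sides through the crude two-sided bound of \cref{lem:g}. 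Instead it keeps the exact weighted integral $\int_k^{\cdot}a^{p-2}(a-k)\,da$ on both sides and estimates the nested-interval quotient directly, obtaining $\int_k^{\bsmu^+}a^{p-2}(a-k)\,da\,/\int_k^{k_{\tilde\varepsilon}}a^{p-2}(a-k)\,da\le1+\bsc\,\tilde\varepsilon$ via \cref{deg5.30}, which tends to $1$. So the $\bsc^2$ loss you worry about is an artifact of applying \cref{lem:g} separately above and below; the bare estimate already gives a constant that can be pushed to $1-\tfrac38\nu$. The logarithm is not needed here, and this is precisely what allows the paper to treat this case with the same machinery as \cref{Lm:3:1}, at the price of the factor $\varepsilon^{p-2}/\tau^{p-2}$, which is harmless for $p\ge2$.

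Beyond the question of economy, there is a concrete gap in your version. With $k:=\bsmu^+-\tfrac14\bsom$, the set $\{\psi>0\}=\{u>k+c\}=\{u>\bsmu^+-(\tfrac14-\varepsilon)\bsom\}$ reaches down to $u$-values near and below zero whenever $\bsmu^+$ is close to its lower bound $\tau\bsom$ (recall $\tau\in(0,\tfrac14)$, so $\bsmu^+-\tfrac14\bsom$ can be negative). On that range the weight $|u|^{p-2}$, $p>2$, is \emph{not} bounded below by any fixed multiple of $\bsom^{p-2}$, so the doubly nonlinear time term does not reduce to a cleanly weighted $\psi^2$ with a two-sided comparison ``up to a $\tau$-dependent factor,'' as you assert. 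Your appeal to \cref{lem:g} does not patch this: \cref{lem:g} concerns $\mathfrak g_\pm$, not the weighted primitive of $\psi\psi'$, and in any case both estimates need $|a|\gtrsim\tau\bsom$ on the whole integration range, which you do not have. The paper sidesteps this entirely by choosing $k=\bsmu^+-\varepsilon\bsom$ with $\varepsilon<\tfrac12\tau$, forcing $k\ge\tfrac12\tau\bsom>0$ so that $a^{p-2}\sim(\tau\bsom)^{p-2}$ uniformly on $(k,\bsmu^+)$. If you insist on the logarithmic route you would have to move your truncation level up to something like $k=\bsmu^+-\tfrac12\tau\bsom$ (using that the hypothesis $\{u(\cdot,\htt)\le\bsmu^+-\tfrac14\bsom\}$, a set of measure $\ge\tfrac12\nu|B_{c_o\varrho}|$, is contained in $\{u(\cdot,\htt)\le k\}$ for any such $k$), redo the $H$, $c$ bookkeeping, and still carefully verify that the resulting two-sided comparison of the time-term primitive with $\bsom^{p-2}\psi^2$ holds — this is exactly the delicate step you flagged, and it is not established in your sketch. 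Your treatment of the second assertion (the tail bound by shrinking $c_o$) matches the paper's \cref{deggclaim5.2} up to cosmetics and is fine.
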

\begin{proof}
	The tail estimate follows from \cref{deggclaim5.2} and so we are only left to prove the propagation of measure conclusion.
	
	Let us take $k = \bsmu^+ - \varepsilon\bsom$ for some $\varepsilon\in (0,\tfrac12\tau)$ to be eventually chosen. From the choice of $\varepsilon$ and $\bsmu^+ \geq \tau \bsom$, we see that $k \geq \tfrac12 \tau \bsom$. We consider the energy estimate from \cref{Prop:energy} for $(u-k)_+$ over the cylinder $(\htt,\mft]$. Furthermore, for $\tilde\sigma \in (0,\tfrac18]$ to be chosen later, we take a cut-off function $\zeta = \zeta(x) \geq 0$ such that it is supported in $B_{c_o\varrho(1-\frac12\tilde\sigma)}$ with $\zeta \equiv 1$ on $B_{c_o\varrho(1-\tilde\sigma)}$ and $|\nabla \zeta| \apprle\frac{1}{\tilde\sigma\varrho}$. Applying \cref{Prop:energy}, we get
	\begin{multline}\label{deg5.23}
	\int_{B_{c_o\varrho} \times \{\mft\}}\int_{k}^{u} a^{p-2}(a-k)_+ \,da\, \zeta^p \,dx \leq \int_{B_{c\varrho}\times \{\htt\}}\int_{k}^{u} a^{p-2}(a-k)_+ \,da\, \zeta^p \,dx   \\
	\begin{array}{l}
+\boldsymbol{C} \int_{\htt}^{\mft}\iint_{B_{c_o\varrho}\times B_{c_o\varrho}} \frac{\max\{(u-k)_{+}(x,t),(u-k)_{+}(y,t)\}^{p}|\zeta(x)-\zeta(y)|^p}{|x-y|^{n+sp}}\,dx\,dy\,dt\\
+ \boldsymbol{C}\lbr  \underset{\stackrel{t \in (\htt,\mft)}{x\in \spt \zeta}}{\esssup}\,\int_{\RR^n \setminus B_{c_o\varrho}}\frac{(u-k)_{+}^{p-1}(y,t)}{|x-y|^{n+sp}}\,dy\rbr\iint_{(\htt,\mft)\times B_{c_o\varrho}} (u-k)_{+}(x,t)\zeta(x)\,dx\,dt.
\end{array}
	\end{multline}

The first term on the right hand side of \cref{deg5.23} is estimated using \cref{assump2} and \cref{remark5.10} to get
\begin{equation*}
	\int_{B_{c\varrho}\times \{\htt\}}\int_{k}^{u} a^{p-2}(a-k)_+ \,da \zeta^p \,dx  \leq (1-\tfrac12 \nu) |B_{c_o\varrho}| \int_{k}^{\bsmu^+} a^{p-2}(a-k)_+ \,da
\end{equation*}

The second term on the right hand side of \cref{deg5.23} is estimated using $(u-k)_+ \leq \varepsilon\bsom$ to get 
\begin{equation*}
	\int_{\htt}^{\mft}\iint_{B_{c_o\varrho}\times B_{c_o\varrho}} \frac{\max\{(u-k)_{\pm}(x,t),(u-k)_{\pm}(y,t)\}^{p}|\zeta(x)-\zeta(y)|^p}{|x-y|^{n+sp}}\,dx\,dy\,dt \apprle \frac{|\mft -\htt| |B_{c_o\varrho}| \varepsilon^p \bsom^p}{\tilde\sigma^p (c_o\varrho)^{sp}}.
\end{equation*}

The third term on the right hand side of \cref{deg5.23} is estimated using the $\tail$ estimate from \cref{deggclaim5.2} to get
\begin{equation*}
	\lbr  \underset{\stackrel{t \in (\htt,\mft)}{x\in \spt \zeta}}{\esssup}\,\int_{\RR^n \setminus B_{c_o\varrho}}\frac{(u-k)_{+}^{p-1}(y,t)}{|x-y|^{n+sp}}\,dy\rbr\iint_{(\htt,\mft)\times B_{c_o\varrho}} (u-k)_{+}(x,t)\zeta(x)\,dx\,dt \apprle \frac{|\mft -\htt| |B_{c_o\varrho}| \varepsilon^p \bsom^p}{ \tilde\sigma^{sp}(c_o\varrho)^{sp}}
\end{equation*}

We now estimate the term appearing on the left hand side of \cref{deg5.23} as follows:
\begin{equation*}
	\int_{B_{c_o\varrho} \times \{\mft\}}\int_{k}^{u} a^{p-2}(a-k)_+ \,da\, \zeta^p \,dx \geq |\{u(\cdot,\mft) > k_{\tilde\varepsilon}\} \cap B_{c_o\varrho(1-\tilde\sigma)}| \int_k^{k_{\tilde\varepsilon}} a^{p-2}(a-k)_+\,da,
\end{equation*}
where $k_{\tilde\varepsilon} = \bsmu^+ - \varepsilon\tilde\varepsilon\bsom$ for some $\tilde\varepsilon\in(0,\tfrac12)$. Recalling the we are in the case $\tau \bsom \leq \bsmu^+ \leq 2\bsom$, we further get
\begin{equation*}
	\int_k^{k_{\tilde\varepsilon}} a^{p-2}(a-k)_+\,da \geq \lbr \tfrac{\tau}{2}\bsom\rbr^{p-2} \int_k^{k_{\tilde\varepsilon}} (a-k)_+ \,da =  \lbr\tfrac{\tau}{2}\bsom\rbr^{p-2} \frac{(\varepsilon\bsom)^2 (1-\tilde\varepsilon)^2}2,
\end{equation*}
where we have used the restriction $\varepsilon \leq \tfrac12 \tau$. As in the proof of \cref{Lm:3:1}, we have
\begin{multline}\label{deg5.29}
	|\{u(\cdot,\mft) >k_{\tilde\varepsilon}\}\cap B_{c_o\varrho(1-\tilde\sigma)}|
	\leq 
	\frac{\int_k^{\bsmu^+} |a|^{p-2}(a-k)_-\,da }{\int_k^{k_{\tilde\varepsilon}} |a|^{p-2}(a-k)_-\,da }
	(1-\tfrac12\nu)|B_{c_o\varrho}|
	+
	\bsc \frac{|\mft-\htt| |B_{c_o\varrho}| \varepsilon^{p-2}}{\tilde\sigma^p(c_o\varrho)^{sp}(1-\tilde\ve)^2\tau^{p-2}} \\
	+
	\bsc \frac{|\mft-\htt| |B_{c_o\varrho}|\varepsilon^{p-2} }{\tilde\sigma^{sp}(c_o\varrho)^{sp}(1-\tilde\ve)^2\tau^{p-2}}+n\tilde\sigma|B_{c_o\varrho}|.
\end{multline}
Furthermore, proceeding as in \cite[Lemma 6.2]{liaoHolderRegularitySigned2021}, we use $\tau \bsom \leq \bsmu^+ \leq 2\bsom$ and $k \geq \tfrac12 \tau \bsom$ to get
\begin{equation}\label{deg5.30}
	\frac{\int_k^{\bsmu^+} |a|^{p-2}(a-k)_-\,da }{\int_k^{k_{\tilde\varepsilon}} |a|^{p-2}(a-k)_-\,da }\leq 1 + \bsc_p \tilde\varepsilon\varepsilon.
\end{equation}
Combining \cref{deg5.29} and \cref{deg5.30} and noting that $|\mft-\htt| \leq (c_o\varrho)^{sp}$ from \cref{assump2}, we get
\begin{equation*}
	|\{u(\cdot,\mft) >k_{\tilde\varepsilon}\}\cap B_{c_o\varrho(1-\sigma)}|
	\leq 
	\lbr (1+\bsc_p \tilde\varepsilon)
	(1-\tfrac12\nu)
	+
	\bsc \frac{  \varepsilon^{p-2}}{\sigma^p(1-\tilde\ve)^2\tau^{p-2}} 
	+
	\bsc \frac{ \varepsilon^{p-2} }{\sigma^{sp}(1-\tilde\ve)^2\tau^{p-2}}+n\sigma\rbr|B_{c_o\varrho}|.
\end{equation*}
We first choose $\tilde\varepsilon$ small such that $(1+\bsc_p \tilde\varepsilon)
(1-\tfrac12\nu) \leq 1-\tfrac38 \nu$ and then choose $\sigma = \tfrac{\nu}{16n}$ followed by $\varepsilon$ small such that $\bsc \frac{  \varepsilon^{p-2}}{\sigma^p(1-\tilde\ve)^2\tau^{p-2}} 
+
\bsc \frac{ \varepsilon^{p-2} }{\sigma^{sp}(1-\tilde\ve)^2\tau^{p-2}} \leq \tfrac{\nu}{16}$ to complete the proof of the lemma. 
\end{proof}


\begin{definition}\label{degdef5.10}
	Let us fix some number $j_* \geq 2$ (which is eventually chosen at the end of this subsection to depend on $n,p,s,\varepsilon,\tau$ where $\varepsilon$ is from \cref{Lm:4:2}) and then choose $A = 2^{j_*(p-2)}+1$. With this choice, we  denote $\mcq_{c_o\varrho}^{j_*} := B_{c_o\varrho}\times (-2^{j_*(p-2)}(c_o\varrho)^{sp},0)$. 
\end{definition}

\begin{corollary}\label{degrmk5.10}
	From \cref{assump2}, \cref{Lm:4:2} and the choice of $A$ according to \cref{degdef5.10}, we see that the conclusion of \cref{Lm:4:2} holds for all time levels $t \in (-2^{j_*(p-2)}(c_o\varrho)^{sp},0)$. In particular, we have
	\[	\begin{cases*}
		\left|\left\{ u(\cdot, t)\le\bsmu^+-\varepsilon\boldsymbol \om\right\}\cap B_{c_o\varrho}\right|
		\ge
		\tfrac14\nu |B_{c_o\varrho}|
		\txt{for all} t\in(-2^{j_*(p-2)}(c_o\varrho)^{sp},0],\\
		c_o^{\frac{sp}{p-1}} \tail(( u-\bsmu^{+})_{+}; {\mbcq}) \leq   \varepsilon^{p-1}\bsom.
	\end{cases*}
	\]
\end{corollary}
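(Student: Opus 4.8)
The plan is to derive the corollary as a purely geometric consequence of \cref{Lm:4:2}, combined with the bookkeeping fixed in \cref{degdef5.10}. Since by \cref{degdef5.10} we have chosen $A = 2^{j_*(p-2)}+1$, we get $A-1 = 2^{j_*(p-2)}$, so the time extent of $\mcq_{c_o\varrho}^{j_*}$ is exactly $(-(A-1)(c_o\varrho)^{sp},0]$ — which is precisely the range over which the base time level $\mft$ is allowed to vary in \cref{assump2}. It therefore suffices to run \cref{Lm:4:2} once for each admissible $\mft$ and combine the resulting conclusions.

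First I would record that the constants are uniform in $\mft$. \cref{Lm:4:2} produces $\varepsilon = \varepsilon(\datanb{,\nu,\tau})$ and $c_o = c_o(n,p,s,\varepsilon)$ depending only on the data, $\nu$, $\tau$, and in particular \emph{not} on the choice of $\mft$ nor on the associated level $\htt$ from \cref{claim3}; hence one and the same pair $(\varepsilon,c_o)$ works simultaneously for all admissible $\mft$. This uniformity is the only point that really needs checking, and it is read off directly from the statement of \cref{Lm:4:2}.

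Next I would fix an arbitrary $\mft\in(-(A-1)(c_o\varrho)^{sp},0] = (-2^{j_*(p-2)}(c_o\varrho)^{sp},0]$ and apply \cref{Lm:4:2} (its hypotheses \cref{Eq:4:3a} and \cref{assump2} being in force throughout the present subsection). It yields a level $\htt\in[\mft-(c_o\varrho)^{sp},\mft-\tfrac12\nu(c_o\varrho)^{sp}]$ such that $\big|\{u(\cdot,t)\le\bsmu^+-\varepsilon\bsom\}\cap B_{c_o\varrho}\big|\ge\tfrac14\nu|B_{c_o\varrho}|$ for every $t\in(\htt,\mft]$; in particular the estimate holds at $t=\mft$. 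Letting $\mft$ range over all of $(-2^{j_*(p-2)}(c_o\varrho)^{sp},0]$, the intervals $(\htt,\mft]$ then cover $(-2^{j_*(p-2)}(c_o\varrho)^{sp},0]$ (each $\mft$ sitting in its own interval), which delivers the first conclusion on the whole of $\mcq_{c_o\varrho}^{j_*}$.

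Finally, the second conclusion $c_o^{\frac{sp}{p-1}}\tail((u-\bsmu^{+})_{+};\mbcq)\le\varepsilon^{p-1}\bsom$ is exactly the tail bound already supplied by \cref{Lm:4:2} (equivalently \cref{deggclaim5.2}): it concerns the fixed reference cylinder $\mbcq$, involves no time interval, and so is inherited verbatim. I do not expect a genuine obstacle here; the only care required is the uniformity of $\varepsilon,c_o$ in the base time level noted above, which is what makes the covering step legitimate.
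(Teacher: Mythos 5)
Your argument is correct and is exactly what the corollary's statement is asking the reader to observe: since $A-1 = 2^{j_*(p-2)}$, Assumption~\ref{assump2} lets $\mft$ range over all of $(-2^{j_*(p-2)}(c_o\varrho)^{sp},0]$, and since $\varepsilon$ and $c_o$ in Lemma~\ref{Lm:4:2} depend only on data, $\nu$, $\tau$ (not on $\mft$ or $\htt$), one application per $\mft$ yields the measure estimate at $t=\mft$ and the single tail bound, which suffices. This matches the paper's (implicit, one-line) reasoning.
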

We now prove a measure shrinking lemma:

\begin{lemma}\label{deglemma5.12}
	Suppose $\tau\bsom\le\bsmu^+\le2\bsom$ and \cref{degrmk5.10} holds, then for any positive integer $j_*$, we have 
	\[
	|\{ u\ge\bsmu^+-\tfrac{\epsilon\bsom}{2^{j_*+1}}\}\cap {\mcq}_{c_o\varrho}^{j_*}|\leq 
	\left(\tfrac{\bsc}{2^{j_*}-1}\right)^{p-1}|{\mcq}_{c_o\varrho}^{j_*}|\quad\mbox{where} \quad {\mcq}_{c_o\varrho}^{j_*} = B_{c_o\varrho}\times (-2^{j_*(p-2)}(c_o\varrho)^{sp},0],
	\]
	where $\boldsymbol{C} = \boldsymbol{C}(\datanb{,\nu,\tau,\ve})$ and $\ve = \ve(\datanb{,\nu,\tau})$ is from \cref{Lm:4:2}  provided the following is satisfied:
	\begin{equation}\label{deglm5.2tail}
	c_o^{\frac{sp}{p-1}} \tail(( u-\bsmu^{+})_{+}; \mbcq) \leq   \frac{\varepsilon\bsom}{2^{j_*+1}}.
	\end{equation}
\end{lemma}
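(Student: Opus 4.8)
The plan is to deduce \cref{deglemma5.12} from the shrinking lemma \cref{lem:shrinking}, applied to the nonnegative function $v:=\bsmu^+-u$ (note $v\ge0$ a.e.\ on $\mcq_{c_o\varrho}^{j_*}\subset\mcq_o$, since $\bsmu^+=\esssup_{\mcq_o}u$). Put $l:=\varepsilon\bsom\,2^{-j_*}$, $m:=\varepsilon\bsom$ and $k_j:=\bsmu^+-l$, so that $(v-l)_-=(u-k_j)_+$ and $(v-l)_+=(u-k_j)_-$; the measure hypothesis of \cref{lem:shrinking}, namely $|\{v(\cdot,t)\ge m\}\cap B_{c_o\varrho}|\ge\tfrac14\nu|B_{c_o\varrho}|$ for every $t$ in the slab $I:=(-2^{j_*(p-2)}(c_o\varrho)^{sp},0]$, is supplied by \cref{degrmk5.10} (exactly as in the proof of \cref{Lm:3:2}, where the $\{\,\cdot\ge\,\}$--$\{\,\cdot>\,\}$ reduction at a single level is also handled). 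Thus everything reduces to establishing the integral hypothesis of \cref{lem:shrinking}, i.e.\ that
\[
\iint_{I\times B_{c_o\varrho}}(u-k_j)_+(x,t)\int_{B_{2c_o\varrho}}\frac{(u-k_j)_-^{p-1}(y,t)}{|x-y|^{n+sp}}\,dy\,dx\,dt\;\lesssim\;\frac{l^p}{(c_o\varrho)^{sp}}\,\big|\mcq_{c_o\varrho}^{j_*}\big| .
\]

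To obtain this bound I would normalize $(x_o,t_o)=(0,0)$ and apply the energy estimate \cref{Prop:energy} to the subsolution $u$ (the ``$+$'' case) over the cylinder $B_{2c_o\varrho}\times I$ at the level $k_j$, with a \emph{time-independent} cut-off $\zeta=\zeta(x)$ satisfying $\zeta\equiv1$ on $B_{c_o\varrho}$, $\spt\zeta\subset B_{2c_o\varrho}$ and $|\nabla\zeta|\lesssim(c_o\varrho)^{-1}$. On the left-hand side one keeps only the nonnegative isoperimetric term and restricts its outer integral to $B_{c_o\varrho}$ (where $\zeta\equiv1$), which produces the left-hand side displayed above. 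On the right-hand side: (i) since $(u-k_j)_+\le\bsmu^+-k_j=l$, the $|\nabla\zeta|^p$-term is $\lesssim l^p(c_o\varrho)^{-sp}|\mcq_{c_o\varrho}^{j_*}|$; (ii) the $|\partial_t\zeta^p|$-term vanishes because $\zeta$ is independent of $t$; (iii) the tail term is handled as in \cref{sec:tail} by writing $(u-k_j)_+\le(u-\bsmu^+)_++l$ and using the scaling established in \cref{deggclaim5.2} together with the hypothesis \cref{deglm5.2tail}, which bounds the tail factor by $\lesssim l^{p-1}(c_o\varrho)^{-sp}$ and hence this contribution by $\lesssim l^p(c_o\varrho)^{-sp}|\mcq_{c_o\varrho}^{j_*}|$.

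The one genuinely delicate term — and the step I expect to be the main obstacle — is the initial-time term $\int_{B_{2c_o\varrho}\times\{-2^{j_*(p-2)}(c_o\varrho)^{sp}\}}\zeta^p\,\mathfrak g_+(u,k_j)\,dx$. By \cref{lem:g}, on $\{u>k_j\}$ one has $\mathfrak g_+(u,k_j)\le\bsc(|u|+|k_j|)^{p-2}(u-k_j)_+^2$; in the regime \cref{Eq:4:3a}, i.e.\ $\tau\bsom\le\bsmu^+\le2\bsom$, and since $l\le\varepsilon\bsom\le\tfrac12\tau\bsom$ (recall $\varepsilon\le\tfrac12\tau$ from \cref{Lm:4:2}), there $\tfrac12\tau\bsom\le k_j\le u\le2\bsom$, so $\mathfrak g_+(u,k_j)\lesssim\bsom^{p-2}l^2$ and the term is $\lesssim\bsom^{p-2}l^2|B_{c_o\varrho}|$. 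This is \emph{not} of order $l^p$; however, since $l=\varepsilon\bsom\,2^{-j_*}$ we may write $\bsom^{p-2}l^2=\varepsilon^{-(p-2)}\,2^{j_*(p-2)}l^p$, and $2^{j_*(p-2)}(c_o\varrho)^{sp}$ is precisely the time-height of $\mcq_{c_o\varrho}^{j_*}$ fixed in \cref{degdef5.10}. Hence $\bsom^{p-2}l^2|B_{c_o\varrho}|\le\varepsilon^{-(p-2)}\,l^p(c_o\varrho)^{-sp}|\mcq_{c_o\varrho}^{j_*}|$, with the factor $\varepsilon^{-(p-2)}$ depending only on $\datanb{}$, $\nu$, $\tau$ and, crucially, \emph{not} on $j_*$ — this is exactly where $p\ge2$ and the intrinsic choice of $A$ in \cref{degdef5.10} enter.

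Collecting (i)--(iii) and this last estimate establishes the integral hypothesis of \cref{lem:shrinking} with radius $c_o\varrho$, dilation parameter $A=2$, levels $m=\varepsilon\bsom$, $l=\varepsilon\bsom\,2^{-j_*}$, density constant $\tfrac14\nu$, and $\mC_1=\bsc(\datanb{},\nu,\tau,\varepsilon)$. Applying \cref{lem:shrinking} (with its index equal to $j_*\ge2$) yields $|\{v<\varepsilon\bsom\,2^{-(j_*+1)}\}\cap\mcq_{c_o\varrho}^{j_*}|\le(\bsc/(2^{j_*}-1))^{p-1}|\mcq_{c_o\varrho}^{j_*}|$, and rewriting $\{v<\varepsilon\bsom\,2^{-(j_*+1)}\}=\{u>\bsmu^+-\varepsilon\bsom\,2^{-(j_*+1)}\}$ gives the asserted estimate (again with the $\{\,\cdot\ge\,\}$--$\{\,\cdot>\,\}$ passage at a single level treated as in \cref{Lm:3:2}). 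The only point really requiring care is the bookkeeping of the initial-time term and the verification that the final constant is independent of $j_*$.
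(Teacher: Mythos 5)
Your argument is correct and follows essentially the same route as the paper: apply \cref{Prop:energy} on $B_{2c_o\varrho}\times I$ at the level $k=\bsmu^+-\varepsilon\bsom/2^{j_*}$ with a time-independent cut-off, absorb the gradient and tail contributions into $l^p(c_o\varrho)^{-sp}|\mcq_{c_o\varrho}^{j_*}|$, and handle the initial-time term by writing $\bsom^{p-2}l^2=\varepsilon^{-(p-2)}2^{j_*(p-2)}l^p$, which is precisely cancelled by the intrinsic height $2^{j_*(p-2)}(c_o\varrho)^{sp}$ of $\mcq_{c_o\varrho}^{j_*}$, then invoke \cref{lem:shrinking}. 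You have in fact made explicit the one point the paper treats tersely, namely that the initial-time term is the only obstruction and that the choice of $A=2^{j_*(p-2)}+1$ in \cref{degdef5.10} is exactly what makes the resulting constant $j_*$-independent (depending only on data, $\nu$, $\tau$, $\varepsilon$), so the proposal is sound.
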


\begin{proof}
	From \cref{deggclaim5.2}, we see that there exists $c_o = c_o(n,p,s,\varepsilon,j_*)$ such that \cref{deglm5.2tail} is always satisfied. Thus, we are now left to obtain the measure shrinking conclusion of the lemma. 
	
	Consider a time independent cut-off function $\zeta(x)$ satisfying $\zeta \equiv 1$ on $B_{c_o\varrho}$ and supported in $B_{\tfrac32c_o\varrho}$ and $|\nabla \zeta| \apprle \tfrac{1}{c_o\varrho}$. Taking $k_j = \bsmu^+ - \tfrac{\varepsilon\bsom}{2^{j}}$ for some $j \in \{1,2,\ldots,j_*-1\}$, we now apply \cref{Prop:energy} over the cylinder  $\mcq_{2c_o\varrho}^{j_*}:=B_{2c_o\varrho}\times (-2^{j_*(p-2)}(c_o\varrho)^{sp},0]$ (note the abuse of notation where $2$ is only taken in $B_{2c_o\varrho}$) to get
	\begin{multline}\label{deg5.33}
		\iint_{\mcq_{c_o\varrho}^{j_*}} (u-k_j)_{+}(x,t)\int_{B_{c_o\varrho}}\frac{(u-k_j)_{-}^{p-1}(y,t)}{|x-y|^{n+sp}}\,dy\,dx\,dt \\ 
		\begin{array}{rcl}
			&\leq&
			C \int_{-2^{j_*(p-2)}(c_o\varrho)^{sp}}^{0}\iint_{B_{2c_o\varrho}\times B_{2c_o\varrho}} \frac{\max\{(u-k_j)_{+}(x,t),(u-k_j)_{+}(y,t)\}^{p}|\zeta(x)-\zeta(y)|^p}{|x-y|^{n+sp}}\,dx\,dy\,dt
			\\
			&&
			+\int_{B_{2c_o\varrho}\times \{-2^{j_*(p-2)}(c_o\varrho)^{sp}\}} \zeta^p \mathfrak g_+ (u,k_j)\,dx 
			\\
			&&+ C\lbr  \underset{\stackrel{t \in (-2^{j_*(p-2)}(c_o\varrho)^{sp},0]}{x\in \spt \zeta}}{\esssup}\,\int_{\RR^n \setminus B_{2c_o\varrho}}\frac{(u-k_j)_{+}^{p-1}(y,t)}{|x-y|^{n+sp}}\,dy\rbr\iint_{ \mcq_{2c_o\varrho}^{j_*}} (u-k_j)_{+}(x,t)\zeta(x)\,dx\,dt\\
			&=:&  \I + \II +  \III.
		\end{array}
	\end{multline}
	We estimate each of the terms appearing on the right hand side of \cref{deg5.33} as follows:
	\begin{description}
		\item[\color{airforceblue}{Estimate for $\I$:}]	We see that $(u-k_j)_+ \leq \tfrac{\varepsilon\bsom}{2^j}$ and thus using $|\nabla \zeta| \apprle \tfrac{1}{c_o\varrho}$, we get
		\[
		\I \apprle  \frac{1}{(c_o\varrho)^{sp}}\lbr\frac{\varepsilon\bsom}{2^j}\rbr^p |{\mcq}_{c_o\varrho}^{j_*}|.
		\]
			\item[\color{airforceblue}{Estimate for $\II$:}] Making use of \cref{lem:g} along with the bound $\tau \bsom \leq \bsmu^+ \leq 2\bsom$, we get
			\[
			\begin{array}{rcl}
				\II& \apprle & \lbr\frac{\varepsilon\bsom}{2^j}\rbr^2 \int_{B_{2c_o\varrho}\times \{-2^{j_*(p-2)}(c_o\varrho)^{sp}\}} (|u| + |k_j|)^{p-2}\,dx\\
				& \overred{deg5.33a}{a}{\apprle}& \lbr\frac{\varepsilon\bsom}{2^j}\rbr^2 \bsom^{2-p} |B_{2c_o\varrho}|
				 =  \lbr\frac{\varepsilon\bsom}{2^j}\rbr^2 \bsom^{2-p} \frac{|{\mcq}_{c_o\varrho}^{j_*}|}{|2^{j_*(p-2)}(c_o\varrho)^{sp}|}\\
				 & \overred{deg5.33b}{b}{\apprle}& \frac{1}{(c_o\varrho)^{sp}}\lbr\frac{\varepsilon\bsom}{2^j}\rbr^p |{\mcq}_{c_o\varrho}^{j_*}|,
				\end{array}	
			\]
			where to obtain \redref{deg5.33a}{a}, we used $(u-k_j)_+ \leq \tfrac{\varepsilon\bsom}{2^j}$ and to obtain \redref{deg5.33b}{b}, we note that $j \leq j_* -1$ and $\varepsilon$ is fixed depending only on $\data{,\nu,\tau}$ according to \cref{Lm:4:2}.
		\item[\color{airforceblue}{Estimate for $\III$:}] In order to estimate this term, we make use of \cref{degg5.5} along with $(u-k_j)_+ \leq \tfrac{\varepsilon\bsom}{2^j}$ and the fact that $\spt(\zeta) \subset B_{\frac32c_o\varrho}$ to get
		\[
		\begin{array}{rcl}
			\III & \apprle & \frac{1}{(c_o\varrho)^{sp}}\tailp((u-k_j)_+,0,2c_o\varrho,(-2^{j_*(p-2)}(c_o\varrho)^{sp},0])\lbr\frac{\varepsilon\bsom}{2^j}\rbr |{\mcq}_{c_o\varrho}^{j_*}|\\
			& \apprle& \frac{1}{(c_o\varrho)^{sp}}|{\mcq}_{c_o\varrho}^{j_*}|\lbr[[]\lbr\frac{\varepsilon\bsom}{2^j}\rbr^p  + \tailp((u-\bsmu^+)_+,0,2c_o\varrho,(-2^{j_*(p-2)}(c_o\varrho)^{sp},0])\lbr\frac{\varepsilon\bsom}{2^j}\rbr\rbr[]]   \\
			& \overlabel{deggclaim5.2}{\apprle}& \lbr\frac{\varepsilon\bsom}{2^j}\rbr^p \frac{1}{(c_o\varrho)^{sp}}|{\mcq}_{c_o\varrho}^{j_*}| + \frac{1}{(c_o\varrho)^{sp}}c_o^{{sp}}\tailp((u-\bsmu^+)_+,\mbcq)\lbr\frac{\varepsilon\bsom}{2^j}\rbr |{\mcq}_{c_o\varrho}^{j_*}|\\
			& \overlabel{deglm5.2tail}{\apprle}&\lbr\frac{\varepsilon\bsom}{2^j}\rbr^p \frac{1}{(c_o\varrho)^{sp}}|{\mcq}_{c_o\varrho}^{j_*}|,
		\end{array}
		\]
		where to obtain the last inequality, we used the fact that $\tfrac1{2^{j_*}}\leq \tfrac1{2^{j}}$ for any $j \leq j_*-1$.
	\end{description}
Thus combining the estimate for $\I$, $\II$ and $\III$ into \cref{deg5.33}, we get
\begin{equation}\label{deg5.34}
\iint_{\mcq_{c_o\varrho}^{j_*}} (u-k_j)_{+}(x,t)\int_{B_{c_o\varrho}}\frac{(u-k_j)_{-}^{p-1}(y,t)}{|x-y|^{n+sp}}\,dy\,dx\,dt\apprle\lbr\frac{\varepsilon\bsom}{2^j}\rbr^p \frac{1}{(c_o\varrho)^{sp}}|{\mcq}_{c_o\varrho}^{j_*}|.
\end{equation}

We see that the hypothesis of \cref{lem:shrinking} is satisfied with $A = 1$, $l= k_j $ applied to \cref{deg5.34} and \cref{degrmk5.10}  over $\mcq_{c_o\varrho}^{j_*}$  and thus, \cref{lem:shrinking} gives the required conclusion.
\end{proof}
We now prove a de Giorgi type iteration lemma.
\begin{lemma}\label{Lm:5:3}
	Suppose that  \cref{assump2} holds and and $\tau\bsom\le\bsmu^+\le2\bsom$ is satisfied.
	Then, there exists a constant $\nu_1 = \nu_1(\datanb{,\varepsilon,\tau})\in(0,1)$ such that if for some $j_*\geq 3$, the following estimate is satisfied:
	\begin{equation*}
		\left|\left\{\bsmu^{+}-u\leq \tfrac{\varepsilon\bsom}{2^{j_*}}\right\} 
		\cap {\mcq}_{c_o\varrho}^{j_*}
		\right|
		\leq \nu_1|{\mcq}_{c_o\varrho}^{j_*}|,
	\end{equation*}
	then the following conclusion holds
	\[
	\bsmu^{+}-u\geq \frac{\varepsilon\bsom}{2^{j_*+1}}\quad\mbox{a.e., in } \quad {\mcq}_{\frac12c_o\varrho}^{j_*},
	\]
	provided we have
	\begin{equation}\label{Lm:5:3:tail}
	c_o^{\frac{sp}{p-1}} \tail(( u-\bsmu^{+})_{+}; \mbcq)  \leq \frac{\epsilon\bsom}{2^{j_*+1}}.
	\end{equation}
	Recall ${\mcq}_{c_o\varrho}^{j_*} = B_{c_o\varrho}\times (-2^{j_*(p-2)}(c_o\varrho)^{sp},0)$ and ${\mcq}_{\frac12c_o\varrho}^{j_*} = B_{\frac12c_o\varrho}\times (-2^{j_*(p-2)}(\tfrac{c_o\varrho}{2})^{sp},0)$.
\end{lemma}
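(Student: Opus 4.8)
The plan is to run a De~Giorgi iteration structurally identical to the one in the proof of \cref{Lm:3:3}, but on cylinders whose time direction is stretched by the degenerate factor $\theta:=2^{j_*(p-2)}$ and with the intrinsic energy scale $M:=\tfrac{\varepsilon\bsom}{2^{j_*}}$ playing the role of the level there. Assume $(x_o,t_o)=(0,0)$. For $j=0,1,2,\dots$ I would take the increasing levels and shrinking radii
\[
k_j:=\bsmu^+-\frac{\varepsilon\bsom}{2^{j_*+1}}-\frac{\varepsilon\bsom}{2^{j_*+1+j}},\qquad \varrho_j:=\frac{c_o\varrho}{2}+\frac{c_o\varrho}{2^{j+1}},
\]
together with $\tilde k_j:=\tfrac12(k_j+k_{j+1})$, the auxiliary radii $\tilde\varrho_j,\hat\varrho_j,\bar\varrho_j$ and the cylinders $\mcq_j:=B_{\varrho_j}\times(-\theta\varrho_j^{sp},0]$, $\widetilde{\mcq}_j,\hat{\mcq}_j,\bar{\mcq}_j$ exactly as in the proof of \cref{Lm:3:3}, so that $k_0=\bsmu^+-M$, $k_j\uparrow\bsmu^+-\tfrac{\varepsilon\bsom}{2^{j_*+1}}$, $\varrho_0=c_o\varrho$ and $\varrho_j\downarrow\tfrac12 c_o\varrho$; note $\mcq_0={\mcq}_{c_o\varrho}^{j_*}$ and $\bigcap_j\mcq_j={\mcq}_{\frac12 c_o\varrho}^{j_*}$. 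Writing $A_j:=\{u>k_j\}\cap\mcq_j$, the measure hypothesis says $|A_0|\le\nu_1|\mcq_0|$ and the assertion is precisely $|A_j|/|\mcq_j|\to0$ as $j\to\infty$. The cut-offs $\zeta_j,\bar\zeta_j$ are taken to depend on both $x$ and $t$, to equal $1$ on the next smaller cylinder, to be supported in the current one, and---crucially---to vanish on the lower time-face of $\mcq_j$ (resp.\ $\bar{\mcq}_j$), so that $|\nabla\zeta_j|\lesssim 2^j/(c_o\varrho)$ and $|\partial_t\zeta_j|\lesssim \tfrac{2^{jsp}}{\theta(c_o\varrho)^{sp}}$.

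Next I would apply the energy estimate \cref{Prop:energy} (in its subsolution form, to $(u-\tilde k_j)_+$ with cut-off $\zeta_j$ over $\mcq_j$). On the superlevel set $\{u>\tilde k_j\}$, using $\varepsilon<\tfrac12\tau$ and $j_*\ge3$, one has $u>\tilde k_j\ge\bsmu^+-M\ge\tfrac12\tau\bsom$, while $u\le\bsmu^+\le2\bsom$; hence $|u|+|\tilde k_j|\approx_{\tau}\bsom$ there, and \cref{lem:g} gives $\mathfrak g_+(u,\tilde k_j)\approx_{\tau,p}\bsom^{p-2}(u-\tilde k_j)_+^2$. The lower inequality converts the first left-hand term of \cref{Prop:energy} into control of $\bsom^{p-2}\esssup_t\int(u-\tilde k_j)_+^2\zeta_j^p\,dx$. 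On the right-hand side: the gradient term is $\lesssim\tfrac{2^{jsp}}{(c_o\varrho)^{sp}}M^p|A_j|$ since $(u-\tilde k_j)_+\le M$; the $|\partial_t\zeta_j^p|$-term is $\lesssim\bsom^{p-2}M^2\cdot\tfrac{2^{jsp}}{\theta(c_o\varrho)^{sp}}|A_j|$, and here the whole point of the choice $\theta=2^{j_*(p-2)}$ is that $\bsom^{p-2}M^2/\theta=\varepsilon^{2-p}M^p$, so this term is again $\lesssim_{\varepsilon}\tfrac{2^{jsp}}{(c_o\varrho)^{sp}}M^p|A_j|$; the initial-time $\mathfrak g_+$-term vanishes because $\zeta_j$ is zero on the lower face of $\mcq_j$; and the Tail term is treated as in \cref{sec:tail}, splitting $\RR^n\setminus B_{\varrho_j}$ into its part inside $B_{c_o\varrho}$ (on which $u\le\bsmu^+$, because $B_{c_o\varrho}\times(-\theta(c_o\varrho)^{sp},0]\subset\mcq_o$ under \cref{assump2} and the choice of $A$ in \cref{degdef5.10}, whence $(u-\tilde k_j)_+\le M$) and its complement, on which \cref{Lm:5:3:tail} together with the scaling in \cref{rmk:4.2} gives $\tail((u-\bsmu^{+})_{+};0,c_o\varrho,\cdot)\le\tfrac12 M$; the net result is that the Tail term is $\lesssim\tfrac{2^{jsp}}{(c_o\varrho)^{sp}}M^{p-1}\cdot M|A_j|$. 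All implied constants depend only on $\datanb{}$, $\tau$ and $\varepsilon$.

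From this point the argument copies the chain of estimates in the proof of \cref{Lm:3:3}: bound $\tfrac{M}{2^{j+3}}|A_{j+1}|$ from below by $\iint_{\bar{\mcq}_{j+1}}(u-\tilde k_j)_+\bar\zeta_j$, apply H\"older, then the Sobolev inequality \cref{fracpoin} after splitting $|(u-\tilde k_j)_+\bar\zeta_j(x)-(u-\tilde k_j)_+\bar\zeta_j(y)|^p$ as in \cref{eq3.9}, and finally insert the energy bounds just obtained. With $\bsy_j:=|A_j|/|\mcq_j|$ this produces a recursion $\bsy_{j+1}\le\bsc\,\boldsymbol b^{\,j}\,\bsy_j^{1+\frac{s}{n+2s}}$. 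The key bookkeeping point is that the factor $2^{-j_*\frac{s(p-2)}{n+2s}}$ coming from $(M/\bsom)^{\frac{s(p-2)}{n+2s}}=(\varepsilon 2^{-j_*})^{\frac{s(p-2)}{n+2s}}$ is exactly cancelled by the factor $2^{j_*(p-2)\frac{s}{n+2s}}$ coming from $|\mcq_j|\approx\theta(c_o\varrho)^{n+sp}$, so that $\bsc=\bsc(\datanb{,\varepsilon,\tau})$ and $\boldsymbol b=\boldsymbol b(\datanb{})$ carry no dependence on $j_*$, and the recursion is scale invariant in $c_o\varrho$ exactly as in \cref{Lm:3:3}. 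Then \cref{geo_con} with exponent $\tfrac{s}{n+2s}$ gives the admissible choice $\nu_1:=\bsc^{-(n+2s)/s}\boldsymbol b^{-((n+2s)/s)^2}$, which depends only on $\datanb{,\varepsilon,\tau}$; since the hypothesis forces $\bsy_0\le\nu_1$, we conclude $|A_j|\to0$, that is $\bsmu^+-u\ge\tfrac{\varepsilon\bsom}{2^{j_*+1}}$ a.e.\ in ${\mcq}_{\frac12 c_o\varrho}^{j_*}$, which is the claim; the tail hypothesis \cref{Lm:5:3:tail} itself can always be met by taking $c_o=c_o(\datanb{,\varepsilon,\tau,j_*})$ small as in \cref{deggclaim5.2}.

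The step I expect to be the main obstacle is exactly the last one: verifying that the interaction of the intrinsic scale $M=\varepsilon\bsom 2^{-j_*}$, the coefficient $\bsom^{p-2}$ in front of $\mathfrak g_+$ (which is two-sidedly comparable to a constant only because $\tau\bsom\le\bsmu^+\le2\bsom$), and the stretched measure $|\mcq_j|\approx 2^{j_*(p-2)}(c_o\varrho)^{n+sp}$ is such that the constant in the geometric recursion---and hence $\nu_1$---is independent of $j_*$ and of $c_o\varrho$; this balance is precisely what forces the choice $A=2^{j_*(p-2)}+1$ in \cref{degdef5.10}. Everything else is the standard fractional-parabolic De~Giorgi machinery already used in \cref{Lm:3:3}.
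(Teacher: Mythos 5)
Your proposal is correct and follows essentially the same approach as the paper's proof: the same choice of levels $k_j\uparrow\bsmu^+-\tfrac{\varepsilon\bsom}{2^{j_*+1}}$ (the paper writes $k_i=\bsmu^+-\tfrac{M}{2}-\tfrac{M}{2^{i+1}}$ with $M=\tfrac{\varepsilon\bsom}{2^{j_*}}$, which is the same sequence), the same nested degenerate cylinders $\mcq_i=B_{\varrho_i}\times(-\theta\varrho_i^{sp},0]$ with $\theta=2^{j_*(p-2)}$ and time-dependent cut-offs vanishing on the parabolic boundary, the same use of $\tau\bsom\le\bsmu^+\le 2\bsom$ and $\varepsilon<\tfrac12\tau$ to get $\mathfrak g_+(u,\tilde k_i)\approx_{\tau,p}\bsom^{p-2}(u-\tilde k_i)_+^2$, the same reduction of the $|\partial_t\zeta^p|$-term to $\varepsilon^{2-p}M^p$ via $\bsom^{p-2}M^2/\theta=\varepsilon^{2-p}M^p$, the same treatment of the Tail using \cref{Lm:5:3:tail}, and the same observation that $(M/\bsom)^{(p-2)s/(n+2s)}$ cancels the $2^{j_*(p-2)s/(n+2s)}$ coming from $|\mcq_i|\approx\theta(c_o\varrho)^{n+sp}$ so that the recursion constant in \cref{geo_con}, and hence $\nu_1$, depends only on $\datanb{,\varepsilon,\tau}$ and is $j_*$- and $\varrho$-independent. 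Your step-by-step bookkeeping is exactly what appears in \cref{degEq:3.6en_i}, \cref{degg5.38_i}, \cref{degg5.39_i}, \cref{degEq:5.40en_i} and the ensuing iteration in the paper.
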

\begin{proof}
As in \cref{deggclaim5.2}, we can choose $c_o = c_o(\datanb{,\varepsilon},j_*) \in (0,1)$ sufficiently small such that the tail estimate \cref{Lm:5:3:tail} is satisfied. 	Let us denote $M = \frac{\varepsilon\bsom}{2^{j_*}}$ and $\tht := 2^{j_*(p-2)}$, then as in the proof of \cref{Lm:6:1},  we define 
	\begin{equation*}
		{\def\arraystretch{\st}\begin{array}{llll}
				k_i:= \bsmu^+ - \frac{M}2 - \frac{M}{2^{i+1}},& \tilde{k}_i:=\tfrac{k_i+k_{i+1}}2,&
				\varrho_i:=\tfrac{c_o\varrho}2+\tfrac{c_o\varrho}{2^{i+1}},
				&\tilde{\varrho}_i:=\tfrac{\varrho_i+\varrho_{i+1}}2,\\
				B_i:=B_{\varrho_i},& \widetilde{B}_i:=B_{\tilde{\varrho}_i},&
				\mcq_i:= B_i \times ( - \tht(\varrho_i)^{sp}),0],&
				\widetilde{\mcq}_i:=\widetilde{B}_i \times ( - \tht(\tilde\varrho_i)^{sp}),0].
		\end{array}}
	\end{equation*}
Furthermore, we also define 
\begin{equation*}
	\hat{\varrho}_i:=\tfrac{3\varrho_i+\varrho_{i+1}}4, \quad 
	\bar{\varrho}_i:=\tfrac{\varrho_i+3\varrho_{i+1}}4,\quad 
	\hat{\mcq}_i:=B_{\hat{\varrho}_i} \times ( - \tht(\hat\varrho_i)^{sp}),0], \quad
	\bar{\mcq}_i:=B_{\bar{\varrho}_i} \times ( - \tht(\bar\varrho_i)^{sp}),0].
\end{equation*}
	We now consider a cut-off functions $\bar{\zeta_i}$ and $\zeta_i$ independent of time such that
\begin{equation}\label{degcutoff_size_i}
	\begin{array}{cccc}
		\bar{\zeta_i} \equiv 1 \text{ on } \mcq_{i+1},& \quad \bar{\zeta_i} =0 \,\,\text{on} \,\,\partial_p \bar{\mcq}_i , &\quad  |\nabla\bar{\zeta_i}| \apprle \frac{1}{\bar{\varrho_i} - \varrho_{i+1}} \approx \frac{2^i}{c_o\varrho}, &\quad |\pa_t\bar{\zeta_i} | \apprle \frac{2^{spi}}{\tht(c_o\varrho)^{sp}} \\
		{\zeta_i} \equiv 1 \text{ on } \mcq_{\tilde{\varrho_j}}, &\quad {\zeta_i}=0 \,\,\text{on} \,\, \pa_p\hat\mcq_i, &\quad  |\nabla{\zeta_i}| \apprle \frac{1}{\hat{\varrho_n} - \tilde{\varrho_{j}}}\approx \frac{2^{j}}{c_o\varrho},&\quad |\pa_t{\zeta_i} | \apprle \frac{2^{spi}}{\tht(c_o\varrho)^{sp}}.
	\end{array}
\end{equation}
We now make the following observations:
\begin{itemize}
	\item Since $\tau \bsom \leq \bsmu^+ \leq 2\bsom$ and $\varepsilon \in (0,\tfrac12\tau)$ as in \cref{Lm:4:2}, we see that $$|u|+|k_i| \geq |k_i| \geq \tau \bsom - \frac{\varepsilon\bsom}{2^{j_*+1}} - \frac{\varepsilon\bsom}{2^{j_*+i+1}} \overset{\text{\cref{Lm:4:2}}}{\geq} \tau \bsom - \frac{\tau\bsom}{2^{j_*+1}} - \frac{\tau\bsom}{2^{j_*+i+2}} \geq \tfrac14 \tau \bsom, $$
	provided $j_* \geq 4$. In particular, we have $\mathfrak g_+(u,\tilde{k}_i) \geq \bsom^{p-2} \lbr\tfrac{\tau}{4}\rbr^{p-2} (u-\tilde{k}_i)_+^2$.
	\item Let us denote $A_i = \{u > k_i\} \cap \mcq_i$.
	\item We trivially have $(u-k_i)_+ \leq M$ and $\mathfrak g(u,k_i)_+ \apprle \bsom^{p-2}M^2$ where we used $\bsmu^+ \leq 2\bsom$ and $M \leq \bsom$.
\end{itemize}
Now, we  apply the energy estimates from \cref{Prop:energy} over  $\mcq_i $ to $ (u-\tilde{k}_i)_{+}$ and $\zeta_i$ to get
\begin{multline}\label{degEq:3.6en_i}
	(\tau\bsom)^{p-2}	\underset{-\theta\varrho_i^{sp} < t < 0}{\esssup}\int_{B_i}(u-\tilde{k}_i)_{+}^2\zeta_i^p(x,t)\,dx  
	+ {\iiint_{\mcq_i}}\frac{|(u-\tilde{k}_i)_{+}(x,t)\zeta_i(x,t)-(u-\tilde{k}_i)_{+}\zeta_i(y,t)|^p}{|x-y|^{n+sp}}\,dx \,dy\,dt
	\\ 
	\begin{array}{rcl} &\leq_{\data{}} & \frac{2^{isp}}{(c_o\varrho)^{sp}}M^p |A_i| +
	\frac{2^{isp}}{\tht(c_o\varrho)^{sp}}M^2\bsom^{p-2} |A_i|\\
	&&+ \lbr \underset{\stackrel{-\theta\varrho_i^{sp} < t < 0;}{ x\in \spt \zeta_i}}{\esssup}\int_{\RR^n \setminus B_i}\frac{(u-\tilde{k}_i)_{+}^{p-1}(y,t)}{|x-y|^{n+sp}}\,dy\rbr   M |A_i|.
				\end{array}
\end{multline}
We estimate the tail term as in \cref{sec:tail} to get
\begin{equation}\label{degg5.38_i}
\underset{\stackrel{-\theta\varrho_i^{sp} < t < 0;}{ x\in \spt \zeta_i}}{\esssup}\int_{\RR^n \setminus B_i}\frac{(u-\tilde{k}_i)_{+}^{p-1}(y,t)}{|x-y|^{n+sp}}\,dy \apprle \frac{2^{isp}}{(c_o\varrho)^{sp}}\tail((u-\bsmu^+)_+;0;\varrho_i,(-\tht\varrho_i^{sp},0]) + \frac{2^{isp}}{(c_o\varrho)^{sp}}M^{p-1}.
\end{equation}
From the choice of $c_o = c_o(\datanb{,\varepsilon,j_*})$, we see that 
\begin{equation}\label{degg5.39_i}
c_o^{\frac{sp}{p-1}} \tail(( u-\bsmu^{+})_{+}; \mbcq)  \leq \frac{\epsilon\bsom}{2^{j_*+1}}
\Longrightarrow\tail((u-\bsmu^+)_+;0;\varrho_i,(-\tht\varrho_i^{sp},0]) \apprle M^{p-1}.
\end{equation}
Thus combining \cref{degg5.38_i} and \cref{degg5.39_i} into \cref{degEq:3.6en_i}, we get
\begin{multline}\label{degEq:5.40en_i}
	(\tau\bsom)^{p-2}	\underset{-\theta\varrho_i^{sp} < t < 0}{\esssup}\int_{B_i}(u-\tilde{k}_i)_{+}^2\zeta_i^p(x,t)\,dx  
	+ {\iiint_{\mcq_i}}\frac{|(u-\tilde{k}_i)_{+}(x,t)\zeta_i(x,t)-(u-\tilde{k}_i)_{+}\zeta_i(y,t)|^p}{|x-y|^{n+sp}}\,dx \,dy\,dt
	\\ 
	\begin{array}{rcl}
	&\apprle_{\data{,\tau}} & \frac{2^{isp}}{(c_o\varrho)^{sp}}M^p |A_i| +
		\frac{2^{isp}}{\tht(c_o\varrho)^{sp}}M^2\bsom^{p-2} |A_i| + \frac{2^{isp}}{(c_o\varrho)^{sp}}M^{p}|A_i|\\
		&{\overred{5.43a}{a}{\apprle}}_{\data{,\tau}}& \frac{2^{isp}}{(c_o\varrho)^{sp}}M^p |A_i| \lbr 1 + \varepsilon^{2-p}\rbr,
		\end{array}
\end{multline}
where to obtain \redref{5.43a}{a}, we substituted the expression for $\tht$ and $M$ into the second term.
From Young's inequality, we have
\begin{multline}\label{degeq3.9_i}
	|(u-\tilde{k}_{i})_+ \bar{\zeta_i}(x,t) - (u-\tilde{k}_{i})_+ \bar{\zeta_i}(y,t)|^p \leq c |(u-\tilde{k}_{i})_+ (x,t) - (u-\tilde{k}_{i})_+ (y,t)|^p\bar{\zeta_i}^p(x,t) \\
	+ c |(u-\tilde{k}_{i})_+(y,t)|^p |\bar{\zeta_i}(x,t) - \bar{\zeta_i}(y,t)|^p,
\end{multline}
from which we obtain the following sequence of estimates:
\begin{equation*}
	\begin{array}{rcl}
		\frac{M}{2^{i+3}}
		|A_{i+1}|
		&\overred{3.7ai}{a}{\leq} &
		\iint_{\bar{\mcq}_{i+1}}\lbr u-\tilde{k}_i\rbr_+\bar{\zeta_i}\,dx\,dt \\
		&\overred{3.7bi}{b}{\leq} &
		\left[\iint_{\widetilde{\mcq}_i}\big[\lbr u-\tilde{k}_i\rbr_+\bar{\zeta_i}\right]^{p\frac{n+2s}{n}}
		\,dx\,dt\bigg]^{\frac{n}{p(n+2s)}}|A_i|^{1-\frac{n}{p(n+2s)}}\\
		&\overred{3.7ci}{c}{\leq} &\bsc 
		\left(\iiint_{\widetilde{\mcq}_i}\frac{|(u-\tilde{k}_i)_{+}(x,t)\bar{\zeta_i}(x,t)-(u-\tilde{k}_i)_{+}\bar{\zeta_i}(y,t)|^p}{|x-y|^{n+sp}}\,dx \,dy\,dt\right)^{\frac{n}{p(n+2s)}} 
		\\&&\qquad\times \left(\underset{-\theta\varrho_i^{sp} < t < 0}{\esssup}\int_{\widetilde{B}_i}[(u-\tilde{k}_i)_{+}\bar\zeta_i(x,t)]^2\,dx\right)^{\frac{s}{n+2s}}|A_i|^{1-\frac{n}{p(n+2s)}}\\
		&\overred{3.7di}{d}{\leq}&
		\bsc  
		\lbr \frac{2^{isp}}{(c_o\varrho)^{sp}}M^p\lbr 1 + \varepsilon^{2-p}\rbr\rbr^{\frac{n}{p(n+2s)}}
		\lbr\frac{2^{isp}}{(c_o\varrho)^{sp}}M^p\lbr 1 + \varepsilon^{2-p}\rbr \tau^{2-p}\bsom^{2-p}\rbr^{\frac{s}{n+2s}}
		|A_i|^{1+\frac{s}{n+2s}} \\
		&\overred{3.7ei}{e}{\leq} &
		\bsc 
		\frac{b_0^i}{(c_o\varrho)^\frac{s(n+sp)}{n+2s}} M \lbr \frac{M^{p-2}}{\bsom^{p-2}}\rbr^{\frac{s}{n+2s}} \lbr 1 + \varepsilon^{2-p}\rbr^{\frac{n+sp}{p(n+2s)}}
		 \tau^{\frac{(2-p)s}{n+2s}}|A_i|^{1+\frac{s}{n+2s}},
	\end{array}
\end{equation*}
where to obtain \redref{3.7ai}{a}, we made use of the observations and enlarged the domain of integration with $\tilde{\zeta}_i$ as defined in \cref{degcutoff_size_i}; to obtain \redref{3.7bi}{b}, we applied H\"older's inequality; to obtain \redref{3.7ci}{c}, we applied \cref{fracpoin}; to obtain \redref{3.7di}{d}, we made use of \cref{degEq:5.40en_i} and \cref{degeq3.9_i}  with $\bsc = \bsc_{\data{}}$ and finally we collected all the terms to obtain \redref{3.7ei}{e}, where $b_0\geq 1$ is a universal constant.

Setting $Y_i = |A_i|/|\mcq_i|$, we get
\[
Y_{i+1} \leq \bsc b_o^i \lbr 1 + \varepsilon^{2-p}\rbr^{\frac{n+sp}{p(n+2s)}}\varepsilon^{\frac{s(p-2)}{n+2s}}\tau^{\frac{(2-p)s}{n+2s}}|Y_i|^{1+\frac{s}{n+2s}}.
\]
We can now apply \cref{geo_con} to obtain the required conclusion.
%
%
%
\end{proof}
Now we can conclude the required reduction of oscillation. First we collect the definition of constants to keep track of their dependence: Note that all constants depend additionally on $\La$ defined in \cref{boundsonKernel}, but we suppress writing this dependence explicitly.

\begin{itemize}
	\item We first choose $j_* = j_*(n,p,s,\varepsilon,\tau)$ such that $\lbr \frac{\bsc}{2^{j_*}-1}\rbr^{p-1}\leq \nu_1$ where $\bsc(\datanb{,\nu,\tau,\ve})$ is the constant in \cref{deglemma5.12}.
	\item We can further choose $j_*$ large enough such that $2^{j_*(p-2)} \geq 1$. 
	\item Once $j_*$ is chosen, this then fixes $A = 2^{j_*(p-2)}+1$ according to \cref{degdef5.10}. 
	\item With this choice of $j_*$, let the rest of the constants be as given in the above table. Then we get 
	\begin{equation}\label{sub5.2.6.a}
		\bsmu^+ - u \geq \frac{\varepsilon\bsom}{2^{j_*+1}} \txt{a.e. on}  \mcq_{\frac12c_o\varrho}^{j_*}
	\end{equation}
\end{itemize}

\subsection{Constants in degenerate case near zero}\label{subsection5.2.6}
\noindent\begin{minipage}{0.49\textwidth}
	\begin{center}
		\begin{tabular}{|c|c|c|}
			\hline
			Symbol & location & dependence\\
			\hline\hline
			$c_o$ & \cref{degclaim5.2,degclaim5.3} & $n,p,s,A$\\
			 & \cref{deggclaim5.3} & $n,p,s,\bar\eta_2$\\
			& \cref{Lm:6:1} & $n,p,s,\eta_1$ \\
			& \cref{deggclaim5.2} & $n,p,s$\\
			& \cref{Lm:4:2} & $n,p,s,\varepsilon$\\
			& \cref{deglemma5.12} & $n,p,s,\varepsilon,j_*$\\
			& \cref{Lm:5:3} & $n,p,s,\varepsilon,j_*$\\
			\hline
			$\eta$ & \cref{degclaim5.3} & $n,p,s,\alpha$\\\hline
$\de$ & \cref{degclaim5.3} & $n,p,s$\\ \hline
		\end{tabular}
	\end{center}
\end{minipage}
\begin{minipage}{0.49\textwidth}
	\begin{center}
		\begin{tabular}{|c|c|c|}
			\hline
			Symbol & location & dependence\\
			\hline\hline
			$ \bar{\eta}$ & \cref{degclaim5.3} & $n,p,s,\alpha,A$ \\\hline
			$\eta_1 $ & \cref{Lm:6:1} & $n,p,s,A$ and $\eta_1 \leq \frac{\tau}4$\\\hline
			$\eta_2$ & \cref{deggclaim5.3} & $n,p,s$\\\hline
			$\bar\eta_2$ & \cref{deggclaim5.3} & $n,p,s,A$\\\hline
			$\tau$ &  \cref{degclaim5.3} & $\tau \leq 2\eta$\\
			& \cref{deggclaim5.3} & $\tau \leq 2\eta_2$\\\hline
			$\varepsilon$ & \cref{Lm:4:2} & $n,p,s,\nu,\tau$\\\hline
			$\nu$ & \cref{assump1} &$n,p,s$\\\hline
			$\nu_1$ & \cref{Lm:5:3} &$n,p,s,\varepsilon,\tau$\\\hline
		\end{tabular}
	\end{center}
\end{minipage}
 
\subsubsection{Reduction of oscillation near zero -  collected estimates}\label{nearzerodegen}
 From choosing $c_o$ sufficiently small, we can ensure  all of the corresponding $\tail$ alternatives holds. Thus, we can collect the conclusions as follows:
 \begin{description}
 	\item[Case $-\tau \bsom \leq \bsmu^- \leq \tau \bsom$:] In this case, from \cref{degclaim5.3}, we get
 	\[
 	u \geq \bsmu^- + \bar\eta \bsom \quad\mbox{a.e., in } \quad B_{\frac{c_o\varrho}2} \times \left(t_o -\tfrac14 (c_o\varrho)^{sp},t_o\right].
 	\]
 	\item[Case $\bsmu^-\leq -\tau \bsom$:] In this case, from \cref{Lm:6:1}, we get
 	\[
 	u\ge\bsmu^-+\eta_1\bsom\quad\mbox{a.e., in}\quad  
 	B_{\frac14 c_o\varrho}\times\left(t_o - (\tfrac{c_o\varrho}{2})^{sp},t_o\right].
 	\]
 	\item[Case $-\tau \bsom \leq \bsmu^+ \leq \tau \bsom$:] In this case, from \cref{deggclaim5.3}, we get
 	\begin{equation*}
 		u \leq \bsmu^+ - \bar\eta_2 \bsom \txt{a.e., in} B_{\frac12c_o\varrho} \times (t_o- \tfrac{\nu}{4}(\tfrac{c_o\varrho}{2})^{sp}, t_o].
 	\end{equation*}
 	\item[Case $\tau \bsom \leq \bsmu^+  \leq 2\bsom$:] In this case, from \cref{sub5.2.6.a} and using $2^{j_*(p-2)}\geq 1$, we have
 	\[
 	\bsmu^{+}-u\geq \frac{\varepsilon\bsom}{2^{j_*+1}}\txt{a.e., in }  B_{\frac12c_o\varrho}\times (t_o-(\tfrac{c_o\varrho}{2})^{sp},t_o].
 	\]
 \end{description}

\section{Reduction of oscillation near zero - Singular case}

In this case, we assume $p \leq 2$ and prove reduction of oscillation.

\subsection{Setting up the geometry}
Let us fix some reference cylinder $\mbcq \subset \omt$. Fix $(x_o,t_o)\in \omt$ and let ${\mcq}_o={\mcq}_{\varrho} = B_{\varrho}(x_0) \times (t_0-\varrho^{sp},t_0] \subset \mbcq $  be any cylinder centred at $(x_o,t_o)$. 
Assume, without loss of generality, that $(x_o,t_o) = (0,0)$ and let
\begin{equation*}
	\bsom := 2\esssup_{\mbcq} |u| +\tail(|u|; \mbcq), \quad \bsmu^+:=\esssup_{{\mcq_o}}u,
	\txt{and}
	\bsmu^-:=\essinf_{{\mcq_o}}u,
\end{equation*}
then it is easy to see that 
\[
\essosc_{\mcq_o} u   = \bsmu^+ - \bsmu^-\leq \bsom.
\]

For some positive constant $\tau \in (0,\tfrac14)$ that will be made precise later on, we have two cases
\refstepcounter{equation}
\def\mynames{\theequation}
\begin{align}[left=\empheqlbrace]
	&\mbox{when $u$ is \emph{near} zero:}  \bsmu^-\le\bsom  \,\,\text{and} \,\,
	\bsmu^+\ge-\bsom,\tag*{(\mynames$_{a}$)}\label{Eq:Hp-main1singh}\\
	&\mbox{when $u$ is \emph{away} from zero:} \bsmu^- >\bsom \,\,\text{or}\,\, \bsmu^+<-\bsom.\tag*{(\mynames$_{b}$)}\label{Eq:Hp-main2sing}
\end{align}
Furthermore, we will always assume the following is satisfied:
\begin{equation*}
	\bsmu^+ -\bsmu^- >\tfrac12\bsom.
\end{equation*} since in the other case, we trivially get the reduction of oscillation.


\subsection{Reduction of Oscillation near zero when \texorpdfstring{\tlcref{Eq:Hp-main1singh}}. holds}\label{singreducoscnearzero}

\begin{description}
	\item[Lower bound for $\bsmu^-$:] In this case, we have
	\begin{equation*}
		\bsmu^- = \bsmu^- - \bsmu^+ + \bsmu^+ \geq -\bsom + \bsmu^+ \geq -2\bsom.
	\end{equation*}
	\item[Upper bound for $\bsmu^+$:] In this case, we have
	\begin{equation*}
		\bsmu^+ = \bsmu^+ - \bsmu^- + \bsmu^- \leq \bsom + \bsmu^- \leq 2\bsom.
	\end{equation*}
\end{description}
In particular, \tlcref{Eq:Hp-main1singh} implies
\begin{equation}\label{boundmusing}
	|\bsmu^{\pm}| \leq 2\bsom.
\end{equation}

\begin{assumption}\label{assump1sing}
	Observe that one of the following two cases must be true: 
	\begin{equation}\label{Eq:1st-alt-meassing}
		\begin{array}{l}
		\abs{\left\{u(\cdot, - (c_o\varrho)^{sp})\geq \bsmu^-+\tfrac{\bsom}{4} \right\}
			\cap 
			{B}_{c_o\varrho}}\geq  \tfrac12 |{B}_{c_o\varrho}|,\\
		\abs{\left\{u(\cdot, - (c_o\varrho)^{sp})\leq \bsmu^+-\tfrac{\bsom}{4} \right\}
			\cap 
			{B}_{c_o\varrho}}\geq  \tfrac12 |{B}_{c_o\varrho}|.
		\end{array}
	\end{equation}
  Since both the cases are treated similarly due to the symmetry of the alternatives, without loss of generality, we shall assume the first inequality in \cref{Eq:1st-alt-meassing} holds. 
\end{assumption}

We will now apply \cref{Prop:1:1} with $\alpha = \tfrac12$, $\xi = 8$, $M= \tfrac14 \bsom$ to get constants $\eta = \eta(\datanb{}) \in (0,1)$, $\de = \de(\datanb{})\in(0,1)$ and $c_o = c_o(\datanb{,\eta})$ such that the following holds:
\[
\tail((u - \bsmu^-)_-;c_o\varrho,0,(-(c_o\varrho)^{sp},0)) \leq\eta \bsom \txt{and} u \geq \bsmu^- + \eta  \bsom \quad \text{on} \,\, B_{c_o\varrho} \times (-\tfrac{\de}2(c_o\varrho)^{sp},0).
\]
Note that since $p<2$, we automatically see that $|\bsmu^{\pm}|\leq \xi M = 2 \bsom$ holds due to \cref{boundmusing}.


\section{Reduction of oscillation for a scaled parabolic fractional \texorpdfstring{$p$}.-Laplace type equations - Preliminary lemmas}
\label{section6}
\subsection{Reduction of Oscillation away from  zero}
We are in the case \tlcref{Eq:Hp-main2} holds and without loss of generality, we consider the case $\bsmu^- > \tau \bsom$ noting that the other case follows by replacing $u$ with $-u$. 
\begin{assumption}\label{assump3}
	Without loss of generality, we consider the case $\bsmu^- > \tau \bsom$. Let us further assume that the following bound holds: 
	\[
	\bsmu^- \leq \frac{1+\tau}{1-\mathring{\eta}} \bsom,
	\]
	for some $\mathring{\eta} = \mathring{\eta}(\datanb{}) \in (0,1)$ to be eventually chosen according to \descrefnormal{step8deg}{Step 8} of \cref{section10} in the degenerate case. Note that with the choice, we have $\frac{1+\tau}{1-\mathring{\eta}} \geq 1$.  See \cref{rmkdeg5.15} on how this assumption is always satisfied.
\end{assumption}

\begin{remark}\label{rmkdeg5.15}
	In order to obtain H\"older regularity, we iteratively assume that \tlcref{Eq:Hp-main1} holds and denote $i_o$ to be the first time when \tlcref{Eq:Hp-main2} occurs. If $i_o=0$, then we trivially have $\bsmu^- \leq \bsom$ from the choice of \cref{defbsom}. On the other hand, if $i_o \geq 1$, then \tlcref{Eq:Hp-main2} implies the following holds:
	\[
	\text{either} \quad\bsmu_{i_o}^-> \tau\bsom_{i_o}\;\quad
	\text{or}\quad
	\;\bsmu_{i_o}^+<-\tau\bsom_{i_o}.
	\]
	We work with $\bsmu_{i_o}^->\tau\bsom_{i_o}$ with the other case being analogous.
	Since $i_o$ is the first index for this to happen,
	we have $\bsmu_{i_o-1}^-\le \tau\bsom_{i_o-1}$ (here is where we need to assume $i_o \geq 1$) and
	\[
	\bsmu_{i_o}^-
	\leq \bsmu_{i_o}^+
	\leq
	\bsmu_{i_o-1}^{+}
	=
	\bsmu_{i_o-1}^{-} + \bsmu_{i_o-1}^{+} - \bsmu_{i_o-1}^{-}
	\le
	\bsmu_{i_o-1}^{-} + \bsom_{i_o-1}
	\le
	(1+\tau)\bsom_{i_o-1} = \frac{1+\tau}{1-\mreta}\bsom_{i_o},
	\]
	where we assumed $\bsom_{i_o} = (1-\mreta) \bsom_{i_o-1} =(1-\mreta)^2 \bsom_{i_o-2} \ldots = (1-\mreta)^{i_o} \bsom $. Recall $\mreta = \mreta(\datanb{})$ is chosen according to \descrefnormal{step8deg}{Step 8} of \cref{section10} in the degenerate case (analogous choice is made in the singular case also). 
	As a result, we have
	\begin{equation}\label{Eq:5:4}
		\tau\bsom_{i_o}
		<
		\bsmu_{i_o}^-\le\frac{1+\tau}{1-\mreta}\bsom_{i_o}.
	\end{equation}
	In particular, \cref{assump3} is always satisfied. 
\end{remark}

\begin{remark}
	By an abuse of notation, we shall suppress keeping track of $i_o$ in the next section and instead denote $\bsom_{i_o}$ by $\bsom$. 
\end{remark}

\begin{definition}\label{defvw}
	Let ${\mcq}_o={\mcq}_{\varrho} = B_{\varrho}(x_o) \times (t_o-\bar A\varrho^{sp},t_o] \subset \mbcq $ where $\bar A \geq 1$ will be chosen later in terms of $\data$. Let $w:=\tfrac{u}{\bsmu^-}$ and denote $v=w^{p-1}$ in $\mcq_o$, then making use of \cref{assump3}, we see that $v$ satisfies
	\begin{equation}\label{Eq:6:9}
		1 \leq w \leq \frac{1+\tau}{\tau} \qquad \text{ a.e in }\quad {\mcq_o},
	\end{equation}
	where we made use of the following bound:
	\[
	\bsmu^- \leq u \leq \bsmu^+ - \bsmu^- + \bsmu^- \leq \boldsymbol \om + \bsmu^- \leq (1+\tfrac{1}{\tau})\bsmu^-. 
	\]
	Moreover, from the scale invariance of \cref{maineq}, we see that $w$ also solves \cref{maineq}. 
	
\end{definition}

\begin{definition}\label{vwdef5.2}
	Let us denote $\bsmu^-_w  =  \frac{\essinf_{\mcq_o} u}{\bsmu^-}=	\essinf_{\mcq_o} w$ and  $\bsmu^+_w   = \frac{\esssup_{\mcq_o} u}{\bsmu^-}=	\esssup_{\mcq_o} w$. Analogously, denote $\bsmu^-_v = (\bsmu^-_w)^{p-1}=	(\essinf_{\mcq_o} w)^{p-1} =\essinf_{\mcq_o} v$ and  $\bsmu^+_v = (\bsmu^+_w)^{p-1}= 	(\esssup_{\mcq_o} w)^{p-1} =\esssup_{\mcq_o} v$.
	We shall denote $\bsom_w = \frac{\bsom}{\bsmu^-}$ where $\bsom \geq \essosc_{\mcq_o} u$,  then it is easy to see that 
	\[
	\bsom \geq \bsmu^+ - \bsmu^- \qquad \Longrightarrow \qquad \bsom_w \geq \bsmu^+_w - \bsmu^-_w.
	\]
	
	Since $w$ satisfies \cref{Eq:6:9}, then for any $x,y \in \mbcq$ with $v(x)\geq v(y)$, we have
	\[
	v(x) - v(y) = w^{p-1}(x) - w^{p-1}(y) \overlabel{alg_lem}{\leq} (p-1)\lbr \tfrac{1+\tau}{\tau}\rbr^{p-1} (w(x)- w(y)).
	\]
	In particular, this implies that if we take $\bsom_v := (p-1)\lbr \tfrac{1+\tau}{\tau}\rbr^{p-1}  \bsom_w$, then we have
	\[
	\bsom_v \geq 	\bsmu^+_v - \bsmu^-_v.
	\]
	Moreover, making use of \cref{Eq:5:4} and $\bsom_w = \frac{\bsom}{\bsmu^-}$, we also have
	\begin{equation}\label{boundsonomega}
		\mathfrak{C}_1:= (p-1)\frac{1-\mathring{\eta}}{1+\tau}\left(\frac{1+\tau}{\tau}\right)^{p-1}\leq {\bsom}_v\leq \frac{p-1}{\tau}\left(\frac{1+\tau}{\tau}\right)^{p-1}=:\mathfrak{C}_2.
	\end{equation}
\end{definition}
\subsection{Energy estimates}
\begin{lemma}\label{lemma6.3}
	With $v$ and $w$ be as in \cref{defvw} and $k \in \RR$ be given, then
	there exists a constant $\bsc  >0$ depending only on the data such that
	for all  every non-negative, piecewise smooth cut-off function
	$\zeta(x,t) = \zeta_1(x)\zeta_2(t)$ vanishing on $\partial_p \mcq_o$,  there holds
	\begin{multline*}
		\esssup_{t_o-\bar A \varrho^{sp}<t<t_o}\int_{B_{\varrho}\times\{t\}}	
		\zeta^p(v-k)_{\pm}^2\,dx +
		\iint_{\mcq} (v-k)_{\pm}(x,t)\zeta_1^p(x)\zeta_2^p(t)\lbr \int_{B_{\varrho}}\frac{(v-k)_{\mp}^{p-1}(y,t)}{|x-y|^{n+sp}}\,dy\rbr\,dx\,dt 
		\\
		+\iiint_{\mcq_o} \frac{|(v-k)_{\pm}(x,t)-(v-k)_{\pm}(y,t)|^p}{|x-y|^{n+ps}}\max\{\zeta_1(x),\zeta_1(y)\}^p\zeta_2^p(t)\,dx\,dy\,dt \\ 
		\begin{array}{rcl}
			&\leq&
			\boldsymbol{C} \iiint_{\mcq_o} \frac{\max\{(v-k)_{\pm}(x,t),(v-k)_{\pm}(y,t)\}^{p}|\zeta(x,t)-\zeta(y,t)|^p}{|x-y|^{n+sp}}\,dx\,dy\,dt
			\\
			&&+
			\iint_{\mcq}\mathfrak (v-k)_\pm^2 (u,k)|\partial_t\zeta^p| \,dx\,dt
			+\int_{B_{\varrho}\times \{t=t_o-\bar A\varrho^{sp}\}} \zeta^p (v-k)_\pm \,dx 
			\\
			&&+ \,\boldsymbol{C}\iint_{\mcq_o}\zeta_2^p(t)\zeta_1^p(x)(v-k)_{\pm}(x,t)\lbr \esssup\limits_{\stackrel{t\in (t_o - \bar A \varrho^{sp},t_o)}{x \in \spt \zeta_1}}\left[\int_{{\RR^n\setminus B_{\varrho}}} \frac{(w- k^{\frac{1}{p-1}})_{\pm}(y,t)^{p-1}}{|x-y|^{n+sp}}\,dy\right]\rbr\,dx\,dt.
		\end{array}
	\end{multline*}
\end{lemma}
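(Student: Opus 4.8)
The plan is to combine the change of variables of \cref{defvw} with the argument underlying the energy estimate \cref{Prop:energy}. By the scale invariance of \cref{maineq} recorded in \cref{defvw}, the rescaled function $w=u/\bsmu^-$ is again a local weak solution of \cref{maineq}, and since $w\ge 1$ in $\mcq_o$ by \cref{Eq:6:9} we have $|w|^{p-2}w=w^{p-1}=v$, so that $w$ satisfies $\partial_t v+\mathcal{L}w=0$ in the weak sense, $\mathcal{L}$ being the nonlocal operator of \cref{maineq}. The point is that one cannot quote \cref{Prop:energy} directly: there the operator and the test function involve the \emph{same} function, whereas here $\mathcal{L}$ acts on $w$ while the natural test function is a truncation of $v$, so the derivation must be redone while tracking this mismatch. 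First I would insert (a Steklov–averaged version of) $\phi=\pm(v-k)_\pm\,\zeta_1^p\zeta_2^p$ into the weak formulation for $w$, which is admissible because $\zeta$ vanishes on the lateral boundary of $\mcq_o$ (the time-regularisation is standard and suppressed). The time term is then handled exactly as in \cref{Prop:energy}: since $v$, and not $|v|^{p-2}v$, appears under $\partial_t$, one has $\pm\partial_t v\,(v-k)_\pm=\tfrac12\partial_t(v-k)_\pm^2$, and integrating by parts in $t$ with $\zeta_2$ monotone produces the $\esssup$ term on the left, the term $\iint_{\mcq}(v-k)_\pm^2|\partial_t\zeta^p|$, and the slice term at $t=t_o-\bar A\varrho^{sp}$ on the right.

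For the nonlocal term I would split $\mathcal{C}_{B_\varrho}=(B_\varrho\times B_\varrho)\cup(B_\varrho\times B_\varrho^c)\cup(B_\varrho^c\times B_\varrho)$; by symmetry of $K$ and because $\zeta_1$ is supported in $B_\varrho$ (so $\phi(y)=0$ for $y\notin B_\varrho$), the two off-diagonal blocks coincide and contribute $2\iint_{B_\varrho\times B_\varrho^c}K\,|w(x)-w(y)|^{p-2}(w(x)-w(y))\,(v-k)_\pm(x)\zeta^p(x)$. On the diagonal block the key step is to pass from increments of $w$ to increments of $v$: for $x,y\in B_\varrho$, the two-sided bound $1\le w\le\tfrac{1+\tau}{\tau}$ of \cref{Eq:6:9} and \cref{alg_lem} (with $q=p-1$) give $|w(x)-w(y)|\asymp|v(x)-v(y)|$ with constants depending only on $p$ and $\tau$ (equivalently on $\mathfrak{C}_1,\mathfrak{C}_2$ from \cref{boundsonomega}); since $s\mapsto s^{p-1}$ is increasing the two increments have the same sign, whence $|w(x)-w(y)|^{p-2}(w(x)-w(y))\asymp|v(x)-v(y)|^{p-2}(v(x)-v(y))$, and similarly $(v-k)_\pm(x)\asymp(w-k^{\frac1{p-1}})_\pm(x)$ on $B_\varrho$ (here one records that the only relevant levels are $k\in[\bsmu^-_v,\bsmu^+_v]\subset[1,(\tfrac{1+\tau}{\tau})^{p-1}]$, so $k^{\frac1{p-1}}$ lies in the range where \cref{alg_lem} applies). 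After this reduction the diagonal block is estimated verbatim as in the proof of \cref{Prop:energy}, i.e.\ via the fractional $p$-Laplacian Caccioppoli algebra together with the ``good term'' device of \cite{adimurthiLocalHolderRegularity2022}: a Young-type splitting of $\zeta(x)-\zeta(y)$ yields the Gagliardo energy $\iiint|(v-k)_\pm(x)-(v-k)_\pm(y)|^p|x-y|^{-n-sp}\max\{\zeta_1(x),\zeta_1(y)\}^p\zeta_2^p$ and, on the set where $v(x),v(y)$ lie on opposite sides of $k$, the isoperimetric term $\iint(v-k)_\pm(x)\zeta_1^p(x)\zeta_2^p\big(\int_{B_\varrho}(v-k)_\mp^{p-1}(y)|x-y|^{-n-sp}\,dy\big)$, at the price of the term $\iiint\max\{(v-k)_\pm(x),(v-k)_\pm(y)\}^p|\zeta(x)-\zeta(y)|^p|x-y|^{-n-sp}$ on the right.

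It then remains to estimate the off-diagonal (tail) block. Arguing pointwise on the set where $(v-k)_\pm(x)>0$, write $a=(w(x)-k^{\frac1{p-1}})_\pm>0$ and $b=\pm(w(y)-k^{\frac1{p-1}})$, so that $\pm(w(x)-w(y))=a-b$; the elementary inequality $-|a-b|^{p-2}(a-b)\,a\le a\,(b_+)^{p-1}$ — the left side being non-positive unless $b>a>0$, in which case $(b-a)^{p-1}\le(b_+)^{p-1}$ — yields, after bounding $K$ by its upper bound from \cref{boundsonKernel} and using $(v-k)_\pm(x)\asymp(w-k^{\frac1{p-1}})_\pm(x)$ on $\supp\zeta_1$, that
\[
-2\iint_{B_\varrho\times B_\varrho^c}K\,|w(x)-w(y)|^{p-2}(w(x)-w(y))(v-k)_\pm(x)\zeta^p(x)\,dy\,dx\,dt\ \lesssim\ \iint_{\mcq}\zeta_1^p(x)\zeta_2^p(t)(v-k)_\pm(x)\Big(\esssup_{\substack{t\in(t_o-\bar A\varrho^{sp},t_o)\\ x\in\supp\zeta_1}}\int_{\RR^n\setminus B_\varrho}\tfrac{(w-k^{\frac1{p-1}})_{\pm}^{p-1}(y,t)}{|x-y|^{n+sp}}\,dy\Big)dx\,dt,
\]
which is exactly the last term of the asserted inequality (cf.\ \cref{sec:tail}). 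Collecting the three pieces, absorbing the Gagliardo and $\esssup$ terms into the left side, and using $(v-k)_\pm\lesssim 1$ to pass from $(v-k)_\pm^2$ to $(v-k)_\pm$ in the slice term, would complete the proof with $\bsc$ depending only on the data.

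I expect the main obstacle to be the diagonal-block algebra: one must carry the signs correctly through the comparison $|w(x)-w(y)|^{p-2}(w(x)-w(y))\asymp|v(x)-v(y)|^{p-2}(v(x)-v(y))$, ensure that the constants produced by \cref{alg_lem} depend only on the fixed $\tau$ (through $\mathfrak{C}_1,\mathfrak{C}_2$), and simultaneously isolate the isoperimetric good term. Away from the diagonal there is no upper bound on $w$, which forces the tail to be measured in the \emph{unchanged} variable $w$ rather than in $v$ — precisely the feature that makes this a ``mixed'' energy estimate and that prevents a clean change of variables in the tail, as anticipated in the introduction.
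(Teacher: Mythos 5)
Your proposal is correct and is essentially the paper's proof: test the $w$-equation with $(v-k)_\pm\zeta^p$, use \cref{Eq:6:9} and \cref{alg_lem} to convert $w$-increments to $v$-increments on the diagonal block $B_\varrho\times B_\varrho$ (then run the standard Caccioppoli algebra, including the isoperimetric good term, for $v$), and on the off-diagonal block drop the favourably-signed part and bound the remainder in the \emph{unchanged} variable $w$ via $v(x)\ge k\Rightarrow w(x)\ge k^{\frac{1}{p-1}}$. Your one-line inequality $-|a-b|^{p-2}(a-b)\le (b_+)^{p-1}$ for $a\ge 0$ and the sign-preserving comparability $|w(x)-w(y)|^{p-2}(w(x)-w(y))\asymp|v(x)-v(y)|^{p-2}(v(x)-v(y))$ are compressed restatements of, respectively, the paper's $T_1/T_2$ split of $\I_2$ and its three-way case analysis on $A_\pm(k,\varrho,t)$ in $\I_1$.
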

\begin{proof}
Given $k \in \RR$, we shall use $(v-k)_{\pm}\zeta^p(x,t)$ (recall $v=w^{p-1}$) where $\zeta(x,t) - \zeta_1(x)\zeta_2(t)$ is a piecewise smooth cut-off function vanishing on $\pa_p \mcq_o$ (see \tlcref{not10} for notation) as a test function in 
 \begin{equation*}
 	\partial_t (|w|^{p-2}w) + \text{P.V.}\int_{\RR^n} |w(x,t)-w(y,t)|^{p-2}(w(x,t)-w(y,t))K(x,y,t)\,dy=0.
 \end{equation*}
We then have
\begin{multline*}
	-\left.\int_{B_{\varrho}(x_o)} (w^{p-1})(v-k)_{\pm}\zeta^p\,dx\right|_{\{t=t_o-\bar A\varrho^{sp}\}}^{\{t=t_o\}}  - \iint_{\mcq_o} (w^{p-1})(v-k)_{\pm} (\partial_t\zeta^p)\,dx\,dt\\
	+\int_{t_o-\bar A \varrho^{sp}}^{t_o}\iint_{\RR^n \times \RR^n}\frac{|w(x,t)-w(y,t)|^{p-2}(w(x,t)-w(y,t))((v-k)_{\pm}\zeta^p(x,t)-(v-k)_{\pm}\zeta^p(y,t))}{|x-y|^{n+sp}}\,dz\,dt
	=0.
\end{multline*}
The first two terms can be estimated in the standard way, see for example \cite[Appendix B]{liaoHolderRegularityParabolic2022}. Hence, we shall only estimate the nonlocal term, which we can split into two parts:
\begin{description}[leftmargin=*]
	\item[The estimate of ${\I}_1$:] 
	
	Recall that
	\begin{equation*}
		{\I}_1:=\frac{1}{2}\iiint_{\mcq_o} |w(x,t)-w(y,t)|^{p-2}(w(x,t)-w(y,t))((v-k)_{\pm}(x,t)\zeta_1^p(x)-(v-k)_{\pm}(y,t)\zeta_1^p(y))\zeta_2^p(t)\,d\mu\,dt,
	\end{equation*}
where we have denoted $d\mu := \frac{dz}{|x-y|^{n+sp}}$.
	
	To estimate the nonlocal terms, we consider the following cases pointwise for $(x,t)$ and $(y,t)$ in $B_{\varrho} \times B_{\varrho}\times I$ where $I:=(t_o-\bar A \varrho^{sp},t_o)$. Furthermore, we shall only consider the case $(v-k)_{+}$ noting that the case $(v-k)_{-}$ can be handled analogously. Recalling the notation from \tlcref{not11}, for any fixed $t\in I$, we claim that
	\begin{itemize}
		\item If $x \notin A_+(k,\varrho,t) = \{v \geq k\}\cap \mcq_o$ and $y \notin A_+(k,\varrho,t)$ then
		\begin{equation*}
			|w(x,t)-w(y,t)|^{p-2}(w(x,t)-w(y,t))((v-k)_{+}(x,t)\zeta_1^p(x)-(v-k)_{+}(y,t)\zeta_1^p(y))=0.
		\end{equation*}
		This estimate follows from the fact that $((v-k)_{+}(x,t)\zeta_1^p(x)-(v-k)_{+}(y,t)\zeta_1^p(y))=0$ when $x \notin A_+(k,\varrho,t)$ and $y \notin A_+(k,\varrho,t)$. 
		\item If $x \in A_+(k,\varrho,t)$ and $ y \notin A_+(k,\varrho,t)$ then
		\begin{multline}\label{eq:B}
			|w(x,t)-w(y,t)|^{p-2}(w(x,t)-w(y,t))((v-k)_{+}(x,t)\zeta_1^p(x)-(v-k)_{+}(y,t)\zeta_1^p(y))\\
			\geq\bsc_{p,\tau}\min\{2^{p-2},1\}\left[|(v-k)_+(x,t)-(v-k)_+(y,t)|^p+(v-k)_-(y,t)^{p-1}(v-k)_+(x,t)\right]\zeta_1(x)^p.
		\end{multline}
		
		To obtain estimate \cref{eq:B}, we note that 
		\begin{multline*}
			|w(x,t)-w(y,t)|^{p-2}(w(x,t)-w(y,t))(v-k)_{+}(x,t)\zeta_1^p(x)-(v-k)_{+}(y,t)\zeta_1^p(y))\\
			\begin{array}{rcl}
			  &\overred{mdeg5.4a}{a}{=} &\lbr|v^{\frac{1}{p-1}}(x,t)-v^{\frac{1}{p-1}}(y,t)|^{p-2}(v^{\frac{1}{p-1}}(x,t)-v^{\frac{1}{p-1}}(y,t))\rbr (v-k)_{+}(x,t)\zeta_1^p(x)\\
			  & \overred{mdeg5.4b}{b}{\geq} & \bsc_{p,\tau} \lbr|v(x,t)-v(y,t)|^{p-2}(v(x,t)-v(y,t))\rbr (v-k)_{+}(x,t)\zeta_1^p(x)\\
			   & \overred{mdeg5.4c}{c}{=} & \bsc_{p,\tau} \lbr(v-k)_{+}(x,t)+(v-k)_{-}(y,t)\rbr^{p-1} (v-k)_{+}(x,t)\zeta_1^p(x)
			  \end{array}
		\end{multline*} where to obtain \redref{mdeg5.4a}{a}, we note that $v = w^{p-1}$ on $\mcq_o$ (see \cref{defvw}); to obtain \redref{mdeg5.4b}{b}, we note that when $x \in A_+(k,\varrho,t)$ and $y \notin A_+(k,\varrho,t)$, we have $v(x,t) \geq v(y,t)$ and thus we can apply \cref{alg_lem} with $q=\tfrac{1}{p-1}$, $c_1 = 1$ and $c_2 = \tfrac{1+\tau}{\tau}$; to obtain \redref{mdeg5.4c}{c}, we again used the fact that  $x \in A_+(k,\varrho,t)$ and $y \notin A_+(k,\varrho,t)$ which says $v(x,t) \geq k\geq v(y,t)$. 
		The estimate follows by an application of  Jensen's inequality when $p \leq 2$ and applying \cref{pineq1} with $\theta=0$ in the case $p \geq 2$ .
		\item If $x, y \in A_+(k,\varrho,t)$ then
		\begin{multline}\label{eq:C}
			|w(x,t)-w(y,t)|^{p-2}(w(x,t)-w(y,t))((v-k)_{+}(x,t)\zeta_1^p(x)-(v-k)_{+}(y,t)\zeta_1^p(y))\\
			 \geq \frac{1}{2}|(v-k)_{+}(x,t)-(v-k)_{+}(y,t)|^p\max\{\zeta_1(x),\zeta_1(y)\}^p\\
			 -C(p)\max\{(v-k)_{+}(x,t),(v-k)_{+}(y,t)\}^p|\zeta_1(x)-\zeta(y)|^p.
		\end{multline}
		Since $x,y \in A_+(k,\varrho,t)$, we have 
		\begin{multline*}
			|w(x,t)-w(y,t)|^{p-2}(w(x,t)-w(y,t))((v-k)_{+}(x,t)\zeta_1^p(x)-(v-k)_{+}(y,t)\zeta_1^p(y))\\
			=\left\{ \begin{array}{ll}
				(w(x,t)-w(y,t))^{p-1}(\zeta_1^p(x)(v-k)_{+}(x,t)-\zeta_1^p(y)(v-k)_{+}(y,t)) & \text{when} \, w(x,t) \geq w(y,t),\\
				(w(y,t)-w(x,t))^{p-1}(\zeta_1^p(y)(v-k)_{+}(y,t)-\zeta_1^p(x)(v-k)_{+}(x,t)) & \text{when} \, w(x,t) \leq w(y,t).
			\end{array}\right.
		\end{multline*}
		Thus, without loss of generality, we can assume $w(x,t) \geq w(y,t)$. Noting that $x,y \in A_+(k,\varrho,t)$, in the case $\zeta_1(x)\geq \zeta_1(y)$, we get
		\begin{multline*}	|w(x,t)-w(y,t)|^{p-2}(w(x,t)-w(y,t))((v-k)_{+}(x,t)\zeta_1^p(x)-(v-k)_{+}(y,t)\zeta_1^p(y))\\
			\begin{array}{rcl}
				& = & |v^{\frac{1}{p-1}}(x,t)-v^{\frac{1}{p-1}}(y,t)|^{p-1}((v-k)_{+}(x,t)\zeta_1^p(x)-(v-k)_{+}(y,t)\zeta_1^p(y))\\
				& \overlabel{alg_lem}{\geq} & \bsc_{p,\tau} |v(x,t)-v(y,t)|^{p-1}((v-k)_{+}(x,t)\zeta_1^p(x)-(v-k)_{+}(y,t)\zeta_1^p(y))\\
				& \geq & \bsc_{p,\tau}((v-k)_{+}(x,t)-(v-k)_{+}(y,t))^p \zeta_1^p(x).
			\end{array}
		\end{multline*}
				Analogously, in the case $\zeta_1(x)\leq \zeta_1(y)$, we get
				\begin{multline*}	|w(x,t)-w(y,t)|^{p-2}(w(x,t)-w(y,t))((v-k)_{+}(x,t)\zeta_1^p(x)-(v-k)_{+}(y,t)\zeta_1^p(y))\\
					\begin{array}{rcl}
						& = & |v^{\frac{1}{p-1}}(x,t)-v^{\frac{1}{p-1}}(y,t)|^{p-1}((v-k)_{+}(x,t)\zeta_1^p(x)-(v-k)_{+}(y,t)\zeta_1^p(y))\\
						& \overlabel{alg_lem}{\geq} & \bsc_{p,\tau} |v(x,t)-v(y,t)|^{p-1}((v-k)_{+}(x,t)\zeta_1^p(x)-(v-k)_{+}(y,t)\zeta_1^p(y))\\
						& {\geq} & \bsc_{p,\tau} \lbr[[](v-k)_{+}(x,t)-(v-k)_{+}(y,t)\rbr[]]^{p}(\zeta_1(y))^p\\
						&&-\bsc_{p,\tau}\lbr[[](v-k)_{+}(x,t)-(v-k)_{+}(y,t)\rbr[]]^{p-1}(v-k)_{+}(x,t)(\zeta_1^p(x)-\zeta_1^p(y))\\
						& \overred{mabs}{a}{\geq}& \bsc_{p,\tau} \lbr[[](v-k)_{+}(x,t)-(v-k)_{+}(y,t)\rbr[]]^{p}(\zeta_1(y))^p\\
						&& -\bsc_{p,\tau} (v-k)_{+}^p(x,t)|\zeta_1(y)-\zeta(x)|^p
					\end{array}
				\end{multline*}
			where to obtain \redref{mabs}{a}, we apply \cref{pineq3} with $a=\zeta_1(y)$ and $b=\zeta_1(x)$ and 
			$\ve=\tfrac{1}{2}\tfrac{(v-k)_{+}(x,t)-(v-k)_{+}(y,t)}{(v-k)_{+}(x,t)}$  to obtain
			\begin{multline*}
				((v-k)_{+}(x,t)-(v-k)_{+}(y,t))^{p-1}(v-k)_{+}(x,t)(\zeta_1^p(x)-\zeta_1^p(y))\\
				\leq \frac{1}{2}((v-k)_{+}(x,t)-(v-k)_{+}(y,t))^p\zeta_1^p(y)+[2(p-1)]^{p-1}(v-k)_{+}^p(x,t)(\zeta_1(y)-\zeta(x))^p.
			\end{multline*}
		Combining the previous three estimates,  we obtain \cref{eq:C} when $w(x,t) \geq w(y,t)$.  The case {$w(x,t) \leq w(y,t)$}  can be handled analogously by interchanging the role of $x$ and $y$.
	\end{itemize}

	As a consequence of these cases and \cref{boundsonKernel}, we have 
	\begin{equation*}
		\begin{array}{rcl}
		{\I}_1&\geq& \bsc_{p,\tau}\Biggr[\iiint_{\mcq_o} \frac{|(v-k)_{+}(x,t)-(v-k)_{+}(y,t)|^p}{|x-y|^{n+ps}}\max\{\zeta_1(x),\zeta_1(y)\}^p\zeta_2^p(t)\,dx\,dy\,dt \\
		&&+\iint_{\mcq_o} \zeta_2^p(t)\zeta_1^p(x)(v-k)_{+}(x,t)\left(\int_{B_{\varrho}}\frac{(v-k)_-(y,t)^{p-1}}{|x-y|^{n+sp}}\,dy\right)\,dx\,dt\Biggr]\\
		&&-\bsc_{p,\tau}\iiint_{\mcq_o}\max\{(v-k)_{+}(x,t),(v-k)_{+}(y,t)\}^p\frac{|\zeta_1(x)-\zeta_1(y)|^p}{|x-y|^{N+sp}}\zeta_2^p(t)\,dx\,dy\,dt.
		\end{array}
	\end{equation*}
	
	\item[The estimate of ${\I}_2$:] Recall that
	\begin{equation*}
		\begin{array}{r@{}cl}
		{\I}_2\,\,\,&:=&\int_{I} \iint_{(\RR^n\setminus B_{\varrho})\times B_{\varrho}}\hspace*{-1.2cm}\frac{|w(x,t)-w(y,t)|^{p-2}(w(x,t)-w(y,t))}{|x-y|^{n+sp}}((v-k)_{+}(x,t)\zeta_1^p(x)-(v-k)_{+}(y,t)\zeta_1^p(y))\zeta_2^p(t)\,dz\,dt\\
		&=&\int_{I}\int_{B_{\varrho}}\zeta_2^p(t)\zeta_1^p(x)(v-k)_{+}(x,t)\left[\int_{\RR^n\setminus B_{\varrho}}\frac{|w(x,t)-w(y,t)|^{p-2}(w(x,t)-w(y,t))}{|x-y|^{n+sp}}\,dy\right]\,dx\,dt\\
		&=& \bsc\underbrace{\int_{I} \int_{A_+(k,\varrho,t)}\zeta_2^p(t)\zeta_1^p(x)(v-k)_{+}(x,t)\left[\int_{\RR^n\setminus B_{\varrho} \cap \{w(x,t)\geq w(y,t)\}} \frac{(w(x,t)-w(y,t))^{p-1}}{|x-y|^{n+sp}}\,dy\right]\,dx\,dt}_{T_1}\\
		&&\qquad - \bsc \underbrace{\int_{I} \int_{A_+\left(k,\varrho,t\right)}\zeta_2^p(t)\zeta_1^p(x)(v-k)_{+}(x,t)\left[\int_{\RR^n\setminus B_{\varrho} \cap \{w(y,t)> w(x,t)\}} \frac{(w(y,t)-w(x,t))^{p-1}}{|x-y|^{n+sp}}\,dy\right]\,dx\,dt}_{T_2}.
		\end{array}
	\end{equation*}
	 Ignoring the nonnegative term $T_1$, we estimate $T_2$ as follows:
	\begin{equation*}
		\begin{array}{rcl}
		T_2&=& \int_{I} \int_{B_{\varrho}}\zeta_2^p(t)\zeta_1^p(x)(v-k)_{+}(x,t)\left[\int_{{\RR^n\setminus B_{\varrho}\cap \{w(y,t)> w(x,t)\}}} \hspace*{-0.5cm}\frac{(w(y,t)-w(x,t))^{p-1}}{|x-y|^{n+sp}}\,dy\right]\,dx\,dt\\
		&\overred{t2a}{a}{\leq}& \int_{I} \int_{B_{\varrho}}\zeta_2^p(t)\zeta_1^p(x)(v-k)_{+}(x,t)\left[\int_{{\RR^n\setminus B_{\varrho}}} \frac{(w(y,t)-k^{\frac{1}{p-1}})_+^{p-1}}{|x-y|^{n+sp}}\,dy\right]\,dx\,dt\\
		&\leq & \iint_{\mcq_o}\zeta_2^p(t)\zeta_1^p(x)(v-k)_{+}(x,t)\lbr \esssup\limits_{t\in (t_o - \bar A \varrho^{sp},t_o)}\left[\int_{{\RR^n\setminus B_{\varrho}}} \frac{(w- k^{\frac{1}{p-1}})_+(y,t)^{p-1}}{|x-y|^{n+sp}}\,dy\right]\rbr\,dx\,dt,
	\end{array}
	\end{equation*}
where to obtain \redref{t2a}{a}, we made use of the fact that $v(x,t) \geq k \Longrightarrow w(x,t)^{p-1} \geq k$ on $\mcq_o$.
\end{description}

Combining all the estimates gives the desired proof of the energy inequality.
\end{proof}

\subsection{Some measure and pointwise lemmas}

The proof follows closely the ideas developed in \cite{liaoHolderRegularityParabolic2022} and we will refer to \cite{liaoHolderRegularityParabolic2022} for the details. The first lemma is the De Giorgi type lemma:

\begin{lemma}\label{lemma5.4}
	Let $v$ and $w$ be as in \cref{defvw}. Given $\de_1, \varepsilon_1 \in (0,1)$, set $\theta = \de_1 (\varepsilon_1 \bsom_v)^{2-p}$ and assume $\mcq_{c_o\varrho}^{\theta} = B_{c_o\varrho} \times (-\theta (c_o\varrho)^{sp},0) \subset \mbcq$. Then there exists a constant $\nu_3 = \nu_3(\datanb{,\delta_1}) \in (0,1)$ and $c_o = c_o(\tau,\varepsilon_1,\mreta)$ such that if 
	\begin{equation*}
		\left|\left\{
		\pm\left(\bsmu^{\pm}_v-v\right)\le \varepsilon_1 \bsom_v\right\}\cap \mcq_{c_o\varrho}^\theta\right|
		\le
		\nu_3|\mcq_{c_o\varrho}^{\theta}|,
	\end{equation*}
	holds along with the assumption 
	\begin{equation}\label{vwLm:3:3:hypothesis}
		c_o^{\frac{sp}{p-1}}\tail((u-\bsmu^{\pm})_{\pm};\mcq_o) \leq \varepsilon_1\bsom,
	\end{equation}
	then the following conclusion follows:
	\begin{equation*}
		\pm\left(\bsmu^{\pm}_v-v\right)\ge\tfrac{1}2\varepsilon_1 \bsom_v
		\quad
		\mbox{ on }\quad \mcq_{\frac{c_o\varrho}{2}}^\theta = B_{\frac{c_o\varrho}2} \times \left(t_o-\theta\lbr \tfrac{c_o\varrho}{2}\rbr^{sp}, t_o\right].
	\end{equation*}
We note that $\nu_3 \approx \delta_1^q$ for some $q =q(n,s)>1$. 
\end{lemma}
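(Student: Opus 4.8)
The plan is to run a standard De Giorgi iteration for the scaled function $v = w^{p-1}$, but using the \emph{mixed-type} energy estimate from \cref{lemma6.3} in place of the usual energy estimate, so that the tail always appears in the unchanged variable $w$ and is controlled via the hypothesis \cref{vwLm:3:3:hypothesis}. I shall treat only the supersolution case $+(\bsmu_v^--v)$, i.e. work with $(v-k_i)_-$ near the infimum $\bsmu_v^-$; the supremum case is symmetric. Set $M = \varepsilon_1 \bsom_v$, $\theta = \de_1 M^{2-p}$, and for $i = 0,1,2,\dots$ define the shrinking levels and radii
\[
k_i := \bsmu_v^- + \tfrac{M}{2} + \tfrac{M}{2^{i+1}}, \qquad \varrho_i := \tfrac{c_o\varrho}{2} + \tfrac{c_o\varrho}{2^{i+1}},
\]
together with the intermediate radii $\tilde\varrho_i,\hat\varrho_i,\bar\varrho_i$ and the cylinders $\mcq_i = B_{\varrho_i}\times(-\theta\varrho_i^{sp},0)$ (and $\widetilde{\mcq}_i$, $\hat{\mcq}_i$, $\bar{\mcq}_i$ accordingly), exactly as in the proofs of \cref{Lm:3:3} and \cref{Lm:5:3}. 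Pick time-independent cut-offs $\zeta_i = \zeta_{1,i}(x)$ with $\zeta_{1,i}\equiv 1$ on $B_{\tilde\varrho_i}$, supported in $B_{\hat\varrho_i}$, with $|\nabla\zeta_{1,i}|\lesssim 2^i/(c_o\varrho)$ (and a similar $\bar\zeta_i$); since the hypothesis already gives the measure-theoretic smallness over the \emph{full} cylinder, there is no need for a cut-off in time in the energy estimate beyond what \cref{lemma6.3} requires, though one must still be careful to keep $\zeta_2^p(t)$-weights present in the estimate and absorb $|\partial_t\zeta_2^p|\lesssim 2^{spi}/(\theta(c_o\varrho)^{sp})$.

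The key steps are as follows. First, apply \cref{lemma6.3} with $k = \tilde k_i$ (a level strictly between $k_{i+1}$ and $k_i$) and cut-off $\zeta_i$ over $\mcq_i$. On the set $A_i = \{v < k_i\}\cap\mcq_i$ one has the pointwise bounds $(v-\tilde k_i)_- \le M$ and, crucially, $\bsom_v$ is pinned between $\mathfrak{C}_1$ and $\mathfrak{C}_2$ by \cref{boundsonomega}, so all quantities depending on $\bsom_v$ are comparable to data-dependent constants (times powers of $\tau,\mreta$). The right-hand side terms of \cref{lemma6.3} are estimated: the gradient term gives $\lesssim \tfrac{2^{ip}}{(c_o\varrho)^{sp}}M^p|A_i|$; the $|\partial_t\zeta^p|$ term gives $\lesssim \tfrac{2^{spi}}{\theta(c_o\varrho)^{sp}}M^2|A_i| = \tfrac{2^{spi}}{\de_1(c_o\varrho)^{sp}}M^p|A_i|$ after substituting $\theta = \de_1 M^{2-p}$; and for the tail term we use the \cref{sec:tail}-type computation: $(w - \tilde k_i^{1/(p-1)})_- \le (w - \bsmu_w^-)_- + \text{const}$, translate back to $u$ via $w = u/\bsmu^-$ and use \cref{boundsonomega} plus \cref{vwLm:3:3:hypothesis} to bound $c_o^{sp/(p-1)}\tail((u-\bsmu^-)_-;\mcq_o)\le\varepsilon_1\bsom$, which after dividing by $(\bsmu^-)^{p-1}$ and the comparability of $\bsom_v$ yields the desired tail control $\lesssim \tfrac{2^{spi}}{(c_o\varrho)^{sp}}M^{p-1}$ (this is exactly where the smallness of $c_o = c_o(\tau,\varepsilon_1,\mreta)$ is consumed). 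Combining, the left-hand side of \cref{lemma6.3} — which includes both the $L^\infty_t L^2_x$ term with weight $(\bsom_v)^{\text{(0, since $v$ itself is the unknown)}}$, wait: here the $L^2$ term is simply $\esssup_t\int \zeta_i^p (v-\tilde k_i)_-^2\,dx$ with no degenerate weight because $v$ is the unknown — is bounded by $\bsc\,\tfrac{2^{spi}}{(c_o\varrho)^{sp}}M^p|A_i|/\de_1$ (note the $\de_1$ dependence, which is the source of $\nu_3 \approx \de_1^q$).

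Second, run the Caccioppoli-to-measure step verbatim as in \cref{Lm:3:3}: write $\tfrac{M}{2^{i+3}}|A_{i+1}| \le \iint_{\bar{\mcq}_{i+1}}(v-\tilde k_i)_-\bar\zeta_i$, apply Hölder, then the parabolic Sobolev inequality \cref{fracpoin} (in the form combining the fractional-seminorm term and the $\esssup_t L^2$ term), then substitute the energy bounds and \cref{eq3.9}-type Young splitting, to arrive at $Y_{i+1} \le \bsc\, b^i \de_1^{-\gamma} Y_i^{1 + \frac{s}{n+2s}}$ with $Y_i = |A_i|/|\mcq_i|$, for a structural exponent $\gamma = \gamma(n,s,p) > 0$ and a data-dependent $\bsc$. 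Third, invoke \cref{geo_con}: the convergence condition $Y_0 \le \bsc^{-(n+2s)/s} b^{-((n+2s)/s)^2}\de_1^{\gamma(n+2s)/s}$ holds provided $\nu_3$ is chosen of the form $\nu_3 = c(\datanb{})\,\de_1^{q}$ with $q = \gamma(n+2s)/s > 1$; then $Y_i \to 0$, which gives $v \ge \bsmu_v^- + \tfrac12 M$ a.e. on $\mcq_{c_o\varrho/2}^\theta$, as claimed. A \cref{remark_degiorgi}-type modification of the time-heights of the $\mcq_i$ can be used if one wants the conclusion cylinder to have time-length $\tfrac\theta2(c_o\varrho)^{sp}$ rather than $\theta(\tfrac{c_o\varrho}{2})^{sp}$.

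The main obstacle — and the only genuinely new point compared to the local/time-cut-off arguments — is bookkeeping the interplay of the three small parameters $\de_1$, $\varepsilon_1$, $c_o$ together with the frozen oscillation $\bsom_v \asymp 1$. Specifically, one must verify that the tail term truly reduces to $\lesssim M^{p-1}$ after rescaling by $(\bsmu^-)^{-(p-1)}$ — this uses that $(w - \tilde k_i^{1/(p-1)})_-^{p-1}$ comparison constant is $p,\tau$-dependent only because $1 \le w \le \tfrac{1+\tau}{\tau}$ from \cref{Eq:6:9}, and that the constant $\mathfrak{C}_1 > 0$ in \cref{boundsonomega} (which is exactly where $\mreta$ enters the dependence of $c_o$) prevents $\bsom_v$ from degenerating. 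The other delicate point is tracking the precise power of $\de_1$ through the Sobolev step so that the final recursion has the advertised form $\nu_3 \approx \de_1^q$; this forces one to carry the $\de_1^{-1}$ from the $|\partial_t\zeta^p|$ term (and from the $\esssup_t L^2$ factor, which enters to the power $\tfrac{s}{n+2s}$) explicitly rather than absorbing it into a generic constant.
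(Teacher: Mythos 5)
Your plan takes essentially the same route as the paper, which (like the cited Lemma~3.1 of Liao) runs a De~Giorgi iteration on the function $v=w^{p-1}$ using the mixed-type energy estimate of \cref{lemma6.3}, so that the tail sits in the unchanged variable $w$. You correctly identify the genuinely new ingredient: the tail must be traced through the $w\leftrightarrow v$ change of variable, the constants $\mathfrak{C}_1\le\bsom_v\le\mathfrak{C}_2$ from \cref{boundsonomega} pin $\bsom_v$ to a data-, $\tau$-, $\mreta$-dependent constant, and $c_o$ is chosen small precisely to beat down the $\bsom_v$-sized (rather than $M$-sized) by-products of the nonlinear change of variable. You also correctly locate the $\de_1$-dependence of $\nu_3$ in the $|\partial_t\zeta^p|$ term via $\theta=\de_1 M^{2-p}$.

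The one place where your sketch is lighter than the paper's argument — and where a reader executing your plan would have to stop and think — is the tail estimate itself. The paper explicitly splits the integral $\int_{\RR^n\setminus B_{\varrho_i}}(w-k_i^{1/(p-1)})_-^{p-1}|y|^{-n-sp}\,dy$ into an outer part $\mathbf{J}_1$ over $\RR^n\setminus B_\varrho$, where $v$ is \emph{undefined} and one must estimate directly in the $w$ (and then $u$) variable, and an annulus part $\mathbf{J}_2$ over $B_\varrho\setminus B_{\varrho_i}$, where $v=w^{p-1}$ is available and \cref{alg_lem} converts $(k_i^{1/(p-1)}-v^{1/(p-1)})_+^{p-1}$ into $(k_i-v)_+^{p-1}$ at the cost of a $\bsc_{p,\tau}$ factor. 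Your phrasing ``translate back to $u$ via $w=u/\bsmu^-$'' covers $\mathbf{J}_1$ but leaves $\mathbf{J}_2$ implicit; the annulus part is exactly what produces the expected $2^{isp}/(c_o\varrho)^{sp}(\varepsilon_1\bsom_v)^{p-1}$ contribution, while the outer part requires the extra $c_o$-smallness to compensate for terms of size $\bsom_v^{p-1}$ rather than $(\varepsilon_1\bsom_v)^{p-1}$. This is a bookkeeping gap rather than a conceptual one; the plan is otherwise sound.
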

\begin{proof}
	The proof is very similar to that of \cite[Lemma 3.1]{liaoHolderRegularityParabolic2022}, but we have to keep track of the relationship between $v$ and $w$ in estimating the $\tail$ term. In order to do this, we only highlight the main differences. With notation as in \cite[Lemma 3.1]{liaoHolderRegularityParabolic2022}, we take $k_i = \bsmu^-_v + \frac{\varepsilon_1\bsom_v}{2}+\frac{\varepsilon_1\bsom_v}{2^{i+1}}$ and $\varrho_i = \frac{c_o\varrho}{2} + \frac{c_o\varrho}{2^{i+1}}$ , then we need to impose the following bound for the proof to hold:
	\[
	 \esssup_{t\in (t_o - \theta \varrho_i^{sp},t_o)}\left[\int_{{\RR^n\setminus B_{\varrho_i}}} \frac{(w- k_i^{\frac{1}{p-1}})_-(y,t)^{p-1}}{|y|^{n+sp}}\,dy\right] \leq \bsc b^i \frac{(\varepsilon_1 \bsom_v)^{p-1}}{(c_o\varrho)^{sp}},
	\]for some universal constants $\bsc = \bsc(\datanb{})$ and $b = b(\datanb{})$. In order to see this, we proceed as follows:
	\begin{equation*}
		\begin{array}{rcl}
			\int_{{\RR^n\setminus B_{\varrho_i}}} \frac{(w- k_i^{\frac{1}{p-1}})_-(y,t)^{p-1}}{|y|^{n+sp}}\,dy & \overred{wv5.6a}{a}{=} &	\int_{{\RR^n\setminus B_{\varrho}}} \frac{( k_i^{\frac{1}{p-1}}-w(y,t))_+^{p-1}}{|y|^{n+sp}}\,dy\\
			&& + \int_{{B_{\varrho}\setminus B_{\varrho_i}}} \frac{( k_i^{\frac{1}{p-1}} -w(y,t))_+ ^{p-1}}{|y|^{n+sp}}\,dy\\
			& \overred{wv5.6b}{b}{=} &\int_{{\RR^n\setminus B_{\varrho}}} \frac{( k_i^{\frac{1}{p-1}}-w(y,t))_+^{p-1}}{|y|^{n+sp}}\,dy\\
			&& + \int_{{B_{\varrho}\setminus B_{\varrho_i}}} \frac{( k_i^{\frac{1}{p-1}} -v^{\frac{1}{p-1}}(y,t))_+ ^{p-1}}{|y|^{n+sp}}\,dy\\
			& = : & {\J}_1 + {\J}_2.\\
		\end{array}
	\end{equation*}
where to obtain \redref{wv5.6a}{a}, we split the integral into two parts and to obtain \redref{wv5.6b}{b} we note that $v = w^{p-1}$ on $\mcq_o$ (see \cref{defvw}). Now we estimate each of the terms as follows:
\begin{description}
	\item[Estimate for ${\J}_1$:] We see that $1 \leq k_i \leq p\lbr \tfrac{1+\tau}{\tau}\rbr^{p-1}$ which holds due to 
	\[
	k_i \geq \bsmu^-_v = (\bsmu^-_w)^{p-1} \geq 1 \txt{and} k_i \overred{j1a}{a}{\leq} (\bsmu^-_w)^{p-1} + \ve_1 \bsom_v \overred{j1b}{b}{\leq} \lbr \tfrac{1+\tau}{\tau}\rbr^{p-1} + \ve_1 (p-1)\lbr \tfrac{1+\tau}{\tau}\rbr^{p} \overred{j1c}{c}{\leq} p \lbr \tfrac{1+\tau}{\tau}\rbr^{p},
	\]
	where to obtain \redref{j1a}{a}, we made use of \cref{vwdef5.2}; to obtain \redref{j1b}{b}, we made use of \cref{Eq:6:9} along with \cref{vwdef5.2} and finally to obtain \redref{j1c}{c}, we made use of $\ve \leq 1$.   In particular,the bound on $k_i$  implies $k_i^{\frac{1}{p-1}} \leq \bsc_{\tau} k_i$. Using this, we get
	\begin{equation}\label{wv5.8}
		\begin{array}{rcl}
( k_i^{\frac{1}{p-1}}-w(y,t))_+ &\leq& ( \bsc_{\tau}k_i-w(y,t)) \lsb{\chi}{\{w^{p-1} \leq k_i\}}\\
& \overred{wv5.8a}{a}{\leq} & (\bsc_{\tau}k_i - k_i)  + (k_i-w(y,t)) \lsb{\chi}{\{w \leq k_i\}}\\
& \overred{wv5.8b}{b}{\leq} & (\bsc_{\tau}-1) |\bsmu_v^-| + (\bsc_{\tau}-1)(\varepsilon_1\bsom_v) +   2(\varepsilon_1\bsom_v)+ (\bsmu_v^--w(y,t))_+,
		\end{array}
	\end{equation}
where to obtain \redref{wv5.8a}{a}, we made use of $w \geq 1$ and to obtain \redref{wv5.8b}{b}, we substituted the expression for $k_i$. 
We will further estimate the last term appearing on the right hand side of \cref{wv5.8} as follows:
\begin{equation*}
	\begin{array}{rcl}
	(\bsmu_v^--w(y,t))_+ & = & ((\bsmu_w^-)^{p-1} - \bsmu_w^- + \bsmu_w^--w(y,t))_+	\\
	& \apprle &  \bsmu_w^- ((\bsmu_w^-)^{p-2}-1) + (\bsmu_w^--w(y,t))_+\\
	& \overred{vw5.9a}{a}{\apprle} & \bsc_{\tau} |\bsmu_w^-| + \frac{(\bsmu^--u(y,t))_+}{\bsmu^-}\\
	& \overred{vw5.9b}{b}{\apprle}  & \bsc_{\tau,\mreta} |\bsom_w| + \frac{(\bsmu^--u(y,t))_+}{\bsmu^-}\\
	& \overred{vw5.9c}{c}{\apprle}  & \bsc_{\tau,\mreta} |\bsom_v| + \frac{(\bsmu^--u(y,t))_+}{\bsmu^-},
	\end{array}
\end{equation*}
where to obtain \redref{vw5.9a}{a}, we made use of \cref{Eq:6:9} to bound $|(\bsmu_w^-)^{p-2}-1| \leq \bsc_{\tau}$ and made use of \cref{defvw}; to obtain  \redref{vw5.9b}{b}, we use \cref{assump3} to get $\bsmu_w^- = \frac{\essinf_{\mcq_o}u}{\bsmu^-} \leq \bsc_{\tau,\mreta} \frac{\bsom}{\bsmu^-} =\bsc_{\tau,\mreta}  \bsom_w$ and finally to obtain \redref{vw5.9c}{c}, we make use of \cref{vwdef5.2} to get $\bsom_w \approx_{\{p,\tau\}} \bsom_v$ which gives $\bsmu_v^- \leq_{\tau,\mreta} \bsom_v$. 
Combining everything together, we get
\begin{equation*}
	\begin{array}{rcl}
	{\J}_1 &\leq& \frac{1}{(c_o\varrho)^{sp}} \lbr c_o^{sp}(\bsc_{\tau}-1)^{p-1} |\bsom_v|^{p-1} +c_o^{sp} \bsc_{\tau}^{p-1}(\varepsilon_1\bsom_v)^{p-1} +c_o^{sp} |\bsom_v|^{p-1}\rbr \\
	&& + \frac{1}{(\bsmu^-)^{p-1}}\int_{\RR^n \setminus B_{\varrho}}\frac{(\bsmu^--u(y,t))_+^{p-1}}{|y|^{n+sp}}\,dy \\
	& = & \frac{1}{(c_o\varrho)^{sp}} \lbr c_o^{sp}(\bsc_{\tau}-1)^{p-1} |\bsom_v|^{p-1} +c_o^{sp} \bsc_{\tau}^{p-1}(\varepsilon_1\bsom_v)^{p-1} +c_o^{sp} |\bsom_v|^{p-1}\rbr \\
	&& + \frac{1}{(\bsmu^-)^{p-1}}\frac{1}{(c_o\varrho)^{sp}} c_o^{sp}\tailp((u-\bsmu^-)_-;\mcq_o).
	\end{array}
\end{equation*}
Choosing $c_o$ small such that $c_o^{sp}(\bsc_{\tau}-1)^{p-1} \leq \varepsilon_1^{p-1}$, $c_o^{sp}(\bsc_{\tau})^{p-1} \leq \varepsilon_1^{p-1}$, $c_o^{sp} \leq \varepsilon_1^{p-1}$ and making use of \cref{vwLm:3:3:hypothesis} along with the fact that $\lbr \frac{\bsom}{\bsmu^-}\rbr^{p-1} = \bsom_w^{p-1} \overlabel{vwdef5.2}{\approx} \bsom_v^{p-1}$ on $\mcq_o$, we get
\[
{\J}_1 \apprle_{\tau,\mreta} \frac{(\varepsilon_1\bsom_v)^{p-1}}{(c_o\varrho)^{sp}}.
\]
\item[Estimate for ${\J}_2$:] Since the change of variable from $u$ to $v$ is valid on $B_{\varrho}$, we see that 
\begin{equation*}
	\begin{array}{rcl}
{\J}_2 & \overlabel{alg_lem}{\apprle_{p,\tau}} & \int_{{B_{\varrho}\setminus B_{\varrho_i}}} \frac{( k_i -v(y,t))_+ ^{p-1}}{|y|^{n+sp}}\,dy	\\
& \apprle & \frac{2^{isp}}{(c_o\varrho)^{sp}} (\varepsilon_1\bsom_v)^{p-1}. 
\end{array}
\end{equation*}
\end{description}
Once we have this estimate, the rest of the proof follows verbatim as in \cite[Lemma 3.1]{liaoHolderRegularityParabolic2022}.
\end{proof}
The next lemma is also a De Giorgi type iteration involving quantitative initial data.

\begin{lemma}\label{lemma5.45}
	Let $v$ and $w$ be as in \cref{defvw}. Given $\varepsilon_1 \in (0,1)$, set $\theta = (\varepsilon_1 \bsom_v)^{2-p}$ and assume $\mcq_{c_o\varrho}^{\theta} \subset \mcq_o$ (where $\mcq_o$ is from \cref{defvw}), then there exists constants $\bar\nu_3 = \bar\nu_3(\datanb{}) \in (0,1)$ and $c_o = c_o(\tau,\varepsilon_1,\mreta)$ such that if 
	\begin{equation*}
		\pm(\bsmu^{\pm}_v-v(\cdot,t_o))\geq \ve_1\bsom_v\quad\mbox{ on }\quad B_{c_o\rho}(x_0),
	\end{equation*}
	holds along with the assumption 
	\begin{equation}\label{vwLm:3:3:hypothesis2}
		c_o^{\frac{sp}{p-1}}\tail((u-\bsmu^{\pm})_{\pm};\mcq_o) \leq \varepsilon_1\bsom,
	\end{equation}
	then the following conclusion holds:
	\begin{equation*}
		\pm\left(\bsmu^{\pm}_v-v\right)\ge\tfrac{1}2\varepsilon_1 \bsom_v
		\quad
		\mbox{ on }\quad
		B_{\frac{c_o\varrho}2} \times \left(t_o,t_o+\bar\nu_3\theta\lbr{c_o\varrho}\rbr^{sp}\right],
	\end{equation*} provided the cylinders are included in $\mbcq$.
\end{lemma}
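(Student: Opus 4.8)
The plan is to carry out a forward-in-time De Giorgi iteration for the changed variable $v=w^{p-1}$, parallel to the proof of \cref{lemma5.4}, the essential simplification being that the pointwise hypothesis at $t=t_o$ forces the initial datum in the energy estimate \cref{lemma6.3} to vanish. I treat only the supersolution case $\pm=-$, i.e.\ $v(\cdot,t_o)\ge\bsmu^-_v+\varepsilon_1\bsom_v$ on $B_{c_o\varrho}(x_o)$; the case $\pm=+$ is symmetric after $u\mapsto-u$. Normalise $(x_o,t_o)=(0,0)$, and for $i\ge0$ set $\varrho_i:=\tfrac{c_o\varrho}2+\tfrac{c_o\varrho}{2^{i+1}}$ together with the intermediate radii $\tilde\varrho_i,\hat\varrho_i,\bar\varrho_i$ of \cref{choices:B_nn}, levels $k_i:=\bsmu^-_v+\tfrac{\varepsilon_1\bsom_v}2+\tfrac{\varepsilon_1\bsom_v}{2^{i+1}}$, $\tilde k_i:=\tfrac{k_i+k_{i+1}}2$, and the \emph{forward} cylinders $\mcq_i:=B_{\varrho_i}\times(0,\bar\nu_3\theta(c_o\varrho)^{sp}]$ with $\theta=(\varepsilon_1\bsom_v)^{2-p}$ (only the balls shrink); put $A_i:=\{v<k_i\}\cap\mcq_i$ and $Y_i:=|A_i|/|\mcq_i|$. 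The cut-offs $\zeta_i=\zeta_i(x)$ and $\bar\zeta_i=\bar\zeta_i(x)$ are taken \emph{independent of $t$}, with the usual nesting ($\zeta_i\equiv1$ on $B_{\tilde\varrho_i}$, supported in $B_{\hat\varrho_i}$; $\bar\zeta_i\equiv1$ on $B_{\varrho_{i+1}}$, supported in $B_{\bar\varrho_i}$) and $|\nabla\zeta_i|+|\nabla\bar\zeta_i|\lesssim 2^i/(c_o\varrho)$. Since $\zeta_i$ does not depend on $t$, the term $\iint(v-\tilde k_i)_-^2|\partial_t\zeta_i^p|$ in \cref{lemma6.3} vanishes, and since $v(\cdot,0)\ge\bsmu^-_v+\varepsilon_1\bsom_v\ge k_i$ on $B_{c_o\varrho}\supseteq B_{\varrho_i}$ the bottom-slice term $\int_{B_{\varrho_i}\times\{0\}}\zeta_i^p(v-\tilde k_i)_-\,dx$ also vanishes.

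Apply \cref{lemma6.3} over $\mcq_i$ (i.e.\ with $\mcq_o$ there replaced by $\mcq_i$, legitimate because $\mcq_i\subset\mbcq$ and \cref{Eq:6:9} holds there) to $(v-\tilde k_i)_-$ and $\zeta_i$. Using $(v-\tilde k_i)_-\le\varepsilon_1\bsom_v$ and the gradient bound, the first right-hand term of \cref{lemma6.3} is $\lesssim 2^{isp}(c_o\varrho)^{-sp}(\varepsilon_1\bsom_v)^p|A_i|$. The tail term is handled exactly as in the proof of \cref{lemma5.4}: writing $k_i^{1/(p-1)}\le\bsc_\tau k_i$, using that the substitution $v=w^{p-1}$ is available only on $B_\varrho$ (so on the complement one reverts to $w$, equivalently $u$), and invoking the smallness \cref{vwLm:3:3:hypothesis2} together with $\bsom/\bsmu^-=\bsom_w\approx_{p,\tau}\bsom_v$, one obtains for $c_o=c_o(\tau,\varepsilon_1,\mreta)$ small
\[
\esssup_{0<t<\bar\nu_3\theta(c_o\varrho)^{sp}}\int_{\RR^n\setminus B_{\varrho_i}}\frac{(w-k_i^{\frac1{p-1}})_-^{p-1}(y,t)}{|y|^{n+sp}}\,dy\ \lesssim_{\tau,\mreta}\ \bsc\,b^i\,\frac{(\varepsilon_1\bsom_v)^{p-1}}{(c_o\varrho)^{sp}},
\]
so the tail term is also $\lesssim\bsc\,b^i(c_o\varrho)^{-sp}(\varepsilon_1\bsom_v)^p|A_i|$; combining everything,
\[
\esssup_{0<t<\bar\nu_3\theta(c_o\varrho)^{sp}}\int_{B_{\tilde\varrho_i}}(v-\tilde k_i)_-^2\,dx+\iiint_{\widetilde{\mcq}_i}\frac{|(v-\tilde k_i)_-(x,t)-(v-\tilde k_i)_-(y,t)|^p}{|x-y|^{n+sp}}\,dx\,dy\,dt\ \le\ \bsc\,b^i\,\frac{(\varepsilon_1\bsom_v)^p}{(c_o\varrho)^{sp}}|A_i|.
\]
The intrinsic choice $\theta=(\varepsilon_1\bsom_v)^{2-p}$ is exactly what makes this homogeneous against $|\mcq_i|\approx\bar\nu_3(\varepsilon_1\bsom_v)^{2-p}(c_o\varrho)^{n+sp}$.

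Now apply the fractional Sobolev--Poincar\'e inequality \cref{fracpoin} to $f=(v-\tilde k_i)_-\bar\zeta_i$ over $\widetilde{\mcq}_i$ along the same chain of estimates as in the proofs of \cref{Lm:6:1} and \cref{Lm:5:3}; using $f\ge\tfrac{\varepsilon_1\bsom_v}{2^{i+3}}$ on $A_{i+1}$ and the energy bound above, all powers of $\varepsilon_1\bsom_v$ and of $c_o\varrho$ cancel and there remains
\[
Y_{i+1}\ \le\ \bsc\,\bar\nu_3^{\frac{sp}{n}}\,b_0^{\,i}\,Y_i^{1+\frac{sp}{n}},\qquad \bsc=\bsc(\datanb{}),\quad b_0=b_0(\datanb{})\ge1.
\]
Since $Y_0\le1$ trivially, it suffices to \emph{first} fix $\bar\nu_3=\bar\nu_3(\datanb{})\in(0,1)$ so small that the threshold $\bigl(\bsc\,\bar\nu_3^{sp/n}\bigr)^{-n/(sp)}b_0^{-(n/(sp))^2}$ appearing in \cref{geo_con} is $\ge1$; then $Y_i\to0$, i.e.\ $|A_\infty|=0$, which is precisely $v\ge\bsmu^-_v+\tfrac12\varepsilon_1\bsom_v$ a.e.\ on $B_{c_o\varrho/2}\times(0,\bar\nu_3\theta(c_o\varrho)^{sp}]$. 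Finally $c_o=c_o(\tau,\varepsilon_1,\mreta)$ is fixed small enough to validate the tail step, which uses \cref{vwLm:3:3:hypothesis2}, and the proof is complete.

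The main obstacle is not the De Giorgi scheme itself but the bookkeeping around the change of variables. Concretely: (i) one must justify applying \cref{lemma6.3} with a time-independent cut-off on forward cylinders, which is possible precisely because the pointwise hypothesis kills the initial slice and the bottom of $\mcq_i$ may be left free; and (ii) one must reproduce the tail estimate of \cref{lemma5.4} in the present geometry — estimating $\esssup_t\int_{\RR^n\setminus B_{\varrho_i}}(w-k_i^{1/(p-1)})_-^{p-1}|y|^{-n-sp}\,dy$ by $(\varepsilon_1\bsom_v)^{p-1}$ — which forces $c_o$ to depend on $\tau,\varepsilon_1,\mreta$ and makes essential use of the fact that $v=w^{p-1}$ is an interior substitution only. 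That $\bar\nu_3$ depends on the data alone, independently of $c_o\varrho$ and of $\varepsilon_1$, is what the intrinsic time scaling $\theta=(\varepsilon_1\bsom_v)^{2-p}$ secures.
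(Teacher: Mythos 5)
Your argument is correct and is essentially the one the paper intends: the paper's proof is a one-line reference to \cite[Lemma 3.2]{liaoHolderRegularityParabolic2022} plus ``make use of the $\tail$ estimates as obtained in the proof of \cref{lemma5.4}'', and you have simply written out that De Giorgi iteration, correctly identifying the two key points — that time-independent cut-offs together with the pointwise initial datum kill both the $|\partial_t\zeta^p|$ term and the bottom-slice term in \cref{lemma6.3}, and that the tail is estimated exactly as the $\J_1+\J_2$ split in \cref{lemma5.4} (annulus via $v=w^{p-1}$, exterior via $u$), which is what forces $c_o=c_o(\tau,\varepsilon_1,\mreta)$. Your verification that the intrinsic weight $\theta=(\varepsilon_1\bsom_v)^{2-p}$ makes the $(c_o\varrho)$ and $\bsom_v$ powers cancel, leaving a prefactor $\bar\nu_3^{\alpha}$ that can be absorbed by choosing $\bar\nu_3$ small depending only on data, is also correct and is the reason $\bar\nu_3$ is independent of $\varepsilon_1$.

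One small inaccuracy: the exponent in the recursion $Y_{i+1}\le\bsc\,\bar\nu_3^{\alpha}b_0^{i}Y_i^{1+\alpha}$ that comes from combining H\"older with \cref{fracpoin} (as in the chain \redref{3.7a}{a}--\redref{3.7e}{e} of \cref{Lm:3:3}) is $\alpha=\tfrac{s}{n+2s}$, not $\tfrac{sp}{n}$ as you wrote; the factor $(\esssup)^{sp/n}$ from \cref{fracpoin} is raised to the further power $\tfrac{n}{p(n+2s)}$ from H\"older, giving $\tfrac{s}{n+2s}$. This changes nothing in the conclusion since any positive data-dependent $\alpha$ allows the same choice of $\bar\nu_3$ through \cref{geo_con}, but the exponent as stated does not match the computation.
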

\begin{proof}
	The proof follows verbatim as in \cite[Lemma 3.2]{liaoHolderRegularityParabolic2022} once we make use of the $\tail$ estimates as obtained in the proof of \cref{lemma5.4}.
\end{proof}
The next lemma we will need is the expansion of positivity.
\begin{lemma}\label{vwLemma5.5}
	Let $v$ and $w$ be as in \cref{vwdef5.2} and fix two parameters $\varepsilon_1, \alpha_1 \in (0,1)$. Then there exists $\de_1 = \de_1(\datanb{,\alpha_1})\in(0,1)$, $\bar\varepsilon = \bar\varepsilon(\datanb{,\alpha_1})\in(0,1)$ and $c_o = c_o(\datanb{,\varepsilon_1,\tau,\mreta})$ such that if 
	\[
	|\{\pm (\bsmu_v^{\pm}-v(\cdot,\mft)) \geq \varepsilon_1\bsom_v\} \cap B_{c_o\varrho}| \geq \alpha_1 |B_{c_o\varrho}|, 
	\]
	and 
	\[
	c_o^{\frac{sp}{p-1}} \tail((u-\bsmu^{\pm})_{\pm};\mcq_o) \leq \varepsilon_1\bsom,
	\]
	holds, then the following conclusion follows:
	\[
	|\{\pm (\bsmu_v^{\pm}-v(\cdot,t)) \geq \bar\varepsilon\varepsilon_1\bsom_v\} \cap B_{c_o\varrho}| \geq \tfrac12 \alpha_1 |B_{c_o\varrho}| \txt{for all} t \in  (\mft, \mft + \de_1(\varepsilon_1\bsom_v)^{2-p}(c_o\varrho)^{sp}).
	\]
	Moreover the dependence is of the form $\bar{\varepsilon} \approx \alpha_1$ and $\delta_1 \approx \alpha_1^{p+1}$. 
\end{lemma}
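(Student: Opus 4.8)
The plan is to run the classical ``expansion of positivity forward in time'' scheme (as in \cite[Lemma~4.1]{bogeleinHolderRegularitySigned2021} and \cite{liaoHolderRegularityParabolic2022}) on the intrinsically scaled cylinder adapted to $v=w^{p-1}$, using the mixed energy estimate \cref{lemma6.3}. The two signs are symmetric, so I treat the ``$-$'' case: assume $\bigl|\{v(\cdot,\mft)\ge\bsmu_v^-+\varepsilon_1\bsom_v\}\cap B_{c_o\varrho}\bigr|\ge\alpha_1|B_{c_o\varrho}|$. Put $k:=\bsmu_v^-+\varepsilon_1\bsom_v$, $\Theta:=\de_1(\varepsilon_1\bsom_v)^{2-p}(c_o\varrho)^{sp}$, and for $\tilde\sigma\in(0,\tfrac18]$ to be chosen take a \emph{time-independent} cut-off $\zeta=\zeta_1(x)$ with $\zeta_1\equiv1$ on $B_{(1-\tilde\sigma)c_o\varrho}$, $\spt\zeta_1\subset B_{(1-\tilde\sigma/2)c_o\varrho}$ and $|\nabla\zeta_1|\lesssim(\tilde\sigma c_o\varrho)^{-1}$. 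Then I apply \cref{lemma6.3} to $(v-k)_-$ over $B_{c_o\varrho}\times(\mft,\mft+\Theta]$ (a sub-cylinder of $\mcq_o$); note $(v-k)_-\le\varepsilon_1\bsom_v$ on $\mcq_o$ and that the $|\partial_t\zeta^p|$ term vanishes since $\zeta$ is independent of $t$.

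The next step is to bound the right-hand side of \cref{lemma6.3}. The cut-off Dirichlet term is $\lesssim\tilde\sigma^{-p}(c_o\varrho)^{-sp}(\varepsilon_1\bsom_v)^p\,|\Theta|\,|B_{c_o\varrho}|=\tilde\sigma^{-p}\de_1(\varepsilon_1\bsom_v)^2|B_{c_o\varrho}|$. The initial datum term satisfies $\int_{B_{c_o\varrho}\times\{\mft\}}\zeta^p(v-k)_-^2\,dx\le\bigl|\{v(\cdot,\mft)<k\}\cap B_{c_o\varrho}\bigr|\,(\varepsilon_1\bsom_v)^2\le(1-\alpha_1)(\varepsilon_1\bsom_v)^2|B_{c_o\varrho}|$, because $(v-k)_-(\cdot,\mft)$ vanishes on $\{v(\cdot,\mft)\ge k\}$, whose measure is $\ge\alpha_1|B_{c_o\varrho}|$ by hypothesis. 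For the tail term I reuse verbatim the change-of-variables computation from the proof of \cref{lemma5.4}: since the substitution $u\mapsto v=w^{p-1}$ is valid only inside $B_\varrho$, the inner integral $\int_{\RR^n\setminus B_\varrho}|x-y|^{-n-sp}(w-k^{\frac{1}{p-1}})_-^{p-1}\,dy$ is split as $\mathcal{J}_1+\mathcal{J}_2$ and rewritten in terms of $u$, $\bsmu^-$, $\bsom_w$ and $\tailp((u-\bsmu^-)_-;\mcq_o)$, using \cref{assump3}, \cref{vwdef5.2} and \cref{boundsonomega} for the comparisons $\bsom_v\approx_{p,\tau}\bsom_w$ and $\bsmu_v^-\lesssim_{\tau,\mreta}\bsom_v$; the hypothesis $c_o^{sp/(p-1)}\tail((u-\bsmu^-)_-;\mcq_o)\le\varepsilon_1\bsom$ together with the smallness of $c_o=c_o(\datanb{,\varepsilon_1,\tau,\mreta})$ then bounds this inner integral by $\lesssim(c_o\varrho)^{-sp}(\varepsilon_1\bsom_v)^{p-1}$, so the whole tail term is $\lesssim(\varepsilon_1\bsom_v)\cdot(c_o\varrho)^{-sp}(\varepsilon_1\bsom_v)^{p-1}\cdot|\Theta|\,|B_{c_o\varrho}|=\de_1(\varepsilon_1\bsom_v)^2|B_{c_o\varrho}|$.

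Then I estimate the left-hand side from below. Fix $\bar t\in(\mft,\mft+\Theta]$, set the target level $k_\varepsilon:=\bsmu_v^-+\bar\varepsilon\varepsilon_1\bsom_v$ with $\bar\varepsilon\le\tfrac12$, so $k-k_\varepsilon=(1-\bar\varepsilon)\varepsilon_1\bsom_v$; on $\{v(\cdot,\bar t)<k_\varepsilon\}$ one has $(v-k)_-(\cdot,\bar t)>(1-\bar\varepsilon)\varepsilon_1\bsom_v$, hence
\[
(1-\bar\varepsilon)^2(\varepsilon_1\bsom_v)^2\bigl|\{v(\cdot,\bar t)<k_\varepsilon\}\cap B_{(1-\tilde\sigma)c_o\varrho}\bigr|\le\esssup_{\mft<t\le\mft+\Theta}\int_{B_{c_o\varrho}\times\{t\}}\zeta^p(v-k)_-^2\,dx.
\]
Combining with the right-hand side bounds, dividing by $(\varepsilon_1\bsom_v)^2$ and adding the shell $|B_{c_o\varrho}\setminus B_{(1-\tilde\sigma)c_o\varrho}|\le n\tilde\sigma|B_{c_o\varrho}|$ gives
\[
\bigl|\{v(\cdot,\bar t)<k_\varepsilon\}\cap B_{c_o\varrho}\bigr|\le\Bigl(\tfrac{1-\alpha_1}{(1-\bar\varepsilon)^2}+\tfrac{\boldsymbol C\de_1}{(1-\bar\varepsilon)^2\tilde\sigma^p}+n\tilde\sigma\Bigr)|B_{c_o\varrho}|.
\]

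Finally I pick the constants keeping track of the $\alpha_1$-dependence: choose $\bar\varepsilon=\bar\varepsilon(\datanb{,\alpha_1})$ small enough that $\tfrac{1-\alpha_1}{(1-\bar\varepsilon)^2}\le1-\tfrac34\alpha_1$ (which needs $\bar\varepsilon\approx\alpha_1$), then $\tilde\sigma=\tfrac{\alpha_1}{8n}$, and then $\de_1=\de_1(\datanb{,\alpha_1})$ small enough that $\tfrac{\boldsymbol C\de_1}{(1-\bar\varepsilon)^2\tilde\sigma^p}\le\tfrac18\alpha_1$ (which forces $\de_1\approx\alpha_1\tilde\sigma^p\approx\alpha_1^{p+1}$); this yields $\bigl|\{v(\cdot,\bar t)<k_\varepsilon\}\cap B_{c_o\varrho}\bigr|\le(1-\tfrac12\alpha_1)|B_{c_o\varrho}|$ for every $\bar t\in(\mft,\mft+\de_1(\varepsilon_1\bsom_v)^{2-p}(c_o\varrho)^{sp}]$, which is exactly the claim. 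The main obstacle is the tail term: because $u\mapsto w^{p-1}$ is only a local change of variables, every tail quantity must be transferred to the unscaled variable with the $\tau,\mreta$-dependent comparisons of $\bsom_v,\bsom_w,\bsmu_v^-$ against $\bsmu^-$ and $\bsom$, and one has to verify that the hypothesis $c_o^{sp/(p-1)}\tail\le\varepsilon_1\bsom$ plus the smallness of $c_o$ suffices to absorb it at the scale $\de_1(\varepsilon_1\bsom_v)^2$ — this is precisely the $\mathcal{J}_1,\mathcal{J}_2$ computation already carried out for \cref{lemma5.4}; everything else is the standard De Giorgi forward-in-time iteration.
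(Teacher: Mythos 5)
Your proposal is correct and is essentially the paper's own (very terse) proof: the paper simply says the result follows verbatim from Liao's Lemma~3.3 once the tail computations from the proof of \cref{lemma5.4} are invoked, and you have spelled out exactly that standard forward-in-time expansion-of-positivity scheme — time-independent cut-off, one-step energy estimate from \cref{lemma6.3}, initial-datum bound via the measure hypothesis, tail absorption through the $\mathcal{J}_1,\mathcal{J}_2$ change-of-variable bookkeeping, and the $\bar\varepsilon\approx\alpha_1$, $\tilde\sigma\approx\alpha_1$, $\delta_1\approx\alpha_1^{p+1}$ parameter tuning. The only thing worth noting is that the tail term, via the \cref{sec:tail} comparison $|x-y|^{-(n+sp)}\lesssim\tilde\sigma^{-(n+sp)}|y|^{-(n+sp)}$, in principle carries a $\tilde\sigma^{-(n+sp)}$ rather than a $\tilde\sigma^{-p}$ factor, which would give $\delta_1\approx\alpha_1^{1+\max\{p,n+sp\}}$; since the paper's own \cref{Lm:3:1} (and the reference it cites) records the same $\alpha_1^{p+1}$ dependence, this is an inherited imprecision and not a gap in your argument.
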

\begin{proof}
	The proof follows verbatim as in \cite[Lemma 3.3]{liaoHolderRegularityParabolic2022} once we make use of the $\tail$ estimates as obtained in the proof of \cref{lemma5.4}.
\end{proof}
We now prove a measure shrinking lemma.
\begin{lemma}\label{vwLemma5.6}
	Let $v$ and $w$ be as in \cref{vwdef5.2} and fix four parameters $\alpha_2, \bar\varepsilon_2, \bar\delta_2, \bar\sigma_2 \in (0,1)$. Then there exists  $c_o = c_o(\datanb{,\bar\varepsilon_2,\bar\sigma_2,\tau,\mreta})$ such that if 
	\[
	|\{\pm (\bsmu_v^{\pm}-v(\cdot,t)) \geq \bar\varepsilon_2\bsom_v\} \cap B_{c_o\varrho}| \geq \alpha_2 |B_{c_o\varrho}| \txt{for all} t \in (\mft - \bar\delta_2(\bar\sigma_2\bar\varepsilon_2\bsom_v)^{2-p}(c_o\varrho)^{sp}, \mft), 
	\]
	and 
	\[
	c_o^{\frac{sp}{p-1}} \tail((u-\bsmu^{\pm})_{\pm};\mcq_o) \leq \bar\sigma_2\bar\varepsilon_2\bsom,
	\]
	holds, then the following conclusion follows:
	\[
	|\{\pm (\bsmu_v^{\pm}-v) \leq \bar\sigma_2\bar\varepsilon_2\bsom_v\} \cap \mcq_{c_o\varrho}^{\theta}| \leq \bsc_{\data{,\tau,\mreta}}\frac{\bar\sigma_2^{p-1}}{\bar\delta_2\alpha_2}  |\mcq_{c_o\varrho}^{\tilde\theta}|,
	\]
	where $\mcq_{c_o\varrho}^{\tilde\theta} := B_{c_o\varrho} \times (\mft - \bar\delta_2(\bar\sigma_2\bar\varepsilon_2\bsom_v)^{2-p}(c_o\varrho)^{sp}, \mft)$.
\end{lemma}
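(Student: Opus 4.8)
The plan is to run a De Giorgi measure-shrinking argument on the mixed energy estimate of \cref{lemma6.3}, in the spirit of the corresponding lemma in \cite{liaoHolderRegularityParabolic2022}, while keeping careful track of the dependence on the four free parameters. It suffices to treat the ``$-$'' case (a supersolution near its infimum $\bsmu_v^-$), the other case following by replacing $u$ with $-u$, and we may normalize $(x_o,t_o)=(0,0)$. Abbreviate $M:=\bar\varepsilon_2\bsom_v$, $\ell:=\bar\sigma_2\bar\varepsilon_2\bsom_v=\bar\sigma_2 M$, $\tilde\theta:=\bar\delta_2\ell^{2-p}$ and $I:=(\mft-\tilde\theta(c_o\varrho)^{sp},\mft)$, so that $Q:=\mcq_{c_o\varrho}^{\tilde\theta}=B_{c_o\varrho}\times I$; by \cref{boundsonomega} the quantity $\bsom_v$ lies between two constants depending only on $\datanb{},\tau,\mreta$, so powers of $\bsom_v$ are harmless. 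In this notation the hypothesis reads $|\{v(\cdot,t)\ge\bsmu_v^-+M\}\cap B_{c_o\varrho}|\ge\alpha_2|B_{c_o\varrho}|$ for every $t\in I$, and the goal is $|\{v\le\bsmu_v^-+\ell\}\cap Q|\lesssim\bar\sigma_2^{p-1}(\bar\delta_2\alpha_2)^{-1}|Q|$ with implied constant of the declared type.

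First I would apply \cref{lemma6.3} to $(v-k)_-$ with the level $k:=\bsmu_v^-+2\ell$ over $B_{2c_o\varrho}\times I$, using a purely spatial cut-off $\zeta_1$ (equal to $1$ on $B_{c_o\varrho}$, supported in $B_{2c_o\varrho}$, with $|\nabla\zeta_1|\lesssim(c_o\varrho)^{-1}$) and $\zeta_2\equiv1$, and discard all the nonnegative terms on the left except the isoperimetric one, which dominates $\iint_{I\times B_{c_o\varrho}}(v-k)_-(x,t)\int_{B_{c_o\varrho}}\tfrac{(v-k)_+^{p-1}(y,t)}{|x-y|^{n+sp}}\,dy\,dx\,dt$. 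On the right-hand side, since $0\le(v-k)_-\le2\ell$ and $v$ is trapped between two positive constants on $\mcq_o$: the gradient term is $\lesssim\ell^p(c_o\varrho)^{-sp}|Q|$; the initial-datum term, of the order $\int_{B_{2c_o\varrho}}(v-k)_-^2\,dx$ by the standard estimate (cf.\ \cref{Prop:energy}), is $\lesssim\ell^2|B_{2c_o\varrho}|\lesssim\bar\delta_2^{-1}\ell^p(c_o\varrho)^{-sp}|Q|$, the intrinsic choice $\tilde\theta\sim\ell^{2-p}$ being exactly what produces the correct power $\ell^p$; and the nonlocal tail term, which involves $\esssup_t\int_{\RR^n\setminus B_{c_o\varrho}}\tfrac{(w-k^{1/(p-1)})_-(y,t)^{p-1}}{|x-y|^{n+sp}}\,dy$, I would estimate exactly as the terms ${\J}_1,{\J}_2$ in the proof of \cref{lemma5.4}: split into $B_\varrho\setminus B_{c_o\varrho}$, where $v=w^{p-1}$ and, by \cref{alg_lem} and $1\le w\le\tfrac{1+\tau}{\tau}$ (see \cref{Eq:6:9}), $(w-k^{1/(p-1)})_-$ is comparable to $(v-k)_-\le2\ell$, and $\RR^n\setminus B_\varrho$, where $(w-k^{1/(p-1)})_-\lesssim\ell+(\bsmu_w^--w)_+$ and $(\bsmu_w^--w)_+\approx(u-\bsmu^-)_-/\bsmu^-$; using $\bsom\approx\bsmu^-\bsom_v$ and the hypothesis $c_o^{sp/(p-1)}\tail((u-\bsmu^-)_-;\mcq_o)\le\bar\sigma_2\bar\varepsilon_2\bsom$, and taking $c_o$ small (which is where $c_o$ ends up depending on $\datanb{},\bar\varepsilon_2,\bar\sigma_2,\tau,\mreta$), this esssup is $\lesssim\ell^{p-1}(c_o\varrho)^{-sp}$, so the tail term is $\lesssim\ell^p(c_o\varrho)^{-sp}|Q|$. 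Altogether,
\[
\iint_{I\times B_{c_o\varrho}}(v-k)_-(x,t)\int_{B_{c_o\varrho}}\frac{(v-k)_+^{p-1}(y,t)}{|x-y|^{n+sp}}\,dy\,dx\,dt\;\lesssim\;\frac{\mC_1\,\ell^p}{(c_o\varrho)^{sp}}\,|Q|,\qquad \mC_1\approx\bar\delta_2^{-1}.
\]

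This display is precisely the integral hypothesis of \cref{lem:shrinking} applied to the function $v-\bsmu_v^-$, with $m=M$, $\nu=\alpha_2$, $A=1$ and $l=2\ell=M/2^{j}$, where $j$ is the integer with $2^{-(j+1)}=\bar\sigma_2$ (the rounding absorbed into the constants); the remaining hypothesis of \cref{lem:shrinking}, namely $|\{v(\cdot,t)-\bsmu_v^->M\}\cap B_{c_o\varrho}|\ge\alpha_2|B_{c_o\varrho}|$ at every time, is our assumption. Hence \cref{lem:shrinking} gives $|\{v<\bsmu_v^-+M/2^{j+1}\}\cap Q|\le(C/(2^j-1))^{p-1}|Q|$, and since $M/2^{j+1}=\ell$ and the proof of \cref{lem:isop}--\cref{lem:shrinking} shows $C^{p-1}$ to be proportional to $\mC_1/\nu$, we get $(C/(2^j-1))^{p-1}\lesssim(\mC_1/\alpha_2)\,\bar\sigma_2^{p-1}\lesssim\bar\sigma_2^{p-1}(\bar\delta_2\alpha_2)^{-1}$, which is the asserted bound.

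The hard part is the nonlocal tail estimate: because $v=w^{p-1}$ holds only inside $B_\varrho$, the tail has to be carried in the unchanged variable, which forces one to pass repeatedly between $w$-levels, $v$-levels and the $\tail$ of $(u-\bsmu^-)_-$ while keeping every constant of the declared form; the computation is the same as in \cref{lemma5.4}, but it is what pins down the dependence of $c_o$. A secondary but essential point is that the intrinsic time scaling $\tilde\theta\sim(\bar\sigma_2\bar\varepsilon_2\bsom_v)^{2-p}$ must be used so that the initial-datum term contributes at the common order $\ell^p(c_o\varrho)^{-sp}|Q|$; without it $\mC_1$ would acquire an extra factor $(\bar\sigma_2\bar\varepsilon_2)^{-1}$ and the clean bound $\bar\sigma_2^{p-1}(\bar\delta_2\alpha_2)^{-1}$ would be lost.
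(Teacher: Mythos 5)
Your proposal is correct and follows essentially the same route the paper takes: the paper's own proof simply refers to \cite[Lemma 3.4]{liaoHolderRegularityParabolic2022} together with the tail estimates worked out in the proof of \cref{lemma5.4}, and your argument reproduces exactly that computation, using the mixed energy inequality of \cref{lemma6.3} at the single level $k=\bsmu_v^-+2\bar\sigma_2\bar\varepsilon_2\bsom_v$ with a purely spatial cut-off, estimating the nonlocal term by the $\J_1$, $\J_2$ split from \cref{lemma5.4}, and feeding the resulting isoperimetric bound into the paper's \cref{lem:shrinking}. The small provisos you leave implicit (choosing the integer $j$ with $\bar\sigma_2\approx 2^{-(j+1)}$, and the fact that the claim is trivial when $\bar\sigma_2$ is bounded below) are harmless and absorbed into the constant.
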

\begin{proof}
	The proof follows verbatim as in \cite[Lemma 3.4]{liaoHolderRegularityParabolic2022} once we make use of the $\tail$ estimates as obtained in the proof of \cref{lemma5.4}.
\end{proof}


\section{Reduction of oscillation for a scaled parabolic fractional \texorpdfstring{$p$}.-Laplace type equations (away from zero case) - Degenerate Case}

\subsection{Defining the alternatives}

%

Without loss of generality, let us reduce the oscillation at the point $(0,0)$. Recalling the reference cylinder $\mcq_o = B_{\varrho}(0) \times (-\bar{A}\varrho^{sp},0)$ from \cref{defvw} with $\bar{A}=A$, let us consider the cylinder 
\begin{equation}\label{def_tht}
\mcq_{c_o\varrho}^{\tht} := B_{c_o\varrho} \times (-\tht(c_o\varrho)^{sp},0) \txt{where} \tht:=\left(\tfrac14\bsom_v\right)^{2-p},
\end{equation}
for some $c_o\in (0,1)$ to be eventually determined. 
Moreover, we have the following set inclusion:
\begin{equation*}
\mcq_{c_o\varrho}^{\tht}\subseteq \mcq_{o}\txt{ provided } c_o \leq 1 \quad \text{and}\quad  c_o^{sp} \leq \lbr \frac{\bsom_v}{4}\rbr^{p-2}\bar{A}
\end{equation*}
Moreover, from \cref{vwdef5.2}, we see that the following is satisfied:
\begin{equation*}
	\essosc_{\mcq_{c_o\varrho}(\tht)} v \leq {\bsom}_v.
\end{equation*}
\begin{remark}\label{rmk6.9}
	We will eventually choose $\bar{A} = \lbr \frac{\bsom_v}{\mathbf{a}}\rbr^{2-p}$, for some $\mathbf{a} \gg 1
	$ to be chosen depending only on data satisfying \cref{defdega1} and \cref{defdega2} (see \cref{degconstantsscaled}). In what follows, we are interested in obtaining reduction of oscillation in the cylinder 
	\begin{equation*}
		\mcq_{c_o\varrho}^{\bar{A}} := B_{c_o\varrho} \times (-\bar{A}(c_o\varrho)^{sp},0).
	\end{equation*}
\end{remark}

\begin{definition}
	Without loss of generality, we assume the following is satisfied:
	\begin{equation}\label{degEq:mu-pm-}
		\bsmu_v^+-\bsmu_v^->\frac12 \bsom_v.
	\end{equation}
For a constant $\nu_4$ to be eventually determined according to \descrefnormal{step1}{Step 1} in the proof of \cref{redoscfirstp>2scaled} (see \cref{degconstantsscaled} for all the constants),  we have one of two alternatives:
\begin{description}
	\descitemnormal{DegAlt-$\I$:}{alt1} There exists a time level $\bar{t} \in (-\bar{A}\varrho^{sp},0)$ such that the following holds:
	\begin{equation*}
		\left|\left\{v\leq \bsmu_v^-+\tfrac14 \bsom_v\right\}\cap
		 \mcq_{c_o\rho}^{\tht}(\bar{t})\right|\leq \nu_4\left|\mcq_{c_o\rho}^{\tht}\right|,
	\end{equation*}
where we have denoted $\mcq_{c_o\rho}^{\tht}(\bar{t}) = B_{c_o\varrho} \times (\bar{t}-\tht(c_o\varrho)^sp,\bar{t})$ with $\tht$ as in \cref{def_tht}.
	\descitemnormal{DegAlt-$\II$:}{alt2} OR for every time level $\bar{t} \in (-\bar{A}\varrho^{sp},0)$, the following holds:
	\begin{equation}
		\label{secondaltdeg2}
		\left|\left\{v\leq \bsmu_v^-+\tfrac14 \bsom_v\right\}\cap \mcq_{c_o\rho}^{\tht}(\bar{t})\right|\geq \nu_4\left|\mcq_{c_o\rho}^{\tht}\right|,
	\end{equation}
where we have denoted $\mcq_{c_o\rho}^{\tht}(\bar{t}) = B_{c_o\varrho} \times (\bar{t}-\tht(c_o\varrho)^sp,\bar{t})$ with $\tht$ as in \cref{def_tht}.
\end{description}
\end{definition}
%

\subsection{Reduction of oscillation when \texorpdfstring{\descrefnormal{alt1}{DegAlt-$\I$}}. holds }\label{redoscfirstp>2scaled}

In this section, we work with $v$  near its infimum and the proof will proceed in several steps:
\begin{description}
	\descitemnormal{Step 1:}{step1} Let us first take $\nu_4= \nu_3$, where $\nu_3$ is from \cref{lemma5.4} applied with $\delta_1 = 1$, $\varepsilon_1 = \tfrac14$ and $c_o$ chosen small enough to satisfy \[
	c_o^{\frac{sp}{p-1}}\tail((u-\bsmu^{\pm})_{\pm};\mcq_o) \leq \tfrac14\bsom.
	\] This gives
	\begin{equation}\label{degeq7.10}
	v \geq \bsmu^-_v + \tfrac18\bsom_v \txt{ a.e in } B_{\frac{c_o\varrho}{2}} \times \left(\bar{t}-\theta\lbr \tfrac{c_o\varrho}{2}\rbr^{sp}, \bar{t}\right],
	\end{equation}
	where $\tht$ as defined in \cref{def_tht}. 
	\descitemnormal{Step 2:}{step2} For some $\varepsilon_1 \in (0,\tfrac18)$ to be chosen according to \cref{eq6.21}, we  apply \cref{lemma5.45} with $t_o = \bar{t}$ noting that \cref{degeq7.10} satisfies the first hypothesis of \cref{lemma5.45}. Additionally, we apply \cref{lemma5.45} with $c_o\varrho$ replaced by $\tfrac12c_o\varrho$ to get
	\begin{equation}\label{eq6.20}
	v \geq \bsmu_v^-+\tfrac12 \varepsilon_1 \bsom_v \quad
	\mbox{ on }\quad
	B_{\frac{c_o\varrho}4} \times \left(\bar{t},\bar{t}+\bar\nu_3(\varepsilon_1\bsom_v)^{2-p}\lbr{\tfrac14c_o\varrho}\rbr^{sp}\right],
	\end{equation}
	provided $c_o = c_o(\datanb{,\varepsilon_1,\tau,\mreta})$ is chosen small enough to ensure the tail alternative in  \cref{vwLm:3:3:hypothesis2} is satisfied.
	\item[Step 3:] From \descrefnormal{step2}{Step 2}, we choose $\varepsilon_1$ small such that $\bar\nu_3 (\varepsilon_1\bsom_v)^{2-p}\lbr{\tfrac14c_o\varrho}\rbr^{sp} \geq \lbr \tfrac{\bsom_v}{\mathbf{a}}\rbr^{2-p}(c_o\varrho)^{sp}$, which gives
	\begin{equation}\label{eq6.21}
		\varepsilon_1\leq \lbr\frac{\bar\nu_3}{4\mathbf{a}^{p-2}}\rbr^{\frac{1}{p-2}}.
	\end{equation}
In particular, with this choice of $\varepsilon_1$, we have \cref{eq6.20} holds till $t=0$.
	\item[Step 4:] Combining the estimates, we have 
	\begin{equation*}
		\essosc_{\mcq} v \leq \lbr 1-\tfrac12 \varepsilon_1\rbr \bsom_v \quad \text{where} \quad \mcq:= B_{\frac{c_o\varrho}4} \times \left(-\bar\nu_3(\varepsilon_1\bsom_v)^{2-p}\lbr{\tfrac14c_o\varrho}\rbr^{sp},0\right].
	\end{equation*}
\end{description}

\begin{remark}
	 Note that the constant $\mathbf{a}$ in \cref{rmk6.9} is yet to be determined and $c_o$ is to be further restricted in the subsequent steps of the proof. In application of \cref{vwLemma5.5}, we see that $c_o$ depends on $\varepsilon_1$ which in turn depends on $\mathbf{a}$ according to \cref{eq6.21}. We will keep track of the constants to ensure suitable choice of $\mathbf{a}$ and $c_o$, see \cref{degconstantsscaled}. 
\end{remark}
This completes the proof of the reduction of oscillation when \descrefnormal{alt1}{DegAlt-$\I$} holds. 
\subsection{Reduction of oscillation when \texorpdfstring{\descrefnormal{alt2}{DegAlt-$\II$}}. holds}\label{redoscsecondp>2scaled}
In this section, we work with $v$  near its supremum and the proof will proceed in several steps:

\begin{description}
	\item[Step 1:] Using \cref{degEq:mu-pm-},  we see that \cref{secondaltdeg2} can be rewritten as 
	\begin{equation}
		\label{secondaltdeg2rewrite}
		\left|\left\{v\leq \bsmu_v^+-\tfrac14 \bsom_v\right\}\cap \mcq_{c_o\rho}^{\tht}(\bar{t})\right|\geq \nu_4\left|\mcq_{c_o\rho}(\tht)\right|,
	\end{equation}
	\item[Step 2:]  The first step involves finding a good time slice, see \cref{figdeg1} for the geometry.  In particular, we have the following result:
	\begin{claim}\label{claim6.12}
		Let  $\bar{t} \in (-\bar{A}\varrho^{sp},0)$ be any time level and suppose \cref{secondaltdeg2rewrite} holds for this time level $\bar{t}$, 
		where we have denoted $\mcq_{c_o\rho}^{\tht}(\bar{t}) = B_{c_o\varrho} \times (\bar{t}-\tht(c_o\varrho)^{sp},\bar{t})$. Then there exists $t^{\ast} \in (\bar{t} - \tht (c_o\varrho)^{sp}, \bar{t} - \tfrac12 \nu_4\tht (c_o\varrho)^{sp} )$ such that the following is satisfied:
		\begin{equation*}
			\left|\left\{v(\cdot,t^{\ast})\leq \bsmu_v^+-\tfrac14 \bsom_v\right\}\cap B_{c_o\rho}\right|\geq \tfrac12\nu_4\left|B_{c_o\rho}\right|.
		\end{equation*}
	\end{claim}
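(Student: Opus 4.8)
The plan is to argue by contradiction, exactly mirroring the proof of \cref{claim3} (which handles the structurally identical statement in the unscaled setting). Suppose the conclusion fails, i.e.\ for every time slice $t^{\ast} \in (\bar{t} - \tht(c_o\varrho)^{sp}, \bar{t} - \tfrac12\nu_4\tht(c_o\varrho)^{sp})$ we have
\[
\left|\left\{v(\cdot,t^{\ast}) \le \bsmu_v^+ - \tfrac14\bsom_v\right\} \cap B_{c_o\varrho}\right| < \tfrac12 \nu_4 |B_{c_o\varrho}|.
\]
Then I would estimate $\left|\left\{v \le \bsmu_v^+ - \tfrac14\bsom_v\right\} \cap \mcq_{c_o\varrho}^{\tht}(\bar{t})\right|$ by splitting the time integral over $(\bar{t}-\tht(c_o\varrho)^{sp}, \bar{t})$ into the two pieces $(\bar{t}-\tht(c_o\varrho)^{sp}, \bar{t}-\tfrac12\nu_4\tht(c_o\varrho)^{sp}]$ and $(\bar{t}-\tfrac12\nu_4\tht(c_o\varrho)^{sp}, \bar{t})$. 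On the first piece, the (negated) assumption bounds the spatial measure by $\tfrac12\nu_4|B_{c_o\varrho}|$, and the time length is at most $\tht(c_o\varrho)^{sp}$; on the second piece, I bound the spatial measure trivially by $|B_{c_o\varrho}|$, and the time length is exactly $\tfrac12\nu_4\tht(c_o\varrho)^{sp}$. Adding these gives
\[
\left|\left\{v \le \bsmu_v^+ - \tfrac14\bsom_v\right\}\cap \mcq_{c_o\varrho}^{\tht}(\bar{t})\right| \le \tfrac12\nu_4 |B_{c_o\varrho}|\,\tht(c_o\varrho)^{sp} + \tfrac12\nu_4\tht(c_o\varrho)^{sp}\,|B_{c_o\varrho}| = \nu_4 \left|\mcq_{c_o\varrho}^{\tht}\right|,
\]
where $|\mcq_{c_o\varrho}^{\tht}| = |B_{c_o\varrho}|\,\tht(c_o\varrho)^{sp}$. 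This directly contradicts \cref{secondaltdeg2rewrite}, completing the proof.

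The only genuine bookkeeping point is to make sure the time measures are handled consistently: the interval $(\bar{t}-\tht(c_o\varrho)^{sp},\bar{t})$ has total length $\tht(c_o\varrho)^{sp}$, the "bad" sub-interval where the negated assumption applies has length $\tht(c_o\varrho)^{sp} - \tfrac12\nu_4\tht(c_o\varrho)^{sp} \le \tht(c_o\varrho)^{sp}$, and the remaining sub-interval has length $\tfrac12\nu_4\tht(c_o\varrho)^{sp}$. Here $\tht$ is the fixed intrinsic parameter from \cref{def_tht}, so it simply factors through every measure computation and plays no active role — the argument is purely a Fubini/measure-splitting computation and does not use the equation at all.

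I do not anticipate any real obstacle: this is a soft measure-theoretic lemma whose proof is identical in spirit to \cref{claim3}, with $\bsmu^-$ replaced by $\bsmu_v^+$, $\bsom$ by $\bsom_v$, the solution $u$ by $v$, and the radius rescaled by $c_o$ and the time scale by $\tht$. If anything needs care it is only the reformulation step (\descrefnormal{step1}{Step 1} of the surrounding section, already recorded as \cref{secondaltdeg2rewrite}) which uses \cref{degEq:mu-pm-} to pass from the sublevel set of $v$ near $\bsmu_v^-$ to the sublevel set near $\bsmu_v^+$; that is given, so the claim follows immediately.
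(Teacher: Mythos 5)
Your proof is correct and takes the same Fubini/splitting contradiction argument as the paper. One small point to tighten: as written you conclude the weak bound $\le \nu_4|\mcq_{c_o\varrho}^{\tht}|$, which does not by itself contradict the hypothesis $\ge \nu_4|\mcq_{c_o\varrho}^{\tht}|$. Since the negated hypothesis gives a \emph{strict} inequality at every time slice and the first sub-interval has positive length $(1-\tfrac12\nu_4)\tht(c_o\varrho)^{sp}$, the first integral is in fact strictly less than $\tfrac12\nu_4\,|B_{c_o\varrho}|\,(1-\tfrac12\nu_4)\tht(c_o\varrho)^{sp}$, and therefore the total is strictly below $\nu_4|\mcq_{c_o\varrho}^{\tht}|$, making the contradiction genuine. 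The paper retains the exact $(1-\tfrac12\nu_4)$ factor in the first term for precisely this reason (its final displayed bound $<\tfrac12\nu_4|\mcq_{c_o\varrho}^{\tht}|$ appears to contain a typo and should read $<\nu_4|\mcq_{c_o\varrho}^{\tht}|$).
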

\begin{proof}
	The proof is by contradiction, suppose the claim is false, then we have for all $t^{\ast} \in (\bar{t} - \tht (c_o\varrho)^{sp}, \bar{t} - \tfrac12 \nu_4\tht (c_o\varrho)^{sp} )$, we would have \begin{equation}\label{eq6.26}
		\left|\left\{v(\cdot,t^{\ast})\leq \bsmu_v^+-\tfrac14 \bsom_v\right\}\cap B_{c_o\rho}\right|< \tfrac12\nu_4\left|B_{c_o\rho}\right|.
	\end{equation}
Denoting the time interval $T:= (\bar{t} - \tht (c_o\varrho)^{sp}, \bar{t} - \tfrac12 \nu_4\tht (c_o\varrho)^{sp} )$,  we have the following sequence of estimates:
\begin{equation*}
	\begin{array}{rcl}
	\left|\left\{v\leq \bsmu_v^+-\tfrac14 \bsom_v\right\}\cap \mcq_{c_o\rho}^{\tht}(\bar{t})\right|	& = & \int_{\bar{t} - \tht (c_o\varrho)^sp}^{\bar{t} - \tfrac12 \nu_4\tht (c_o\varrho)^sp} \left|\left\{v(\cdot,t^{\ast})\leq \bsmu_v^+-\tfrac14 \bsom_v\right\}\cap B_{c_o\rho}\right| \,ds\\
	&& + \int_{\bar{t} - \tfrac12 \nu_4\tht (c_o\varrho)^sp}^{\bar{t}} \left|\left\{v(\cdot,t^{\ast})\leq \bsmu_v^+-\tfrac14 \bsom_v\right\}\cap B_{c_o\rho}\right| \,ds\\
	& \overset{\cref{eq6.26}}{<}& \tfrac12 \nu_4|B_{c_o\varrho}| \tht (c_o\varrho)^{sp} (1-\tfrac12 \nu_4) + \tfrac12 \nu_4|B_{c_o\varrho}| \tht (c_o\varrho)^{sp}\\
	& < & \tfrac12 \nu_4|\mcq_{c_o\varrho}^{\tht}(\bar{t})|,
	\end{array}
\end{equation*}
which is a contradiction to \cref{secondaltdeg2rewrite}. 

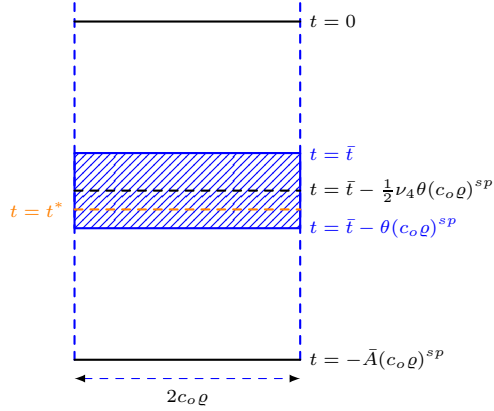
\begin{figure}[ht]
	\begin{center}
		\begin{tikzpicture}[line cap=round,line join=round,>=latex,scale=0.5]
			\coordinate  (O) at (0,0);
			\draw[thick, draw=blue, pattern color=blue,pattern=north east lines, opacity=0.3] (-3,0) rectangle (3,2);
			
			\node  [anchor=west] at (3,2) {\scriptsize \textcolor{blue}{$t=\bar{t}$}};
			\node  [anchor=west] at (3,0) {\scriptsize \textcolor{blue}{$t=\bar{t}-\tht(c_o\varrho)^{sp}$}};
			\draw[thick, draw=blue, dashed] (-3,-3.5) -- (-3,6);
			\draw[thick, draw=blue, dashed] (3,-3.5) -- (3,6);
			
			\draw[thick, draw=black, dashed] (-3,1) -- (3,1);
			\node  [anchor=west] at (3,1) {\scriptsize \textcolor{black}{$t=\bar{t}-\tfrac12\nu_4\tht(c_o\varrho)^{sp}$}};
			
			\draw[thick, draw=orange, dashed] (-3,0.5) -- (3,0.5);
			\node  [anchor=east] at (-3,0.5) {\scriptsize \textcolor{orange}{$t=t^{\ast}$}};
			\draw[draw=black,thick] (-3,-3.5) -- (3,-3.5);
			
			\draw[draw=black,thick] (-3,5.5) -- (3,5.5);
			\node  [anchor=west] at (3,5.5) {\scriptsize \textcolor{black}{$t=0$}};
			\draw[draw=blue, dashed, <->] (-3,-4) -- (3,-4);
			\node  at (0,-4.5) {\scriptsize $2c_o\varrho$};
			\node [anchor=west] at (3,-3.5) {\scriptsize $t=- \bar{A}(c_o\varrho)^{sp}$};
		\end{tikzpicture}
	\end{center}
		\caption{ Finding a time slice}
	\label{figdeg1}
\end{figure}
\end{proof}
	\descitemnormal{Step 3:}{step3} In this step, we expand measure information forward in time on one of the intrinsic cylinder, see \cref{figdeg2} for the geometry. 
		We  apply \cref{vwLemma5.5} with the choice $\alpha_1 = \tfrac12 \nu_4$ and  $\varepsilon_1=\bar\varepsilon_1  \in (0,\tfrac14)$ (with $\bar\varepsilon_1$ to be chosen in  \cref{degdefep1}) and choose $c_o = c_o(\datanb{,\bar\varepsilon_1, \nu_4,\tau,\mreta})$ small enough such that the tail alternative is satisfied, noting that the measure hypothesis of \cref{vwLemma5.5} holds from \cref{claim6.12}. From this, we  get the existence of two constants $\varepsilon_4 = \varepsilon_4(\datanb{,\nu_4,\mreta,\tau})$ and $\delta_4 = \delta_4(\datanb{,\nu_4,\mreta,\tau})$ such that the following conclusion follows:
		\begin{equation}\label{defdeg6.28}
		|v(\cdot,t)) \leq \bsmu_v^{+} -\varepsilon_4\bar\varepsilon_1\bsom_v\} \cap B_{c_o\varrho}| \geq \tfrac12 \alpha_1 |B_{c_o\varrho}| \txt{for all} t \in  (t^{\ast},t^{\ast} + \de_4(\bar\varepsilon_1\bsom_v)^{2-p}(c_o\varrho)^{sp}),
		\end{equation}
		where $t^{\ast}$ is as obtained in \cref{claim6.12}. 
		
		Now we choose $\bar\varepsilon_1$ to satisfy:
		\begin{equation}\label{degdefep1}
		\de_4(\bar\varepsilon_1\bsom_v)^{2-p}(c_o\varrho)^{sp} \geq  \tht (c_o \varrho)^{sp} \overset{\cref{def_tht}}{=}  (\tfrac14 \bsom_v)^{2-p} (c_o \varrho)^{sp} \quad  \Longrightarrow \quad \bar\varepsilon_1  \leq \tfrac14 \delta_4^{\frac{1}{p-2}}.
		\end{equation}
	In this way, the measure information can be propagated to the top of the intrinsic cylinder. In particular, \cref{defdeg6.28} holds in the interval $(t^{\ast},\bar{t})$ where $\bar{t}$ and $t^{\ast}$ are as obtained in \cref{claim6.12}.
	
	Since \descrefnormal{alt2}{DegAlt-$\II$} holds, we see that \cref{defdeg6.28} holds in the range
	\begin{equation*}
		|v(\cdot,t)) \leq \bsmu_v^{+} -\varepsilon_4\bar\varepsilon_1\bsom_v\} \cap B_{c_o\varrho}| \geq \tfrac12 \alpha_1 |B_{c_o\varrho}| \quad \text{holds} \,\forall \, t \in  (-\bar{A}(c_o\varrho)^{sp} +  \tht(c_o\varrho)^{sp},0),
	\end{equation*}
	where $\tht$ is defined  from \cref{def_tht}.
\begin{figure}[ht]
	\begin{center}
		\begin{tikzpicture}[line cap=round,line join=round,>=latex,scale=0.5]
			\coordinate  (O) at (0,0);
			\draw[thick, draw=blue, pattern color=blue,pattern=north east lines, opacity=0.3] (-3,0) rectangle (3,2);
			
			\draw[ draw=orange,pattern=north west lines, pattern color=orange, opacity=1] (-3,0.5) rectangle (3,2);
			\node  [anchor=west] at (3,2) {\scriptsize \textcolor{blue}{$t=\bar{t}$}};
			\node  [anchor=west] at (3,0) {\scriptsize \textcolor{blue}{$t=\bar{t}-\tht(c_o\varrho)^{sp}$}};
			\draw[thick, draw=blue, dashed] (-3,-3.5) -- (-3,6);
			\draw[thick, draw=blue, dashed] (3,-3.5) -- (3,6);
			
			\draw[thick, draw=black, dashed] (-3,1) -- (3,1);
			\node  [anchor=west] at (3,1) {\scriptsize \textcolor{black}{$t=\bar{t}-\tfrac12\nu_4\tht(c_o\varrho)^{sp}$}};
			
			\draw[draw=orange, dashed] (-3,0.5) -- (3,0.5);
			\node  [anchor=east] at (-3,0.5) {\scriptsize \textcolor{orange}{$t=t^{\ast}$}};
			\draw[draw=black,thick] (-3,-3.5) -- (3,-3.5);
			
			\draw[draw=black,thick] (-3,5.5) -- (3,5.5);
			\node  [anchor=west] at (3,5.5) {\scriptsize \textcolor{black}{$t=0$}};
			\draw[draw=blue, dashed, <->] (-3,-4) -- (3,-4);
			\node  at (0,-4.5) {\scriptsize $2c_o\varrho$};
			\node [anchor=west] at (3,-3.5) {\scriptsize $t=- \bar{A}(c_o\varrho)^{sp}$};
		\end{tikzpicture}
	\qquad 
	\begin{tikzpicture}[line cap=round,line join=round,>=latex,scale=0.5]
		\coordinate  (O) at (0,0);
		\draw[thick, draw=orange, pattern color=orange,pattern=north east lines, opacity=0.3] (-3,-2.5) rectangle (3,5.5);
		
		\node  [anchor=west] at (3,-2.5) {\scriptsize \textcolor{blue}{$t=-\bar{A} c_o\varrho)^{sp}\bar{t}+\tht(c_o\varrho)^{sp}$}};
		\draw[thick, draw=blue, dashed] (-3,-3.5) -- (-3,6);
		\draw[thick, draw=blue, dashed] (3,-3.5) -- (3,6);
		
%
		\draw[draw=black,thick] (-3,-3.5) -- (3,-3.5);
		
		\draw[draw=black,thick] (-3,5.5) -- (3,5.5);
		\node  [anchor=west] at (3,5.5) {\scriptsize \textcolor{black}{$t=0$}};
		\draw[draw=blue, dashed, <->] (-3,-4) -- (3,-4);
		\node  at (0,-4.5) {\scriptsize $2c_o\varrho$};
		\node [anchor=west] at (3,-3.5) {\scriptsize $t=- \bar{A}(c_o\varrho)^{sp}$};
	\end{tikzpicture}
	\end{center}
		\caption{Expansion of measure information when \descrefnormal{alt2}{DegAlt-$\II$} holds}
	\label{figdeg2}
\end{figure}
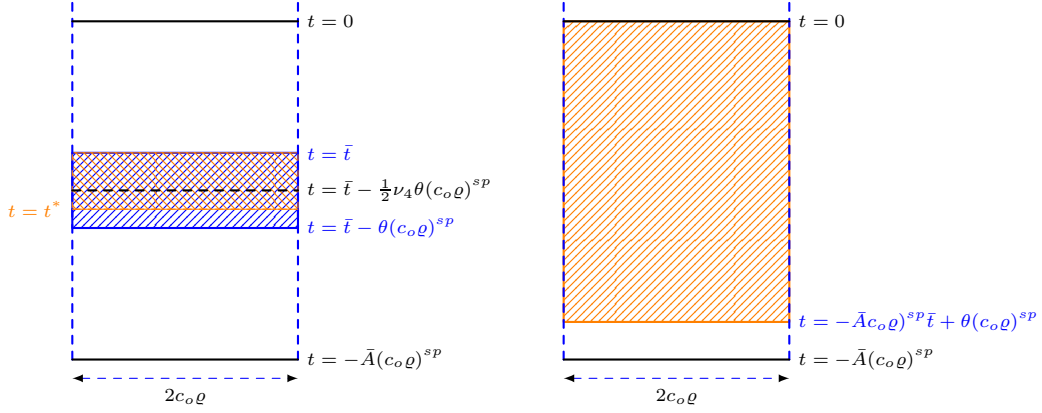
	\descitemnormal{Step 4:}{step4de} With $\delta_1 = 1$ and $\varepsilon_1 = \varepsilon_4\bar\varepsilon_1$, we will apply \cref{lemma5.4} to obtain two constants $\nu_5= \nu_3(\datanb{})$ and $c_o = c_o(\tau,\varepsilon_4,\bar\varepsilon_1,\mreta)$. In \cref{lemma5.4}, we will have to ensure that
	\begin{equation}\label{defdega1}
	\lbr \frac{\bsom_v}{\mathbf{a}}\rbr^{2-p} (c_o\varrho)^{sp} \geq (\varepsilon_4\bar\varepsilon_1\bsom_v)^{2-p} (\tfrac{c_o\varrho}{2})^{sp} \quad \Longrightarrow \quad \mathbf{a} \geq \frac{1}{2^{\frac{sp}{p-2}}\varepsilon_4\bar\varepsilon_1}.
	\end{equation}  Note that $c_o = c_o(\varepsilon_4,\bar\varepsilon_1)$ will be further restricted  to satisfy the tail alternative holds.
	\descitemnormal{Step 5:}{step5de} With $\nu_5$ obtained in \descrefnormal{step4de}{Step 4}, we will apply \cref{vwLemma5.6} with $\bar{\delta_2} = 1$, $\bar{\varepsilon_2} = \bar\varepsilon_1\varepsilon_4$ and $\alpha_2 = \tfrac18 \nu_4$ followed by choosing $\bar{\sigma_2}$ to satisfy
	\[
	\bsc \frac{\bar{\sigma_2}^{p-1}}{\alpha_2} \leq \nu_5,
	\]
	where $\bsc$ is the constant in the conclusion of \cref{vwLemma5.6}. Again for this step to hold, we will need 
	\begin{equation}\label{defdega2}
	\lbr \frac{\bsom_v}{\mathbf{a}}\rbr^{2-p} (c_o\varrho)^{sp}  \geq (\bar{\sigma_2}\bar\varepsilon_1\varepsilon_4\bsom_v)^{2-p} (c_o\varrho)^{sp} \quad \Longrightarrow\quad \mathbf{a} \geq \frac{1}{\bar{\sigma_2}\bar\varepsilon_1\varepsilon_4}.
		\end{equation}
	Again we further restrict $c_o = c_o(\bar\varepsilon_1,\varepsilon_4,\bar\sigma_2)$ such that the tail condition in \cref{vwLemma5.6} is satisfied. 
	\item[Step 6:] From \descrefnormal{step4de}{Step 4} and \descrefnormal{step5de}{Step 5}, we obtain the following conclusion:
	\[
	v \leq \bsmu^+_v - \tfrac12 \bar{\sigma_2}\bar\varepsilon_1\varepsilon_4 \bsom_v\qquad \text{in} \quad B_{\frac14c_o\varrho}\times (-(\bar{\sigma_2}\bar\varepsilon_1\varepsilon_4 \bsom_v)^{p-2}(\tfrac14c_o\varrho)^{sp},0).
	\]
\end{description}
This completes the proof of the reduction of oscillation when \descrefnormal{alt2}{DegAlt-$\II$} holds.

\subsection{Collection of dependence of constants}\label{degconstantsscaled}
Combining \cref{redoscfirstp>2scaled} and \cref{redoscsecondp>2scaled}, we have the following proposition:
\begin{proposition}\label{propdegaway}
	Let $\mcq_{c_o\varrho}^{\bar{A}} := B_{c_o\varrho} \times (-\lbr\tfrac{\bsom_v}{\mathbf{a}} \rbr^{2-p}(c_o\varrho)^{sp},0)$ be a given cylinder,  then there exists a constant $c_o = c_o(\datanb{,\tau,\mreta,\tfrac{1}{\mathbf{a}}})$ such that one of the two conclusions hold:
	\begin{description}
		\descitemnormal{Conclusion 1:}{conc1deg} We have
		\begin{equation*}
			\essosc_{\mcq} v \leq \lbr 1-\tfrac12 \varepsilon_1\rbr \bsom_v \quad \text{where} \quad \mcq:= B_{\frac{c_o\varrho}4} \times \left(-\bar\nu_3(\varepsilon_1\bsom_v)^{2-p}\lbr{\tfrac14c_o\varrho}\rbr^{sp},0\right],
		\end{equation*}
	and 
	\[
	c_o^{\frac{sp}{p-1}}\tail((u-\bsmu^{-})_{-};0,c_o\varrho,(-\lbr\tfrac{\bsom_v}{\mathbf{a}} \rbr^{2-p}(c_o\varrho)^{sp},0)) \leq \min\{\tfrac14,\varepsilon_1,\bar\sigma_2\varepsilon_2\} \bsom.
	\]
	 \descitemnormal{Conclusion 2:}{conc2deg} Or
	 \[
	 v \leq \bsmu^+_v - \tfrac12 \bar{\sigma_2}\bar\varepsilon_1\varepsilon_4 \bsom_v\qquad \text{in} \quad B_{\frac14c_o\varrho}\times (-(\bar{\sigma_2}\bar\varepsilon_1\varepsilon_4 \bsom_v)^{p-2}(\tfrac14c_o\varrho)^{sp},0).
	 \]
	 and 
	 \[
	 c_o^{\frac{sp}{p-1}}\tail((u-\bsmu^{+})_{+};0,c_o\varrho,(-\lbr\tfrac{\bsom_v}{\mathbf{a}} \rbr^{2-p}(c_o\varrho)^{sp},0)) \leq \min\{\tfrac14,\bar\varepsilon_1,\bar\varepsilon_1\varepsilon_4, \bar\sigma_2\bar\varepsilon_2\} \bsom.
	 \]
	\end{description}
We can combine both conclusions to have
\[
\essosc_{\mcq}v \leq \lbr1-\eta_d \rbr \bsom_v\qquad \text{where} \quad \mcq:= B_{\frac14c_o\varrho}\times (-\mathbf{d}\bsom_v^{2-p}(\tfrac14c_o\varrho)^{sp},0),
\]
where $\mathbf{d} = \min\{\bar\nu_3(\varepsilon_1)^{2-p}, (\bar\sigma_2\bar\varepsilon_1\varepsilon_4)^{2-p}\}$, $\eta_d= \min\{\tfrac12 \bar{\sigma_2}\bar\varepsilon_1\varepsilon_4, \tfrac12 \varepsilon_1\}$
and 
\[
c_o^{\frac{sp}{p-1}}\tail((u-\bsmu^{\pm})_{\pm};0,c_o\varrho,(-\lbr\tfrac{\bsom_v}{\mathbf{a}} \rbr^{2-p}(c_o\varrho)^{sp},0)) \leq \mathbf{d}_t \bsom,
\]
where $\mathbf{d}_t:=\min\{\tfrac14,\bar\varepsilon_1,\bar\varepsilon_1\varepsilon_4, \bar\sigma_2\bar\varepsilon_2\}$ and $\bsom,\bsom_v$ are related by $\bsom_v = (p-1)\lbr \tfrac{1+\tau}{\tau}\rbr^{p-1}\tfrac{\bsom}{\bsmu^-}$. 
\end{proposition}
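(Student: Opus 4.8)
The statement is a bookkeeping consolidation of the two alternatives analysed in \cref{redoscfirstp>2scaled} and \cref{redoscsecondp>2scaled}, so the plan is to run through both cases and merge the outcomes on a common cylinder. First I would fix the parameter $\mathbf{a}$, and hence $\bar{A}=(\bsom_v/\mathbf{a})^{2-p}$, to be any number satisfying both \cref{defdega1} and \cref{defdega2}; since the constants $\varepsilon_1,\bar\varepsilon_1,\varepsilon_4,\bar\sigma_2$ occurring there depend only on $\datanb{,\tau,\mreta}$, such a choice $\mathbf{a}=\mathbf{a}(\datanb{,\tau,\mreta})$ exists. With $\nu_4$ as pinned down in \descrefnormal{step1}{Step 1} of \cref{redoscfirstp>2scaled}, exactly one of \descrefnormal{alt1}{DegAlt-$\I$} or \descrefnormal{alt2}{DegAlt-$\II$} holds on $\mcq_{c_o\varrho}^{\bar{A}}$, and I would treat the two cases separately.

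If \descrefnormal{alt1}{DegAlt-$\I$} holds, the successive applications of \cref{lemma5.4} and \cref{lemma5.45} carried out in \cref{redoscfirstp>2scaled} give \descrefnormal{conc1deg}{Conclusion 1} verbatim, i.e. $\essosc_{\mcq}v\le(1-\tfrac12\varepsilon_1)\bsom_v$ on the cylinder indicated there, together with the quoted tail condition. If instead \descrefnormal{alt2}{DegAlt-$\II$} holds, then \cref{claim6.12} furnishes a good time slice, \cref{vwLemma5.5} (applied as in \descrefnormal{step3}{Step 3}) propagates the measure information to the top of the intrinsic cylinder, and \cref{lemma5.4} followed by \cref{vwLemma5.6} (as in \descrefnormal{step4de}{Step 4} and \descrefnormal{step5de}{Step 5}) produce \descrefnormal{conc2deg}{Conclusion 2}. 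In both cases the finitely many smallness requirements on the tail demanded by \cref{lemma5.4}, \cref{lemma5.45}, \cref{vwLemma5.5} and \cref{vwLemma5.6} are each of the shape $c_o^{sp/(p-1)}\tail((u-\bsmu^{\pm})_{\pm};\mcq_o)\le(\text{fixed fraction})\bsom$, and since the fractions $\tfrac14,\varepsilon_1,\bar\varepsilon_1,\bar\varepsilon_1\varepsilon_4,\bar\sigma_2\bar\varepsilon_2$ are determined by $\datanb{,\tau,\mreta,1/\mathbf{a}}$, a single $c_o=c_o(\datanb{,\tau,\mreta,\tfrac{1}{\mathbf{a}}})$ can be chosen that makes all of them hold simultaneously; this is the $c_o$ in the statement. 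The passage from the tail of $u$ to the quantities $\bsom_v,\bsmu_v^{\pm}$ attached to $v$ is effected by \cref{rmk:4.2} (to go from radius $\varrho$ to $c_o\varrho$), by the identity $\bsom_v=(p-1)(\tfrac{1+\tau}{\tau})^{p-1}\bsom/\bsmu^-$, and by the two-sided bound \cref{boundsonomega}.

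It then remains to merge the two conclusions onto one cylinder. Putting $\mathbf{d}:=\min\{\bar\nu_3\varepsilon_1^{2-p},(\bar\sigma_2\bar\varepsilon_1\varepsilon_4)^{2-p}\}$ and $\eta_d:=\min\{\tfrac12\bar\sigma_2\bar\varepsilon_1\varepsilon_4,\tfrac12\varepsilon_1\}$, I would check (using $2-p\le 0$ and $\varepsilon_1,\bar\sigma_2\bar\varepsilon_1\varepsilon_4\in(0,1)$) that the cylinder $\mcq:=B_{c_o\varrho/4}\times(-\mathbf{d}\bsom_v^{2-p}(c_o\varrho/4)^{sp},0]$ sits inside both the cylinder of \descrefnormal{conc1deg}{Conclusion 1} and that of \descrefnormal{conc2deg}{Conclusion 2}. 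Restricting the respective oscillation bound to $\mcq$ then yields $\essosc_{\mcq}v\le(1-\eta_d)\bsom_v$ whichever alternative occurred, and collecting the tail fractions into $\mathbf{d}_t:=\min\{\tfrac14,\bar\varepsilon_1,\bar\varepsilon_1\varepsilon_4,\bar\sigma_2\bar\varepsilon_2\}$ completes the proof. The main obstacle is entirely one of bookkeeping: one must confirm that none of $\varepsilon_1,\bar\varepsilon_1,\varepsilon_4,\delta_4,\bar\nu_3,\nu_3,\nu_4,\nu_5,\bar\sigma_2$ depends on $c_o\varrho$, so that the intrinsic waiting-time constraints \cref{eq6.21}, \cref{degdefep1}, \cref{defdega1} and \cref{defdega2} are mutually compatible for a suitable $\mathbf{a}$; granting this, everything else is a direct citation of \cref{redoscfirstp>2scaled} and \cref{redoscsecondp>2scaled}.
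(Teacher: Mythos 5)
Your proposal is correct and follows exactly the route the paper intends — \cref{propdegaway} is presented by the paper as the consolidation of \cref{redoscfirstp>2scaled} and \cref{redoscsecondp>2scaled}, and you reconstruct precisely that, with the correct ordering of parameter choices (the auxiliary constants $\bar\varepsilon_1,\varepsilon_4,\bar\sigma_2$ and $\mathbf{a}$ from \cref{defdega1}--\cref{defdega2} fixed before $c_o$, and $\varepsilon_1$ determined via \cref{eq6.21} from $\mathbf{a}$ without circularity since $\varepsilon_1$ does not enter \cref{defdega1}--\cref{defdega2}). One note worth recording: for your cylinder-inclusion check in the merging step to go through as sketched, the exponent in \descrefnormal{conc2deg}{Conclusion 2} should read $2-p$ (not $p-2$ as printed), which is what the definition $\mathbf{d}=\min\{\bar\nu_3\varepsilon_1^{2-p},(\bar\sigma_2\bar\varepsilon_1\varepsilon_4)^{2-p}\}$ already implicitly assumes.
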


\noindent\begin{minipage}{0.49\textwidth}
	\begin{center}
		\begin{tabular}{|c|c|c|}
			\hline
			Symbol & location & dependence\\
			\hline\hline
			$c_o$ & \descrefnormal{step1}{Step 1} of \cref{redoscfirstp>2scaled} & $n,p,s,\tau,\mreta$\\
			& \descrefnormal{step2}{Step 2} of \cref{redoscfirstp>2scaled} & $\tau,\varepsilon_1,\mreta$\\
			 & \descrefnormal{step3}{Step 3} of \cref{redoscfirstp>2scaled} & $\bar\varepsilon_1,\nu_4,\mreta,\tau$\\
			 & \descrefnormal{step4de}{Step 4} of \cref{redoscfirstp>2scaled} & $\varepsilon_4, \bar\varepsilon_1$ \\
			 &\descrefnormal{step5de}{Step 5} of \cref{redoscfirstp>2scaled}&$\bar\varepsilon_1,\varepsilon_4,\bar\sigma_2$\\\hline
			 $\mathbf{a} $& \cref{defdega1}& $\tfrac{1}{\bar\varepsilon_1},\tfrac{1}{\varepsilon_4}$ \\
			 &\cref{defdega2}& $\tfrac{1}{\bar\sigma_2}, \tfrac{1}{\bar\varepsilon_1},\tfrac{1}{\varepsilon_4}$  \\\hline
			 $\alpha_1$ &\descrefnormal{step3}{Step 3} of \cref{redoscfirstp>2scaled}  & $:=\tfrac12 \nu_4$\\\hline
		\end{tabular}
	\end{center}
\end{minipage}
\begin{minipage}{0.49\textwidth}
	\begin{center}
		\begin{tabular}{|c|c|c|}
			\hline
			Symbol & location & dependence\\
			\hline\hline
			$ \nu_4$ & \descrefnormal{step1}{Step 1} of \cref{redoscfirstp>2scaled} & $n,p,s,\La$ \\\hline
			$\varepsilon_1$ & \cref{eq6.21} & $n,p,s,\La, \tfrac{1}{\mathbf{a}},\bar\nu_3$\\\hline
			$\bar\nu_3$ &\descrefnormal{step2}{Step 2} of \cref{redoscfirstp>2scaled} & $n,p,s,\La$\\\hline
			$\bar\varepsilon_1$ & \cref{degdefep1} & $n,p,s,\La,\de_4$ \\\hline
			$\de_4$ & \cref{defdeg6.28} & $n,p,s,\La,\nu_4,\mreta,\tau$\\\hline
			$\varepsilon_4$ & \descrefnormal{step3}{Step 3} of \cref{redoscfirstp>2scaled} & $\nu_4,\mreta,\tau$\\\hline
			$\nu_5$ & \descrefnormal{step4de}{Step 4} of \cref{redoscfirstp>2scaled} & $\datanb{}$\\\hline
			$\bar\sigma_2$ & \descrefnormal{step5de}{Step 5} of \cref{redoscfirstp>2scaled} & $\nu_5,\nu_4$\\\hline
		\end{tabular}
	\end{center}
\end{minipage}


\section{Reduction of oscillation for a scaled parabolic fractional \texorpdfstring{$p$}.-Laplace type equations (away from zero case) - singular case}

In this case, we assume $p \leq 2$ and obtain reduction of oscillation. The proof follows the calculations of \cite[Section 4]{liaoHolderRegularityParabolic2022}. 

\begin{remark}
	We note that all the calculations from \cref{section6} holds with $\tau = 1$ in the case $p<2$ with obvious modifications. We will not write these standard modifications for the sake of brevity. 
\end{remark}
\begin{proposition}
	Let $v$ be as in \cref{vwdef5.2} and  \cref{lemma6.3}. Suppose for two  constants $\alpha_5 \in (0,1)$ and $\varepsilon_5 \in (0,1)$, the following assumption holds:
	\[
	|\{\pm (\bsmu^{\pm}_v - v(\cdot,t_o))\geq \varepsilon_5 \bsom_v\}| \geq \alpha_5 |B_{c_o\varrho}|.
	\]
	Then there exists constants $\delta_5 = \delta_5(\datanb{,\alpha_5})$, $\eta_5 = \eta_5(\datanb{,\alpha_5})$ and $c_o = c_o(\eta_5,\varepsilon_5)$ such that the following conclusion follows:
	\[
	c_o^{\frac{sp}{p-1}} \tailp((u - \bsmu^{\pm})_{\pm}; \mcq) \leq \eta_5\varepsilon_5\bsom,
	\]
	and 
	\[
	\pm(\bsmu^{\pm}_v - v) \geq \eta_5\varepsilon_5\bsom_v \quad \text{on} \quad B_{2c_o\varrho} \times (t_o + \tfrac12 \delta_5 (\varepsilon_5\bsom_v)^{2-p}(c_o\varrho)^{sp}, t_o + \delta_5 (\varepsilon_5\bsom_v)^{2-p}(c_o\varrho)^{sp})
		\]
		provided $B_{2c_o\varrho} \times (t_o + \tfrac12 \delta_5 (\varepsilon_5\bsom_v)^{2-p}(c_o\varrho)^{sp}, t_o + \delta_5 (\varepsilon_5\bsom_v)^{2-p}(c_o\varrho)^{sp}) \subset \mcq$. 
		Moreover, we have $\delta_5 \approx \alpha_5^{p-1}$ and $\eta_5 \approx \alpha_5^q$ for some $q>1$ depending only on data.
\end{proposition}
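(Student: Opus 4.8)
The plan is to reproduce, in the intrinsic geometry of the scaled equation (time scale $\theta\sim(\varepsilon_5\bsom_v)^{2-p}$), the three-step argument behind \cref{Prop:1:1}: expansion of positivity in time, a measure shrinking lemma, and a De Giorgi iteration, now using \cref{vwLemma5.5}, \cref{vwLemma5.6} and \cref{lemma5.4} of \cref{section6} in place of \cref{Lm:3:1}, \cref{Lm:3:2} and \cref{Lm:3:3}. We carry out the $+$ alternative (that is, $v$ near its infimum); the $-$ alternative is completely analogous. A preliminary remark: every $\tail$ alternative demanded by the cited lemmas has the shape $c_o^{\frac{sp}{p-1}}\tailp((u-\bsmu^{\pm})_{\pm};\mcq_o)\le(\text{a fixed power of }\alpha_5)\cdot\bsom$, so finitely many reductions of $c_o$ let the single hypothesis $c_o^{\frac{sp}{p-1}}\tailp((u-\bsmu^{\pm})_{\pm};\mcq)\le\eta_5\varepsilon_5\bsom$ of the statement imply all of them at once.

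First I would apply \cref{vwLemma5.5} with $\alpha_1=\alpha_5$ and $\varepsilon_1=\varepsilon_5$; this yields $\delta_1=\delta_1(\datanb{},\alpha_5)\approx\alpha_5^{p+1}$, $\bar\varepsilon=\bar\varepsilon(\datanb{},\alpha_5)\approx\alpha_5$ and a first smallness requirement on $c_o$, giving
\[
\big|\{\bsmu^{-}_v-v(\cdot,t)\ge\bar\varepsilon\varepsilon_5\bsom_v\}\cap B_{c_o\varrho}\big|\ \ge\ \tfrac12\alpha_5\,|B_{c_o\varrho}|\qquad\text{for all }t\in\big(t_o,\,t_o+\delta_1(\varepsilon_5\bsom_v)^{2-p}(c_o\varrho)^{sp}\big).
\]
Since the same density (up to an extra dimensional factor) then holds on $B_{4c_o\varrho}$, I can run the next two lemmas on $B_{4c_o\varrho}$ so that their conclusions land on $B_{2c_o\varrho}$, exactly as in the passage from $B_{\varrho}$ to $B_{2\varrho}$ in \cref{Prop:1:1}.

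Next I would apply \cref{vwLemma5.6} on $B_{4c_o\varrho}$ with $\alpha_2\approx\alpha_5$, $\bar\varepsilon_2=\bar\varepsilon\varepsilon_5$, $\bar\delta_2$ a small constant and $\bar\sigma_2\in(0,1)$ to be fixed; the resulting bad-set fraction is $\bsc_{\data{,\tau,\mreta}}\tfrac{\bar\sigma_2^{p-1}}{\bar\delta_2\alpha_2}$ on a cylinder of intrinsic height $\bar\delta_2(\bar\sigma_2\bar\varepsilon\varepsilon_5\bsom_v)^{2-p}(4c_o\varrho)^{sp}$, which — since $\bar\sigma_2\bar\varepsilon<1$ and $p<2$ — is no larger than $\delta_1(\varepsilon_5\bsom_v)^{2-p}(4c_o\varrho)^{sp}$ once $\bar\delta_2$ is small, hence sits inside the slab of Step 1. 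Choosing $\bar\sigma_2$ so small that $\bsc_{\data{,\tau,\mreta}}\tfrac{\bar\sigma_2^{p-1}}{\bar\delta_2\alpha_2}\le\nu_3$, where $\nu_3=\nu_3(\datanb{},\bar\delta_2)\approx\bar\delta_2^{q}$ is the constant of \cref{lemma5.4} applied with $\delta_1=\bar\delta_2$ and $\varepsilon_1=\bar\sigma_2\bar\varepsilon\varepsilon_5$, I then invoke \cref{lemma5.4} on $B_{4c_o\varrho}$ (after one more restriction of $c_o$ absorbing its tail alternative) to obtain
\[
\bsmu^{-}_v-v\ \ge\ \tfrac12\bar\sigma_2\bar\varepsilon\varepsilon_5\bsom_v\quad\text{on }B_{2c_o\varrho}\times\big(t_o+\tfrac12\delta_5(\varepsilon_5\bsom_v)^{2-p}(c_o\varrho)^{sp},\,t_o+\delta_5(\varepsilon_5\bsom_v)^{2-p}(c_o\varrho)^{sp}\big),
\]
with $\eta_5:=\tfrac12\bar\sigma_2\bar\varepsilon$ and $\delta_5$ coming out of the chain of choices. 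Tracing $\bar\varepsilon\approx\alpha_5$, $\delta_1\approx\alpha_5^{p+1}$, $\nu_3\approx\bar\delta_2^{q}$ and the constraint $\bsc\,\bar\sigma_2^{p-1}\le\bar\delta_2\alpha_2\nu_3$ forces $\bar\sigma_2$, and hence $\eta_5$ and $\delta_5$, to be explicit positive powers of $\alpha_5$; unwinding the cascade recovers the exponents $\eta_5\approx\alpha_5^{q'}$ (some $q'>1$ depending only on the data) and $\delta_5\approx\alpha_5^{p-1}$ recorded in the statement, and collecting the finitely many smallness conditions on $c_o$ fixes $c_o=c_o(\datanb{},\eta_5,\varepsilon_5)$ while making the tail hypothesis self-consistent.

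The step I expect to be the main obstacle is not any single estimate — the three lemmas are ready to use — but the coordination of the several intrinsic time scales: the cylinder fed to \cref{vwLemma5.6}, the one fed to \cref{lemma5.4} (whose height is rigidly tied to $\delta_1(\varepsilon_1\bsom_v)^{2-p}$), and the slab produced in Step 1 must be nested, and all must sit inside $\mbcq$; in the singular range $p<2$ this forces $\bar\delta_2,\bar\sigma_2$ to be chosen in the right order and with the right powers of $\alpha_5$, and it is this cascade one must unwind carefully to extract the sharp exponents $\delta_5\approx\alpha_5^{p-1}$ and $\eta_5\approx\alpha_5^{q'}$. The nonlocal $\tail$ contributions introduce nothing new: they are treated exactly as in the proofs of \cref{vwLemma5.5,vwLemma5.6,lemma5.4} and are absorbed by taking $c_o$ small.
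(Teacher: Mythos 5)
Your proposal follows the paper's proof closely and uses the same three-step chain: first \cref{vwLemma5.5} expands the one-time-slice density forward in time (producing $\bar\varepsilon\approx\alpha_5$ and $\delta_1\approx\alpha_5^{p+1}$), then \cref{vwLemma5.6} converts the density on a time interval into a small bad-set fraction on the appropriate intrinsic cylinder with $\bar\varepsilon_2=\bar\varepsilon\varepsilon_5$ and with $\bar\delta_2$ chosen to nest the cylinders, and finally \cref{lemma5.4} upgrades this to the pointwise bound after $\bar\sigma_2$ is taken small enough that the bad-set fraction falls below the critical $\nu_3$ of \cref{lemma5.4}; the constants $\eta_5$ and $\delta_5$ and the cascade of smallness conditions on $c_o$ drop out exactly as in the paper. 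The only divergence is cosmetic: the paper runs all three lemmas at radius $c_o\varrho$ and its final display lands on $B_{c_o\varrho/2}$ (leaving an implicit relabeling to match the stated $B_{2c_o\varrho}$), whereas you degrade $\alpha_5$ by the dimensional factor $4^{-n}$ and run the later steps at $B_{4c_o\varrho}$ so that the conclusion sits on $B_{2c_o\varrho}$ from the start — a harmless modification that, if anything, makes the bookkeeping cleaner.
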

\begin{proof}
	The tail estimate follows analogous to those in the proof of \cref{lemma5.4} by choosing $c_o$ sufficiently small. So, we only need to show the pointwise local estimate for $v$ which we do as follows:
	\begin{description}
		\descitemnormal{Step 1:}{step1sing} We apply \cref{vwLemma5.5} with $\alpha_1 = \alpha_5$ and $\varepsilon_1 = \varepsilon_5$ to get 
		\[
		|\{\pm (\bsmu^{\pm}_v - v(\cdot,t))\geq \bar\varepsilon\varepsilon_5 \bsom_v\}| \geq \tfrac12\alpha_5 |B_{c_o\varrho}| \txt{for a.e} t \in (t_o,t_o+\delta_1(\varepsilon_5\bsom_v)^{2-p}(c_o\varrho)^{sp}), 
		\]
		where $\bar\varepsilon$ and $\delta_1$ are as obtained in \cref{vwLemma5.5} in this situation.
		\descitemnormal{Step 2:}{step2sing}In this step, we want to apply \cref{vwLemma5.6} which requires verifying the measure hypothesis is satisfied. With $\delta_1$ as obtained in \descrefnormal{step1sing}{Step 1}, we need to ensure $\de_1 (\varepsilon_5\bsom_v)^{2-p}(c_o\varrho)^{sp} \geq \bar\delta_2(\bar\sigma_2\bar\varepsilon\varepsilon_5\bsom_v)^{2-p}(c_o\varrho)^{sp}$ which holds provided we choose $\bar\delta_2=\delta_1$ and  $\bar\sigma_2\bar\varepsilon \leq 1$. 
		 
		 In particular, with this choice, \descrefnormal{step1sing}{Step 1} gives
		 \[
		 |\{\pm (\bsmu^{\pm}_v - v(\cdot,t))\geq \bar\varepsilon\varepsilon_5 \bsom_v\}| \geq \tfrac12\alpha_5 |B_{c_o\varrho}| \txt{for a.e} t \in (t_o,t_o+\delta_1(\bar\sigma_2\bar\varepsilon\varepsilon_5\bsom_v)^{2-p}(c_o\varrho)^{sp}). 
		 \]
		 
		 Now apply \cref{vwLemma5.6}, we get the following conclusion for any $\bar\sigma_2 \in (0,1)$:
		 \[
		 |\{\pm (\bsmu_v^{\pm}-v) \leq \bar\sigma_2\bar\varepsilon\varepsilon_5\bsom_v\} \cap \mcq| \leq \bsc_{\data{,\tau,\mreta}}\frac{\bar\sigma_2^{p-1}}{\delta_1\alpha_5}  |\mcq|,
		 \]
		 where $\mcq = B_{c_o\varrho} \times (t_o,t_o+\delta_1(\bar\sigma_2\bar\varepsilon\varepsilon_5\bsom_v)^{2-p}(c_o\varrho)^{sp})$.
		 \item[Step 3:] Let us take $\varepsilon_1 = \bar\sigma_2\bar\varepsilon\varepsilon_5$ and $\de_1$ from \descrefnormal{step2sing}{Step 2} in \cref{lemma5.4} to obtain $\nu_3 = \nu_3(\datanb{,\delta_1})$. Now we make the choice $\bar\sigma_2 \in (0,1)$ such that $\bsc_{\data{,\tau,\mreta}}\frac{\bar\sigma_2^{p-1}}{\delta_1\alpha_5} \leq \nu_3$, noting that $\nu_3$ is independent of $\bar\sigma_2$. Now \cref{lemma5.4} becomes applicable which leads to the following conclusion:
		 \[
		 \pm(\bsmu^{\pm}_v-v) \geq \tfrac12 \bar\sigma_2\bar\varepsilon\varepsilon_5 \txt{on} B_{\frac12c_o\varrho} \times (t_o + \tfrac12 \de_1(\bar\sigma_2\bar\varepsilon\varepsilon_5\bsom_v)^{2-p}(\tfrac12c_o\varrho)^{sp}), t_o + \de_1(\bar\sigma_2\bar\varepsilon\varepsilon_5\bsom_v)^{2-p}(\tfrac12c_o\varrho)^{sp}).
		 \]
		 Let us define $\eta_5 := \tfrac12 \bar\sigma_2\bar\varepsilon$ and $\de_5:= \tfrac{\delta_1 (\bar\sigma_2\bar\varepsilon)^{2-p}}{2^{sp}}$ which completes the proof. 
	\end{description}
\end{proof}

\section{H\"older regularity for nonlocal doubly nonlinear equations - degenerate case}\label{section10}
For some fixed $R \leq 1$, let us consider the parabolic cylinder 
\begin{equation*}
	\mathbf{Q}_o := B_{8R}(x_o)\times\left(t_o-(8R)^{sp},t_o+(8R)^{sp}\right].
\end{equation*}
With this choice, let us  introduce numbers $\bsmu^{\pm}$ and $\bsom$ satisfying
\begin{equation*}
	\bsmu^+\geq  \esssup_{\mathbf{Q}_o}u, \qquad \bsmu^-\leq \essinf_{\mathbf{Q}_o} u \qquad \text{ and } 
	\qquad \bsom \geq \bsmu^+ - \bsmu^-.	
\end{equation*}
We also assume $(x_o,t_o) = (0,0)$ without loss of generality. 

\begin{definition}
	There exists $\mathbf{C}_o \gg 2$ large depending only on data and we define $R_i := \mathbf{C}_o^{-i}R$ for $i = 1,2,\ldots$. With this, we denote $\mcq_i := B_{R_i} \times (-R_i^{sp},0) =: B_i \times I_i$. It is easy to see that $\mcq_o = B_{R} \times (-R^{sp},0) \subset \mathbf{Q}_o$. 

	Furthermore, for some fixed $j_o \geq 1$,  let us define the constant 
	\[
	L:= 2 \mathbf{C}_o^{\frac{sp}{p-1}j_o} \|u\|_{L^{\infty}(B_{8R}\times (-(8R)^{sp},0))} + \tail(|u|,8R,0,(-(8R)^{sp},0)).
	\]
%
%
	Let us also take $m_o := -\tfrac12 L$ and $M_o := \tfrac12 L$. 
\end{definition}

The main proposition we will prove is the following:
\begin{proposition}\label{prop10.1}
	There exists two sequence of numbers nondecreasing $\{m_i\}_{i=0}^{\infty}$  and nonincreasing $\{M_i\}_{i=0}^{\infty}$ such that for any $i \in \{0,1,2,3\ldots\}$, there holds
	\[
	m_i \leq u \leq M_i \txt{on} \mcq_i = B_i \times I_i,
	\]
	where $m_i$ and $M_i$ further satisfy
	\begin{equation*}\label{eq10.2deg}
		\def\arraystretch{1.2}
		\begin{array}{rcl}
			M_i - m_i & = & \mathbf{C}_o^{-\beta i} L,\\
			\beta & \leq  & \max\left\{ \log_{\mathbf{C}_o} \tfrac{1}{1-\bar\eta}, \log_{\mathbf{C}_o} \tfrac{1}{1-\eta_1}, \log_{\mathbf{C}_o} \tfrac{1}{1-\bar\eta_2}, \log_{\mathbf{C}_o} \tfrac{1}{1-\frac{\varepsilon}{2^{j_*+1}}}\right\} \txt{from \descrefnormal{step4covdeg}{Step 4}}. \\
			\beta & \leq & \min\{\log_{\mathbf{C}_o}\frac{1}{1-\frac12 \varepsilon_1}, \log_{\mathbf{C}_o}\frac{1}{1-\frac12\bar\sigma_2\bar\varepsilon_1\varepsilon_4}\} \txt{from \descrefnormal{step11deg}{Step 11}}\\
			\kappa & \leq & \leq \min\{\bar\eta,\eta_1,\bar\eta_2,\tfrac{\varepsilon}{2^{j_*+1}}\} \txt{from \descrefnormal{step6deg}{Step 6}}\\
			C_1\int_{\mathbf{C}_o}^1  \frac{\lbr b^{\beta}-1\rbr^{p-1}}{b^{1+sp}}\,db   & \leq &  \tfrac12 \kappa^{p-1}\\
			j_o &=& \lceil\frac{1}{sp-\beta(p-1)}\log_{\mathbf{C}_o}\frac{2C_3}{\kappa^{p-1}}\rceil \txt{from \descrefnormal{step5deg}{Step 5}}.
		\end{array}
	\end{equation*}
In particular, this implies $\essosc_{\mcq_i} u \leq \mathbf{C}_o^{\beta i}L$. 
\end{proposition}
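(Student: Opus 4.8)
The plan is to prove the proposition by induction on $i$, constructing the monotone sequences $\{m_i\}$ and $\{M_i\}$ one level at a time so that the nested cylinders $\mcq_i=B_i\times I_i$ carry the pointwise bounds $m_i\le u\le M_i$ with $M_i-m_i=\mathbf{C}_o^{-\beta i}L$. The first stretch of indices is essentially free: since $L\ge 2\mathbf{C}_o^{\frac{sp}{p-1}j_o}\|u\|_{L^\infty(B_{8R}\times(-(8R)^{sp},0))}$, for every $i$ with $\beta i\le\frac{sp}{p-1}j_o$ one has $\mathbf{C}_o^{-\beta i}L\ge 2\|u\|_{L^\infty}\ge\essosc_{\mcq_i}u$, so one may simply take $M_i=\tfrac12\mathbf{C}_o^{-\beta i}L$ and $m_i=-\tfrac12\mathbf{C}_o^{-\beta i}L$; this settles the base case $i=0$ and all $i\le i_\ast:=\lfloor\frac{sp}{\beta(p-1)}j_o\rfloor$, with monotonicity automatic. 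For $i>i_\ast$ I would run the inductive step: assuming $m_i\le u\le M_i$ on $\mcq_i$ with $\bsom_i:=M_i-m_i=\mathbf{C}_o^{-\beta i}L$, set $\bsmu^+_i:=M_i$, $\bsmu^-_i:=m_i$ so that $\essosc_{\mcq_i}u\le\bsom_i$, and produce $M_{i+1},m_{i+1}$ with $M_{i+1}-m_{i+1}=\mathbf{C}_o^{-\beta}\bsom_i$.

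The inductive step splits along the dichotomy of \cref{section5} and \cref{section6}: either $u$ is \emph{near zero} on $\mcq_i$ in the sense of \tlcref{Eq:Hp-main1}, or it is \emph{away from zero} as in \tlcref{Eq:Hp-main2}. In the near-zero case I would invoke the collected conclusions of \cref{nearzerodegen}, which (once the relevant $\tail$ alternatives hold) give $\essosc u\le(1-\kappa')\bsom_i$ on a subcylinder, where $\kappa'$ is one of the explicit constants $\bar\eta,\eta_1,\bar\eta_2,\tfrac{\varepsilon}{2^{j_\ast+1}}$. In the away-from-zero case I would pass to the scaled functions $w=u/\bsmu^-_i$ and $v=w^{p-1}$ of \cref{defvw}: because $\bsmu^-_i>\tau\bsom_i$ in this regime the oscillation $\bsom_v$ of $v$ is a \emph{bounded} quantity, $\bsom_v\in[\mathfrak{C}_1,\mathfrak{C}_2]$ by \cref{boundsonomega}, so the intrinsically scaled time length $\bsom_v^{2-p}(c_o\varrho)^{sp}$ in \cref{propdegaway} is comparable to the parabolic one; applying \cref{propdegaway} and transferring the reduction of oscillation for $v$ back to $u$ through \cref{alg_lem} gives $\essosc u\le(1-\kappa')\bsom_i$ with $\kappa'\in\{\tfrac12\varepsilon_1,\tfrac12\bar\sigma_2\bar\varepsilon_1\varepsilon_4\}$, again on a subcylinder. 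Taking $\beta$ small enough (quantified in the statement, so that $\mathbf{C}_o^{-\beta}\ge 1-\kappa'$ in every case, with $\kappa:=\min\{\bar\eta,\eta_1,\bar\eta_2,\tfrac{\varepsilon}{2^{j_\ast+1}},\tfrac12\varepsilon_1,\tfrac12\bar\sigma_2\bar\varepsilon_1\varepsilon_4\}$) and $\mathbf{C}_o$ large enough that $\mcq_{i+1}$ sits inside the subcylinder — spatially requiring $\mathbf{C}_o^{-1}\le\tfrac14c_o$, and in time $\mathbf{C}_o^{-sp}\le\mathbf{d}\,\bsom_v^{2-p}(\tfrac14c_o)^{sp}$ in the away-from-zero case (thanks to the boundedness of $\bsom_v$), respectively the analogous parabolic condition in the near-zero case — one defines $M_{i+1},m_{i+1}$ by keeping whichever of the sup/inf of $u$ on $\mcq_{i+1}$ did not improve and moving the other one so that $M_{i+1}-m_{i+1}=\mathbf{C}_o^{-\beta}\bsom_i$, and monotonicity is preserved.

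The heart of the argument — and the main obstacle — is verifying at each level $i$ the $\tail$ alternatives demanded by \cref{nearzerodegen} and \cref{propdegaway}, which in the nonlocal setting are not automatic and which, since the change of variables $v=w^{p-1}$ is unavailable for the far-field term, must be estimated directly in the unscaled function $u$. The recipe is to split $\tail((u-\bsmu^\pm_i)_\pm;0,c_oR_i,\cdot)$ into a near part over $B_R\setminus B_{c_oR_i}=(B_{R_i}\setminus B_{c_oR_i})\cup\bigcup_{j<i}(B_{R_j}\setminus B_{R_{j+1}})$ and a far part over $\RR^n\setminus B_R$. On $B_{R_i}\setminus B_{c_oR_i}$ the inductive hypothesis gives $(u-\bsmu^\pm_i)_\pm\le\bsom_i$, and on each annulus $B_{R_j}\setminus B_{R_{j+1}}$ with $j<i$ it gives $(u-\bsmu^\pm_i)_\pm\le\bsom_j=\mathbf{C}_o^{-\beta j}L$; after normalising as in the $\tail$ alternative (the prefactor $c_o^{\frac{sp}{p-1}}$) the near part is bounded by a constant times the convergent sum $\sum_{k\ge1}\mathbf{C}_o^{-k(sp-\beta(p-1))}$ weighted by a power of $c_o$ — i.e.\ by $C_1\int_{\mathbf{C}_o}^{1}\frac{(b^\beta-1)^{p-1}}{b^{1+sp}}\,db$, which is exactly the structural constraint $sp-\beta(p-1)>0$ and is pushed below $\tfrac12\kappa^{p-1}$ for $\mathbf{C}_o$ large; the far part, controlled by $\|u\|_{L^\infty}+\tail(|u|;\mathbf{Q}_o)$, carries the extra normalising factor $(R_i/R)^{sp}=\mathbf{C}_o^{-isp}$, and comparison with $\bsom_i^{p-1}=\mathbf{C}_o^{-\beta i(p-1)}L^{p-1}$ produces the gain $\mathbf{C}_o^{-i(sp-\beta(p-1))}$, which for $i\ge j_o$ makes the far part at most $C_3\,\mathbf{C}_o^{-j_o(sp-\beta(p-1))}\le\tfrac12\kappa^{p-1}$ precisely by the displayed choice of $j_o$, while for $i<j_o\le i_\ast$ the level already lies in the trivial first stretch. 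The order of quantifiers must then be respected: first fix $\alpha,\tau,\mreta$ and hence $\bar\eta,\eta_1,\bar\eta_2,\varepsilon,j_\ast,\varepsilon_1,\bar\varepsilon_1,\varepsilon_4,\bar\sigma_2$ (none depending on $\mathbf{C}_o$), then $\kappa$ and $\beta$, then $\mathbf{C}_o$ large — admissible since $sp-\beta(p-1)>0$ — to realise simultaneously the covering inclusions, the integral bound $C_1\int_{\mathbf{C}_o}^{1}\frac{(b^\beta-1)^{p-1}}{b^{1+sp}}\,db\le\tfrac12\kappa^{p-1}$, and the admissibility of the radius ratio $c_o$ in all the reduction-of-oscillation lemmas, and only then $j_o$. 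The closing assertion $\essosc_{\mcq_i}u\le\mathbf{C}_o^{-\beta i}L$ is immediate from $M_i-m_i=\mathbf{C}_o^{-\beta i}L$, and this feeds, by a routine interpolation between consecutive cylinders $\mcq_i$, into the H\"older estimate of \cref{holderparabolic}.
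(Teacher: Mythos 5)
Your proposal follows essentially the same route as the paper's own proof: an initial ``trivial'' stretch of indices handled by the definition of $L$, an inductive step split by the near-zero/away-from-zero dichotomy of \tlcref{Eq:Hp-main1}--\tlcref{Eq:Hp-main2}, verification of the tail alternatives by decomposing the annulus $B_R\setminus B_{c_oR_i}$ into the dyadic shells $B_{R_j}\setminus B_{R_{j+1}}$ on which the inductive bounds $m_j\le u\le M_j$ yield $(u-\bsmu^\pm_i)_\pm\le\bsom_j$, plus the far-field contribution from $\RR^n\setminus B_R$ controlled by $L$ with a gain of $\mathbf{C}_o^{-i(sp-\beta(p-1))}$, and finally the observation that $\bsom_v\in[\mathfrak{C}_1,\mathfrak{C}_2]$ from \cref{boundsonomega}, which lets the intrinsically scaled cylinders of \cref{propdegaway} be nested compatibly with the parabolic ones in \descrefnormal{step11deg}{Step 11}. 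All the key ingredients are present and correctly invoked.

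The one place your write-up slips is the final order of quantifiers. You state ``first fix $\alpha,\tau,\mreta,\ldots$, then $\kappa$ and $\beta$, then $\mathbf{C}_o$ large,'' and list only lower-bound constraints on $\mathbf{C}_o$ (the covering inclusions and making the tail integral small). But the heart of the induction step requires $\mathbf{C}_o^{-\beta}\ge 1-\kappa$, equivalently $\beta\le\log_{\mathbf{C}_o}\tfrac{1}{1-\kappa}$, which for a \emph{fixed} $\beta$ is an \emph{upper} bound $\mathbf{C}_o\le(1-\kappa)^{-1/\beta}$. Once $\beta$ is pinned down you are therefore not free to send $\mathbf{C}_o\to\infty$: you would have to verify that the admissible window $\bigl[\tfrac{4}{c_o},\,(1-\kappa)^{-1/\beta}\bigr]$ is nonempty (which needs $\beta\le\log_{4/c_o}\tfrac{1}{1-\kappa}$) and that the integral constraint already holds at the left endpoint of that window. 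The paper avoids this circularity by fixing $\mathbf{C}_o$ \emph{first} (just large enough for the covering), rendering the integral constraint $\mathbf{C}_o$-independent via $\tfrac{C_1}{\mathbf{C}_o^{sp}}\int_{\mathbf{C}_o}^\infty\tfrac{(b^\beta-1)^{p-1}}{b^{1+sp}}\,db\le C_1\int_{1}^\infty\tfrac{(b^\beta-1)^{p-1}}{b^{1+sp}}\,db$, and only \emph{then} choosing $\beta$ small so that both this and $\beta\le\log_{\mathbf{C}_o}\tfrac{1}{1-\kappa}$ hold; $j_o$ comes last. Your earlier sentence ``taking $\beta$ small enough so that $\mathbf{C}_o^{-\beta}\ge 1-\kappa'$, and $\mathbf{C}_o$ large enough that\ldots'' is in fact consistent with this correct order, but the explicit enumeration that follows contradicts it. Swapping the two choices (or, if you insist on $\beta$ before $\mathbf{C}_o$, making explicit that $\mathbf{C}_o$ is then pinned to a finite window and verifying all constraints at, say, $\mathbf{C}_o=\lceil 4/c_o\rceil$) closes the argument.
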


\subsection{Proof of \texorpdfstring{\cref{prop10.1}}.}

\begin{description}[leftmargin=*]
	\item[Step 1:] For $i = 1,2,\ldots, j_o$, let us define $m_i = -\tfrac12 \mathbf{C}_o^{-\beta i} L$ and $M_i = \tfrac12 \mathbf{C}_o^{-\beta i} L$. Then we have the following estimate:
	\[
	\|u\|_{L^{\infty}(\mcq_i)}\leq \|u\|_{L^{\infty}(\mcq_o)} \leq \|u\|_{L^{\infty}(\mathbf{Q}_o)} = \tfrac12 \mathbf{C}_o^{-\beta i} \lbr 2 \mathbf{C}_o^{\frac{sp}{p-1}}\|u\|_{L^{\infty}(\mathbf{Q}_o)} \rbr \lbr \frac{\mathbf{C}_o^{\beta i}}{\mathbf{C}_o^{\frac{sp}{p-1}j_o}}\rbr \leq M_i.
	\]
	In particular, this says 
	\[
	m_i \leq u \leq M_i \txt{on} \mcq_i. 
	\]
	\item[Step 2:] We inductively define $M_i$ and $m_i$ and in this regard, assume that we have defined $m_i$ and $M_i$ for $i = j_o,j_o+1, \ldots , j$, then we will define it for $i=j+1$.  With the notation as used in \cref{section5}, let us denote $\bsmu^- = m_j$, $\bsmu^+ = M_j$ and $\bsom = M_j - m_j =\mathbf{C}_o^{-\beta j}L$.  Based on this, either \tlcref{Eq:Hp-main1} or \tlcref{Eq:Hp-main2} holds, with $\tau$ fixed as \cref{subsection5.2.6}. 
	
	\item[Step 3:] Let us consider the cylinder $\mcq_{c_o\varrho} := B_{c_o\varrho} \times (-(A-1)(c_o\varrho)^{sp},0) \subset \mcq_j = B_{R_j} \times (-R_j^{sp},0)$. This inclusion is possible provided we choose $ \varrho = \mathbf{C}_o^{-j}R = R_j$ and $c_o \leq \min\{A^{-\frac{1}{sp}},1\}$ (note that $\mathbf{C}_o$ is yet to be chosen).  
	
	\descitemnormal{Step 4:}{step4covdeg} Let us reduce the oscillation near zero, i.e., we assume \tlcref{Eq:Hp-main1} holds with $\tau$ fixed as \cref{subsection5.2.6}. From \cref{nearzerodegen}, we obtain the reduction of oscillation, provided the required tail alternatives are satisfied. Assuming that the tail alternatives are satisfied, we take
	\begin{description}
		\item[Case $-\tau \bsom \leq \bsmu^- \leq \tau \bsom$:] In this case, from \cref{degclaim5.3}, we get
		\[
		u \geq \bsmu^- + \bar\eta \bsom \quad\mbox{a.e., in } \quad B_{\frac{c_o\varrho}2} \times \left( -\tfrac14 (c_o\varrho)^{sp},0\right].
		\]
		Let us take $m_{j+1} = M_j - \mathbf{C}_o^{-\beta (j+1)}L$ and $M_{j+1} = M_j$. Then, it is easy to see that $M_{j+1} - m_{j+1} = \mathbf{C}_o^{-\beta (j+1)}L$. Furthermore, $u \leq M_{j+1}$ trivially and to see $u \geq m_{j+1}$ on $B_{\frac{c_o\varrho}2} \times \left( -\tfrac14 (c_o\varrho)^{sp},0\right]$, we proceed as follows:
		\begin{equation*}
		u  \geq  m_j + \bar\eta\mathbf{C}_o^{-\beta j} L = m_j - M_j + M_j +	\bar\eta\mathbf{C}_o^{-\beta j} L
		 =  M_j - \mathbf{C}_o^{-\beta j} L (1-\bar\eta)
		 \geq    M_j - \mathbf{C}_o^{-\beta (j+1)} L,
	\end{equation*}
	provided we choose $\beta \leq \log_{\mathbf{C}_o} \tfrac{1}{1-\bar\eta}$ which is equivalent to $\mathbf{C}_o^{-\beta}\geq 1-\bar\eta$. 
		\item[Case $\bsmu^-\leq -\tau \bsom$:] In this case, from \cref{Lm:6:1}, we get
		\[
		u\ge\bsmu^-+\eta_1\bsom\quad\mbox{a.e., in}\quad  
		B_{\frac14 c_o\varrho}\times\left( - (\tfrac{c_o\varrho}{2})^{sp},0\right].
		\]
		Let us take $m_{j+1} = M_j - \mathbf{C}_o^{-\beta (j+1)}L$ and $M_{j+1} = M_j$. Then, it is easy to see the same calculations as in the case $-\tau \bsom \leq \bsmu^- \leq \tau \bsom$ follows and we need to further restrict $\beta \leq \log_{\mathbf{C}_o} \tfrac{1}{1-\eta_1}$ which is equivalent to $\mathbf{C}_o^{-\beta}\geq 1-\eta_1$.
		\item[Case $-\tau \bsom \leq \bsmu^+ \leq \tau \bsom$:] In this case, from \cref{deggclaim5.3}, we get
		\begin{equation*}
			u \leq \bsmu^+ - \bar\eta_2 \bsom \txt{a.e., in} B_{\frac12c_o\varrho} \times (- \tfrac{\nu}{4}(\tfrac{c_o\varrho}{2})^{sp}, 0].
		\end{equation*}
		Let us take $m_{j+1} = m_j$   and $M_{j+1} = m_j + \mathbf{C}_o^{-\beta (j+1)}L$. Then, it is easy to see that $M_{j+1} - m_{j+1} = \mathbf{C}_o^{-\beta (j+1)}L$. Furthermore, $u \geq m_{j+1}$ trivially and to see $u \leq M_{j+1}$ on $B_{\frac{c_o\varrho}2} \times \left( -\tfrac{\nu}4 (c_o\varrho)^{sp},0\right]$, we proceed as follows:
		\begin{equation*}
			u  \leq  M_j - \bar\eta_2\mathbf{C}_o^{-\beta j} L = M_j - m_j + m_j -	\bar\eta_2\mathbf{C}_o^{-\beta j} L
			=  m_j + \mathbf{C}_o^{-\beta j} L (1-\bar\eta_2)
			\leq    m_j + \mathbf{C}_o^{-\beta (j+1)} L,
		\end{equation*}
		provided we choose $\beta \leq \log_{\mathbf{C}_o} \tfrac{1}{1-\bar\eta_2}$ which is equivalent to $\mathbf{C}_o^{-\beta}\geq 1-\bar\eta_2$. 
		\item[Case $\tau \bsom \leq \bsmu^+  \leq 2\bsom$:] In this case, from \cref{sub5.2.6.a} and using $2^{j_*(p-2)}\geq 1$, we have
		\[
		\bsmu^{+}-u\geq \frac{\varepsilon\bsom}{2^{j_*+1}}\txt{a.e., in }  B_{\frac12c_o\varrho}\times (t_o-(\tfrac{c_o\varrho}{2})^{sp},t_o].
		\]
		Analogous to calculations in the case $-\tau \bsom \leq \bsmu^+ \leq \tau \bsom$, we  choose $m_{j+1} = m_i$   and $M_{j+1} = m_j+ \mathbf{C}_o^{-\beta (j+1)}L$ with $\beta \leq \log_{\mathbf{C}_o} \tfrac{1}{1-\frac{\varepsilon}{2^{j_*+1}}}$.
	\end{description}
\descitemnormal{Step 5:}{step5deg} In this step, we will prove the validity of the tail estimates needed to ensure \descrefnormal{step4covdeg}{Step 4} is applicable. Recalling $m_j = \bsmu^-$, $M_j = \bsmu^+$ and $m_i \leq u \leq M_i$ on $B_i \times I_i$ for $i \in \{0,1,2,\ldots, j\}$,  we have the following sequence of estimates:
\begin{equation}\label{deg10.3}
	\begin{array}{rcl}
		\tail((u-\bsmu^{\pm})_{\pm}, R_j, 0, (-R_j^{sp},0))^{p-1} & = &  R_j^{sp} \esssup_{I_j} \int_{\RR^n\setminus B_0} \frac{(u-\bsmu^{\pm})_{\pm}^{p-1}}{|x|^{n+sp}}\,dx \\
		&& + R_j^{sp} \esssup_{I_j}\int_{B_0\setminus B_j} \frac{(u-\bsmu^{\pm})_{\pm}^{p-1}}{|x|^{n+sp}}\,dx\\
		& \leq & R_j^{sp} \esssup_{I_j}  2^{p-1}\int_{\RR^n\setminus B_0} \frac{|u|^{p-1} + |\bsmu^{\pm|^{p-1}}}{|x|^{n+sp}}\,dx \\
		&& + R_j^{sp} \esssup_{I_j} \int_{B_0\setminus B_j} \frac{(u-\bsmu^{\pm})_{\pm}^{p-1}}{|x|^{n+sp}}\,dx\\
		& \leq & R_j^{sp} C_3 \frac{L^{p-1}}{R^{sp}} + R_j^{sp} \esssup_{I_j} \int_{B_0\setminus B_j} \frac{(u-\bsmu^{\pm})_{\pm}^{p-1}}{|x|^{n+sp}}\,dx,
	\end{array}
\end{equation}
where we used $|\bsmu^{\pm}| \leq 2\|u\|_{L^{\infty}(\mathbf{Q}_o)} \leq L$ and $\int_{\RR^n\setminus B_0} \frac{|u|^{p-1}}{|x|^{n+sp}}\,dx = \tfrac{\tail(|u|, R,0,(-R^{sp},0))^{p-1}}{R^{sp}}\leq \frac{L^{p-1}}{R^{sp}}$. 

We further estimate the last term appearing on the right hand side of \cref{deg10.3} as follows: For each  $x \in B_0 \setminus B_j$, then there exists $l \in 0,1,\ldots, j-1$ such that $\mathbf{C}_o^{-l}R \geq |x| \geq \mathbf{C}_o^{-{l+1}}R$ and $m_l \leq u \leq M_l$ which gives
\begin{equation}\label{deg10.4}\def\arraystretch{1.5}
	\begin{array}{rcl}
		(u - m_j)_- &\leq&  m_j - m_l = m_j - M_j + M_j - m_l = -\mathbf{C}_o^{-\beta j}L + M_j - m_l \leq -\mathbf{C}_o^{-\beta j}L + M_l - m_l \\
		&= & -\mathbf{C}_o^{-\beta j}L + \mathbf{C}_o^{-\beta l}L
		 \leq L \lbr \frac{|x|}{\mathbf{C}_oR}\rbr^{\beta}-\mathbf{C}_o^{-\beta j}L \\
		(u - M_j)_+ & \leq & M_l - M_j = M_l - m_j + m_j - M_j = M_l - m_j-\mathbf{C}_o^{-\beta j}L \leq M_l - m_l-\mathbf{C}_o^{-\beta j}L\\
		& = & \mathbf{C}_o^{-\beta l}L - \mathbf{C}_o^{-\beta j}L
		\leq L \lbr \frac{|x|}{\mathbf{C}_oR}\rbr^{\beta}-\mathbf{C}_o^{-\beta j}L \\
	\end{array}
\end{equation}
Thus we have the following sequence of estimates: 
\begin{equation}\label{deg10.5}
	\begin{array}{rcl}
	\esssup_{I_j} \int_{B_0\setminus B_j} \frac{(u-\bsmu^{\pm})_{\pm}^{p-1}}{|x|^{n+sp}}\,dx & \overset{\cref{deg10.4}}{\leq} & L^{p-1} \mathbf{C}_o^{-\beta j(p-1)}\esssup_{I_j} \int_{B_0\setminus B_j} \frac{\lbr\lbr \frac{|x|}{\mathbf{C}_o^{1-j}R}\rbr^{\beta}-1\rbr^{p-1}}{|x|^{n+sp}}\,dx\\
	& \leq & 	L^{p-1} \mathbf{C}_o^{-\beta j(p-1)}\esssup_{I_j} \int_{\RR^n\setminus B_{\mathbf{C}_o^{-j}R}} \frac{\lbr\lbr \frac{|x|}{\mathbf{C}_o^{1-j}R}\rbr^{\beta}-1\rbr^{p-1}}{|x|^{n+sp}}\,dx\\
	& = & 	L^{p-1} \mathbf{C}_o^{-\beta j(p-1)}\frac{1}{(\mathbf{C}_o^{1-j}R)^{sp}}\esssup_{I_j} \int_{\RR^n\setminus B_{\mathbf{C}_o}} \frac{\lbr |y|^{\beta}-1\rbr^{p-1}}{|y|^{n+sp}}\,dy\\
	& = & C_1 L^{p-1} \mathbf{C}_o^{-\beta j(p-1)}\frac{1}{(\mathbf{C}_o^{1-j}R)^{sp}} \int_{\mathbf{C}_o}^{\infty}  \frac{\lbr b^{\beta}-1\rbr^{p-1}}{b^{1+sp}}\,db
	\end{array}
\end{equation}
Combining \cref{deg10.5} and \cref{deg10.3}, for any $\kappa \in (0,1)$ we get
\begin{equation*}
	\begin{array}{rcl}
	\tail((u-\bsmu^{\pm})_{\pm}, R_j, 0, (-R_j^{sp},0))^{p-1} & \leq & R_j^{sp} C_3 \frac{L^{p-1}}{R^{sp}} + \frac{C_1 R_j^{sp} L^{p-1} \mathbf{C}_o^{-\beta j(p-1)}}{(\mathbf{C}_o^{1-j}R)^{sp}} \int_{\mathbf{C}_o}^{\infty}  \frac{\lbr b^{\beta}-1\rbr^{p-1}}{b^{1+sp}}\,db\\
	& \leq & \mathbf{C}_o^{-\beta j(p-1)} L^{p-1} \lbr[[] C_3 \mathbf{C}_o^{-j(sp-\beta(p-1))}+ \frac{C_1}{\mathbf{C}_o^{sp}}\int_{\mathbf{C}_o}^{\infty}  \frac{\lbr b^{\beta}-1\rbr^{p-1}}{b^{1+sp}}\,db \rbr[]]\\
	& \leq & \mathbf{C}_o^{-\beta j(p-1)} L^{p-1} \kappa^{p-1},
	\end{array}
\end{equation*}
provided we choose $\beta \in (0,1)$ small (independent of $\mathbf{C}_o \geq 1$) such that $$\frac{C_1}{\mathbf{C}_o^{sp}}\int_{\mathbf{C}_o}^{\infty}  \frac{\lbr b^{\beta}-1\rbr^{p-1}}{b^{1+sp}}\,db  \leq {C_1}\int_{1}^{\infty}  \frac{\lbr b^{\beta}-1\rbr^{p-1}}{b^{1+sp}}\,db \leq \tfrac12 \kappa^{p-1},$$ and $j_o =\lceil \frac{1}{sp-\beta(p-1)}\log_{\mathbf{C}_o}\frac{2C_3}{\kappa^{p-1}}\rceil$ and noting that $j \geq j_o$. 
\descitemnormal{Step 6:}{step6deg} In this step, we will choose $\kappa$ to ensure \descrefnormal{step4covdeg}{Step 4} is applicable to obtain the required tail estimates.  We will take $\kappa \leq \min\{\bar\eta,\eta_1,\bar\eta_2,\tfrac{\varepsilon}{2^{j_*+1}}\}$, where each of these numbers are as obtained in \descrefnormal{step4covdeg}{Step 4}. This ensures all the tail alternatives are satisfied. 

\descitemnormal{Step 7:}{step7deg} From the previous steps, we see that the reduction of oscillatin is obtained in $B_{R_1} \times (-R_1^{sp},0)$, where $R_1 \leq \tfrac12 c_oR_j$ and $R_1^{sp} \leq \min\{\tfrac14 (c_oR_j)^{sp}, \tfrac{\nu}{4}(\tfrac14c_oR_j)^{sp}\}$. We need $\mcq_{j+1} \subset B_{R_1} \times (-R_1^{sp},0)$ which holds provided we take $\mathbf{C}_o^{-1} \leq \min\{\tfrac12 c_o, 4^{-\frac{1}{sp}}c_o, (\nu/4)^{\frac{1}{sp}}c_o\}$. 
 Then the previous conclusions shows that $\essosc_{\mcq_{j+1}} u \leq (1-\sigma)\essosc_{\mcq_{j}} u$ where $\sigma  = \min\{\bar\eta,\eta_1,\bar\eta_2,\tfrac{\varepsilon}{2^{j_*+1}}\}$. Further choosing $\beta \leq \log_{\mathbf{C}_o}(1-\sigma)$ implies $(1-\sigma) \leq \mathbf{C}_o^{-\beta}$ which gives
 \[
 \essosc_{\mcq_{j+1}} u \leq (1-\sigma)\essosc_{\mcq_{j}} u = (1-\sigma) \mathbf{C}_o^{-\beta j}L \leq \mathbf{C}_o^{-\beta (j+1)}L.
 \]
 
 \descitemnormal{Step 8:}{step8deg} Let us assume there is a number $i_o \geq j_o$ such that if we denote $\bsmu^-=m_{i_o}$, $\bsmu^+ = M_{i_o}$ and $\bsom = \bsmu^+ - \bsmu^-$, then \tlcref{Eq:Hp-main2} holds. This will correspond to the reduction of oscillation away from zero.   In \cref{rmkdeg5.15}, we will take $\mreta = \sigma$ where $\sigma$ is from \descrefnormal{step7deg}{Step 7}.  Unlike in \cref{section6}, in subsequent calculations, we will keep track of $i_o$ and not suppress writing the subscript. 
 
 With this notation, we have $m_{i_o} \leq u \leq M_{i_o}$ and without loss of generality, we consider the case $m_{i_o} > \tau \mathbf{C}_o^{-\beta i_o}L$ holds which is the first case of \tlcref{Eq:Hp-main2} (note that $M_{i_o} < -\tau\mathbf{C}_o^{-\beta i_o}L$ can be handled analogously). Then we take $v := \lbr \tfrac{u}{m_{i_o}}\rbr^{p-1}$ and we have the bounds $m_{i_o} \leq u \leq \lbr \tfrac{1+\tau}{\tau}\rbr m_{i_o}$. We need to obtain reduction of oscillation in the cylinder $\mcq_{i_o} = B_{R_{i_o}} \times (-R_{i_o}^{sp},0)$.
 
 \item[Step 9:] In order to apply the conclusion of \cref{degconstantsscaled}, we need to ensure that the cylinder $\mcq_{c_o\varrho}^{\bar{A}} \subset \mcq_{i_o}$ for some $c_o, \varrho$ and $\bar{A} = \lbr \tfrac{\bsom_v}{\mathbf{a}}\rbr^{2-p}$. Let us take $\varrho = R_{i_o}$ and $c_o \leq \lbr \tfrac{\mathfrak{C}_1}{\mathbf{a}}\rbr^{p-2}$, where $\mathbf{a}$ is from \cref{degconstantsscaled} and $\mathfrak{C}_1$ is from \cref{boundsonomega}, then we have $\mcq_{c_o\varrho}^{\bar{A}} \subset \mcq_{i_o}$. 
 \begin{figure}[ht]
 	\begin{center}
 		\begin{tikzpicture}[line cap=round,line join=round,>=latex,scale=0.5]
 			\coordinate  (O) at (0,0);
 			
 						\draw[thick, draw=teal,pattern=north east lines, pattern color=teal, opacity=0.1] (-1.5,5.5) rectangle (1.5,-2);
 			\draw[thick, draw=blue, dashed] (-3,-3.5) -- (-3,5.5);
 			\draw[thick, draw=blue, dashed] (3,-3.5) -- (3,5.5);
 			\draw[draw=teal, dotted, ->] (1.5,-2) -- (3,-2);
 			\draw[thick, draw=black, dotted, ->] (3,-3) -- (-3,-3);
 			
 			\draw[draw=teal, dashed,<->] (-1.5,6) -- (1.5,6);
 			\draw[draw=blue,thick] (-3,-3.5) -- (3,-3.5);
 			
 			\draw[draw=black,thick] (-3,5.5) -- (3,5.5);
 			\node  [anchor=west] at (3,5.5) {\scriptsize \textcolor{black}{$t=0$}};
 			\draw[draw=blue, dashed, <->] (-3,-4) -- (3,-4);
 			\node  at (0,6.5) {\color{teal}{\scriptsize $2c_oR_{i_o}$}};
 			\node  at (0,2) {\scriptsize \color{teal}{\bf $\mcq_{c_oR_{i_o}}^{\bar{A}}$}};
 			\node  at (0,-4.5) {\scriptsize {\color{blue}$2R_{i_o}$}};
 			\node [anchor=west] at (3,-2) {\color{teal}{\scriptsize $t=-\lbr \frac{\bsom_v}{\mathbf{a}}\rbr^{2-p} (c_oR_{i_o})^{sp}$}};
 				\node [anchor=west] at (-10.1,-3) {\color{black}{\scriptsize $t=-\lbr \frac{\mathfrak{C}_1}{\mathbf{a}}\rbr^{2-p} (c_oR_{i_o})^{sp}$}};
 			\node [anchor=west] at (3,2) {\color{blue}{\scriptsize $\mcq_{i_o}$}};
 			\node [anchor=west] at (3,-3.5) {\scriptsize ${\color{blue}t=- R_{i_o}^{sp}}$};
 		\end{tikzpicture}
 	\end{center}
 	\caption{Reduction of Oscillation away from zero}
 	\label{figdegcov1}
 \end{figure}
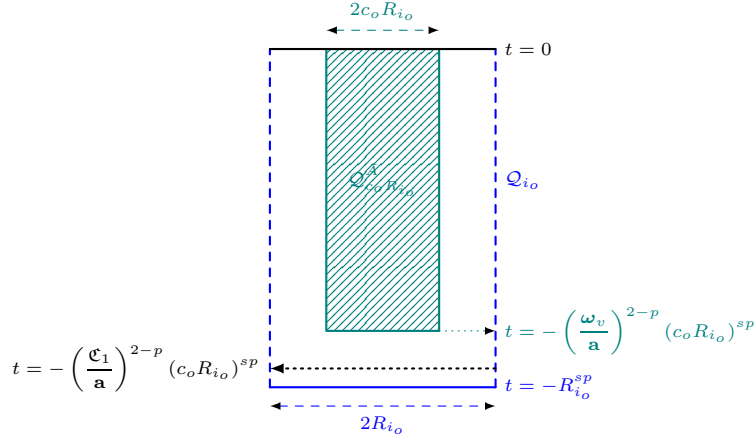

\item[Step 10:] In order to apply \cref{propdegaway}, we need to obtain  a suitable tail decay estimate, which proceeds as follows: 
\begin{equation*}
	\begin{array}{rcl}
		\tail((u-\bsmu^{\pm})_{\pm}, c_oR_{i_o}, 0 (-\bar{A} (c_oR_{i_o})^{sp},0))& = & (c_oR_{i_o})^{sp} \esssup_{(-\bar{A} (c_oR_{i_o})^{sp},0)} \int_{\RR^n\setminus B_{i_o}} \frac{(u-\bsmu^{\pm})_{\pm}^{p-1}}{|x|^{n+sp}}\,dx \\
		&& + (c_oR_{i_o})^{sp} \esssup_{(-\bar{A} (c_oR_{i_o})^{sp},0)} \int_{B_{i_o}\setminus B_{c_oR_{i_o}}} \frac{(u-\bsmu^{\pm})_{\pm}^{p-1}}{|x|^{n+sp}}\,dx\\
		& \leq & c_o^{sp} \tail((u-\bsmu^{\pm})_{\pm}, R_{i_o}, 0, (-R_{i_o}^{sp},0)) \\
		&& +  (c_oR_{i_o})^{sp} \esssup_{(-\bar{A} (c_oR_{i_o})^{sp},0)} \int_{B_{R_{i_o}}\setminus B_{c_oR_{i_o}}} \frac{(u-\bsmu^{\pm})_{\pm}^{p-1}}{|x|^{n+sp}}\,dx\\
		& \leq & c_o^{sp}\mathbf{C}_o^{-\beta i_o(p-1)} L^{p-1}\kappa^{p-1},
	\end{array}
\end{equation*}
since $\bsmu^- = m_{i_o} \leq u \leq M_{i_o} = \bsmu^+$ on $\mcq_{i_o}$ and to estimate $\tail((u-\bsmu^{\pm})_{\pm}, R_{i_o}, 0, (-R_{i_o}^{sp},0))$, we applied the calculations from \descrefnormal{step5deg}{Step 5}. We choose $c_o$ small such that $c_o^{sp} \kappa^{p-1} \leq \mathbf{d}_t$ where $\mathbf{d}_t$ is from \cref{propdegaway}.  

\descitemnormal{Step 11:}{step11deg} Hence we can apply \cref{propdegaway}, in the case \descrefnormal{conc1deg}{Conclusion 1}, we take $M_{i_o+1} = M_{i_o}$ and $m_{i_o+1} = M_{i_o} - \mathbf{C}_o^{-\beta(j_o+1)}L$ with $\beta \leq \log_{\mathbf{C}_o}\frac{1}{1-\frac12 \varepsilon_1}$ and in the case \descrefnormal{conc2deg}{Conclusion 2}, we take $m_{i_o+1}=  m_{i_o}$ and $M_{i_o+1} = m_{i_o} + \mathbf{C}_o^{-\beta(j_o+1)}L$ with $\beta \leq \log_{\mathbf{C}_o}\frac{1}{1-\frac12\bar\sigma_2\bar\varepsilon_1\varepsilon_4}$.  With notation as in \cref{propdegaway}, we further restrict $\mathbf{C}_o$ to satisfy 
\begin{equation}\label{controlsizedeg}
\mathbf{C}_o^{-1} \leq \tfrac14 c_o \txt{and} \mathbf{C}_o^{-sp} \leq \mathbf{d} \mathfrak{C}_2^{2-p} (\tfrac14 c_o)^{sp} \leq \mathbf{d} \bsom_v^{2-p} (\tfrac14 c_o)^{sp},
\end{equation}
where $\mathfrak{C}_2$ is from \cref{boundsonomega} and $\mathbf{d}$ is from \cref{propdegaway}. With this restriction, we see that $\mcq_{i_o+1} \subset B_{c_oR_{i_o}} \times (-\mathbf{d} \bsmu_v^{2-p} (\tfrac14c_oR_{i_o})^{sp},0)$ and hence the required reduction of oscillation follows. 

\descitemnormal{Step 12:}{step12deg} From $i_o$ onwards, we are always in the away from zero case and  there exists $\alpha \in (0,1)$ depending only on data, such that  for any $\sigma \in (0,1)$, there holds:
\[
\essosc_{\sigma\mcq_{i_o}} v \leq \mathbf{C}\sigma^{\alpha} \essosc_{\mcq_{i_o}}v.
\]
In particular, making use of \cref{vwdef5.2} and \cref{alg_lem}, we get
\[
\essosc_{\sigma \mcq_{i_o}} u \leq \mathbf{C}_{\data{}} \sigma^{\alpha}\lbr \frac{1+\tau}{\tau}\rbr^p \mathbf{C}_o^{-\beta i_o}L.
\]

We note that the $\mathfrak{C}_2$ from \cref{boundsonomega} is used for controlling the size of the cylinders in \cref{controlsizedeg} and not $\mathfrak{C}_1$. As a consequence, we can iterate this reduction of oscillation. 
\item[Step 13:] Combining the calculations from \descrefnormal{step12deg}{Step 12}, for any $\varrho < R$ and possibly smaller $\alpha$ (again depending on data), we get
\[
\essosc_{B_{\varrho}\times (-\varrho^{sp},0)}u \leq \mathbf{C}_{\data{}} \lbr \frac{\varrho}{R}\rbr^{\alpha} L. 
\]

\end{description}

\section{H\"older regularity for nonlocal doubly nonlinear equations - singular case}\label{section11}

The reduction of oscillation in the case $p<2$ follows analogously as the degenerate case and we leave the details to the interested readers.

\section*{References} 

\begin{thebibliography}{10}
	
	\bibitem{adimurthiOlderRegularityFractional2022}
	Karthik Adimurthi, Harsh Prasad, and Vivek Tewary.
	\newblock H\"older regularity for fractional $p$-laplace equations.
	\newblock {\em arXiv:2203.13082 [math]}, March 2022.
	
	\bibitem{adimurthiLocalHolderRegularity2022}
	Karthik Adimurthi, Harsh Prasad, and Vivek Tewary.
	\newblock {Local H\"older regularity for nonlocal parabolic $p$-Laplace
		equations}.
	\newblock {\em
		\href{http://arxiv.org/abs/2205.09695}{http://arxiv.org/abs/2205.09695}},
	2022.
	
	\bibitem{banerjeeLowerSemicontinuityPointwise2021}
	Agnid Banerjee, Prashanta Garain, and Juha Kinnunen.
	\newblock Lower semicontinuity and pointwise behavior of supersolutions for
	some doubly nonlinear nonlocal parabolic $p$-laplace equations.
	\newblock January 2021.
	
	\bibitem{banerjeeLocalPropertiesSubsolution2021}
	Agnid Banerjee, Prashanta Garain, and Juha Kinnunen.
	\newblock Some local properties of subsolution and supersolutions for a doubly
	nonlinear nonlocal p-{{Laplace}} equation.
	\newblock {\em Annali di Matematica Pura ed Applicata (1923 -)}, November 2021.
	
	\bibitem{bassHarnackInequalitiesNonlocal2005}
	Richard Bass and Moritz Kassmann.
	\newblock Harnack inequalities for non-local operators of variable order.
	\newblock {\em Transactions of the American Mathematical Society},
	357(2):837--850, 2005.
	
	\bibitem{bassHolderContinuityHarmonic2005}
	Richard~F. Bass and Moritz Kassmann.
	\newblock H\"older {{Continuity}} of {{Harmonic Functions}} with {{Respect}} to
	{{Operators}} of {{Variable Order}}.
	\newblock {\em Communications in Partial Differential Equations},
	30(8):1249--1259, July 2005.
	
	\bibitem{bogeleinHolderRegularitySigned2021}
	Verena B{\"o}gelein, Frank Duzaar, and Naian Liao.
	\newblock On the {{H\"older}} regularity of signed solutions to a doubly
	nonlinear equation.
	\newblock {\em Journal of Functional Analysis}, 281(9):109173, November 2021.
	
	\bibitem{brascoHigherHolderRegularity2018}
	Lorenzo Brasco, Erik Lindgren, and Armin Schikorra.
	\newblock Higher {{H\"older}} regularity for the fractional p-{{Laplacian}} in
	the superquadratic case.
	\newblock {\em Advances in Mathematics}, 338:782--846, November 2018.
	
	\bibitem{brascoContinuitySolutionsNonlinear2021}
	Lorenzo Brasco, Erik Lindgren, and Martin Str{\"o}mqvist.
	\newblock Continuity of solutions to a nonlinear fractional diffusion equation.
	\newblock {\em Journal of Evolution Equations}, June 2021.
	
	\bibitem{bogeleinOlderRegularitySigned2021}
	Verena Bögelein, Frank Duzaar, Naian Liao, and Leah Schätzler.
	\newblock On the h\"older regularity of signed solutions to a doubly nonlinear
	equation. part ii.
	\newblock {\em arXiv:2108.02749 [math]}, August 2021.
	
	\bibitem{caffarelliRegularityTheoryParabolic2011}
	Luis Caffarelli, Chi~Hin Chan, and Alexis Vasseur.
	\newblock Regularity theory for parabolic nonlinear integral operators.
	\newblock {\em Journal of the American Mathematical Society}, 24(3):849--869,
	July 2011.
	
	\bibitem{caffarelliDriftDiffusionEquations2010}
	Luis Caffarelli and Alexis Vasseur.
	\newblock Drift diffusion equations with fractional diffusion and the
	quasi-geostrophic equation.
	\newblock {\em Annals of Mathematics. Second Series}, 171(3):1903--1930, 2010.
	
	\bibitem{cozziRegularityResultsHarnack2017}
	Matteo Cozzi.
	\newblock Regularity results and {{Harnack}} inequalities for minimizers and
	solutions of nonlocal problems: {{A}} unified approach via fractional {{De
			Giorgi}} classes.
	\newblock {\em Journal of Functional Analysis}, 272(11):4762--4837, June 2017.
	
	\bibitem{cozziFractionalGiorgiClasses2019}
	Matteo Cozzi.
	\newblock Fractional {{De Giorgi Classes}} and {{Applications}} to {{Nonlocal
			Regularity Theory}}.
	\newblock In Serena Dipierro, editor, {\em Contemporary {{Research}} in
		{{Elliptic PDEs}} and {{Related Topics}}}, Springer {{INdAM Series}}, pages
	277--299. {Springer International Publishing}, {Cham}, 2019.
	
	\bibitem{dibenedettoLocalBehaviourSolutions1986}
	E.~Di~Benedetto.
	\newblock {On the local behaviour of solutions of degenerate parabolic
		equations with measurable coefficients}.
	\newblock {\em Annali della Scuola Normale Superiore di Pisa - Classe di
		Scienze}, 13(3):487--535, 1986.
	
	\bibitem{dicastroLocalBehaviorFractional2016}
	Agnese Di~Castro, Tuomo Kuusi, and Giampiero Palatucci.
	\newblock Local behavior of fractional p-minimizers.
	\newblock {\em Annales de l'Institut Henri Poincar\'e C, Analyse non
		lin\'eaire}, 33(5):1279--1299, September 2016.
	
	\bibitem{dibenedettoDegenerateParabolicEquations1993}
	Emmanuele DiBenedetto.
	\newblock {\em Degenerate Parabolic Equations}.
	\newblock Universitext. {Springer-Verlag, New York}, 1993.
	
	\bibitem{dibenedettoHarnackEstimatesQuasilinear2008}
	Emmanuele DiBenedetto, Ugo Gianazza, and Vincenzo Vespri.
	\newblock Harnack estimates for quasi-linear degenerate parabolic differential
	equations.
	\newblock {\em Acta Mathematica}, 200(2):181--209, January 2008.
	
	\bibitem{dibenedettoHarnackInequalityDegenerate2012}
	Emmanuele DiBenedetto, Ugo Gianazza, and Vincenzo Vespri.
	\newblock {\em Harnack's Inequality for Degenerate and Singular Parabolic
		Equations}.
	\newblock Springer {{Monographs}} in {{Mathematics}}. {Springer, New York},
	2012.
	
	\bibitem{dingLocalBoundednessHolder2021}
	Mengyao Ding, Chao Zhang, and Shulin Zhou.
	\newblock Local boundedness and {{H\"older}} continuity for the parabolic
	fractional p-{{Laplace}} equations.
	\newblock {\em Calculus of Variations and Partial Differential Equations},
	60(1):38, January 2021.
	
	\bibitem{gianazzaNewProofHolder2010}
	Ugo Gianazza, Mikhail Surnachev, and Vincenzo Vespri.
	\newblock A new proof of the {{H\"older}} continuity of solutions to
	p-{{Laplace}} type parabolic equations.
	\newblock 3(3):263--278, July 2010.
	
	\bibitem{hwangHolderContinuityBounded2015a}
	Sukjung Hwang and Gary~M. Lieberman.
	\newblock Hölder continuity of bounded weak solutions to generalized parabolic
	$p$-laplacian equations {I}: degenerate case.
	\newblock {\em Electronic Journal of Differential Equations}, pages No. 287,
	32, 2015.
	
	\bibitem{hwangHolderContinuityBounded2015}
	Sukjung Hwang and Gary~M. Lieberman.
	\newblock Hölder continuity of bounded weak solutions to generalized parabolic
	$p$-laplacian equations {II}: singular case.
	\newblock {\em Electronic Journal of Differential Equations}, pages No. 288,
	24, 2015.
	
	\bibitem{ishiiClassIntegralEquations2010}
	Hitoshi Ishii and Gou Nakamura.
	\newblock A class of integral equations and approximation of p-{{Laplace}}
	equations.
	\newblock {\em Calculus of Variations and Partial Differential Equations},
	37(3):485--522, March 2010.
	
	\bibitem{kassmannPrioriEstimatesIntegrodifferential2009}
	Moritz Kassmann.
	\newblock A priori estimates for integro-differential operators with measurable
	kernels.
	\newblock {\em Calculus of Variations and Partial Differential Equations},
	34(1):1--21, January 2009.
	
	\bibitem{MR2957656}
	Tuomo Kuusi, Rojbin Laleoglu, Juhana Siljander, and Jos\'{e}~Miguel Urbano.
	\newblock H\"{o}lder continuity for {T}rudinger's equation in measure spaces.
	\newblock {\em Calc. Var. Partial Differential Equations}, 45(1-2):193--229,
	2012.
	
	\bibitem{MR3029403}
	Tuomo Kuusi, Juhana Siljander, and Jos\'{e}~Miguel Urbano.
	\newblock Local {H}\"{o}lder continuity for doubly nonlinear parabolic
	equations.
	\newblock {\em Indiana Univ. Math. J.}, 61(1):399--430, 2012.
	
	\bibitem{liaoUnifiedApproachHolder2020}
	Naian Liao.
	\newblock A unified approach to the {{H\"older}} regularity of solutions to
	degenerate and singular parabolic equations.
	\newblock {\em Journal of Differential Equations}, 268(10):5704--5750, May
	2020.
	
	\bibitem{liaoHolderRegularityParabolic2022}
	Naian Liao.
	\newblock {H\"older regularity for parabolic fractional $p$-Laplacian}.
	\newblock {\em
		\href{https://arxiv.org/abs/2205.10111}{https://arxiv.org/abs/2205.10111}},
	2022.
	
	\bibitem{liaoHolderRegularitySigned2021}
	Naian Liao and Leah Schätzler.
	\newblock On the h\"older regularity of signed solutions to a doubly nonlinear
	equation. part iii.
	\newblock {\em arXiv:2108.03878 [math]}, August 2021.
	
	\bibitem{silvestreHolderEstimatesSolutions2006}
	Luis Silvestre.
	\newblock H\"older {{Estimates}} for {{Solutions}} of {{Integro-differential
			Equations Like The Fractional Laplace}}.
	\newblock {\em Indiana University Mathematics Journal}, 55(3):1155--1174, 2006.
	
	\bibitem{tilli}
	Paolo Tilli.
	\newblock Remarks on the {H}\"{o}lder continuity of solutions to elliptic
	equations in divergence form.
	\newblock {\em Calc. Var. Partial Differential Equations}, 25(3):395--401,
	2006.
	
	\bibitem{ya-zheLocalBehaviorSolutions1988}
	Chen {Ya-zhe} and Emmanuele Di~Benedetto.
	\newblock On the local behavior of solutions of singular parabolic equations.
	\newblock {\em Archive for Rational Mechanics and Analysis}, 103(4):319--345,
	December 1988.
	
\end{thebibliography}

\end{document}